\pgfplotsset{compat=newest}
\def\grd@save@target#1{%
	\def\grd@target{#1}}
\def\grd@save@start#1{%
	\def\grd@start{#1}}
\tikzset{
	grid with coordinates/.style={
		to path={%
			\pgfextra{%
				\edef\grd@@target{(\tikztotarget)}%
				\tikz@scan@one@point\grd@save@target\grd@@target\relax
				\edef\grd@@start{(\tikztostart)}%
				\tikz@scan@one@point\grd@save@start\grd@@start\relax
				\draw[minor help lines] (\tikztostart) grid (\tikztotarget);
				\draw[major help lines] (\tikztostart) grid (\tikztotarget);
				\grd@start
				\pgfmathsetmacro{\grd@xa}{\the\pgf@x/1cm}
				\pgfmathsetmacro{\grd@ya}{\the\pgf@y/1cm}
				\grd@target
				\pgfmathsetmacro{\grd@xb}{\the\pgf@x/1cm}
				\pgfmathsetmacro{\grd@yb}{\the\pgf@y/1cm}
				\pgfmathsetmacro{\grd@xc}{\grd@xa + \pgfkeysvalueof{/tikz/grid with coordinates/major step}}
				\pgfmathsetmacro{\grd@yc}{\grd@ya + \pgfkeysvalueof{/tikz/grid with coordinates/major step}}
				\foreach \x in {\grd@xa,\grd@xc,...,\grd@xb}
				\node[anchor=north] at (\x,\grd@ya) {\pgfmathprintnumber{\x}};
				\foreach \y in {\grd@ya,\grd@yc,...,\grd@yb}
				\node[anchor=east] at (\grd@xa,\y) {\pgfmathprintnumber{\y}};
			}
		}
	},
	minor help lines/.style={
		help lines,
		step=\pgfkeysvalueof{/tikz/grid with coordinates/minor step}
	},
	major help lines/.style={
		help lines,
		line width=\pgfkeysvalueof{/tikz/grid with coordinates/major line width},
		step=\pgfkeysvalueof{/tikz/grid with coordinates/major step}
	},
	grid with coordinates/.cd,
	minor step/.initial=.2,
	major step/.initial=1,
	major line width/.initial=0.25mm,
}
\tikzset{
	% style to apply some styles to each segment of a path
	on each segment/.style={
		decorate,
		decoration={
			show path construction,
			moveto code={},
			lineto code={
				\path [#1]
				(\tikzinputsegmentfirst) -- (\tikzinputsegmentlast);
			},
			curveto code={
				\path [#1] (\tikzinputsegmentfirst)
				.. controls
				(\tikzinputsegmentsupporta) and (\tikzinputsegmentsupportb)
				..
				(\tikzinputsegmentlast);
			},
			closepath code={
				\path [#1]
				(\tikzinputsegmentfirst) -- (\tikzinputsegmentlast);
			},
		},
	},
	% style to add an arrow in the middle of a path
	mid arrow/.style={postaction={decorate,decoration={
				markings,
				mark=at position .5 with {\arrow[#1]{stealth}}
	}}},
	rmid arrow/.style={postaction={decorate,decoration={
				markings,
				mark=at position .5 with {\arrowreversed[#1]{stealth}}
	}}},
	end arrow/.style={postaction={decorate,decoration={
				markings,
				mark=at position 1 with {\arrow[#1]{stealth}}
	}}},
	start arrow/.style={postaction={decorate,decoration={
				markings,
				mark=at position 0 with {\arrow[#1]{stealth}}
	}}},
	mid3 arrow/.style={postaction={decorate,decoration={
				markings,
				mark=at position .3 with {\arrow[#1]{stealth}}
	}}},
	rmid3 arrow/.style={postaction={decorate,decoration={
				markings,
				mark=at position .7 with {\arrowreversed[#1]{stealth}}
	}}},
	mid4 arrow/.style={postaction={decorate,decoration={
				markings,
				mark=at position .4 with {\arrow[#1]{stealth}}
	}}},
	rmid4 arrow/.style={postaction={decorate,decoration={
				markings,
				mark=at position .4 with {\arrowreversed[#1]{stealth}}
	}}},
}
\tikzset{every state/.style={minimum size=0pt}}
\tikzset{
	mark position/.style args={#1(#2)}{
		postaction={
			decorate,
			decoration={
				markings,
				mark=at position #1 with \coordinate (#2);
			}
		}
	}
}
\tikzset{middle segment/.style={decoration={middle},decorate, segment length=#1}}
\newtheorem{thm}{Theorem}[section]
\newtheorem{lm}[thm]{Lemma}
\newtheorem{notation}[thm]{Notation}
\newtheorem{prop}[thm]{Proposition}
\newtheorem{conj}[thm]{Conjecture}
\newtheorem{cor}[thm]{Corollary}
\newtheorem{rmk}[thm]{Remark}
\numberwithin{equation}{section}
\newcommand{\bigO}{\mathcal{O}}
\newcommand{\CC}{\mathcal{C}}
\newcommand{\Ch}{\mathcal{C}}
\newcommand{\complexC}{\mathbb{C}}
\newcommand{\ddbar}[2]{\frac{{\mathrm d}#1}{2\pi {\mathrm i}#2}}
\newcommand{\dg}{\mathrm{dist}}
\newcommand{\dtt}{\text{dist}}
\newcommand{\exe}{\mathrm{err}}
\newcommand{\gd}{{\mathcal{G}}}
\newcommand{\ii}{\mathrm{i}}
\newcommand{\inn}{\mathrm{in}}
\newcommand{\intZ}{\mathbb{Z}}
\newcommand{\limp}{\mathrm{p}}
\newcommand{\LL}{\mathrm{L}}
\newcommand{\out}{\mathrm{out}}
\newcommand{\prob}{\mathbb{P}}
\newcommand{\realR}{\mathbb{R}}
\newcommand{\rg}{\mathrm{g}}
\newcommand{\RR}{\mathrm{R}}
\newcommand{\rr}{\mathrm{r_{max}}}
\newcommand{\rz}{\mathrm{z}}
\begin{document}

\title{One-point distribution of the geodesic in directed last passage percolation}

\author{Zhipeng Liu}
\maketitle

\begin{abstract}
	We consider the geodesic of the directed last passage percolation with iid exponential weights. We find the explicit one-point distribution of the geodesic location joint with the last passage times, and its limit as the parameters go to infinity under the KPZ scaling. 
\end{abstract}

\section{Introduction}

In recent twenty years, there has been a huge progress towards to understanding a universal class of random growth models, the so-called Kardar-Parisi-Zhang (KPZ) universality class \cite{Baik-Deift-Johansson99,Johansson00,Johansson03,Borodin-Ferrari-Prahofer-Sasamoto07,Tracy-Widom08,Tracy-Widom09,Borodin-Corwin13,Matetski-Quastel-Remenik17,Dauvergne-Ortmann-Virag18,Johansson-Rahman19,Liu19}. Very recently, studies about the geodesics of these models started to appear \cite{Basu-Sarkar-Sly17,Hammond20,Hammond-Sarkar20,Basu-Hoffman-Sly18,Basu-Ganguly-Hammon19,Bates-Ganguly-Hammond19b,Busani-Ferrari20,Dauvergne-Sarkar-Virag20,Corwin-Hammond-Hegde-Matetski21,Dauvergne-Virag21}. However, the explicit distributions of the geodesic are still not well understood. As far as we know, the only known related results are the distribution of the geodesic endpoint location \cite{MorenoFlores-Quastel-Remenik13,Schehr12,Baik-Liechty-Schehr12}. 

This is the first paper of an ongoing project to investigate the limiting distributions of the geodesics in one representative model, the directed last passage percolation with exponential weights, using the methods in integrable probability. We obtain the finite time one-point distribution of the geodesic location joint with the last passage times, see Theorem~\ref{thm:finite_time_geodesic1}. We are also able to find the large time limit of this distribution function, see Theorem~\ref{thm:limiting_geodesic}. We remark that our results are for the point-to-point geodesic. In the follow-up papers, we will consider the point-to-point and point-to-line geodesics using a different approach, and the multi-point distributions of the point-to-point geodesic.

The limiting distributions obtained in this paper are expected to be universal for all models in the KPZ universality class. See \cite{Dauvergne-Virag21} for more discussions related to the geodesics.

\bigskip
Below we introduce the main results of the paper. We start from a short description of the model.

The directed last passage percolation is defined on the lattice set $\intZ^2$. We assign to each integer site $\mathbf{p}\in\intZ^2$ an i.i.d. exponential random variable $w(\mathbf{p})$ with mean $1$. Assume that $\mathbf{p}$ and $\mathbf{q}$ are two lattice points satisfying $\mathbf{q}-\mathbf{p}\in\intZ_{\ge 0}^2$, i.e., the point $\mathbf{q}$ lies in the upper right direction of $\mathbf{p}$. The last passage time from  $\mathbf{p}$ to $\mathbf{q}$ is
\begin{equation}
	\label{eq:def_dlpp}
	L_{\mathbf{p}}(\mathbf{q}):=\max_{\pi} \sum_{\mathbf{r}\in\pi} w(\mathbf{r}),
\end{equation}
where the maximum is over all possible up/right lattice paths from $\mathbf{p}$ to $\mathbf{q}$.

Since the random variables $w(\mathbf{r})$'s are continuous, the last passage time $L_{\mathbf{p}}(\mathbf{q})$ in~\eqref{eq:def_dlpp} is almost surely obtained at a unique up/right lattice path, which we call the geodesic from $\mathbf{p}$ to $\mathbf{q}$ and denote $\gd_{\mathbf{p}}(\mathbf{q})$.

Note that the two neighboring sites $\mathbf{r}$ and $\mathbf{r}_+$ with $\mathbf{r}_+-\mathbf{r}\in\{(0,1), (1,0)\}$ are on the geodesic $\gd_{\mathbf{p}}(\mathbf{q})$, if and only if  the sites $\mathbf{p},\mathbf{r},\mathbf{r}_+,\mathbf{q}$ satisfy $\mathbf{r}-\mathbf{p},\mathbf{q}-\mathbf{r}_+\in\intZ_{\ge 0}^2$, and the last passage times $L_{\mathbf{p}}(\mathbf{r})$ and $L_{\mathbf{r}_+}(\mathbf{q})$ satisfy
\begin{equation}
	\label{eq:dlpp_split}
     L_{\mathbf{p}}(\mathbf{r})+L_{\mathbf{r}_+}(\mathbf{q})
	=L_{\mathbf{p}}(\mathbf{q}).
\end{equation}
Throughout this paper, we always use $\mathbf{r}_+$ to denote the lattice point following $\mathbf{r}$ in the geodesic.

\subsection{Finite time joint probabilities of geodesic location and last passage times}

The first main result of this paper is about the joint probability that a fixed pair of neighboring sites $\mathbf{r}$ and $\mathbf{r}'$ are on the geodesic $\gd_{\mathbf{p}}(\mathbf{q})$, and the two last passage times $L_{\mathbf{p}}(\mathbf{r})$, $L_{\mathbf{r}'}(\mathbf{q})$ lie in some intervals.
\begin{thm}
	\label{thm:finite_time_geodesic1}
	Set $\mathbf{p}=(1,1)$, $\mathbf{q}=(M,N)$. Suppose $\mathbf{r}=(m,n)$ and $\mathbf{r'}=(m+1,n)$, with $m,n$ satisfying $1\le m\le M-1$ and $1\le n\le N$. Assume that $t_1,t_2,\epsilon_1,\epsilon_2$ are all positive real numbers. We have
	\begin{equation}
		\label{eq:main_thm01}
		\prob
		    \left(
		       \mathbf{r},\mathbf{r}'\in\gd_{\mathbf{p}}(\mathbf{q}),
		       L_{\mathbf{p}}(\mathbf{r})\in[t_1,t_1+\epsilon_1], L_{\mathbf{r'}}(\mathbf{q})\in[t_2,t_2+\epsilon_2]
		     \right)
		=
		    \int_{t_1}^{t_1+\epsilon_1}
		    \int_{t_2}^{t_2+\epsilon_2}
		      p(s_1,s_2;m,n,M,N)
		    \mathrm{d}s_2\mathrm{d}s_1,
	\end{equation}
where the function $p(s_1,s_2;m,n,M,N)$ is defined in~\eqref{eq:def_finite_time_density}. Similarly, if $\mathbf{r}=(m,n)$ and $\mathbf{r'}=(m,n+1)$, with $m,n$ satisfying $1\le m\le M$ and $1\le n\le N-1$, the formula~\eqref{eq:main_thm01} holds with $p(s_1,s_2;m,n,M,N)$ replaced by $p(s_1,s_2;n,m,N,M)$.
\end{thm}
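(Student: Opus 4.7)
The plan is to reformulate the event in \eqref{eq:main_thm01} as a constraint on a collection of correlated last-passage-time pairs, and then exploit the independence of two sub-rectangles together with Schur-process integrability to derive the explicit density. First, I would observe that every up/right path from $\mathbf{p}$ to $\mathbf{q}$ contains a unique rightward step from column $m$ to column $m+1$, occurring at some row $k\in\{1,\ldots,N\}$. Splitting the path at that step gives the deterministic identity
\[
L_{\mathbf{p}}(\mathbf{q}) \;=\; \max_{1\le k\le N}\bigl[\,L_{\mathbf{p}}((m,k)) + L_{(m+1,k)}(\mathbf{q})\,\bigr],
\]
and by continuity of the weights the geodesic's crossing row is the a.s.\ unique maximizer. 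Setting $U_k:=L_{\mathbf{p}}((m,k))$ and $V_k:=L_{(m+1,k)}(\mathbf{q})$, the probability on the left of \eqref{eq:main_thm01} equals
\[
\prob\bigl(U_n\in[t_1,t_1+\epsilon_1],\; V_n\in[t_2,t_2+\epsilon_2],\; U_k+V_k\le U_n+V_n\text{ for all }k\bigr),
\]
with the $k=n$ constraint being automatic.

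The two families $\{U_k\}_{k=1}^N$ and $\{V_k\}_{k=1}^N$ are determined by weights in the disjoint rectangles $[1,m]\times[1,N]$ and $[m+1,M]\times[1,N]$ and are therefore independent. By the RSK correspondence / Schur-process description of exponential last passage percolation, each family admits an explicit joint density on its monotone support ($U_1\le\cdots\le U_N$ and, respectively, $V_1\ge\cdots\ge V_N$). Substituting these joint densities, fixing $U_n=s_1$ and $V_n=s_2$, and integrating out the remaining $U_k,V_k$ subject to $U_k+V_k\le s_1+s_2$ (for $k\ne n$) will produce $p(s_1,s_2;m,n,M,N)$.

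To handle the cross-family coupling constraints without breaking the factorization across the two rectangles, I would replace each indicator by a contour integral of the form
\[
\mathbf{1}(U_k+V_k\le s_1+s_2)\;=\;\frac{1}{2\pi\ii}\oint\frac{e^{z(s_1+s_2-U_k-V_k)}}{z}\,\mathrm{d}z
\]
on a contour with $\mathrm{Re}(z)>0$. This turns each constraint into a product $e^{-zU_k}e^{-zV_k}$, restoring independence of the $U$- and $V$-integrations and recasting the problem as a multi-contour integral that can then be collapsed---via Cauchy--Binet-type determinantal identities or Schur-function orthogonality---into the compact expression $p(s_1,s_2;m,n,M,N)$. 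The vertical-step case of the theorem follows from the horizontal case by the row/column exchange symmetry of the model, which accounts for the stated substitution $(m,n,M,N)\mapsto(n,m,N,M)$.

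The hard part will be this final reduction. Starting from the Schur-process joint densities and the proliferation of contour integrals just described, it takes care to perform the right determinantal manipulations and contour deformations in order to collapse everything into the closed-form density $p(s_1,s_2;m,n,M,N)$; this is precisely the step where the integrable structure specific to exponential last passage percolation plays an essential role.
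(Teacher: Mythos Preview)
Your opening decomposition---splitting at the column-$m$ to column-$(m+1)$ step, using independence of $\{U_k\}$ and $\{V_k\}$, and invoking a determinantal joint law for each---matches the paper's starting point exactly. The paper, however, proceeds differently in the mechanics. It works first with \emph{geometric} weights, using Johansson's column-to-column transition formula (Theorem~\ref{thm:Joh}), and only at the end passes to the exponential model via $q=1-\epsilon\to1$. The constraints $U_k+V_k\le U_n+V_n$ are not handled by your indicator--contour representation; instead the paper sums them directly: Lemma~\ref{lem:sum_Y} telescopes the $V$-sum column by column, and Lemma~\ref{lem:double_sum} (an identity from \cite{Baik-Liu17}) collapses the remaining ordered $U$-sum into a single $(N-1)\times(N-1)$ determinant. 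The $\rz$-contour in \eqref{eq:def_finite_time_density} arises from a Cauchy--Binet device that separates the $k<n$ and $k>n$ blocks, not from an integral representation of an indicator. Your trick would introduce $N-1$ auxiliary variables $z_k$, and it is not clear how those would collapse to a single $\rz$.

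The real gap is that the ``final reduction'' you defer is where essentially all of the paper's content lies, and the structures needed are not ones your sketch anticipates. After the first determinantal formula (Proposition~\ref{prop:p_step1}) one must (i) recognize that the determinant contains boundary terms $D_\rz$ encoding the trivial bounds $1\le k\le N$ and remove them by the constructive contour splitting $\int=\frac{-\rz}{1-\rz}\int_{\Sigma_\out}+\frac{1}{1-\rz}\int_{\Sigma_\inn}$ (Proposition~\ref{prop:p_step2}, Lemma~\ref{lem:remove_D}); (ii) evaluate a double sum over minors via the algebraic identity of Lemma~\ref{lem:Cauchy_generalization}, which produces the factor $H$; and (iii) convert the resulting size-$N$ Cauchy-type integral into the Fredholm-like expansion \eqref{eq:def_finite_time_density} by passing through a Bethe-roots discretization and the Cauchy-chain summation identities of Propositions~\ref{prop:Cauchy_summation_identity}--\ref{prop:Cauchy_summation_identity2}. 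Each of these is a substantial and specific technical step, and together they constitute the proof; your proposal does not yet contain a path through any of them.
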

\begin{rmk}
	\label{rmk:geodesic_probability}
	By setting $t_1=t_2=0$ and $\epsilon_1=\epsilon_2=\infty$, one can derive a  formula for the probability of $\mathbf{r},\mathbf{r}'\in\gd_{\mathbf{p}}(\mathbf{q})$ without the double integral with respect to the last passage times. See~\eqref{eq:finite_time_geodesic_distribution}. However, we are not able to directly perform the asymptotics analysis of this formula since the summand~\eqref{eq:2021_12_22_01} diverges when the parameters go to infinity under the KPZ scaling, the scaling of most interests to us. Moreover, it is not very surprising that the geodesic information is intertwisted with the last passage times. In fact, it has been proved that the geodesic $\gd_{\mathbf{p}}(\mathbf{q})$ becomes more rigid (or localized) around its expected location when the last passage time $L_{\mathbf{p}}(\mathbf{q})$ becomes very large \cite{Basu-Ganguly19,Liu21}. On the other hand, it is not concentrated around any deterministic curve when the last passage time becomes very small \cite{Basu-Ganguly-Sly19}.
\end{rmk}

The proof of Theorem~\ref{thm:finite_time_geodesic1} is given in Section~\ref{sec:proof_thm1}.

\subsection{The probability density function $p(s_1,s_2;m,n,M,N)$}

We first introduce three notations. Suppose $W=(w_1,\cdots,w_k)\in\complexC^k$ is a vector, we denote
\begin{equation}
	\label{eq:def_Delta1}
	\Delta(W):=\prod_{1\le i<j\le k}(w_j-w_i).
\end{equation}
If $W=(w_1,\cdots,w_k)\in\complexC^k$ and $W'=(w'_1,\cdots,w'_{k'})\in\complexC^{k'}$ are two vectors, we denote
\begin{equation}
	\label{eq:def_Delta2}
	\Delta(W;W'):= \prod_{i=1}^k\prod_{i'=1}^{k'}(w_i-w'_{i'}).
\end{equation}
Finally, if $f:\complexC\to\complexC$ is a function and $W=(w_1,\cdots,w_k)\in\complexC^k$ is a vector, or $W=\{w_1,\cdots,w_k\}$ with each element $w_i\in\complexC$, we write
\begin{equation}
	\label{eq:def_f_vector}
	f(W):=\prod_{i=1}^k f(w_i).
\end{equation}
Throughout this paper, we allow the empty product and define it to be $1$.

We need to introduce six contours. Suppose $\Sigma_{\LL,\out},\Sigma_\LL$ and $\Sigma_{\LL,\inn}$ are three nested contours, from outside to inside, enclosing $-1$ but not $0$. Similarly, $\Sigma_{\RR,\out},\Sigma_\RR$ and $\Sigma_{\RR,\inn}$ are three nested contours, from outside to inside, enclosing $0$ but not $-1$. We further assume that the contours enclosing $-1$ are disjoint from those enclosing $0$. In other words, the two outermost contours $\Sigma_{\LL,\out}$ and $\Sigma_{\RR,\out}$ do not intersect. All the closed contours throughout this paper are counterclockwise oriented. See Figure~\ref{fig:contours_finite_time} for an illustration of these contours.

	\begin{figure}[t]
	\centering
	\begin{tikzpicture}[scale=1]
		\draw [line width=0.4mm,lightgray] (-4.5,0)--(1.5,0) node [pos=1,right,black] {$\realR$};
		\draw [line width=0.4mm,lightgray] (0,-1.5)--(0,1.5) node [pos=1,above,black] {$\mathrm{i}\realR$};
		\fill (0,0) circle[radius=2.5pt] node [below,shift={(0pt,-3pt)}] {$0$};
		\fill (-3,0) circle[radius=2.5pt] node [below,shift={(0pt,-3pt)}] {$-1$};
		
%		\draw [draw=black] (-1.3,-1.3) rectangle (1.4,1.3);
%		\draw [draw=black] (-4.4,-1.5) rectangle (-1.5,1.5);		
		
		\draw[thick] (0,0) circle (20pt);
		\draw[thick] (0,0) circle (27pt);
		\draw[thick] (0,0) circle (34pt);
		
		\draw[thick] (-3,0) circle (20pt);
		\draw[thick] (-3,0) circle (27pt);
		\draw[thick] (-3,0) circle (34pt);
	\end{tikzpicture}
	\caption{Illustration of the contours: The three contours around $-1$ from outside to inside are $\Sigma_{\LL,\out},\Sigma_{\LL}$ and $\Sigma_{\LL,\inn}$ respectively, and the three contours around $0$ from outside to inside are $\Sigma_{\RR,\out},\Sigma_{\RR}$ and $\Sigma_{\RR,\inn}$ respectively.}\label{fig:contours_finite_time}
\end{figure}
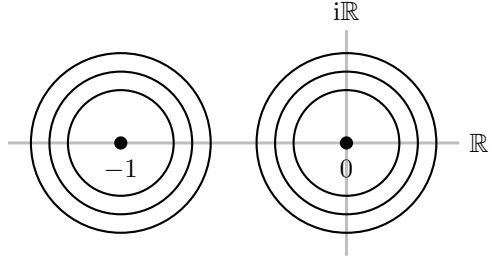	

We also introduce the notation of an integral along a small loop around a point $z_0$ in the complex plane
\begin{equation*}
	\oint_{z_0} f(z) \mathrm{d}z := \int_{|z-z_0|=\epsilon} f(z) \mathrm{d}z,
\end{equation*} 
where $f(z)$ is an arbitrary meromorphic function defined in a neighborhood of $z_0$ and $\epsilon$ is a sufficiently small constant.

\bigskip

The probability density function $p(s_1,s_2;m,n,M,N)$ is defined to be
\begin{equation}
	\label{eq:def_finite_time_density}
	p(s_1,s_2;m,n,M,N)
	:=\oint_0\ddbar{\mathrm{z}}{\mathrm{(1-z)^2}}\sum_{k_1,k_2\ge 1}
	     \frac{1}{(k_1!k_2!)^2}
	     T_{k_1,k_2}(\mathrm{z};s_1,s_2;m,n,M,N)
\end{equation}
with
\begin{equation}
	\label{eq:def_T}
	\begin{split}
		&T_{k_1,k_2}(\mathrm{z};s_1,s_2;m,n,M,N)\\
		&:=
		    \prod_{i_1=1}^{k_1}     
		        \left(  \frac{1}{1-\mathrm{z}} \int_{\Sigma_{\LL,\inn}} 	    
		               \ddbar{u^{(1)}_{i_1}}{}   
		               -\frac{\mathrm{z}}{1-\mathrm{z}} \int_{\Sigma_{\LL,\out}} \ddbar{u^{(1)}_{i_1}}{}
		        \right)
		        \left(  \frac{1}{1-\mathrm{z}}  \int_{\Sigma_{\RR,\inn}}  
		               \ddbar{v^{(1)}_{i_1}}{}
		              -\frac{\mathrm{z}}{1-\mathrm{z}}   \int_{\Sigma_{\RR,\out}}
		               \ddbar{v^{(1)}_{i_1}}{}
		        \right)
		\\
		&\quad \cdot 
		         \prod_{i_2=1}^{k_2}  
		            \int_{\Sigma_\LL} \ddbar{u^{(2)}_{i_2}}{} 
		            \int_{\Sigma_\RR} \ddbar{v^{(2)}_{i_2}}{} 
		       \cdot    
		           \left(1-\mathrm{z}\right)^{k_2}
		           \left(1-\frac{1}{\mathrm{z}}\right)^{k_1}
		       \cdot 
		           \frac{f_1(U^{(1)};s_1)f_2(U^{(2)};s_2)}{f_1(V^{(1)};s_1)f_2(V^{(2)};s_2)}
		       \cdot
		            H(U^{(1)},U^{(2)};V^{(1)},V^{(2)})\\
		&\quad \cdot 
		           \prod_{\ell=1}^2
		           \frac{\left( \Delta(U^{(\ell)}) \right)^2
		           	     \left( \Delta(V^{(\ell)}) \right)^2
	           	         }
		                {\left( \Delta(U^{(\ell)}; V^{(\ell)}) \right)^2}
		       \cdot 
		           \frac{\Delta(U^{(1)};V^{(2)})\Delta(V^{(1)};U^{(2)})}
		                {\Delta(U^{(1)};U^{(2)})\Delta(V^{(1)};V^{(2)})},
	\end{split}
\end{equation}
where the vectors $U^{(\ell)}=(u_1^{(\ell)},\cdots,u_{k_\ell}^{(\ell)})$ and $V^{(\ell)}=(v_1^{(\ell)},\cdots,v^{(\ell)}_{k_\ell})$ for $\ell\in\{1,2\}$, the functions $f_1,f_2$ are defined by
\begin{equation}
	\label{eq:def_f}
	\begin{split}
	f_1(w;s)&:=(w+1)^{-m} w^n e^{s w},\\
	f_2(w;s)&:= (w+1)^{-M+m} w^{N-n} e^{s w},
	\end{split}
\end{equation}
and the function $H$ is defined by
\begin{equation}
	\label{eq:def_H}
	\begin{split}
		&H(U^{(1)},U^{(2)};V^{(1)},V^{(2)})\\
		&:=\frac12 \left(
		                 \sum_{i_1=1}^{k_1} (u^{(1)}_{i_1}-v^{(1)}_{i_1})
		                -\sum_{i_2=1}^{k_2} (u^{(2)}_{i_2}-v^{(2)}_{i_2})
		           \right)^2
		           \left(
		                 1 + \prod_{i_1=1}^{k_1}
		                          \frac{v_{i_1}^{(1)}}
		                               {u_{i_1}^{(1)}}
		                     \prod_{i_2=1}^{k_2}
		                          \frac{u_{i_2}^{(2)}}
		                               {v_{i_2}^{(2)}}		                     
		           \right)\\
		&\quad +
	       \frac12 \left(
	                    -\sum_{i_1=1}^{k_1}    
	                            \left(
	                                (u^{(1)}_{i_1})^2-(v^{(1)}_{i_1})^2
	                            \right)
	                    +\sum_{i_2=1}^{k_2} 
	                            \left(
	                                (u^{(2)}_{i_2})^2-(v^{(2)}_{i_2})^2
	                            \right)
	               \right)
	               \left(
	                     1 - \prod_{i_1=1}^{k_1}
	                             \frac{v_{i_1}^{(1)}}
	                                  {u_{i_1}^{(1)}}
	                         \prod_{i_2=1}^{k_2}
	                             \frac{u_{i_2}^{(2)}}
	                                  {v_{i_2}^{(2)}}
	               \right).
	\end{split}
\end{equation}

\bigskip

We remark that the formula~\eqref{eq:def_finite_time_density} has a very similar structure with the two-point distribution formula of TASEP in \cite{Liu19} (with step initial condition), except that we have different $\rz$ factors in the integral, and that we have an extra factor $H(U^{(1)},U^{(2)};V^{(1)},V^{(2)})$. See equations (2.1) and (2.14) in \cite{Liu19}. It is not hard to prove that $T_{k_1,k_2}$ becomes zero when $k_1$ or $k_2$ becomes large, hence the formula~\eqref{eq:def_finite_time_density}  only involves finite many nonzero terms in the summation and is well defined. \footnote{In fact, we can view the integrand of~\eqref{eq:def_T} as a function of $V^{(1)}$ and $V^{(2)}$, which equals to the product of the following three terms: $\Delta(V^{(1)})\Delta(V^{(2)})$, a Cauchy-type factor $\Ch(V^{(1)};V^{(2)})=\Delta(V^{(1)})\Delta(V^{(2)}) /\Delta(V^{(1)};V^{(2)})$ (see the definition in~\eqref{eq:2021_12_23_02}), and some function which is meromorphic for each $v_{i_\ell}^{(\ell)}$ with a possible pole at $0$ but the degree of this pole is at most $\max\{n,N-n+1\}$. Note that expanding the first term $\Delta(V^{(1)})\Delta(V^{(2)})$ gives a sum of terms $\prod_{1\le \ell_1\le k_1}(v_{\sigma(\ell_1)}^{(1)})^{k_1-\ell_1}\prod_{1\le \ell_2\le k_2}(v_{\pi(\ell_2)}^{(2)})^{k_2-\ell_2}$ over permutations $\sigma\in S_{k_1}$ and $\pi\in S_{k_2}$, here $S_k$ denotes the permutation group of $\{1,2,\cdots,k\}$. If $k_1$ is large enough (the case when $k_2$ is large is similar), for example if $k_1>N$, the integrand is analytic for $v_{\sigma(1)}^{(1)}$ at $0$ by checking the degrees. So when we integrate $v_{\sigma(1)}^{(1)}$, the only possible nontrivial contribution is from the residues $v_{\sigma(1)}^{(1)}=v_{j}^{(2)}$ if $v_j^{(2)}$ lies inside the contour of $v_{\sigma(1)}^{(1)}$ due to the Cauchy-type factor. However, if we further integrate $v_{j}^{(2)}$ we find each residue contribution is also zero by checking the degree of $v_j^{(2)}$ which is $k_1-1-n-(N-n+1)=k_1-N>0$. We remark that the proof does not rely on the explicit formula of $H$ or the variable $\rz$, and it is similar to the argument for the two-point distribution formula of TASEP (see Remark 2.8 of \cite{Liu19}) where they do not have the factor $H$.}
%The proof of $T_{k_1,k_2}=0$ for large $k_1$ or $k_2$ is similar to that for the two-point distribution formula of TASEP (see Remark 2.9 of \cite{Liu19}), hence we omit the proof here.

Finally, by exchanging the integral and summations, and using the identity $\int_0^\infty \frac{f_\ell(U^{(\ell)};s_\ell)}{f_\ell(V^{(\ell)};s_\ell)}\mathrm{d}s_\ell =  \frac{f_\ell(U^{(\ell)};0)}{f_\ell(V^{(\ell)};0)}\cdot \frac{1}{\sum_{i_\ell=1}^{k_\ell}(v_{i_\ell}^{(\ell)}-u_{i_\ell}^{(\ell)})}$ since $\mathrm{Re} (v_{i_\ell}^{(\ell)}-u_{i_{\ell}}^{(\ell)})<0$ due to the locations of the contours, we obtain
\begin{equation}
	\label{eq:finite_time_geodesic_distribution}
	\begin{split}
	\prob
	\left(
	\mathbf{r},\mathbf{r}'\in\gd_{\mathbf{p}}(\mathbf{q})
	\right) &= \int_0^\infty\int_0^\infty p(s_1,s_2;m,n,M,N)\mathrm{d}s_1\mathrm{d}s_2 \\
	&= \oint_0\ddbar{\mathrm{z}}{\mathrm{(1-z)^2}}\sum_{k_1,k_2\ge 1}
	\frac{1}{(k_1!k_2!)^2}
	\mathcal{T}_{k_1,k_2}(\mathrm{z};m,n,M,N),
	\end{split}
\end{equation}
where
\begin{equation}
	\label{eq:2021_12_22_01}
	\begin{split}
		&\mathcal{T}_{k_1,k_2}(\mathrm{z};m,n,M,N)\\
		&:=
		\prod_{i_1=1}^{k_1}     
		\left(  \frac{1}{1-\mathrm{z}} \int_{\Sigma_{\LL,\inn}} 	    
		\ddbar{u^{(1)}_{i_1}}{}   
		-\frac{\mathrm{z}}{1-\mathrm{z}} \int_{\Sigma_{\LL,\out}} \ddbar{u^{(1)}_{i_1}}{}
		\right)
		\left(  \frac{1}{1-\mathrm{z}}  \int_{\Sigma_{\RR,\inn}}  
		\ddbar{v^{(1)}_{i_1}}{}
		-\frac{\mathrm{z}}{1-\mathrm{z}}   \int_{\Sigma_{\RR,\out}}
		\ddbar{v^{(1)}_{i_1}}{}
		\right)
		\\
		&\quad \cdot 
		\prod_{i_2=1}^{k_2}  
		\int_{\Sigma_\LL} \ddbar{u^{(2)}_{i_2}}{} 
		\int_{\Sigma_\RR} \ddbar{v^{(2)}_{i_2}}{} 
		\cdot    
		\left(1-\mathrm{z}\right)^{k_2}
		\left(1-\frac{1}{\mathrm{z}}\right)^{k_1}
		\cdot 
		\frac{f_1(U^{(1)};0)f_2(U^{(2)};0)}{f_1(V^{(1)};0)f_2(V^{(2)};0)}
		\cdot
		\frac{1}{\prod_{\ell=1}^2\sum_{i_\ell=1}^{k_\ell} (u_{i_\ell}^{(\ell)}-v_{i_\ell}^{(\ell)})}
		\\
		&\quad \cdot
		H(U^{(1)},U^{(2)};V^{(1)},V^{(2)})\cdot 
		\prod_{\ell=1}^2
		\frac{\left( \Delta(U^{(\ell)}) \right)^2
			\left( \Delta(V^{(\ell)}) \right)^2
		}
		{\left( \Delta(U^{(\ell)}; V^{(\ell)}) \right)^2}
		\cdot 
		\frac{\Delta(U^{(1)};V^{(2)})\Delta(V^{(1)};U^{(2)})}
		{\Delta(U^{(1)};U^{(2)})\Delta(V^{(1)};V^{(2)})}.
	\end{split}
\end{equation}

\subsection{Limiting joint distribution of geodesic location and last passage times}

For any two lattice points $\mathbf{p}=(\mathrm{p}_1,\mathrm{p}_2)$ and $\mathbf{q}=(\mathrm{q}_1,\mathrm{q}_2)$ satisfying $\mathrm{p}_1\le \mathrm{q}_1$ and $\mathrm{p}_2\le \mathrm{q}_2$, we define
\begin{equation}
	d(\mathbf{p},\mathbf{q})
	  :=\left(
	           \sqrt{\mathrm{q}_1-\mathrm{p}_1}
	          +\sqrt{\mathrm{q}_2-\mathrm{p}_2}
	    \right)^2.
\end{equation}

We say a geodesic $\gd_{\mathbf{p}}(\mathbf{q})$ exits a set $A$ at a point $\mathbf{r}$, if and only if the geodesic intersects $A$ and $\mathbf{r}$ is the last point of the intersection, i.e., $\mathbf{r}\in\gd_{\mathbf{p}}(\mathbf{q})\cap A$ and $\mathbf{r}_+\in\gd_{\mathbf{p}}(\mathbf{q})\setminus A$.

\begin{thm}
	\label{thm:limiting_geodesic}
	Suppose $\alpha>0$, $\gamma\in(0,1)$ are fixed constants. Assume $x_1,x_2,x'_1,x'_2$ are four real numbers satisfying $x_1>x'_1$ and $x_2<x'_2$. Let
	\begin{equation}
		\begin{split}
			M&= [\alpha N],\\
			m&=[\gamma\alpha N + x_1 \alpha^{2/3}(1+\sqrt{\alpha})^{2/3} N^{2/3}],\\
			n&=[\gamma N + x_2 \alpha^{-1/3}(1+\sqrt{\alpha})^{2/3} N^{2/3}],\\
			m'&=[\gamma\alpha N + x'_1 \alpha^{2/3}(1+\sqrt{\alpha})^{2/3} N^{2/3}],\\
			n'&=[\gamma N + x'_2 \alpha^{-1/3}(1+\sqrt{\alpha})^{2/3} N^{2/3}],
		\end{split}
	\end{equation}	
where $[x]$ denotes the largest integer which is smaller than or equal to $x$.
	 Suppose $\pi$ is an up/left lattice path from $(m,n)$ to $(m',n')$. Then
	 \begin{equation}
	 	\label{eq:prelimit_geodesic}
	 	\lim_{N\to\infty} \prob
	 	\left(
	 	\begin{array}{l}
	 		\gd_{(1,1)}{(M,N)}
	 		 \text{ intersects $\pi$,} \\
	 		\mbox{and }L_{(1,1)}(\mathbf{p})\ge d((1,1),\mathbf{p}) + \mathrm{t}_1\cdot \alpha^{-1/6}(1+\sqrt{\alpha})^{4/3} N^{1/3}, \\
	 		\mbox{and } L_{\mathbf{p}_+}(M,N)\ge d(\mathbf{p}_+,(M,N)) + \mathrm{t}_2\cdot \alpha^{-1/6}(1+\sqrt{\alpha})^{4/3} N^{1/3},\\
	 		\mbox{where $\mathbf{p}$ denotes the exit point of $\gd_{(1,1)}{(M,N)}$ on $\pi$}\\
	   \end{array}
	 	\right)
	 \end{equation}
 exists and is independent of the choice of $\pi$. %Here $\mathbf{p}_+$ denotes the next lattice point after $\mathbf{p}$ on $\gd_{(1,1)}^{(M,N)}$.
  The limit equals to
 \begin{equation}
 	\label{eq:limit_geodesic_distribution}
 	\int_{x_2-x_1}^{x'_2-x'_1} \int_{\mathrm{t}_1}^\infty\int_{\mathrm{t}_2}^\infty
 	\limp(\mathrm{s}_1,\mathrm{s}_2,\mathrm{x};\gamma)
 	\mathrm{d}\mathrm{s}_2\mathrm{d}\mathrm{s}_1\mathrm{d}\mathrm{x},
  \end{equation}
where the joint  probability density function $\limp(\mathrm{s}_1,\mathrm{s}_2,\mathrm{x};\gamma)$ is defined in~\eqref{eq:def_limiting_density}.
\end{thm}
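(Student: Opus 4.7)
The plan is to decompose the event in \eqref{eq:prelimit_geodesic} by the exit point $\mathbf{p}$ of $\gd_{(1,1)}(M,N)$ on $\pi$, apply Theorem~\ref{thm:finite_time_geodesic1} to each term, and then carry out a steep-descent analysis of the resulting contour integrals under the KPZ scaling. Since the geodesic is monotone up/right and $\pi$ is monotone up/left, whenever they meet the exit point is almost surely unique. Partitioning the event and invoking Theorem~\ref{thm:finite_time_geodesic1} rewrites the probability in \eqref{eq:prelimit_geodesic} as
\begin{equation*}
\sum_{\mathbf{r}\in\pi}\int_{D_1(\mathbf{r})}^\infty\int_{D_2(\mathbf{r})}^\infty p^{(\mathbf{r})}(s_1,s_2)\,\mathrm{d}s_2\,\mathrm{d}s_1,
\end{equation*}
where $D_i(\mathbf{r})$ are the thresholds of \eqref{eq:prelimit_geodesic} evaluated at $\mathbf{r}$ and $p^{(\mathbf{r})}$ is either $p(\cdot,\cdot;m_{\mathbf{r}},n_{\mathbf{r}},M,N)$ or $p(\cdot,\cdot;n_{\mathbf{r}},m_{\mathbf{r}},N,M)$ according to whether the geodesic step leaving $\pi$ at $\mathbf{r}$ goes right or up.

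The next step is the saddle-point analysis of \eqref{eq:def_T}. Substituting $s_i=d_i(\mathbf{r})+\mathrm{s}_i\cdot\alpha^{-1/6}(1+\sqrt{\alpha})^{4/3}N^{1/3}$ with $d_1(\mathbf{r})=d((1,1),\mathbf{r})$ and $d_2(\mathbf{r})=d(\mathbf{r}_+,(M,N))$ and solving $(\log f_j)'(w)=0$ at leading order, one checks that both $f_1$ and $f_2$ possess the common double critical point $w_c=-1/(1+\sqrt{\alpha})$, independent of $\gamma$. I would deform the six contours $\Sigma_{\LL,\out},\Sigma_\LL,\Sigma_{\LL,\inn},\Sigma_{\RR,\out},\Sigma_\RR,\Sigma_{\RR,\inn}$ to pass through $w_c$ along the standard steep-descent directions, introduce local coordinates $u^{(\ell)}_i=w_c+c_L N^{-1/3}\zeta_i^{(\ell)}$, $v^{(\ell)}_i=w_c+c_R N^{-1/3}\eta_i^{(\ell)}$ with suitable constants $c_L,c_R$, and Taylor-expand $\log f_1,\log f_2$ to cubic order. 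The result is an Airy-type integrand in $(\zeta,\eta)$; the factor $H$ in \eqref{eq:def_H} survives with a nontrivial polynomial limit in these variables, and the small $\mathrm{z}$-contour around $0$ is essentially unaffected, much as in the analogous two-point analysis of \cite{Liu19}. Combining, the density satisfies $p^{(\mathbf{r})}\sim S^{-2}\cdot C(\mathbf{r})\cdot\limp(\mathrm{s}_1,\mathrm{s}_2,\mathrm{x};\gamma)$, with $S=\alpha^{-1/6}(1+\sqrt{\alpha})^{4/3}N^{1/3}$, $\mathrm{x}=\xi_2-\xi_1$ the scaled exit coordinate (where $\xi_1,\xi_2$ are the KPZ-scaled versions of $m_{\mathbf{r}},n_{\mathbf{r}}$), and $C(\mathbf{r})$ a direction-dependent prefactor.

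Finally, the Riemann sum over $\mathbf{r}\in\pi$ converts to an integral in $\mathrm{x}$. Each left-step of $\pi$ advances $\mathrm{x}$ by $\Delta_L=1/(\alpha^{2/3}(1+\sqrt{\alpha})^{2/3}N^{2/3})$ and each up-step by $\Delta_U=\alpha^{1/3}/((1+\sqrt{\alpha})^{2/3}N^{2/3})$. Path-independence will follow once one verifies that $C(\mathbf{r})=\Delta_L$ when the geodesic exits horizontally and $C(\mathbf{r})=\Delta_U$ when it exits vertically, so that contributions of the two step-types assemble into the same Riemann sum regardless of the shape of $\pi$. This identity is a KPZ-level manifestation of the symmetry $(m,n,M,N)\leftrightarrow(n,m,N,M)$ already built into Theorem~\ref{thm:finite_time_geodesic1}; note that the KPZ scaling $S$ itself is invariant under $\alpha\leftrightarrow 1/\alpha$. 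The sum then converges to $\int_{x_2-x_1}^{x_2'-x_1'}\int_{\mathrm{t}_1}^\infty\int_{\mathrm{t}_2}^\infty\limp(\mathrm{s}_1,\mathrm{s}_2,\mathrm{x};\gamma)\,\mathrm{d}\mathrm{s}_2\,\mathrm{d}\mathrm{s}_1\,\mathrm{d}\mathrm{x}$, which is \eqref{eq:limit_geodesic_distribution}.

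The hardest part will be making the steep-descent analysis fully rigorous: establishing uniform tail bounds on the sum over $k_1,k_2\ge 1$, deforming the six nested contours through the common critical point $w_c$ while keeping the integrand controllable, and tracking the delicate cancellations among the Vandermonde, Cauchy-type, and $H$ factors near $w_c$. The framework of \cite{Liu19} absorbs much of this bookkeeping, but the extra factor $H$ introduces genuinely new terms (and a new polynomial limit at $w_c$) with no analogue in the two-point KPZ formula, and the direction-dependent prefactor $C(\mathbf{r})$ must be computed exactly in order to confirm path-independence.
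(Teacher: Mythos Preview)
Your proposal is essentially the paper's proof: decompose by exit point, apply Theorem~\ref{thm:finite_time_geodesic1}, perform steep descent around $w_c=-1/(1+\sqrt\alpha)$ (packaged in the paper as Proposition~\ref{prop:asymptotics_density}, with the uniform tail bounds and contour-splitting you anticipate handled by Lemmas~\ref{lm:hat_rt_uniform_bounds}--\ref{lm:hat_T_err}), and read the sum over exit points as a Riemann sum in $\mathrm{x}$. One bookkeeping correction: the prefactors come out the other way round---a right (horizontal) geodesic exit yields $C=\alpha^{1/3}(1+\sqrt\alpha)^{-2/3}N^{-2/3}=\Delta_U$ and an up (vertical) exit yields $C=\alpha^{-2/3}(1+\sqrt\alpha)^{-2/3}N^{-2/3}=\Delta_L$; this is precisely the pairing that makes the Riemann sum assemble, since right exits occur at vertical points of $\pi$ (in bijection with its up-steps, each worth $\Delta_U$ in $\mathrm{x}$) and up exits at horizontal points (in bijection with its left-steps, each worth $\Delta_L$).
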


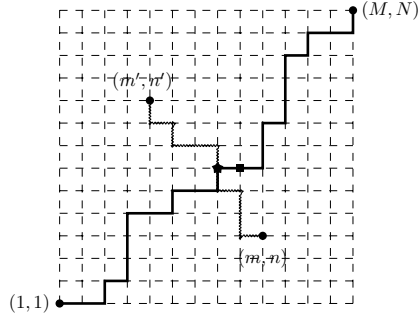
\begin{figure}[h]
	\centering\scalebox{1}{	\begin{tikzpicture}
			\draw[step=0.3cm, dashed,line width= 0.1pt] (0,0) grid (3.9,3.9);
			\draw[line width= 1pt] (0,0) -- (0.6,0) -- (0.6,0.3) -- (0.9,0.3) -- (0.9,1.2) -- (1.5,1.2) -- (1.5,1.5) --(2.1,1.5)--(2.1,1.8);
			\draw[line width= 1pt] (2.1,1.8)--(2.7,1.8) -- (2.7,2.4) -- (3,2.4) -- (3,3.3) -- (3.3, 3.3) -- (3.3, 3.6) -- (3.9,3.6) -- (3.9,3.9);
			%\node at (0,0)[circle,fill,color=red!40!yellow,inner sep=1.5pt]{};
			\node at (2.1,1.8)[star,fill,inner sep=1.1pt]{};
			\node at (2.4,1.8)[rectangle,fill,inner sep=1.4pt]{};								
			%\node at (3.9,3.9)[circle,fill,color=red!40!gray,inner sep=1.5pt]{};
			\draw[decoration={aspect=0.03, segment length=0.4mm, amplitude=0.12mm,coil},decorate] (1.2,2.7)--(1.2,2.4)--(1.5,2.4)--(1.5,2.1)--(2.1,2.1)--(2.1,1.5)--(2.4,1.5)--(2.4,0.9)--(2.7,0.9);
			\node at (2.7,0.9)[circle,fill,inner sep=1.1pt]{};
			\node at (0,0)[circle,fill,inner sep=1.1pt]{};
			\node at (3.9,3.9)[circle,fill,inner sep=1.1pt]{};
			\node at (1.2,2.7)[circle,fill,inner sep=1.1pt]{};
			  \node[scale=0.7] at (2.7,0.6)  {$(m,n)$};
			  \node[scale=0.7] at (1.1,2.95)  {$(m',n')$};
			  \node[scale=0.7] at (-0.4,0)  {$(1,1)$};
			  \node[scale=0.7] at (4.4,3.9)  {$(M,N)$};
	\end{tikzpicture}}
\caption{The thick path denotes the geodesic $\gd_{(1,1)}(M,N)$. The spring-shaped lattice path denotes $\pi$. The star-shaped point is the exit point of $\gd_{(1,1)}(M,N)$ on $\pi$, and the square-shaped point is the next point on $\gd_{(1,1)}(M,N)$ after the exit point.}\label{fig:limit_theorem}
\end{figure}

See Figure~\ref{fig:limit_theorem} for an illustration. The proof of Theorem~\ref{thm:limiting_geodesic} is provided in Section~\ref{sec:proof_thm2}.

We expect that the geodesic is around a straight line from $(1,1)$ to $(M,N)$. The line is of slope $\alpha^{-1}\approx N/M$. Then $x_2-x_1$ and $x'_2-x'_1$ can be viewed as (after appropriate scaling) the shifts of moving $(m,n)$ and $(m',n')$ to the line. Similarly, in the density function $\limp(\mathrm{s}_1,\mathrm{s}_2,\mathrm{x};\gamma)$, $\mathrm{x}$ can be viewed as the shift of moving the exit point $\mathbf{p}$ to the line. See Figure~\ref{fig:limit_theorem_proof} at the beginning of Section~\ref{sec:proof_thm2} for an illustration.

It might look surprising at a first glance that the limiting distribution is independent of $\pi$, but only depends on the locations of the endpoints. Here we provide an intuitive explanation.  Suppose we have a different up/left  lattice path $\pi'$ from $(m,n)$ to $(m',n')$. For any point $\mathbf{q}\in\pi$, we can find a unique point $\mathbf{q'}\in\pi'$ such that $\mathbf{q}-\mathbf{q}'\in\{(\alpha y,y):y\in\realR\}$. Note that the distance between $\mathbf{q}$ and $\mathbf{q}'$ is at most of order $O(N^{2/3})\ll o(N)$. By the uniform slow decorrelation of the directed last passage percolation \cite{Corwin-Ferrari-Peche12,Corwin-Liu-Wang16}, $N^{-1/3}({L_{(1,1)}(\mathbf{q})-d((1,1),\mathbf{q})})-N^{-1/3}({L_{(1,1)}(\mathbf{q}')-d((1,1),\mathbf{q}')})$ converges to $0$ in probability as $N\to\infty$. Moreover, with appropriate scaling, the limiting process of the last passage times from $(1,1)$ (and from $(M,N)$ similarly) to the points of $\pi$ has the same law as that  to the points of $\pi'$. Therefore we expect the limit of~\eqref{eq:prelimit_geodesic} is independent of $\pi$. This probabilistic argument is heuristic but it might be possible to make it rigorous. In this paper, we will use an analytical way to show this independence instead. See the argument after Proposition~\ref{prop:asymptotics_density} in Section~\ref{sec:proof_thm2}.

Note that the geodesic $\gd_{(1,1)}{(M,N)}$ intersects a rectangle with vertices $(m,n), (m,n'), (m',n')$ and $(m',n)$ if and only if $\gd_{(1,1)}{(M,N)}$ intersects a lattice path from  $(m,n)$ to $(m',n')$.  Thus by setting $\mathrm{t}_1,\mathrm{t}_2\to-\infty$ we immediately have
	\begin{equation}
		\begin{split}
		&\lim_{N\to\infty} \prob
		\left(\gd_{(1,1)}{(M,N)} \text{ intersects the rectangle with vertices $(m,n), (m,n'), (m',n')$ and $(m',n)$}\right)\\
		&=\int_{x_2-x_1}^{x'_2-x'_1} \int_{-\infty}^\infty\int_{-\infty}^\infty
		\limp(\mathrm{s}_1,\mathrm{s}_2,\mathrm{x};\gamma)
		\mathrm{d}\mathrm{s}_2\mathrm{d}\mathrm{s}_1\mathrm{d}\mathrm{x}.
		\end{split}
	\end{equation}

\bigskip

Now we discuss an application of Theorem~\ref{thm:limiting_geodesic}. 

\begin{cor}
	\label{cor:argmax_Airys}
	Let $\mathcal{A}^{(1)}$ and $\mathcal{A}^{(2)}$ be two independent Airy$_2$ processes. Denote the parabolic Airy$_2$ processes $\mathcal{\hat A}^{(\ell)}(\mathrm{x})=\mathcal{A}^{(\ell)}(\mathrm{x})-\mathrm{x}^2$, $\ell=1,2$. Suppose $\gamma\in(0,1)$ is a fixed constant. Denote
	\begin{equation*}
		\mathcal{T}=\mathrm{argmax}_{\mathrm{x}}\left(\gamma^{1/3}\mathcal{\hat A}^{(1)}\left(\frac{\mathrm{x}}{2\gamma^{2/3}}\right)+(1-\gamma)^{1/3}\mathcal{\hat A}^{(2)}\left(\frac{\mathrm{x}}{2(1-\gamma)^{2/3}}\right)\right).
	\end{equation*}
Then $\limp(\mathrm{s}_1,\mathrm{s}_2,\mathrm{x};\gamma)$ is the joint probability density function of $\gamma^{1/3}\mathcal{ A}^{(1)}\left(\frac{\mathcal{T}}{2\gamma^{2/3}}\right),(1-\gamma)^{1/3}\mathcal{A}^{(2)}\left(\frac{\mathcal{T}}{2(1-\gamma)^{2/3}}\right)$ and $\mathcal{T}$.
\end{cor}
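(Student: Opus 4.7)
The plan is to combine Theorem~\ref{thm:limiting_geodesic} with two classical ingredients: the splitting property of the directed last passage geodesic and the KPZ-scale convergence of the last passage time field to the Airy$_2$ process. Since Theorem~\ref{thm:limiting_geodesic} asserts that the limiting joint distribution is independent of the choice of $\pi$, we may pick any convenient up/left path.

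First, by the characterization~\eqref{eq:dlpp_split}, for any up/left path $\pi$ from $(m,n)$ to $(m',n')$, the exit point $\mathbf{p}$ of $\gd_{(1,1)}(M,N)$ on $\pi$ is the unique lattice point in $\pi$ that maximizes $L_{(1,1)}(\mathbf{p})+L_{\mathbf{p}_+}(M,N)$, and the value of this maximum equals $L_{(1,1)}(M,N)$. Since $\pi$ separates $(1,1)$ from $(M,N)$, the random fields $\mathbf{p}\mapsto L_{(1,1)}(\mathbf{p})$ and $\mathbf{p}\mapsto L_{\mathbf{p}_+}(M,N)$ depend on disjoint families of weights and are therefore independent.

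Second, parametrize $\mathbf{p}\in\pi$ by the scaled anti-diagonal coordinate $\mathrm{x}=y_2-y_1$, where $(y_1,y_2)$ are the KPZ-scaled offsets of $\mathbf{p}$ from the expected geodesic line (cf.\ the discussion around Figure~\ref{fig:limit_theorem_proof}). The standard Airy$_2$ convergence for exponential LPP~\cite{Johansson03,Borodin-Ferrari-Prahofer-Sasamoto07}, combined with slow decorrelation~\cite{Corwin-Ferrari-Peche12,Corwin-Liu-Wang16}, yields joint weak convergence on compact $\mathrm{x}$-intervals
\begin{equation*}
	\begin{split}
		\alpha^{1/6}(1+\sqrt{\alpha})^{-4/3}N^{-1/3}\bigl(L_{(1,1)}(\mathbf{p}(\mathrm{x}))-d((1,1),\mathbf{p}(\mathrm{x}))\bigr)
		  &\Rightarrow \gamma^{1/3}\mathcal{A}^{(1)}\!\left(\tfrac{\mathrm{x}}{2\gamma^{2/3}}\right),\\
		\alpha^{1/6}(1+\sqrt{\alpha})^{-4/3}N^{-1/3}\bigl(L_{\mathbf{p}_+(\mathrm{x})}(M,N)-d(\mathbf{p}_+(\mathrm{x}),(M,N))\bigr)
		  &\Rightarrow (1-\gamma)^{1/3}\mathcal{A}^{(2)}\!\left(\tfrac{\mathrm{x}}{2(1-\gamma)^{2/3}}\right),
	\end{split}
\end{equation*}
which are independent by the previous paragraph. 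A Taylor expansion of $d((1,1),\mathbf{p}(\mathrm{x}))+d(\mathbf{p}_+(\mathrm{x}),(M,N))$ around its maximum at $\mathrm{x}=0$ produces, under the same scaling, the parabolic correction $-\mathrm{x}^2/(4\gamma(1-\gamma))$, precisely matching the two hatted terms $-(\mathrm{x}/(2\gamma^{2/3}))^2$ and $-(\mathrm{x}/(2(1-\gamma)^{2/3}))^2$ that convert $\mathcal{A}^{(\ell)}$ into $\mathcal{\hat A}^{(\ell)}$.

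Third, combine the two steps: the scaled exit coordinate is the argmax over $\mathbf{p}\in\pi$ of $L_{(1,1)}(\mathbf{p})+L_{\mathbf{p}_+}(M,N)$, which by the previous paragraph converges to $\mathcal{T}$, and simultaneously the two scaled centered last passage times evaluated at the exit point converge to $\gamma^{1/3}\mathcal{A}^{(1)}(\mathcal{T}/(2\gamma^{2/3}))$ and $(1-\gamma)^{1/3}\mathcal{A}^{(2)}(\mathcal{T}/(2(1-\gamma)^{2/3}))$. Sending $x_1,x_2\to-\infty$ and $x'_1,x'_2\to\infty$ (which is justified because the parabolic decay a.s.\ confines $\mathcal{T}$ to a compact set) and comparing with~\eqref{eq:limit_geodesic_distribution} identifies $\limp(\mathrm{s}_1,\mathrm{s}_2,\mathrm{x};\gamma)$ with the asserted joint density. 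The main obstacle will be the argmax continuous-mapping step: it requires joint process convergence on arbitrarily large compact $\mathrm{x}$-intervals, almost-sure uniqueness of the maximizer in the limit (which follows from standard properties of independent parabolic Airy$_2$ processes, see e.g.~\cite{MorenoFlores-Quastel-Remenik13}), and uniform tail estimates ensuring the pre-limit argmax stays confined as $N\to\infty$ — each piece is available in the literature, but assembling them coherently to get the joint convergence of (exit location, two last passage times) is the technical heart of the argument.
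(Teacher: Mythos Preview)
Your approach is essentially the same as the paper's. The paper specializes to $\alpha=1$ and takes $\pi$ to be the anti-diagonal line $\{x+y=2\gamma N\}$, then invokes Johansson's Airy$_2$ convergence~\cite{Johansson03} with tightness from~\cite{Ferrari-Occelli18}, the exponential tail bound on the exit location from~\cite{Baik-Liu16b}, and uniqueness of the limiting argmax via geodesic uniqueness in the directed landscape~\cite{Dauvergne-Virag21}, before applying Theorem~\ref{thm:limiting_geodesic} and expanding $d$ to recover the parabola. Your outline hits the same beats; the only cosmetic differences are that you keep general $\alpha$ (and therefore invoke slow decorrelation, which is unnecessary on the straight anti-diagonal when $\alpha=1$) and that you send the window endpoints to $\pm\infty$ at the end, whereas the paper simply reads off the density from the identity~\eqref{eq:limit_geodesic_distribution} on every finite box $[x_2-x_1,x'_2-x'_1]\times[\mathrm t_1,\infty)\times[\mathrm t_2,\infty)$, which already determines the joint law without any extra limiting step.
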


\begin{proof}
	Denote $\pi$ the line $\{(x,y):x+y=2\gamma N\}$.
	It is known \cite{Johansson03} that the processes of the last passage times from $(1,1)$ (or $(N,N)$) to the points on $\pi$ after appropriate scaling converge to two independent parabolic Airy$_2$ processes as $N\to\infty$. More explicitly, for any constant $K$,
	\begin{equation}
		\label{eq:2021_12_28_01}
		\frac{L_{(1,1)}(\gamma N - 2^{-1/3}\mathrm{x} N^{2/3},\gamma N + 2^{-1/3}\mathrm{x} N^{2/3})-4\gamma N}{2^{4/3}N^{1/3}} \to \gamma^{1/3}\mathcal{\hat A}^{(1)}\left(\frac{\mathrm{x}}{2\gamma^{2/3}}\right),\quad |\mathrm{x}|\le K
	\end{equation}
and
\begin{equation}
	\label{eq:2021_12_28_02}
	\frac{L_{(\gamma N - 2^{-1/3}\mathrm{x} N^{2/3},\gamma N + 2^{-1/3}\mathrm{x} N^{2/3})}(N,N)-4(1-\gamma) N}{2^{4/3}N^{1/3}} \to (1-\gamma)^{1/3}\mathcal{\hat A}^{(2)}\left(\frac{\mathrm{x}}{2(1-\gamma^{2/3})}\right),\quad |\mathrm{x}|\le K
\end{equation}
as $N\to\infty$. Both processes are tight in the space of continuous functions on $[-K,K]$ (see  \cite[Theorem 2.3]{Ferrari-Occelli18} for example). Note that the geodesic passes through a point $\mathbf{q}$ on the line $\pi$ if and only if $L_{(1,1)}(\mathbf{q})+L_{\mathbf{q}}(N,N)$ reaches the maximum. And the probability that this intersection point $\mathbf{q}$ lies outside of $\{(\gamma N - 2^{-1/3}\mathrm{x} N^{2/3},\gamma N + 2^{-1/3}\mathrm{x} N^{2/3}): |\mathrm{x}|\le K\}$ decays exponentially as $N\to\infty$ and $K$ becomes large (see \cite[Proposition 2.1]{Baik-Liu16b} for example). Also note that the argmax $\mathcal{T}$ is unique since it represents the geodesic location in the limiting directed landscape and the geodesic is unique (see \cite{Dauvergne-Virag21}). Using the above facts, we conclude that the location of the intersection of $\gd_{(1,1)}(N,N)$ and $\pi$, the argmax of the left hand side of~\eqref{eq:2021_12_28_01}$+$\eqref{eq:2021_12_28_02}, converges to $\mathcal{T}$. Now we apply Theorem~\ref{thm:limiting_geodesic} with $\alpha=1$ and use the facts that 
\begin{equation*}
	d_{(1,1)}(\gamma N - 2^{-1/3}\mathrm{x} N^{2/3},\gamma N + 2^{-1/3}\mathrm{x} N^{2/3})=4\gamma N+\frac{\mathrm{x}^2}{2^{2/3}\gamma}N^{1/3}+o(1)
\end{equation*}
and
\begin{equation*}
	d_{(\gamma N - 2^{-1/3}\mathrm{x} N^{2/3},\gamma N + 2^{-1/3}\mathrm{x} N^{2/3})}(N,N)=4(1-\gamma) N+\frac{\mathrm{x}^2}{2^{2/3}(1-\gamma)}N^{1/3}+o(1).
\end{equation*}
Corollary~\ref{cor:argmax_Airys} follows immediately.
\end{proof}
The explicit distribution of $\mathcal{T}$ was an interesting open problem in the community before, see \cite[Problem 14.4(a)]{Dauvergne-Ortmann-Virag18} for example. Our result above resolves this problem. It is also possible to apply this result and the formula of $\limp(\mathrm{s}_1,\mathrm{s}_2,\mathrm{x};\gamma)$ to obtain some properties of the directed landscape, the limiting four-parameter random field of the directed last passage percolation. For example, in a follow-up paper \cite{Liu21} we proved that when the height of the directed landscape at a point is sufficiently large, the geodesic to this point is rigid and the location has a Gaussian distribution under appropriate scaling.

We remark that the density function $\limp(\mathrm{s}_1,\mathrm{s}_2,\mathrm{x};\gamma)$ can be related to the well-known GUE Tracy-Widom distribution. Note that the max of $\gamma^{1/3}\mathcal{\hat A}^{(1)}\left(\frac{\mathrm{x}}{2\gamma^{2/3}}\right)+(1-\gamma)^{1/3}\mathcal{\hat A}^{(2)}\left(\frac{\mathrm{x}}{2(1-\gamma)^{2/3}}\right)$ satisfies 
\begin{equation*}
	\prob\left(\max_{\mathrm{x}\in\realR} \left\{\gamma^{1/3}\mathcal{\hat A}^{(1)}\left(\frac{\mathrm{x}}{2\gamma^{2/3}}\right)+(1-\gamma)^{1/3}\mathcal{\hat A}^{(2)}\left(\frac{\mathrm{x}}{2(1-\gamma)^{2/3}}\right)\right\}\le \mathrm{s}\right) = F_{GUE}(\mathrm{s}),
\end{equation*}
where $F_{GUE}(\mathrm{s})$ is the GUE Tracy-Widom distribution. See \cite{Baik-Liu14,Baik-Liu13} for more details. By applying the Corollary~\ref{cor:argmax_Airys} and noting $\mathcal{\hat A}^{(\ell)}(\mathrm{x})=\mathcal{A}^{(\ell)}(\mathrm{x})-\mathrm{x}^2$, we have 
\begin{equation}\int_\realR\mathrm{d}\mathrm{x} \iint_{\mathrm{s}_1+\mathrm{s_2}\le\mathrm{s}}\mathrm{ds_1ds_2} \limp\left(\mathrm{s}_1+\frac{\mathrm{x}^2}{4\gamma},\mathrm{s}_2+\frac{\mathrm{x}^2}{4(1-\gamma)},\mathrm{x};\gamma\right) =F_{GUE}(\mathrm{s}).
	\end{equation}

\bigskip

One might be able to obtain the tail estimates for the geodesic using the formula~\eqref{eq:main_thm01}.  After a preliminary calculation, we have the following conjecture.
\begin{conj}
	Let $M,N$ and $m,n$ be numbers satisfying the scaling~\eqref{eq:scaling} in Theorem~\ref{thm:limiting_geodesic}, then
	\begin{equation}
		\lim_{N\to\infty}\prob\left(\gd_{(1,1)}(M,N) \text{ lies above } (m,n)\right) = e^{-c\mathrm{x}^3+o(\mathrm{x})} \quad \text{with}\quad c=\frac1{6(\gamma(1-\gamma))^{3/2}},
	\end{equation}
	when $\mathrm{x}=x_2-x_1$ becomes large.
\end{conj}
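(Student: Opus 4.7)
The plan is to identify the conjectured probability with the upper tail of the argmax random variable $\mathcal{T}$ introduced in Corollary~\ref{cor:argmax_Airys} (after extending from $\alpha=1$ to general $\alpha > 0$ by the same slow-decorrelation reduction). First I would show that, by sending $(m',n')$ to the upper-left direction ($x'_1 \to -\infty$, $x'_2 \to +\infty$) and $\mathrm{t}_1, \mathrm{t}_2 \to -\infty$ in Theorem~\ref{thm:limiting_geodesic}, the prelimit probability that $\gd_{(1,1)}(M,N)$ lies above $(m,n)$ converges to $\int_{\mathrm{x}}^{\infty} \bar\limp(\mathrm{y};\gamma)\, \mathrm{d}\mathrm{y}$, where $\bar\limp(\mathrm{y};\gamma) := \iint \limp(\mathrm{s}_1, \mathrm{s}_2, \mathrm{y};\gamma)\, \mathrm{d}\mathrm{s}_1 \mathrm{d}\mathrm{s}_2$ is the marginal density of the transverse shift. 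By Corollary~\ref{cor:argmax_Airys}, this equals $\prob(\mathcal{T} \geq \mathrm{x})$, with $\mathcal{T}$ the argmax of $\mathcal{S}(\mathrm{y}) := \gamma^{1/3}\mathcal{\hat A}^{(1)}(\mathrm{y}/(2\gamma^{2/3})) + (1-\gamma)^{1/3}\mathcal{\hat A}^{(2)}(\mathrm{y}/(2(1-\gamma)^{2/3}))$.

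For the upper bound, for any $\mathrm{y} \geq \mathrm{x}$ the parabolic drift of $\mathcal{S}$ at $\mathrm{y}$ equals $-\mathrm{y}^2/(4\gamma(1-\gamma))$, so the event $\{\mathcal{T} \geq \mathrm{y}\}$ forces the Airy components to satisfy $\gamma^{1/3}\mathcal{A}^{(1)}(\mathrm{y}/(2\gamma^{2/3})) + (1-\gamma)^{1/3}\mathcal{A}^{(2)}(\mathrm{y}/(2(1-\gamma)^{2/3})) \geq \mathrm{y}^2/(4\gamma(1-\gamma)) + O(1)$. Using the independence of the two Airy processes and the GUE Tracy-Widom upper tail $\prob(\mathcal{A}(s) \geq a) = e^{-(4/3)a^{3/2}(1+o(1))}$ for each component, and optimizing via Lagrange multipliers under the linear constraint $\gamma^{1/3}a_1 + (1-\gamma)^{1/3}a_2 = A := \mathrm{y}^2/(4\gamma(1-\gamma))$, one finds $a_1^* = A\gamma^{2/3}$, $a_2^* = A(1-\gamma)^{2/3}$, with minimum cost $(4/3)A^{3/2} = \mathrm{y}^3/(6(\gamma(1-\gamma))^{3/2})$. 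A union bound over a polynomially-spaced grid in $[\mathrm{x}, \infty)$ for $\prob(\sup_{\mathrm{y} \geq \mathrm{x}} \mathcal{S}(\mathrm{y}) \geq -C)$, combined with the $1/2$-Hölder modulus of continuity of the Airy$_2$ process, shows that the dominant contribution is at $\mathrm{y} = \mathrm{x}$, yielding $\prob(\mathcal{T} \geq \mathrm{x}) \leq e^{-c\mathrm{x}^3 + o(\mathrm{x}^3)}$.

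For the matching lower bound, introduce the event $E = \{A_1 \in [a_1^*, a_1^*+1]\} \cap \{A_2 \in [a_2^*, a_2^*+1]\}$ where $A_1 = \mathcal{A}^{(1)}(\mathrm{x}/(2\gamma^{2/3}))$, $A_2 = \mathcal{A}^{(2)}(\mathrm{x}/(2(1-\gamma)^{2/3}))$, and $a_1^*, a_2^*$ are the optimal values determined above with $A = \mathrm{x}^2/(4\gamma(1-\gamma))$. By independence and the sharp two-sided Tracy-Widom one-point asymptotics, $\prob(E) = e^{-c\mathrm{x}^3 + o(\mathrm{x}^3)}$. On $E$ one has $\mathcal{S}(\mathrm{x}) \geq -O(1)$, so it remains to show that $\sup_{\mathrm{y} < \mathrm{x}} \mathcal{S}(\mathrm{y}) < \mathcal{S}(\mathrm{x})$ with conditional probability bounded below by a constant; this follows from the stationarity of the Airy$_2$ process, the fact that $\mathcal{S}(\mathrm{y})$ stays $O(1)$ on any compact set with high probability, and the sharp parabolic decay of $\mathcal{S}$ away from the spike at $\mathrm{x}$.

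The main obstacle will be sharpening the exponent from $o(\mathrm{x}^3)$ to the claimed $o(\mathrm{x})$. This requires simultaneously controlling (i) the known polynomial prefactors in the Tracy-Widom one-point tails, (ii) the continuum union-bound losses from the grid and from the Airy regularity, and (iii) logarithmic contributions from the conditional structure of the Airy process given a large spike. An alternative route, likely more quantitative but computationally heavier, is to perform a direct steepest-descent analysis of the contour-integral representation~\eqref{eq:def_limiting_density} of $\limp$, in the spirit of the asymptotic arguments of Section~\ref{sec:proof_thm2}: the rate $-c\mathrm{x}^3$ should emerge from the leading saddle, and the sub-leading exponential corrections (from Gaussian fluctuations around the saddle and from possible pole crossings) are natural candidates for producing an $o(\mathrm{x})$ correction in the exponent.
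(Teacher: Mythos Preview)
The statement you are addressing is labeled a \emph{Conjecture} in the paper and is explicitly left open: the authors write that after ``a preliminary calculation'' they arrive at this conjecture, and that obtaining a more accurate estimate ``might be possible'' but is left ``as a future project.'' There is therefore no proof in the paper to compare your proposal against.

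On the merits of your proposal: the probabilistic reduction to $\prob(\mathcal{T}\ge\mathrm{x})$ via Corollary~\ref{cor:argmax_Airys} and the Lagrange optimization of the two independent Tracy--Widom upper tails is a sound way to recover the leading rate $c=\frac{1}{6(\gamma(1-\gamma))^{3/2}}$. You correctly identify the central obstruction: your argument yields at best an $e^{-c\mathrm{x}^3+o(\mathrm{x}^3)}$ bound, whereas the conjecture demands the much finer $e^{-c\mathrm{x}^3+o(\mathrm{x})}$. The losses you list (polynomial prefactors, union-bound discretization, Airy H\"older regularity) are each of order $O(\log\mathrm{x})$ or $O(\mathrm{x}^\theta)$ with $\theta<1$ in the exponent and so are compatible with $o(\mathrm{x})$; the genuinely delicate piece is the lower bound, where conditioning the Airy$_2$ process on a large spike of height $\sim\mathrm{x}^2$ and then asserting that the maximum over $(-\infty,\mathrm{x})$ is not larger requires sharp one-point-to-global control that is not available from stationarity and H\"older continuity alone. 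This step as written is a gap.

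The alternative you mention at the end---a steepest-descent analysis of the contour-integral formula~\eqref{eq:def_limiting_density}---is in fact the route the paper points to (``using the formula~\eqref{eq:main_thm01}''). That approach bypasses the conditional Airy analysis entirely and is the more promising path to the sharp $o(\mathrm{x})$ error, since the subleading corrections come from Gaussian fluctuations around a single saddle rather than from a supremum over an unbounded domain.
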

It also might be possible to obtain a more accurate estimate from this formula. We leave it as a future project.

\subsection{The limiting density function $\limp(\mathrm{s}_1,\mathrm{s}_2,\mathrm{x};\gamma)$}

The limiting density function $\limp(\mathrm{s}_1,\mathrm{s}_2,\mathrm{x};\gamma)$ has a similar structure as the finite time probability density function $p(s_1,s_2;m,n,M,N)$. Before we write down the formula, we introduce some contours. Suppose $\Gamma_{\LL,\inn},\Gamma_{\LL}$ and $\Gamma_{\LL,\out}$ are three disjoint contours on the left half plane each of which starts from $e^{-2\pi\ii/3}\infty$ and ends to $e^{2\pi\ii/3}\infty$. Here $\Gamma_{\LL,\inn}$ is the leftmost contour and $\Gamma_{\LL,\out}$ is the rightmost contour. The index ``$\inn$'' and ``$\out$'' refer to the relative location compared with $-\infty$. Similarly, suppose  $\Gamma_{\RR,\inn},\Gamma_{\RR}$ and $\Gamma_{\RR,\out}$ are three disjoint contours on the right half plane each of which starts from $e^{-\pi\ii/3}\infty$ and ends to $e^{\pi\ii/3}\infty$. Here the index ``$\inn$'' and ``$\out$'' refer to the relative location compared with $+\infty$, hence $\Gamma_{\RR,\inn}$ is the rightmost contour and $\Gamma_{\RR,\out}$ is the leftmost contour. See Figure~\ref{fig:contours_limit} for an illustration of these contours.

\begin{figure}[t]
	\centering
	\begin{tikzpicture}[scale=1]
		\draw [line width=0.4mm,lightgray] (-3,0)--(3,0) node [pos=1,right,black] {$\realR$};
		\draw [line width=0.4mm,lightgray] (0,-1)--(0,1) node [pos=1,above,black] {$\mathrm{i}\realR$};
		\fill (0,0) circle[radius=2pt] node [below,shift={(0pt,-3pt)}] {$0$};

		\path [draw=black,thick,postaction={mid3 arrow={black,scale=1.2}}]	(3,-5.1/6) 
		to [out=160,in=-90] (1.5,0)
		to [out=90,in=-160] (3,5.1/6);
		\path [draw=black,thick,postaction={mid3 arrow={black,scale=1.2}}]	(2.6,-6.1/6) 
		to [out=160,in=-90] (1,0)
		to [out=90,in=-160] (2.6,6.1/6);
		\path [draw=black,thick,postaction={mid3 arrow={black,scale=1.2}}]	(2.3,-7.1/6) 
		to [out=160,in=-90] (0.5,0)
		to [out=90,in=-160] (2.3,7.1/6);
		
		\path [draw=black,thick,postaction={mid3 arrow={black,scale=1.5}}]	(-3,-5.1/6) 
		to [out=20,in=-90] (-1.5,0)
		to [out=90,in=-20] (-3,5.1/6);
		\path [draw=black,thick,postaction={mid3 arrow={black,scale=1.5}}]	(-2.6,-6.1/6) 
		to [out=20,in=-90] (-1,0)
		to [out=90,in=-20] (-2.6,6.1/6);
		\path [draw=black,thick,postaction={mid3 arrow={black,scale=1.5}}]	(-2.3,-7.1/6) 
		to [out=20,in=-90] (-0.5,0)
		to [out=90,in=-20] (-2.3,7.1/6);
	\end{tikzpicture}
	\caption{Illustration of the contours: The three contours in the left half plane from left to right are $\Gamma_{\LL,\inn}, \Gamma_\LL$ and $\Gamma_{\LL,\out}$ respectively, and the three contours in the right half plane from left to right are $\Gamma_{\RR,\out}, \Gamma_{\RR}$ and $\Gamma_{\RR,\inn}$ respectively.}\label{fig:contours_limit}
\end{figure}
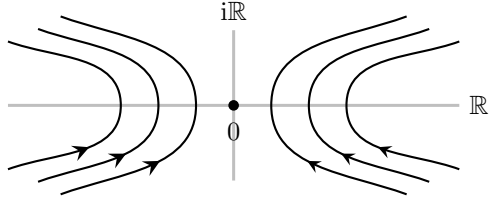

\bigskip

The probability density function $\limp(\mathrm{s}_1,\mathrm{s}_2,\mathrm{x};\gamma)$ is defined to be
\begin{equation}
	\label{eq:def_limiting_density}
	\limp(\mathrm{s}_1,\mathrm{s}_2,\mathrm{x};\gamma)
	:=\oint_0\ddbar{\mathrm{z}}{\mathrm{(1-z)^2}}\sum_{k_1,k_2\ge 1}
	\frac{1}{(k_1!k_2!)^2}
	\mathrm{T}_{k_1,k_2}(\mathrm{z};\mathrm{s}_1,\mathrm{s}_2,\mathrm{x};\gamma)
\end{equation}
with
\begin{equation}
	\label{eq:def_limit_T}
	\begin{split}
		&\mathrm{T}_{k_1,k_2}(\mathrm{z};\mathrm{s}_1,\mathrm{s}_2,\mathrm{x};\gamma)\\
		&:=(-1)^{k_1+k_2}
		\prod_{i_1=1}^{k_1}     
		\left(  \frac{1}{1-\mathrm{z}} \int_{\Gamma_{\LL,\inn}} 	    
		\ddbar{\xi^{(1)}_{i_1}}{}   
		-\frac{\mathrm{z}}{1-\mathrm{z}} \int_{\Gamma_{\LL,\out}} \ddbar{\xi^{(1)}_{i_1}}{}
		\right)
		\left(  \frac{1}{1-\mathrm{z}}  \int_{\Gamma_{\RR,\inn}}  
		\ddbar{\eta^{(1)}_{i_1}}{}
		-\frac{\mathrm{z}}{1-\mathrm{z}}   \int_{\Gamma_{\RR,\out}}
		\ddbar{\eta^{(1)}_{i_1}}{}
		\right)
		\\
		&\quad \cdot 
		\prod_{i_2=1}^{k_2}  
		\int_{\Gamma_\LL} \ddbar{\xi^{(2)}_{i_2}}{} 
		\int_{\Gamma_\RR} \ddbar{\eta^{(2)}_{i_2}}{} 
		\cdot    
		\left(1-\mathrm{z}\right)^{k_2}
		\left(1-\frac{1}{\mathrm{z}}\right)^{k_1}
		\cdot 
		\frac
		   {\mathrm{f}_1(\boldsymbol{\xi}^{(1)};\mathrm{s}_1)
			\mathrm{f}_2(\boldsymbol{\xi}^{(2)};\mathrm{s}_2)}
		   {\mathrm{f}_1(\boldsymbol{\eta}^{(1)};\mathrm{s}_1)
		   	\mathrm{f}_2(\boldsymbol{\eta}^{(2)};\mathrm{s}_2)}
		\cdot
		\mathrm{H}(\boldsymbol{\xi}^{(1)},\boldsymbol{\eta}^{(1)};
		           \boldsymbol{\xi}^{(2)},\boldsymbol{\eta}^{(2)})\\
		&\quad \cdot 
		\prod_{\ell=1}^2
		\frac{\left( \Delta(\boldsymbol{\xi}^{(\ell)}) \right)^2
			  \left( \Delta(\boldsymbol{\eta}^{(\ell)}) \right)^2
		     }
		     {\left( \Delta(\boldsymbol{\xi}^{(\ell)}; \boldsymbol{\eta}^{(\ell)}) \right)^2}
		\cdot 
		\frac{\Delta(\boldsymbol{\xi}^{(1)};\boldsymbol{\eta}^{(2)})
			  \Delta(\boldsymbol{\eta}^{(1)};\boldsymbol{\xi}^{(2)})
		     }
		     {\Delta(\boldsymbol{\xi}^{(1)};\boldsymbol{\xi}^{(2)})
		      \Delta(\boldsymbol{\eta}^{(1)};\boldsymbol{\eta}^{(2)})
	         },
	\end{split}
\end{equation}
where the vectors $\boldsymbol{\xi}^{(\ell)}=(\xi_1^{(\ell)},\cdots,\xi_{i_\ell}^{(\ell)})$ and $\boldsymbol{\eta}^{(\ell)}=(\eta_1^{(\ell)},\cdots,\eta^{(\ell)}_{i_\ell})$ for $\ell\in\{1,2\}$, the functions $\mathrm{f}_1,\mathrm{f}_2$ are defined by
\begin{equation}
	\label{eq:def_rmf}
	\begin{split}
	\mathrm{f}_1(\zeta;\mathrm{s})&:=\exp\left(-\frac{\gamma}{3}\zeta^3-\frac12\mathrm{x}\zeta^2+\left(\mathrm{s}-\frac{\mathrm{x}^2}{4\gamma}\right)\zeta\right),\\
	 \mathrm{f}_2(\zeta;\mathrm{s})& := \exp\left(-\frac{(1-\gamma)}{3}\zeta^3+\frac12\mathrm{x}\zeta^2+\left(\mathrm{s}-\frac{\mathrm{x}^2}{4(1-\gamma)}\right)\zeta\right),
	 \end{split}
\end{equation}
and the function $\mathrm{H}$ is defined by
\begin{equation}
	\label{eq:def_rmH}
	\mathrm{H}(\boldsymbol{\xi}^{(1)},\boldsymbol{\eta}^{(1)};
	\boldsymbol{\xi}^{(2)},\boldsymbol{\eta}^{(2)})
	=\frac{1}{12}\mathrm{S}_1^4+\frac14\mathrm{S}_2^2-\frac13\mathrm{S}_1\mathrm{S}_3
\end{equation}
with
\begin{equation}
	\label{eq:def_S}
	\mathrm{S}_\ell
	=\mathrm{S}_\ell(\boldsymbol{\xi}^{(1)},\boldsymbol{\eta}^{(1)};
	\boldsymbol{\xi}^{(2)},\boldsymbol{\eta}^{(2)})
	=\sum_{i_1=1}^{k_1}
	      \left( \left(\xi_{i_1}^{(1)}\right)^{\ell}
	            -\left(\eta_{i_1}^{(1)}\right)^{\ell}
	      \right)
	 -\sum_{i_2=1}^{k_2}
	      \left( \left(\xi_{i_2}^{(2)}\right)^{\ell}
	             -\left(\eta_{i_2}^{(2)}\right)^{\ell}
	      \right).
\end{equation}

\begin{rmk}
	\label{rmk:symmetry_T}
	It can be directly verified that $\mathrm{T}$ is symmetric on $\mathrm{x}$, i.e., it satisfies $\mathrm{T}_{k_1,k_2}(\mathrm{z};\mathrm{s}_1,\mathrm{s}_2,\mathrm{x};\gamma)=\mathrm{T}_{k_1,k_2}(\mathrm{z};\mathrm{s}_1,\mathrm{s}_2,-\mathrm{x};\gamma)$. In fact, one can see it clearly by changing variables $\xi_{i_\ell}^{(\ell)}=-\tilde\eta_{i_\ell}^{(\ell)}$ and $\eta_{i_\ell}^{(\ell)}=-\tilde\xi_{i_\ell}^{(\ell)}$ for $1\le i_\ell\le k_\ell$ and $\ell=1,2$.
\end{rmk}

One can prove that the summation is absolutely convergent in~\eqref{eq:def_limiting_density} due to the super-exponential decay of $\mathrm{f}_\ell$ along the integral contours. The proof is similar to that of Lemma~\ref{lm:hat_rt_uniform_bounds} so we omit it.

\section*{Acknowledgments}
We would like to thank Jinho Baik, Ivan Corwin, Duncan Dauvergne, Patrik Ferrari, Kurt Johansson, Daniel Remenik and B\'alint Vir\'ag for the comments and suggestions. The work was supported by the University of Kansas Start Up Grant, the University of Kansas New Faculty General Research Fund, Simons Collaboration Grant No. 637861, and  NSF grant
DMS-1953687.

\section{Finite time formulas and proof of Theorem~\ref{thm:finite_time_geodesic1}}
\label{sec:proof_thm1}
\subsection{Outline of the proof}

Theorem~\ref{thm:finite_time_geodesic1} states two formulas for different locations of $\mathbf{r}'$. The equation~\eqref{eq:main_thm01} holds when $\mathbf{r}'=(m+1,n)$, i.e., when $\mathbf{r}'$ is at the same row as $\mathbf{r}$. The case when $\mathbf{r}'$ is at the same column as $\mathbf{r}$ follows by switching the rows and columns of the model. Thus it is sufficient to show the equation~\eqref{eq:main_thm01} with $\mathbf{r}'=(m+1,n)$.

The proof involves a few computations and identities. We would like to split the proof into three steps, each of which ends with an identity about the probability density function $p(s_1,s_2;m,n,M,N)$. We will outline the steps and state these main identities in this subsection and leave their proofs in subsequent subsections.

In the first step, we obtain a formula for $p(s_1,s_2;m,n,M,N)$. The main idea is to convert the desired probability to a sum of the product of two transition probabilities, and evaluate the sum explicitly. There are two types of transition probabilities for the exponential directed last passage percolation. One is the transition probability by viewing its equivalent model, the so-called TASEP, as a Markov process with respect to time \cite{Schutz97}. The second one is the transition probability by viewing the model as a Markov chain along one dimension on the space \cite{Johansson10}. It turns out that only the later one can be used to find an exact formula for $p(s_1,s_2;m,n,M,N)$. If one uses the transition probabilities of TASEP instead, there will be an $\bigO(1)$ error on the finite time formulas but the resulting limit probability densities $\mathrm{p}(\mathrm{s}_1,\mathrm{s}_2,\mathrm{x};\gamma)$ is the same. We will consider this approach in a follow-up paper.

Using the transition probability formula of \cite{Johansson10} and an summation identity for the product of two eigenfunctions, we obtain the following proposition.
\begin{prop}
	\label{prop:p_step1}
	We have the following formula for $p(s_1,s_2;m,n,M,N)$
	\begin{equation}
	\label{eq:finite_time_formula01}
	\begin{split}
	   & p(s_1,s_2;m,n,M,N) \\
	                      & = \frac{(-1)^{N(N-1)/2}}
	                               {(N!)^2}
	                          \oint_0\ddbar{\rz}{\rz^{n}}
	                             \prod_{i_1=1}^N \int_{|w_{i_1}^{(1)}|=R_1} \ddbar{w_{i_1}^{(1)}}{}
	                             \prod_{i_2=1}^N \int_{|w_{i_2}^{(2)}|=R_2} \ddbar{w_{i_2}^{(2)}}{}
	                                \Delta\left(W^{(1)}\right)
	                                \Delta\left(W^{(2)}\right)\\
	                       &\quad \cdot
	                                \tilde f_1\left( W^{(1)} \right)
	                                \tilde f_2\left( W^{(2)} \right)
	                              \cdot \sum_{\ell_1,\ell_2=1}^N (-1)^{\ell_1+\ell_2}
	                                       \frac{ \left(w_{\ell_1}^{(1)}\right)^n e^{s_1 w_{\ell_1}^{(1)}} }
	                                            { \left(w_{\ell_2}^{(2)}\right)^{n-1} e^{s_1 w_{\ell_2}^{(2)}} }
	                                    \det\left[
	                                          C_\rz\left(w_{i_1}^{(1)},w_{i_2}^{(2)}\right)
	                                          +
	                                          D_\rz\left(w_{i_1}^{(1)},w_{i_2}^{(2)}\right)
	                                        \right]_{\substack{i_1\ne \ell_1,\\
	                                                           i_2\ne \ell_2}
                                                    }.
	\end{split}   
	\end{equation}
	Here the radii of the contours satisfy $R_1>R_2>1$. The vectors $W^{(1)}$ and $W^{(2)}$ are defined by
	\begin{equation*}
	W^{(1)}=(w_1^{(1)},\cdots,w_N^{(1)}),
	\quad
	W^{(2)}=(w_1^{(2)},\cdots,w_N^{(2)}).
	\end{equation*}
    Recall our conventions $\Delta(W)$ and $f(W)$ as in~\eqref{eq:def_Delta1} and~\eqref{eq:def_f_vector}. The functions $\tilde f_1$ and $\tilde f_2$ are defined by
	\begin{equation}
		\label{eq:tilde_f}
	\tilde f_1 (w) := w^{-N} (w+1)^{-m},\quad \tilde f_2 (w) := (w+1)^{-M+m} e^{(s_1+s_2) w}.
	\end{equation}
	The functions $C$ and $D$ appearing in the determinant are defined by
	\begin{equation}
		\label{eq:C}
	C_\rz(w_1,w_2):=   \frac{ \rz } {w_1-w_2}
	                   \frac{ w_1^{n-1} e^{s_1 w_1} }
	                        { w_2^{n-1} e^{s_1 w_2} }
	                +  \frac{ 1 } {-w_1+w_2}
	                   \frac{ w_1^{n+1} e^{s_1 w_1} }
	                        { w_2^{n+1} e^{s_1 w_2} },
	\end{equation}
	and
	\begin{equation}
		\label{eq:D}
	D_\rz(w_1,w_2):=   \frac{ \rz } {-w_1+w_2}
	                   \frac{ w_1 } { w_2 }
	                +  \frac{ 1 } {w_1-w_2}
	                   \frac{ w_1^{N} e^{(s_1+s_2) w_1} }
	                        { w_2^{N} e^{(s_1+s_2) w_2} }.
	\end{equation}
\end{prop}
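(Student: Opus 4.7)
The plan is to exploit the Markov structure of exponential DLPP along a spatial direction, decomposing the rectangle across row $n$. By the split identity~\eqref{eq:dlpp_split}, the event $\{\mathbf{r},\mathbf{r}'\in\gd_{\mathbf{p}}(\mathbf{q})\}$ is almost surely equivalent to $L_{(1,1)}(\mathbf{r})+L_{\mathbf{r}'}(\mathbf{q})=L_{(1,1)}(\mathbf{q})$, and the weights involved in $L_{(1,1)}(\mathbf{r})$, $L_{\mathbf{r}'}(\mathbf{q})$, and $w(m+1,n)$ are mutually disjoint. Introducing two independent last-passage-time profiles on either side of row $n$, the joint density factorizes into contributions from the two halves of the rectangle, glued by a summation over the row-$n$ profile that records the geodesic's exit from the bottom half and its entrance to the top half.

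I would then apply the spatial-Markov transition-probability formula of \cite{Johansson10}, which gives the joint density of a full row profile as an antisymmetric contour-integral expression in $N$ auxiliary variables with a Vandermonde prefactor. Applied separately to the bottom half (with parameters tied to $m$ and $s_1$) and to the top half (with parameters $M-m$ and $s_2$), this produces two sets of variables $W^{(1)}$ and $W^{(2)}$ together with the ``external'' weights $\tilde f_1(W^{(1)})\tilde f_2(W^{(2)})$ of~\eqref{eq:tilde_f}; the factors $\Delta(W^{(1)})\Delta(W^{(2)})$ and the power $w^N$ appearing in $D_\rz$ are inherited directly from Johansson's formula.

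The third and technically central step is to collapse the sum over the intermediate row profile using a summation identity for products of two eigenfunctions of the LPP Markov kernel. The Laplace-expansion-like factor $\sum_{\ell_1,\ell_2}(-1)^{\ell_1+\ell_2}\det[\cdots]_{i_1\ne\ell_1,i_2\ne\ell_2}$ singles out the pair of eigenfunctions attached to the horizontal step at column $m\to m+1$, while the auxiliary contour integral in $\rz$ around $0$ with weight $1/\rz^n$ encodes this choice via the residue identity $\frac{1}{2\pi\ii}\oint_0 \rz^{k-n}\mathrm{d}\rz=\delta_{k,n-1}$. The additive splitting $C_\rz+D_\rz$ inside the determinant reflects two distinct ways a pair $(w_{i_1}^{(1)},w_{i_2}^{(2)})$ can couple across row $n$: either through a ``short'' contribution internal to one half (giving the $s_1$-only $C_\rz$) or a ``long'' contribution that traverses both halves and therefore carries the factor $w^N e^{(s_1+s_2)w}$ (giving $D_\rz$). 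The main obstacle is identifying and proving this summation identity, which I expect to come from a Cauchy--Binet or LGV-type manipulation applied to a biorthogonal family tailored to the geometry of the problem; once it is in hand, the prefactor $(-1)^{N(N-1)/2}/(N!)^2$ emerges from symmetrization over both sets of variables, and the contours can be deformed to the circles $|w|=R_1>R_2>1$ as stated.
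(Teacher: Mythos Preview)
Your strategy matches the paper's: split the event across the column interface between $m$ and $m+1$ (not ``row $n$''), apply Johansson's transition formula on each side, and collapse the sum over the intermediate profile. Two points deserve correction.

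First, Johansson's formula in \cite{Johansson10} is for \emph{geometric} weights, so the paper first proves the analogue for geometric DLPP (its Lemma~\ref{lem:prob_A}) and only then takes the limit $q=1-\epsilon$, $x=s_1/\epsilon$, $y=s_2/\epsilon$, $\epsilon\to 0$ to produce the exponential density. You omit this step; while routine, it is where the factors $e^{s_iw}$ emerge from $(w+1)^{x_i}$ and where discrete probabilities become a continuous density.

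Second, the summation identity is not a generic Cauchy--Binet or LGV manipulation but a specific one. After summing over $Y$ (the paper's Lemma~\ref{lem:sum_Y}), Laplace-expanding both determinants along column $n$, and applying Cauchy--Binet to the cofactors, the constrained sum over $0\le x_1\le\cdots\le x_N\le x+y$ with $x_n=x$ fixed factors into $\hat S_{0,x}\cdot\hat S_{x,x+y}$, each evaluated by an identity from \cite{Baik-Liu17} (the paper's Lemma~\ref{lem:double_sum}). The four boundary contributions at $0,x,x,x{+}y$ are then recombined via the identity $\oint_0\det[\rz A+B]\,\mathrm{d}\rz/(2\pi\ii\rz^n)$ to produce $C_\rz+D_\rz$. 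Your ``short/long coupling'' interpretation of this split is off: the two $D_\rz$ terms come from the \emph{range endpoints} $0$ and $x+y$ (equivalently the lattice bounds at rows $1$ and $N$), not from traversal of both halves---which is precisely why $D_\rz$ can later be removed in the paper's Proposition~\ref{prop:p_step2}.
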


\bigskip

The proof of Proposition~\ref{prop:p_step1} is provided in the next subsection~\ref{sec:proof_step1}.

\bigskip

It seems that the formula~\eqref{eq:finite_time_formula01} is not suitable for asymptotic analysis by the following two reasons. The first reason is that this formula involves some unneeded information. Note that the two terms in $D_\rz(w_1,w_2)$ have factors $(w_1/w_2)^1$ and $(w_1/w_2)^N$ whose exponents $1$ and $N$ indeed represent the bounds of the possible locations of the geodesic. However, we expect that the geodesic only fluctuates of order $N^{2/3}$ around its expected location. In other words, changing the far endpoints $1$ and $N$ will not affect the asymptotics. Therefore, $D_\rz(w_1,w_2)$ should not appear in the limit and we need to reformulate~\eqref{eq:finite_time_formula01} and remove the term $D_\rz(w_1,w_2)$. The second reason is that the formula~\eqref{eq:finite_time_formula01} contains some determinants of size $\bigO(N)$, such as the Vandermonde determinants $\Delta\left(W^{(1)}\right)$ and $\Delta\left(W^{(2)}\right)$, and the determinant $\det(C_\rz+D_\rz)$. It is typically hard to find the asymptotics of these determinants when the size $N\to\infty$. We will need to rewrite it to a formula which is more suitable for asymptotic analysis.

\bigskip

In the second step, we take the term  $D_\rz(w_1,w_2)$ away at the cost of changing the integral contours, and then evaluate the summation over $\ell_1,\ell_2$. We obtain
\begin{prop}
	\label{prop:p_step2}
	The equation~\eqref{eq:finite_time_formula01} is equivalent to
	\begin{equation}
		\label{eq:finite_time_formula02}
		\begin{split}
		&p(s_1,s_2;m,n,M,N)=\frac{1}{(N!)^2} 
	 	 \oint_0  \frac{ (1-\rz)^{N-2} \mathrm{d}\rz }
		               { 2\pi\mathrm{i} \rz^n }
		          \prod_{i_1=1}^N \left(
		                              \frac{-\rz}{1-\rz} \int_{\Sigma_{\out}}
		                                                 \ddbar{w_{i_1}^{(1)}}{}
		                             +\frac{1}{1-\rz}  \int_{\Sigma_{\inn}}
		                                               \ddbar{w_{i_1}^{(1)}}{}
		                          \right)
		          \prod_{i_2=1}^N \int_{\Sigma} \ddbar{w_{i_2}^{(2)}}{}\\
		 &\quad 
		        \hat f_1\left(W^{(1)}\right)
		        \hat f_2\left(W^{(2)}\right)
		        \frac{ \left( \Delta\left(W^{(1)}\right) \right)^2 
		 	           \left( \Delta\left(W^{(2)}\right) \right)^2
	 	             }
		             { \Delta\left(W^{(2)};W^{(1)}\right)
		             }
	             \cdot 
	             \left(
	               \hat H \left(W^{(1)};W^{(2)}\right)
	                + \rz \frac{\prod_{i_2=1}^N w_{i_2}^{(2)}}
	                           {\prod_{i_1=1}^N w_{i_1}^{(1)}}
	                      \hat H \left(W^{(2)}; W^{(1)}\right)
	             \right),
		\end{split}
	\end{equation}
where the contours $\Sigma_\out$, $\Sigma$, and $\Sigma_{\inn}$ are three nested closed contours, from outside to inside, all of which enclose both $0$ and $-1$. The vectors $W^{(1)}:=(w_1^{(1)},\cdots,w_N^{(1)})$ and $W^{(2)}:=(w_1^{(2)},\cdots,w_N^{(2)})$. The functions
\begin{equation}
	\label{eq:hat_f}
	\hat f_1(w):= (w+1)^{-m} w^{-N+n} e^{s_1 w},\quad 
	\hat f_2(w):= (w+1)^{-M+m} w^{-n} e^{s_2 w},
\end{equation}
and
\begin{equation}
	\label{eq:hat_H}
	\hat H\left(W;W'\right):= 
	   \frac{1}{2} \left(
	                   \sum_i w_i -\sum_{i'} w'_{i'}  
	               \right)^2
	   -\frac{1}{2} \left(
	                  \sum_i w_i^2 -\sum_{i'} \left(w'_{i'}\right)^2  
	                \right)
\end{equation}
for any vectors $W=(\cdots,w_i,\cdots)$ and $W'=(\cdots,w'_{i'},\cdots)$ of finite sizes.
\end{prop}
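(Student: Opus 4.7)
The plan is to execute the two transformations indicated just before the statement: (a) perform the $\ell_1,\ell_2$ summation explicitly, and (b) eliminate the $D_\mathrm{z}$ contribution by deforming the $W^{(1)}$- and $W^{(2)}$-contours onto common nested contours. The unifying observation is that each of the four elementary pieces comprising $K_\mathrm{z}:=C_\mathrm{z}+D_\mathrm{z}$ is a Cauchy-type kernel of the form $(w_1-w_2)^{-1}g(w_1)/g(w_2)$, where $g(w)$ is one of $w^{n-1}e^{s_1 w}$, $w^{n+1}e^{s_1 w}$, $w$, or $w^N e^{(s_1+s_2)w}$; the first two carry coefficients $\mathrm{z}$ and $-1$ (from $C_\mathrm{z}$), and the last two carry $-\mathrm{z}$ and $+1$ (from $D_\mathrm{z}$).

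First I would expand $\det[K_\mathrm{z}(w^{(1)}_{i_1},w^{(2)}_{i_2})]_{i_1\ne\ell_1,\,i_2\ne\ell_2}$ multilinearly along its columns into a sum of $4^{N-1}$ determinants of pure Cauchy type, and evaluate each via the Cauchy determinant identity $\det[1/(w_i-w'_j)]=\Delta(W)\Delta(W')/\Delta(W;W')$. Combining this with the cofactor structure supplied by the $(-1)^{\ell_1+\ell_2}$-weighted sum over $\ell_1,\ell_2$, the missing row and column are reinstated and the partial Vandermondes become full. Multiplying by the factors $\Delta(W^{(1)})\Delta(W^{(2)})$ already present in~\eqref{eq:finite_time_formula01} then produces the target structural combination $(\Delta(W^{(1)}))^2(\Delta(W^{(2)}))^2/\Delta(W^{(2)};W^{(1)})$. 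Second, I would deform the disjoint circles $|w^{(1)}|=R_1>|w^{(2)}|=R_2$ onto the three nested contours $\Sigma_{\out},\Sigma,\Sigma_{\inn}$ that all enclose both $0$ and $-1$. Writing the constant coefficient $1$ as $\frac{1-\mathrm{z}}{1-\mathrm{z}}$ and distributing the $\mathrm{z}$-weighted elementary pieces onto the outer contour $\Sigma_{\out}$ and the remaining pieces onto the inner contour $\Sigma_{\inn}$ reproduces exactly the linear combination $\frac{-\mathrm{z}}{1-\mathrm{z}}\int_{\Sigma_{\out}}+\frac{1}{1-\mathrm{z}}\int_{\Sigma_{\inn}}$ for each $w^{(1)}$-variable, while each $w^{(2)}$-variable settles on the middle contour $\Sigma$. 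Each crossing of a $W^{(1)}$-contour past a $W^{(2)}$-contour generates residues at $w^{(1)}=w^{(2)}$ that must be carefully tracked.

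The key cancellation I expect is that every multilinear term containing at least one $d_2$-column, i.e.~one carrying the boundary factor $w_1^N/w_2^N$, vanishes after deformation: the integrand becomes analytic in some free variable and Cauchy's theorem applies, so the index $N$, which encodes a far-away endpoint of the geodesic, drops out as it should. The surviving $D_\mathrm{z}$ contribution comes only from the $d_1$-piece and, combined with the $c_1,c_2$-pieces, should collapse into the compact polynomial $\hat H(W^{(1)};W^{(2)})+\mathrm{z}\,(\prod_{i_2} w^{(2)}_{i_2}/\prod_{i_1} w^{(1)}_{i_1})\,\hat H(W^{(2)};W^{(1)})$, with the factor $(1-\mathrm{z})^{N-2}$ arising from the combinatorial count of the multilinear choices on the $N-2$ ``bulk'' columns. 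The main obstacle will be recognizing the surviving combination as exactly $\hat H$: because $\hat H$ is quadratic in the power sums $\sum w_i$ and $\sum w_i^2$, at most two columns can effectively contribute $d_1$ factors, and the residue bookkeeping must collapse the apparent dependence on many variables into these two symmetric functions. The manifest symmetry of the target formula under the swap $W^{(1)}\leftrightarrow W^{(2)}$ combined with $\mathrm{z}\leftrightarrow 1/\mathrm{z}$ (the ratio $\prod w^{(2)}/\prod w^{(1)}$ absorbing the Jacobian) should serve as both a consistency check and a practical device that halves the calculation.
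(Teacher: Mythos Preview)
Your proposal has the right target and some correct intuitions (the nested-contour combination, the expectation that the boundary-dependent piece $w^N e^{(s_1+s_2)w}$ drops out, the emergence of $(1-\rz)^{N-2}$), but it contains a misconception about which pieces survive and a genuine gap at the crucial step.

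\textbf{The misconception.} You expect the $d_1$-piece of $D_\rz$ (the one carrying $w_1/w_2$) to survive and contribute to $\hat H$, while only the $d_2$-piece (with $w_1^N/w_2^N$) vanishes. In fact \emph{both} pieces of $D_\rz$ disappear. The paper's contour rewriting
\[
\int_{|w|=R_1} = \frac{-\rz}{1-\rz}\int_{\Sigma_\out} + \frac{1}{1-\rz}\int_{\Sigma_\inn}
\]
is not an assignment of elementary pieces to separate contours; it is a trivial identity (no poles are crossed, since $C_\rz+D_\rz$ has no singularity at $w_1=w_2$). Its purpose is that, \emph{after} this rewriting, a direct residue check shows
\[
\Big(\tfrac{-\rz}{1-\rz}\!\!\int_{\Sigma_\out}+\tfrac{1}{1-\rz}\!\!\int_{\Sigma_\inn}\Big)\int_\Sigma
\Delta(W^{(1)})\Delta(W^{(2)})\tilde f_1(W^{(1)})\tilde f_2(W^{(2)})\,D_\rz(w_{i_1}^{(1)},w_{i_2}^{(2)})\,\ddbar{w_{i_1}^{(1)}}{}\ddbar{w_{i_2}^{(2)}}{}=0
\]
for every pair $(i_1,i_2)$: the $d_1$ residue on $\Sigma_\inn$ exactly cancels the $d_2$ residue on $\Sigma_\out$. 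A simple lemma then says that if $\int\!\!\int B\cdot D(w_i,w'_{i'})=0$ entrywise, one may drop $D$ from $\det[C+D]$ inside the integral. So there are no residues to track from crossing contours, and the entire $\hat H$ structure comes from $C_\rz$ alone.

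\textbf{The gap.} After $D_\rz$ is gone, the identity you need is a purely algebraic one, independent of contours:
\[
\sum_{a,b=1}^N(-1)^{a+b}y_b\det\!\Big[\tfrac{\rz}{x_i-y_j}\tfrac{y_j}{x_i}+\tfrac{1}{-x_i+y_j}\tfrac{x_i}{y_j}\Big]_{\substack{i\ne a\\ j\ne b}}
=(1-\rz)^{N-2}\Big(\hat H(X;Y)+\rz\prod_i\tfrac{y_i}{x_i}\hat H(Y;X)\Big)\det\!\Big[\tfrac{1}{y_j-x_i}\Big]_{i,j=1}^N.
\]
This is where $(1-\rz)^{N-2}$ and the quadratic $\hat H$ actually come from, and your sketch (``at most two columns can effectively contribute\ldots the residue bookkeeping must collapse\ldots'') does not indicate how to prove it. The paper's route is to substitute $u=-\rz/(1-\rz)$, rewrite each entry as $(1-\rz)\tfrac{x_i}{y_j}\big(\tfrac{1}{-x_i+y_j}+\tfrac{u}{x_i}+\tfrac{uy_j}{x_i^2}\big)$, and then compute rank-one and rank-two perturbations of the Cauchy determinant using the residue sums
\[
C_{p,q}=\sum_{a,b}\frac{x_a^p y_b^q\,Y(x_a)X(y_b)}{(x_a-y_b)X'(x_a)Y'(y_b)},\qquad X(w)=\prod_i(w-x_i),\ Y(w)=\prod_i(w-y_i),
\]
several of which (e.g.\ $C_{-1,2},C_{1,0},C_{0,1},C_{-2,1}$) are needed explicitly. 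Your multilinear expansion into $4^{N-1}$ Cauchy determinants would have to recover this identity anyway, so I recommend reorganizing the argument into the two clean steps above: first kill all of $D_\rz$ by the entrywise-vanishing lemma, then prove the Cauchy-type summation identity directly.
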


We remark that the idea of changing the integral contours is constructive. It results in a compact formula which effectively removes the terms including the information of the geodesic bounds. Formulas from similar summations (for product of two eigenfunctions in TASEP as we did in the proof of Proposition~\ref{prop:p_step1}) without including the information of the summation bounds were also obtained in the periodic version of the directed last passage percolation \cite{Baik-Liu16,Baik-Liu17,Baik-Liu21} and its large period limit \cite{Liu19}. Heuristically, in the periodic model it turned out that the upper bound (in the previous period) cancels out the lower bound (in the current period) in the summation. While in this paper, we construct contours $\Sigma_\inn$ and $\Sigma_\out$ which play similar roles as different periods: integral of the terms involving the upper bound along one contour cancels that involving the lower bound along the other contour.

The proof of Proposition~\ref{prop:p_step2} is provided in subsection~\ref{sec:proof_step2}.

\bigskip

In the last step, we rewrite the formula~\eqref{eq:finite_time_formula02} in the form with a structure similar to a Fredholm determinant expansion, which is the formula~\eqref{eq:def_finite_time_density}.

\begin{prop}
	\label{prop:p_step3}
	The formula~\eqref{eq:finite_time_formula02} is equivalent to~\eqref{eq:def_finite_time_density}.
\end{prop}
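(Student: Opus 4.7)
The plan is to derive~\eqref{eq:def_finite_time_density} from~\eqref{eq:finite_time_formula02} by splitting each contour integral into a piece around $w=-1$ and a piece around $w=0$, and then re-indexing the resulting sum using the symmetry of the integrand. Each of the contours $\Sigma_{\inn},\Sigma,\Sigma_{\out}$ in~\eqref{eq:finite_time_formula02} encloses both singularities of the integrand, whereas the contours in~\eqref{eq:def_finite_time_density} each enclose only one. I would decompose each $w^{(1)}_{i_1}$ integration $\frac{1}{1-\mathrm{z}}\int_{\Sigma_{\inn}}-\frac{\mathrm{z}}{1-\mathrm{z}}\int_{\Sigma_{\out}}$ into the sum of analogous operations on contours $\Sigma_{\LL,\inn},\Sigma_{\LL,\out}$ around $-1$ (yielding a variable to be relabeled $u^{(1)}$) and on contours $\Sigma_{\RR,\inn},\Sigma_{\RR,\out}$ around $0$ (yielding a $v$-type variable). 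Analogously, each $w^{(2)}_{i_2}$ integral on $\Sigma$ splits into an integral on $\Sigma_\LL$ (relabeled $u^{(2)}$) and one on $\Sigma_\RR$ (another $v$-type variable). The nestings $\Sigma_{\LL,\inn}\subset\Sigma_\LL\subset\Sigma_{\LL,\out}$ and $\Sigma_{\RR,\inn}\subset\Sigma_\RR\subset\Sigma_{\RR,\out}$ are inherited from $\Sigma_{\inn}\subset\Sigma\subset\Sigma_{\out}$, matching Figure~\ref{fig:contours_finite_time}.

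Next I would organize the resulting terms by the counts $k_1:=\#\{W^{(1)}\text{ around }-1\}$ and $k_2:=\#\{W^{(2)}\text{ around }-1\}$, with the remaining $N-k_1$ variables of $W^{(1)}$ identified with $V^{(2)}$ and the remaining $N-k_2$ of $W^{(2)}$ identified with $V^{(1)}$. Because the integrand in~\eqref{eq:finite_time_formula02} is symmetric in $W^{(1)}$ and in $W^{(2)}$ (due to the squared Vandermondes), the $\binom{N}{k_1}\binom{N}{k_2}$ equivalent choices of subsets combine with the $\frac{1}{(N!)^2}$ prefactor to yield $\frac{1}{k_1!(N-k_1)!k_2!(N-k_2)!}$. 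This matches the target factor $\frac{1}{(k_1!k_2!)^2}$ precisely when $|V^{(2)}|=N-k_1=k_2$ and $|V^{(1)}|=N-k_2=k_1$, i.e., when $k_1+k_2=N$. This forced constraint explains the vanishing of $T_{k_1,k_2}$ for $k_1+k_2\ne N$ asserted in the footnote following~\eqref{eq:def_T}; only the slice $k_1+k_2=N$ contributes.

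The main obstacle is verifying the algebraic identities that result after the substitution $W^{(1)}\mapsto U^{(1)}\cup V^{(2)}$ and $W^{(2)}\mapsto U^{(2)}\cup V^{(1)}$. Three checks are required: (i)~the Vandermonde factorization $(\Delta(W^{(1)}))^2(\Delta(W^{(2)}))^2/\Delta(W^{(2)};W^{(1)})$ must match the combined Vandermonde and cross factors in~\eqref{eq:def_T}, which follows by pairing differences $(w_i-w_j)$ according to the sublists containing their arguments; (ii)~the product $\hat f_1(W^{(1)})\hat f_2(W^{(2)})$ must become $f_1(U^{(1)};s_1)f_2(U^{(2)};s_2)/[f_1(V^{(1)};s_1)f_2(V^{(2)};s_2)]$ via the identity $\hat f_\ell(w)=w^{-N}f_\ell(w;s_\ell)$, with the $w^{-N}$ weights redistributing between $u$'s and $v$'s during the splitting; and (iii)~the bracket $\hat H(W^{(1)};W^{(2)})+\mathrm{z}\,\frac{\prod w^{(2)}_{i_2}}{\prod w^{(1)}_{i_1}}\hat H(W^{(2)};W^{(1)})$ must agree with $H(U^{(1)},U^{(2)};V^{(1)},V^{(2)})$ from~\eqref{eq:def_H}. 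For (iii), a direct expansion gives $\hat H(W^{(1)};W^{(2)})=\tfrac12 S_1^2-\tfrac12 S_2$ and $\hat H(W^{(2)};W^{(1)})=\tfrac12 S_1^2+\tfrac12 S_2$ in the $S_\ell$ notation of~\eqref{eq:def_S}, so the bracket becomes $\tfrac12 S_1^2(1+\mathrm{z}R)-\tfrac12 S_2(1-\mathrm{z}R)$ with $R=\prod(v^{(1)}/u^{(1)})\prod(u^{(2)}/v^{(2)})$. The stray factor $\mathrm{z}$ is absorbed into the redistribution of the outer weight $(1-\mathrm{z})^{N-2}/\mathrm{z}^n$ into $(1-\mathrm{z})^{k_2}(1-\mathrm{z}^{-1})^{k_1}/(1-\mathrm{z})^2$ together with the $w^{-N}$ powers tracked in step (ii); this bookkeeping of $\mathrm{z}$-powers and signs is the calculational heart of the proof.
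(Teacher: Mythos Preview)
Your proposal has a genuine structural gap. In the target~\eqref{eq:def_T}, the vectors $U^{(\ell)}$ and $V^{(\ell)}$ \emph{both} have size $k_\ell$, independently for $\ell=1,2$. Under your direct contour splitting, however, the $W^{(1)}$ variables landing near $0$ number $N-k_1$, and you propose to call these $V^{(2)}$; this forces $|V^{(2)}|=N-k_1$, which equals $k_2$ only on the slice $k_1+k_2=N$. You then assert that only this slice contributes, citing the footnote after~\eqref{eq:def_T}. But the footnote says something much weaker: $T_{k_1,k_2}$ vanishes once $k_1$ (or $k_2$) exceeds roughly $N$, not that it vanishes off the hyperplane $k_1+k_2=N$. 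Indeed the paper's own derivation (equation~\eqref{eq:aux_30}) produces a genuine double sum over $k_1,k_2\ge 0$ with no such constraint. So your re-indexing cannot reproduce~\eqref{eq:def_finite_time_density}.

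The deeper issue is that the $V^{(\ell)}$ variables in~\eqref{eq:def_T} are not ``the $W$-variables that landed near $0$''; they arise from a different mechanism. The paper passes through a discrete Bethe-root structure: one fixes $q(w)=w^N(w+1)^{L-N}$ and writes each $W^{(\ell)}$ as a vector in $\mathcal{R}_{\hat z_\ell}$. When $n_\ell$ coordinates of $W^{(\ell)}$ lie in $\mathcal{R}_{\hat z_\ell,\LL}$ (these become $U^{(\ell)}$), the remaining $N-n_\ell$ lie in $\mathcal{R}_{\hat z_\ell,\RR}$, and since $|\mathcal{R}_{\hat z_\ell,\RR}|=N$ exactly $n_\ell$ elements of $\mathcal{R}_{\hat z_\ell,\RR}$ are \emph{unused}; those unused roots are what become $V^{(\ell)}$. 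This complementation trick is what forces $|U^{(\ell)}|=|V^{(\ell)}|$ for each $\ell$ separately, and it has no analogue in a bare contour split. Your steps (ii) and (iii) also break under your identification: the $W^{(1)}$ variables carry $\hat f_1$, so labeling the near-$0$ ones as $V^{(2)}$ (which should carry $f_2^{-1}$) cannot be reconciled by the relation $\hat f_\ell(w)=w^{-N}f_\ell(w;s_\ell)$ alone, and the ``stray $\mathrm{z}$'' in (iii) does not in fact cancel against the outer $\mathrm{z}$-weights.
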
 

The proof of Proposition~\ref{prop:p_step3}  is provided in subsection~\ref{sec:proof_step3}. It involves an extension of a Cauchy-type summation formula in \cite{Liu19}. We first convert the integral into discrete summations over a so-called Bethe roots, then reformulate the summation as a Fredholm-determinant-like expansion, and finally convert the discrete summation back into integrals. It would be nice to see a more direct proof for Proposition~\ref{prop:p_step3} but it seems quite complicated considering the differences between the two formulas.

\subsection{Proof of Proposition~\ref{prop:p_step1}}
\label{sec:proof_step1}

As we mentioned in the previous subsection, we need a transition probability formula by viewing the directed last passage percolation as a Markov chain. Such a formula was obtained in \cite{Johansson10} for the geometric directed last passage percolation, which is a discrete version of the model we are considering in this paper. We will introduce the model below. Then we will show how to compute an analogous probability for the geodesic in the geometric model, and take the limit to get the results for exponential directed last passage percolation.

The geometric last passage percolation model is defined as follows. We assign to each site $\mathbf{p}\in \intZ^2$ an i.i.d. geometric random variables $\tilde w(\mathbf{p})$ with parameter $q\in(0,1)$
\begin{equation}
	\prob\left(\tilde w(\mathbf{p}) = i \right) = (1-q) q^i,\qquad i=0,1,2,\cdots
\end{equation}
for each integer site $\mathbf{p}$. Note that if we take $q=1-\epsilon$ and let $\epsilon\to 0$,  $\epsilon\tilde w(\mathbf{p})$ converges to an exponential random variable.

Similar to~\eqref{eq:def_dlpp}, if a lattice point $\mathbf{q}$ lies in the upper right direction of another lattice point $\mathbf{p}$, we define the last passage time from  $\mathbf{p}$ to $\mathbf{q}$ as
\begin{equation}
	\label{eq:def_dlpp2}
	G_{\mathbf{p}}(\mathbf{q}):=\max_{\pi} \sum_{\mathbf{r}\in\pi} \tilde w(\mathbf{r}),
\end{equation}
where the maximum is over all possible up/right lattice paths from $\mathbf{p}$ to $\mathbf{q}$. We remark that the maximal path is not necessary unique in this model. We call these maximal paths the geodesics from $\mathbf{p}$ to $\mathbf{q}$.

We consider the following event
\begin{equation}
	\label{eq:2021_12_28_03}
	A= \left\{
	\begin{array}{l}
		G_{(1,1)}(m,n)+G_{(m+1,n)}(M,N) = G_{(1,1)} (M,N),\\
		G_{(1,1)}(m,n) = x,\\
		G_{(m+1,n)}(M,N) = y.
	\end{array}
	\right\}.
\end{equation}
Here $x$ and $y$ are nonnegative integers. As we mentioned before, there may be more than one geodesic. The event $A$ means that there is one geodesic that passes through the two points $(m,n)$ and $(m+1,n)$, and these two points split the last passage time $G_{(1,1)}(M,N)$ into two parts $G_{(1,1)}(m,n)=x$ and $G_{(m+1,n)}(M,N)=y$. Later we will show
\begin{lm}
	\label{lem:prob_A}
	We have
	\begin{equation}
		\label{eq:pA_formula}
		\begin{split}
			&\prob\left(A\right)\\
			&= (-1)^{N(N-1)/2}\frac{(1-q)^{MN}}{(N!)^2}
			\oint_0 	\ddbar{\rz}{\rz^n}
			\prod_{i_1=1}^N 
			\int_{|w_{i_1}^{(1)}|=R_1} \ddbar{w_{i_1}^{(1)}}{}
			\prod_{i_2=1}^N
			\int_{|w_{i_2}^{(2)}|=R_2} \ddbar{w_{i_2}^{(2)}}{}
			\Delta\left(W^{(1)}\right)
			\Delta\left(W^{(2)}\right)\\
			& \quad 
			\tilde F_1\left( W^{(1)} \right)
			\tilde F_2\left( W^{(2)} \right)
			\sum_{\ell_1,\ell_2= 1}^N
			(-1)^{\ell_1+\ell_2}
			\frac{\left(w_{\ell_1}^{(1)} + 1\right)^x
				\left(w_{\ell_1}^{(1)}\right)^n               	     
			}
			{\left(w_{\ell_2}^{(2)} + 1\right)^{x+1}
				\left(w_{\ell_2}^{(2)}\right)^{n-1}
			}
			\det\left[ \mathcal{C}_\rz(w_{i_1}^{(1)}, w_{i_2}^{(2)})
			+\mathcal{D}_\rz(w_{i_1}^{(1)}, w_{i_2}^{(2)})
			\right]_{\substack{i_1\ne \ell_1\\
					i_2\ne \ell_2}},
		\end{split}
	\end{equation}
where the radii $R_1$ and $R_2$ are distinct and both larger than $1$. The functions $\tilde F_1$ and $\tilde F_2$ are defined by
\begin{equation}
	\label{eq:tilde_F}
	\tilde F_1(w):= (w+1)^{m-1} w^{-N} (w+1-q)^{-m},\quad 
	\tilde F_2(w):= (w+1)^{x+y+M-m} (w+1-q)^{-M+m}.
\end{equation}
Recall the conventions $\tilde F_\ell\left( W^{(\ell)} \right)$ and $\Delta\left(W^{(\ell)}\right) = \prod_{i>j}\left(w_i^{(\ell)} -w_j^{(\ell)}\right)=\det\left[\left(w_i^{(\ell)}\right)^{j-1}\right]_{i,j=1}^N$ as introduced in~\eqref{eq:def_Delta1} and~\eqref{eq:def_f_vector}. Finally, the functions $\mathcal{C}_\rz$ and $\mathcal{D}_\rz$ are given by
\begin{equation}
	\mathcal{C}_\rz(w_1,w_2):= \frac{\rz}{w_1-w_2}\cdot \frac{w_1^{n-1}(w_1+1)^{x+1}}{w_2^{n-1}(w_2+1)^x}
	+\frac{1}{-w_1+ w_2}\cdot \frac{w_1^{n+1} (w_1+1)^{x}}
	{w_2^{n+1} (w_2+1)^{x-1}}
\end{equation}
and
\begin{equation}
\mathcal{D}_\rz(w_1,w_2):= \frac{\rz}{-w_1+w_2}\cdot \frac{w_1(w_2+1)}{w_2}
+\frac{1}{w_1- w_2}\cdot \frac{w_1^N (w_1+1)^{x+y+1}}
{w_2^N (w_2+1)^{x+y}}.
\end{equation}
\end{lm}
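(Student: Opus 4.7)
The plan is to view the geometric LPP as a spatial Markov chain in the vertical coordinate (following Johansson~\cite{Johansson10}), split the event $A$ into a forward and a backward piece joined at row $n$, apply the associated contour integral transition formula to each piece, and evaluate the resulting sum by a Cauchy-type identity for products of the two eigenfunction families.

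First, I would decompose $\prob(A)$ by conditioning on the full last passage vector $\mathbf{G}^{(n)} := (G_{(1,1)}(k,n))_{k=1}^M$ along row $n$ and on its backward analogue from $(M,N)$. The event that a geodesic uses the horizontal step $(m,n)\to(m+1,n)$ with $G_{(1,1)}(m,n)=x$ and $G_{(m+1,n)}(M,N)=y$ imposes local constraints at columns $m, m+1$ together with a global compatibility forcing $x+y=G_{(1,1)}(M,N)$, which makes $\prob(A)$ factor as a bilinear pairing of the forward and backward transition kernels.

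Second, I would invoke the contour integral transition formula of~\cite{Johansson10} for each piece. This produces two sets of Bethe roots $W^{(1)}, W^{(2)}$ of size $N$ on circles of radii $R_1>R_2>1$, with Vandermonde factors $\Delta(W^{(\ell)})$ and weights $\tilde F_1, \tilde F_2$: the $(w+1-q)^{-m}$ and $(w+1-q)^{-(M-m)}$ factors arise from the $q$-geometric weight densities in the two spatial halves, the $(w+1)^{m-1}$ and $(w+1)^{x+y+M-m}$ factors from boundary corrections and the exact-value constraints on $x$ and $y$, and the $w^{-N}$ factor from the initial-point normalization. The distinguished indices $\ell_1, \ell_2$, with the isolated factors $(w^{(1)}_{\ell_1}+1)^x(w^{(1)}_{\ell_1})^n$ and $1/[(w^{(2)}_{\ell_2}+1)^{x+1}(w^{(2)}_{\ell_2})^{n-1}]$, come from residues encoding the exact constraints at columns $m$ and $m+1$.

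Finally, the sum over the remaining column indices along row $n$ (together with the compatibility constraint) is evaluated by a Cauchy-type identity pairing the forward eigenfunctions with the backward ones. The auxiliary variable $\rz$, integrated on a small loop around $0$, enters as a generating parameter that separates the ``interior'' contribution $\mathcal{C}_\rz$ (whose exponents $n, x$ reflect the local data at $(m,n)$) from the ``endpoint'' contribution $\mathcal{D}_\rz$ (whose exponents $N, x+y+1$ reflect the global endpoint $(M,N)$); Laplace-expanding the resulting $N\times N$ matrix along rows $\ell_1, \ell_2$ produces the external sum $\sum_{\ell_1,\ell_2}(-1)^{\ell_1+\ell_2}$ and the $(N-1)\times(N-1)$ determinant of $\mathcal{C}_\rz+\mathcal{D}_\rz$. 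The chief obstacle will be isolating and proving this precise $\rz$-splitting, which is essentially a two-kernel Cauchy identity; I would derive it by starting from a single Cauchy-type summation over all Bethe roots and splitting off the bulk and boundary residues according to which variables lie inside versus outside a chosen contour. Once this identity is in hand, the remaining bookkeeping (the sign $(-1)^{N(N-1)/2}$, the geometric normalization $(1-q)^{MN}$, and the factorial symmetrization $1/(N!)^2$) is routine.
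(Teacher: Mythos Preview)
Your overall framework is right in spirit---Johansson's transition formula, a forward/backward decomposition, and a Cauchy-type summation---but two concrete points are off and would prevent the argument from closing.

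\textbf{Direction of the Markov chain.} You propose to condition on the row vector $\mathbf{G}^{(n)}=(G_{(1,1)}(k,n))_{k=1}^M$, i.e.\ to run the chain in the vertical coordinate. But that state vector has size $M$, whereas the target formula has $N$-fold integrals; you even write that $W^{(1)},W^{(2)}$ have size $N$, so your own description is internally inconsistent. The paper conditions on the \emph{column} vectors $G(m)=(G_{(1,1)}(m,j))_{j=1}^N$ and $\tilde G(m+1)=(G_{(m+1,j)}(M,N))_{j=1}^N$, which are independent and of size $N$. The event $A$ then becomes $x_n=x$, $y_n=y$, and $x_j+y_j\le x+y$ for all $j$, and one sums over all admissible $X,Y\in\mathbb Z_{\ge0}^N$. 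The distinguished indices $\ell_1,\ell_2$ arise not from residues at columns $m,m{+}1$ but from Laplace-expanding each determinant along its $n$-th column (the one with the fixed entry $x_n=x$).

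\textbf{Origin of $\rz$ and of $\mathcal C_\rz+\mathcal D_\rz$.} Your picture of $\rz$ as a contour-splitting parameter separating interior from endpoint residues does not match what actually happens. After Laplace expansion and Cauchy--Binet, the $X$-sum splits into two blocks, one over $x_1\le\cdots\le x_{n-1}\in[0,x]$ and one over $x_{n+1}\le\cdots\le x_N\in[x,x+y]$. Each block is evaluated in closed form by a known identity for
\[
\hat S_{a,b}(W,W')=\sum_{a\le x_1\le\cdots\le x_k\le b}\det[(w_i+1)^{x_j}w_i^j]\det[(w'_i+1)^{-x_j}(w'_i)^{-j}],
\]
which collapses to a single $k\times k$ determinant with two Cauchy-type terms indexed by the endpoints $a$ and $b$ (this is the Baik--Liu identity; it is the technical core and you do not mention it). The variable $\rz$ is then introduced purely algebraically, via
\[
\sum_{\substack{I_1\sqcup I_2=\{1,\dots,N-1\}\\ |I_1|=n-1}}(-1)^{\#}\det[A]_{I_1}\det[B]_{I_2}=\oint_0\det[\rz A+B]\,\frac{d\rz}{2\pi i\,\rz^{n}},
\]
to recombine the two blocks into a single $(N{-}1)\times(N{-}1)$ determinant. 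The four terms in $\mathcal C_\rz+\mathcal D_\rz$ are exactly the four endpoint contributions $(a,b)\in\{(0,x),(x,x+y)\}$ from the two applications of the $\hat S_{a,b}$ identity, weighted by $\rz$ or $1$ according to which block they came from. There is no contour deformation or residue splitting at this stage.
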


We postpone the proof of this lemma later in this subsection. Assuming Lemma~\ref{lem:prob_A}, we are ready to prove Proposition~\ref{prop:p_step1}. Below we write $A$ as $A(x,y)$ in~\eqref{eq:2021_12_28_03} to emphasize the parameters $x$ and $y$. As we mentioned before, if we take $q=1-\epsilon$ and let $\epsilon\to 0$, the geometric directed last passage percolation becomes an exponential one. More explicitly, $\epsilon \tilde w(\mathbf{p})$ converges to an exponential random variable in distribution as $\epsilon\to 0$. Moreover, for any fixed interval $I_1=[t_1,t_1+\epsilon_1]$ and $I_2=[t_2,t_2+\epsilon_2]$, we have
\begin{equation}
	\label{eq:2021_12_28_04}
	\prob\left(\bigcup_{s_1\in I_1}\bigcup_{s_2\in I_2}  A\left(\frac{s_1}{\epsilon},\frac{s_2}{\epsilon}\right)\right)
	=\prob\left\{
	\begin{array}{l}
		G_{(1,1)}(m,n)+G_{(m+1,n)}(M,N) = G_{(1,1)} (M,N),\\
		\epsilon G_{(1,1)}(m,n)\in I_1,\\
		\epsilon G_{(m+1,n)}(M,N) \in I_2.
	\end{array}
	\right\}
\end{equation}
converges as $\epsilon\to 0$ to the analogous probability that in the exponential directed last passage percolation, the geodesic $\gd_{(1,1)}(M,N)$ passes through two points $(m,n)$ and $(m+1,n)$, and the analogous last passage times satisfy $L_{(1,1)}(m,n)\in I_1$ and $L_{(m+1,n)}(M,N)\in I_2$. In other words, the limit of~\eqref{eq:2021_12_28_04} is the left hand side of~\eqref{eq:main_thm01}. We remark that although it is possible that there are more than one geodesics in the geometric last passage percolation, after taking the small $\epsilon$ limit the chance of getting more geodesics becomes zero. 

Now we evaluate the limit of~\eqref{eq:2021_12_28_04}. The left hand side of~\eqref{eq:2021_12_28_04} is
\begin{equation}
	\sum_{i\epsilon\in I_1,j\epsilon\in I_2}\prob(A(i,j))=\int_{I_1}\int_{I_2} \frac{1}{\epsilon^2}\prob\left(A\left(\frac{s_1}{\epsilon},\frac{s_2}{\epsilon}\right)\right)\mathrm{d}\mu_\epsilon(s_2)\mathrm{d}\mu_\epsilon(s_1),
\end{equation}
where $\mathrm{d}\mu_\epsilon(s)=\epsilon\delta_{\frac{s}{\epsilon}\in\intZ}$. We will prove
\begin{equation}
	\label{eq:gDLPP_to_eDLPP}
	\lim_{\epsilon\to 0}\frac{1}{\epsilon^2}\prob(A(s_1/\epsilon,s_2/\epsilon))= p(s_1,s_2;m,n,M,N)
\end{equation}
uniformly on $I_1\times I_2$, with $p(s_1,s_2;m,n,M,N)$ defined in~\eqref{eq:finite_time_formula01}. Then by using the continuity of the function $p(s_1,s_2;m,n,M,N)$ we immediately obtain that the limit of~\eqref{eq:2021_12_28_04} equals to $\int_{I_1}\int_{I_2}p(s_1,s_2;m,n,M,N)\mathrm{d}s_2\mathrm{d}s_1$. Hence we prove Proposition~\ref{prop:p_step1}.

Now we prove~\eqref{eq:gDLPP_to_eDLPP}. We insert $q=1-\epsilon$, $x=s_1/\epsilon$, and $y=s_2/\epsilon$ in~\eqref{eq:pA_formula}. Note that all other parameters are fixed, and $s_1\in I_1,s_2\in I_2$ are nonnegative. We observe that the exponents of $(w_{i_1}^{(1)}+1)$ for each $1\le i_1\le N$ in the integrand are at least $m-1 +\min\{x,1\}\ge m-1\ge 0$, and the exponents of $(w_{i_2}^{(2)}+1)$ for each $1\le i_2\le N$ are at least $x+y+M-m-\max\{x+1,x+y\}\ge M-m-1\ge 0$. Therefore the integrand is analytic at $-1$ for each $w_{i_1}^{(1)}$ and $w_{i_2}^{(2)}$. There are possible poles at $0$ and $q-1=-\epsilon$ both of which are close to $0$ as $\epsilon\to 0$. We hence can deform the contours sufficiently close to the origin. More precisely, we replace $R_1$ and $R_2$ by $\epsilon \hat R_1$ and $\epsilon \hat R_2$ where $\hat R_1,\hat R_2$ are distinct constants and both larger than $1$, and change variables $w_{i_1}^{(1)} =\epsilon \hat w_{i_1}^{(1)}$ and $w_{i_2}^{(2)} =\epsilon \hat w_{i_2}^{(2)}$. Then
\begin{equation}
	\label{eq:2021_12_29_01}
	\begin{split}
	&\Delta\left(W^{(1)}\right)\Delta\left(W^{(2)}\right) = \epsilon^{N(N-1)} \Delta\left(\hat W^{(1)}\right)\Delta\left(\hat W^{(2)}\right),\\
	&\tilde F_1(w) = \epsilon^{-N-m}(\hat w^{-N} (\hat w+1)^{-m}+\bigO(\epsilon))
	              =\epsilon^{-N-m} (\tilde f_1(\hat w)+\bigO(\epsilon)),\\
	&\tilde F_2(w) = \epsilon^{-M+m} ((\hat w +1)^{-M+m} e^{(s_1+s_2)\hat w}+\bigO(\epsilon))
	              =\epsilon^{-M+m} (\tilde f_2(\hat w)+\bigO(\epsilon)),\\
	& \left(w+1\right)^x w^n = 
	             \epsilon^n (\hat w^n e^{s_1\hat w}+\bigO(\epsilon)),
	  \quad \left(w+1\right)^x w^{n-1} =
	             \epsilon^{n-1} (\hat w^{n-1} e^{s_1\hat w}+\bigO(\epsilon)),\\	
	&\mathcal{C}_\rz(w_1,w_2)= 
	       \frac{\rz}{\epsilon(\hat w_1-\hat w_2)}\cdot     
	       \frac{\hat w_1^{n-1}e^{s_1\hat w_1}}{\hat w_2^{n-1}e^{s_1\hat w_2}}
	      +\frac{1}{\epsilon(-\hat w_1+ \hat w_2)}\cdot 
	       \frac{\hat w_1^{n+1}e^{s_1 \hat w_1}}{\hat w_2^{n+1} e^{s_1\hat w_2}}+\bigO(1)
	       =\epsilon^{-1}(C_\rz(\hat w_1,\hat w_2)+\bigO(\epsilon)),\\
	&\mathcal{D}_{\rz}(w_1,w_2)= 
	      \frac{\rz}{\epsilon(-\hat w_1+\hat w_2)}\cdot     
	      \frac{\hat w_1 }{\hat w_2}
         +\frac{1}{\epsilon(\hat w_1- \hat w_2)}\cdot 
	      \frac{\hat w_1^{N}e^{(s_1+s_2)\hat w_1}}
	           {\hat w_2^{N} e^{(s_1+s_2)\hat w_2}}+\bigO(1)
	    =\epsilon^{-1}(D_\rz(\hat w_1,\hat w_2)+\bigO(\epsilon)).
	\end{split}
\end{equation}
We remind that $\mathrm{d}w=\epsilon \mathrm{d}\hat w$. Therefore by inserting these leading terms, we heuristically obtain that
\begin{equation}
	\label{eq:2021_12_28_05}
	\lim_{\epsilon\to 0}\frac{1}{\epsilon^2}\prob(A(s_1/\epsilon,s_2/\epsilon)) = \text{ the right hand side of~\eqref{eq:finite_time_formula01}}.
\end{equation}
On the other hand, since all other parameters are fixed and the contours $|\hat w_{i_1}^{(1)}|=\hat R_1$ and $|\hat w_{i_2}^{(2)}|=\hat R_2$ are of finite size, if we insert the above estimates~\eqref{eq:2021_12_29_01} with the error terms into~\eqref{eq:pA_formula}, all the terms involving $\bigO(\epsilon)$ are uniformly bounded by $C\epsilon$ for some constant $C$, and there are only finitely many such terms. Therefore the equation~\eqref{eq:2021_12_28_05} holds uniformly. This proves~\eqref{eq:gDLPP_to_eDLPP}.
\bigskip

The remaining part of this subsection is to prove Lemma~\ref{lem:prob_A}.

Denote
\begin{equation}
	G(m) = \left(G_{(1,1)}(m,1),\cdots,G_{(1,1)}(m,N)\right)
\end{equation}
the vector of the last passage times from the site $(1,1)$ to $(m,i)$, $1\le i\le N$.

Our starting point is the following remarkable formula for the distribution of $G(m)$. 

\begin{thm}\cite[Theorem 2.1]{Johansson10}
	\label{thm:Joh}
	Suppose $X=(x_1,\cdots,x_N)\in\intZ_{\ge 0}^N$ satisfies $x_1\le x_2\le\cdots\le x_N$, then 
	\begin{equation*}
		\prob\left(G(m)=X\right)=\det\left[(1-q)^m\int_{|w|=R}(w+1)^{x_j+m-1}w^{j-i}\ddbar{w}{(w+1-q)^m}\right]_{i,j=1}^N,
	\end{equation*}
	where $R>1$ is any constant.
\end{thm}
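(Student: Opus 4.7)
My plan is to exhibit $(G(m))_{m\ge 1}$ as a Markov chain that is intertwined (via the Vandermonde) with $N$ independent geometric random walks, and then apply the Karlin--McGregor formula together with a contour-integral representation of the single-walk transition kernel.

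First, I would verify that $(G(m))_{m\ge 1}$ is itself a Markov chain, using the column recursion
$$G_{(1,1)}(m+1,j) = \max\bigl(G_{(1,1)}(m,j),\,G_{(1,1)}(m+1,j-1)\bigr) + \tilde w(m+1,j)$$
and the independence of the $(m+1)$-th column of weights from everything in earlier columns. After the shift $c_j(m) := G(m,j)+j$, the coordinates become strictly ordered and $c(m)$ lives in the Weyl chamber $\{c_1<\cdots<c_N\}$.

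Next, and this is the crucial step, I would identify $c(m)$ with the Doob $h$-transform (with $h=\Delta$ the Vandermonde) of $N$ independent walks with step law $\prob(\xi=k)=(1-q)q^k$ for $k\ge 0$. The clean justification is via column-insertion RSK: applied to the array $(\tilde w(i,j))$, this generates a Markov chain on triangular Gelfand--Tsetlin patterns whose longest row coincides with $G(m)$ by Greene's theorem, and whose $k$-th level from the top evolves as $k$ reflecting geometric walkers. Marginalizing to the top level produces exactly the $\Delta$-transform of $N$ free walks, in the Warren/O'Connell spirit. Alternatively, one can verify the intertwining $P^{(N)}\Delta = \Delta\,P^{\otimes N}$ directly on the Weyl chamber, bypassing the multilevel picture.

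Finally, Karlin--McGregor applied to the $N$ free walks gives $\det[p_m(c_j(m)-c_i(0))]$, with the Vandermonde weights from the Doob transform cancelling so that a single determinant remains. The one-step generating function of $\xi$ is $(1-q)/(1-qz)$, and the substitution $z=1/(w+1)$ turns Cauchy's coefficient extraction into
$$p_m(y)=(1-q)^m\oint_{|w|=R}\frac{(w+1)^{y+m-1}}{(w+1-q)^m}\,\frac{\mathrm dw}{2\pi\mathrm i},\qquad R>1.$$
Routine column operations on the Karlin--McGregor determinant absorb the initial shift $c_i(0)=i$ into the factor $w^{j-i}$, producing the formula of the theorem.

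The genuine obstacle is the middle step: the coordinate interaction in the LPP recursion is a one-sided ``push'' of $G(m,j-1)$ into $G(m,j)$ rather than the symmetric non-collision of free walkers, and the equivalence with the $\Delta$-transform of independent walks is a real combinatorial fact whose cleanest proof uses the full RSK/Gelfand--Tsetlin machinery. The first and third steps are routine once the intertwining is in hand.
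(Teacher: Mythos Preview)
The paper does not supply its own proof of this statement; it quotes Johansson's theorem and only remarks that the contour-integral form follows from combining equations (9) and (25) of \cite{Johansson10}. So there is no ``paper's proof'' to compare against beyond the citation. Your proposal is therefore an attempt to reconstruct Johansson's argument. Unfortunately the central identification is wrong.

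Your claim is that after the shift $c_j(m)=G(m,j)+j$ the process is the Doob $\Delta$-transform of $N$ independent geometric walks, so that Karlin--McGregor gives $\det[p_m(c_j-i)]$, and then ``routine column operations'' turn the resulting $(w+1)^{j-i}$ into the $w^{j-i}$ of the theorem. This fails already for $N=2$, $m=1$, $q=1/2$, $X=(1,1)$: the true probability is $\prob(\tilde w(1,1)=1,\tilde w(1,2)=0)=1/8$, and the theorem's determinant indeed returns $1/8$, but the Karlin--McGregor determinant $\det[p_1(c_j-i)]$ with $c=(2,3)$ equals $p_1(1)^2-p_1(2)p_1(0)=1/16-1/16=0$. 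No column operation can repair this; the two determinants are genuinely different functions of $X$.

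The point is that the formula in the theorem is a \emph{Sch\"utz-type} determinant, not a Karlin--McGregor one. Writing the $(i,j)$ entry as $F_{j-i}(x_j)$ with $F_k(x)=(1-q)^m\oint(w+1)^{x+m-1}w^k(w+1-q)^{-m}\,\ddbar{w}{}$, the factor $w=(w+1)-1$ shows $F_1=\nabla F_0$ is a discrete derivative and $F_{-1}$ is a discrete antiderivative of $F_0=p_m$; this differencing/summing structure is exactly what the one-sided push in the LPP recursion produces, and it is \emph{not} what the $\Delta$-transform of free walks gives (that would yield $(w+1)^{j-i}$, i.e.\ argument shifts). In Johansson's paper the kernel is obtained by writing down the one-step LPP transition explicitly and checking inductively that the determinantal ansatz solves the forward equation---the same mechanism as Sch\"utz's TASEP formula. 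Your RSK/Gelfand--Tsetlin framework is the right habitat, but the top-row marginal of the push-block dynamics has a Sch\"utz-type kernel (cf.\ Dieker--Warren), not the Doob-conditioned one; the two coincide for Brownian motion but not for these discrete walks.
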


Note that the contour is of radius $R>1$ in the above theorem. This restriction will be kept throughout the proof of Lemma~\ref{lem:prob_A} and finally lead to the requirements $R_1>1$ and $R_2>1$.

The original theorem of \cite[Theorem 2.1]{Johansson10} considered the finite-step transition probabilities from any column to another, and for any $x_1\le \cdots\le x_N$ without assuming $x_1\ge 0$. For our purpose we only need this simpler version. The assumption that $x_1\ge 0$ comes from the fact that all random variables $\tilde w(\mathbf{p})$ are nonnegative. Moreover, we use the contour integral formula in the above determinant for later computations. This formula is equivalent to the original version by combining the equations (9) and (25) in \cite{Johansson10}.

Denote
\begin{equation*}
	\tilde G(m+1) = \left( G_{(m+1,1)}(M,N),\cdots,G_{(m+1,N)}(M,N)\right).
\end{equation*}
Note that, by flipping the sites $(i,j)\to (-i,-j)$ and shifting the site $(-M,-N)$ to $(1,1)$, $\tilde G(m+1)$ has the same distribution as 
\begin{equation*}
	\left(G_{(1,1)}(M-m,N),G_{(1,1)}(M-m,N-1),\cdots,G_{(1,1)}(M-m,1)\right).
\end{equation*}
Therefore, by applying Theorem~\ref{thm:Joh} we have
		\begin{equation*}
		\prob\left( \tilde G(m+1)=Y
		     \right)
		 =  \det
		      \left[
		           (1-q)^{M-m} \int_{|w|=R}
		                       (w+1)^{y_{N+1-j}+M-m-1}w^{j-i}
		                       \ddbar{w}{(w+1-q)^{M-m}}
		      \right]_{i,j=1}^N
	\end{equation*}
for any $Y=(y_1,\cdots,y_N)\in\intZ^N$ satisfying $y_1\ge y_2\ge\cdots\ge y_N\ge 0$.

Note that $G(M)$ and $\tilde G(m+1)$ are independent since they are defined on the lattices $\intZ_{\le m}\times\intZ$ and $\intZ_{\ge m+1}\times \intZ$ respectively. Also note the event $A$ is equivalent to the event that $G_{(1,1)}(m,n)=x$, $G_{(m+1,n)(M,N)}=y$, and $G_{(1,1)}(m,i)+G_{(m+1,i)}(M,N)\le G_{(1,1)}(M,N)=x+y$ for all other $i$'s. Thus by combining Theorem~\ref{thm:Joh} and the above formula for $\tilde G(m+1)$, we obtain
\begin{equation}
	\label{eq:PA_01}
	\begin{split}
      \prob(A)
		&= \sum \prob\left(G(m) =X \right) \prob\left( \tilde G(m+1) = Y\right)\\
		&= (1-q)^{MN} \sum   \det 
		                      \left[
		                         \int_{|w|=R}
		                           (w+1)^{x_j+m-1}w^{j-i}(w+1-q)^{-m}\ddbar{w}{}
		                      \right]_{i,j=1}^N\\
		                      &\qquad\qquad\qquad \quad \cdot
		                      \det 
		                      \left[
		                      \int_{|w|=R}
		                      (w+1)^{y_{N+1-j}+M-m-1}w^{j-i}(w+1-q)^{-M+m}\ddbar{w}{}
		                      \right]_{i,j=1}^N,
	\end{split}
\end{equation}
where the summation is running over all possible $X=(x_1,\cdots,x_N)\in\intZ^N$ and $Y=(y_1,\cdots,y_N)\in\intZ^N$ satisfying
\begin{equation}
	\label{eq:XY_bounds}
	\begin{split}
	&0\le x_1\le\cdots\le x_N, \quad	y_1\ge\cdots\ge y_N\ge 0,\\			
	&x_i+y_i \le x+y, \quad \text{for all } i=1,\cdots,N,\\
	&\text{and }		x_n=x,\ y_n=y.
   \end{split}
\end{equation}

We will consider the above summation in two steps. First, we fix $X$ satisfying $0\le x_1\le\cdots\le x_N \le x+y$ and $x_n=x$, and take the sum over $Y$ satisfying~\eqref{eq:XY_bounds}. Note that only the last determinant in~\eqref{eq:PA_01} contains $Y$. We formulate such a summation in the following lemma.

\begin{lm}
	\label{lem:sum_Y}
	Suppose $0\le x_1\le\cdots\le x_N\le x+y$, $x_n=x$ and $y_n=y$. Assume that $F(w)$ is a function which is analytic on $|w|\ge R$ and satisfies $|F(w)|\to 0$ uniformly as $|w|\to\infty$. Then
	\begin{equation*}
		\begin{split}
		&\sum_{y_N=0}^{x+y-x_N}
		 \sum_{y_{N-1}=y_N}^{x+y-x_{N-1}}
		 \cdots 
		 \sum_{y_{n+1}=y_{n+2}}^{x+y-x_{n+1}}
		 \sum_{y_{n-1}=y}^{x+y-x_{n-1}}
		 \cdots
		 \sum_{y_1=y_2}^{x+y-x_1}
		 \det\left[ \int_{|w|=R} 
		            (w+1)^{y_j}w^{-j+i}F(w)
		            \ddbar{w}{}
		     \right]_{i,j=1}^N\\
		&= \det\left[
		          \int_{|w|=R}
		                        (w+1)^{x+y-x_j+1_{j\ne n}}w^{-j+i-1_{j\ne n}}F(w)
		                    \ddbar{w}{}
		       \right]_{i,j=1}^N.
		\end{split}
	\end{equation*}
\end{lm}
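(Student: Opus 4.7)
The plan is to expand the determinant on the LHS by multilinearity in its columns and then perform the nested summations one variable at a time via the geometric identity
\begin{equation*}
	\sum_{k=a}^{b}(w+1)^k = \frac{(w+1)^{b+1}-(w+1)^a}{w}.
\end{equation*}
Set $a_{i,j}(\eta):=\int_{|w|=R}(w+1)^\eta w^{-j+i}F(w)\,\ddbar{w}{}$, so the LHS matrix entry is $a_{i,j}(y_j)$, and let $\tilde a_{i,j}:=\int_{|w|=R}(w+1)^{x+y-x_j+1}w^{-j+i-1}F(w)\,\ddbar{w}{}$; these $\tilde a_{i,j}$ are exactly the columns on the RHS for $j\ne n$, while the $j=n$ column of the RHS equals $a_{i,n}(y)$ (using $x_n=x$).

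First I would sum out the left chain in innermost-outward order $y_1,y_2,\ldots,y_{n-1}$. At step $j$, the geometric identity rewrites column $j$ as $\tilde a_{i,j}-a_{i,j+1}(y_{j+1})$, where the subtracted term is literally the $(j{+}1)$-st column of the current matrix (for $j=n-1$ the lower bound is $y=y_n$, and the cancellation is against column $n$). Column multilinearity splits the determinant and the piece containing the lower boundary term has two equal columns, hence vanishes; so column $j$ is effectively replaced by $\tilde a_{i,j}$. The same procedure applied to $y_{n+1},y_{n+2},\ldots,y_{N-1}$ converts every column $j\in\{n+1,\ldots,N-1\}$ to $\tilde a_{i,j}$ by cancellation with column $j+1$.

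The remaining and most delicate step is the outermost sum $y_N=0,\ldots,x+y-x_N$. Here the geometric identity produces the upper term $\tilde a_{i,N}$ and a lower term $-b_i$ with $b_i:=\int_{|w|=R}w^{-N+i-1}F(w)\,\ddbar{w}{}$, which no longer matches any other column. I would then show $b_i=0$ for every $i$ using the hypotheses on $F$: since $F$ is analytic on $|w|\ge R$, the contour can be deformed to $|w|=R'$ for any $R'>R$, yielding the bound $|b_i|\le (R')^{i-N}\max_{|w|=R'}|F(w)|$. For $i\le N-1$ the factor $(R')^{i-N}$ alone forces $o(1)$ as $R'\to\infty$, and for $i=N$ the bound reduces to $\max_{|w|=R'}|F(w)|\to 0$ by the uniform-decay assumption. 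Thus all $b_i$ vanish, column $N$ becomes $\tilde a_{i,N}$, and collecting the surviving columns gives the RHS. The main obstacle is precisely this last boundary term at $y_N=0$: unlike every other lower boundary term it does not cancel against a neighboring column, and killing it is exactly what forces the hypothesis that $F$ is analytic on $|w|\ge R$ and decays uniformly at infinity.
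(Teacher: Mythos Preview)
Your proposal is correct and follows essentially the same route as the paper: bring the sums inside the determinant via column multilinearity, use the geometric series to produce an upper boundary term (the desired column) and a lower boundary term that coincides with the adjacent column and hence drops out, and finally kill the leftover boundary term at $y_N=0$ by deforming the contour to infinity using the analyticity and uniform decay of $F$. The paper's proof is terser but identical in substance.
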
 
\begin{proof}[Proof of Lemma~\ref{lem:sum_Y}]
	Due to the linearity of determinant, we can take the summation of the columns inside the determinant. For each $j=1,\cdots,n-1,n+1,\cdots,N-1$, we have
	\begin{equation*}
		\begin{split}
		&\sum_{y_{j}=y_{j+1}}^{x+y-x_j} 
	  	  \int_{|w|=R} 
		  (w+1)^{y_j}w^{-j+i}F(w)
		  \ddbar{w}{}\\
		  &
		 =\int_{|w|=R} 
		  (w+1)^{x+y-x_j+1}w^{-j-1+i}F(w)
		  \ddbar{w}{}
		  -\int_{|w|=R} 
		  (w+1)^{y_{j+1}}w^{-j-1+i}F(w)
		  \ddbar{w}{},
		\end{split}
	\end{equation*}
where the second term matches the corresponding entry in the $(j+1)$-th column. Therefore we can remove this term without changing the determinant. For the summation over $y_N$, we have a similar identity where the second term becomes
\begin{equation*}
	\int_{|w|=R} 
	w^{-N-1+i}F(w)
	\ddbar{w}{}=0
\end{equation*}
by deforming the contour to infinity. We complete the proof by combining the above summations.
\end{proof}

\bigskip
Now we come back to~\eqref{eq:PA_01}. We reorder the rows and columns in the second determinant by replacing $i\to N+1-i$ and $j\to N+1-j$, and apply Lemma~\ref{lem:sum_Y} with $F(w) = (w+1)^{M-m-1}(w+1-q)^{-M+m}$. We have
\begin{equation}
	\label{eq:PA_02}
	\begin{split}
		\prob(A)
		&= (1-q)^{MN} \sum   \det 
		\left[
		\int_{|w|=R}
		(w+1)^{x_j+m-1}w^{j-i}(w+1-q)^{-m}\ddbar{w}{}
		\right]_{i,j=1}^N\\
		&\qquad\qquad\qquad \quad \cdot
		 \det\left[
	           \int_{|w|=R}
		          (w+1)^{-x_j+x+y+M-m-1_{j=N}}w^{-j-1+i+1_{j=n}}(w+1-q)^{-M+m}
		       \ddbar{w}{}
		     \right]_{i,j=1}^N,
	\end{split}
\end{equation}
where the summation is over all $0\le x_1\le\cdots\le x_N\le x+y$ with $x_n=x$.
\bigskip

In the next step, we consider the sum over $X$ in~\eqref{eq:PA_02}. We first apply the following Cauchy-Binet/Andreief’s formula in~\eqref{eq:PA_02}
\begin{equation*}
	\det  \left[
	           \int f_i(z)g_j(z) \mathrm{d}\mu(z)
	      \right]_{i,j=1}^N
	=\frac{1}{N!}
	    \int\cdots\int 
	       \det  \left[ f_i(z_j) \right]_{i,j=1}^N
	       \det  \left[ g_i(z_j) \right]_{i,j=1}^N
	       \mathrm{d}\mu(z_1)\cdots \mathrm{d}\mu(z_N).
\end{equation*}
We also relabel the variables to avoid confusions. Recall the functions $\tilde F_1$ and $\tilde F_2$ defined in~\eqref{eq:tilde_F}. We have
\begin{equation*}
\begin{split}
	&\det 
	\left[
	\int_{|w|=R}
	(w+1)^{x_j+m-1}w^{j-i}(w+1-q)^{-m}\ddbar{w}{}
	\right]_{i,j=1}^N\\
	&=\frac{1}{N!}\prod_{i=1}^N 
	\int_{|w_{i}^{(1)}|=R_1} \ddbar{w_{i}^{(1)}}{}
	\tilde F_1\left( W^{(1)} \right) \det\left[ 
	\left( w_i^{(1)}+1\right)^{x_j}
	\left(w_i^{(1)}\right)^j
	\right]_{i,j=1}^N\det\left[\left(w_i^{(1)}\right)^{N-j}\right]_{i,j=1}^N
\end{split}
\end{equation*}
and
\begin{equation*}
	\begin{split}
		&\det\left[
		\int_{|w|=R}
		(w+1)^{-x_j+x+y+M-m-1_{j=N}}w^{-j-1+i+1_{j=n}}(w+1-q)^{-M+m}
		\ddbar{w}{}
		\right]_{i,j=1}^N\\
		&=\frac{1}{N!}\prod_{i=1}^N 
		\int_{|w_{i}^{(2)}|=R_2} \ddbar{w_{i}^{(2)}}{}
		\tilde F_2\left( W^{(2)} \right) \det\left[ 
		\left( w_i^{(1)}+1\right)^{-x_j-1_{j=N}}
		\left(w_i^{(1)}\right)^{-j+1_{j=N}}
		\right]_{i,j=1}^N\det\left[\left(w_i^{(2)}\right)^{j-1}\right]_{i,j=1}^N.
	\end{split}
\end{equation*}
Thus we write
\begin{equation}
	\label{eq:PA_03}
	\begin{split}
		\prob(A) & =(-1)^{N(N-1)/2} \frac{(1-q)^{MN}}{(N!)^2}
		              \prod_{i=1}^N 
		                    \int_{|w_{i}^{(1)}|=R_1} \ddbar{w_{i}^{(1)}}{}		              
		                    \int_{|w_{i}^{(2)}|=R_2} \ddbar{w_{i}^{(2)}}{}
		              \tilde F_1\left( W^{(1)} \right)
		              \tilde F_2\left( W^{(2)} \right)\\
		         & \quad \cdot \Delta\left(W^{(1)}\right)
		                       \Delta\left(W^{(2)}\right)
		                \cdot S\left(W^{(1)}, W^{(2)}\right),
	\end{split}
\end{equation}
where $W^{(1)}=\left(w_1^{(1)},\cdots,w_N^{(1)}\right)$, $W^{(2)}=\left(w_1^{(2)},\cdots,w_N^{(2)}\right)$.  We also rewrote $\det\left[\left(w_{i}^{(\ell)}\right)^{j-1}\right]_{i,j=1}^N = \Delta\left(W^{(\ell)}\right)$ for both $\ell=1,2$. Finally, the function
\begin{equation}
	\label{eq:SW}
	S\left(W^{(1)};W^{(2)}\right)
	:=\sum_{X} 
	     \det\left[ 
               	\left( w_i^{(1)}+1\right)^{x_j}
             	\left(w_i^{(1)}\right)^j
	         \right]_{i,j=1}^N
	     \det\left[ 
	            \left( w_i^{(2)}+1\right)^{-x_j-1_{j=n}}
	            \left(w_i^{(2)}\right)^{-j+1_{j=n}}
	         \right]_{i,j=1}^N,
\end{equation}
where the summation is over all $0\le x_1\le\cdots\le x_N\le x+y$ with fixed $x_n=x$.

Note that the summation over $X$ only appears in the function $S\left(W^{(1)};W^{(2)}\right)$. Our goal in this step is to evaluate this summation explicitly. We remark that this summation without the extra $1_{j=n}$ in the exponents can be simplified to a compact formula if all the coordinates of $W^{(\ell)}$ satisfy a so-called Bethe equation, see \cite[Proposition 5.2]{Baik-Liu17}. However, here we do not have the Bethe roots structure for the coordinates and the resulting formulas are more complicated.

To proceed, we need an identity to expand the determinants in~\eqref{eq:SW}. By using the Laplace expansion of the determinant along the $n$-th column and the Cauchy-Binet formula for the cofactors, we have the identity
\begin{equation*}
	\det\left[A_{i,j}\right]_{i,j=1}^N 
	  = \sum_{\ell}(-1)^{\ell+n}A_{\ell,n}
	    \sum_{
	    	  \substack{ I_1\cup I_2=\{1,\cdots,N\}\setminus\{\ell\}\\
	    	  	         |I_1|=n-1, |I_2|=N-n
	    	           }
    	     }
         (-1)^{\#(I,J)}
         \det\left[A_{i,j}\right]_{\substack{i\in I_1 \\ 1\le j\le n-1}}
         \det\left[A_{i,j}\right]_{\substack{i\in I_2 \\ n+1\le j\le N}},
\end{equation*}
where
\begin{equation}
	\#(I_1,I_2):=\text{ the number of pairs $(i_1,i_2)\in I_1\times I_2$ such that $i_1>i_2$}.
\end{equation}
We apply the above identity in~\eqref{eq:SW} and change the order of summations. This leads to
\begin{equation}
	\label{eq:SW2}
	\begin{split}
		&S\left(W^{(1)};W^{(2)}\right)\\
		&=\sum_{\ell_1,\ell_2\ge1} (-1)^{\ell_1+\ell_2}
		               \frac{\left(w_{\ell_1}^{(1)} + 1\right)^x
		               	     \left(w_{\ell_1}^{(1)}\right)^n               	     
		                    }
		                    {\left(w_{\ell_2}^{(2)} + 1\right)^{x+1}
		                     \left(w_{\ell_2}^{(2)}\right)^{n-1}
		                    }
	       \sum_{\substack{
	     		                 I_1^{(1)} \cup I_2^{(1)}
	     		                  = \{1,\cdots,N\} \setminus\{\ell_1 \}\\
	     		                 I_1^{(2)} \cup I_2^{(2)}
	     		                  = \{1,\cdots,N\} \setminus\{\ell_2 \}\\
	     		                 |I_1^{(1)}| = |I_1^{(2)}| =n-1\\
	     		                 |I_2^{(1)}| = |I_2^{(2)}| =N-n
	     	                    }
	                 }
              (-1)^{\#(I_1^{(1)},I_2^{(1)}) +\#(I_1^{(2)},I_2^{(2)})}\\
&\quad           \frac{\prod_{i\in I_2^{(1)}} \left(w_i^{(1)}\right)^n}
                      {\prod_{i\in I_2^{(2)}} \left(w_i^{(2)}\right)^n}
                 \hat S_{0,x}\left( W^{(1)}_{I_1^{(1)}}, W^{(2)}_{I_1^{(2)}}\right)
                 \hat S_{x,x+y} \left( W^{(1)}_{I_2^{(1)}}, W^{(2)}_{I_2^{(2)}} \right),
	\end{split}
\end{equation}
where for simplification we use the notation $W_I$ for the vector with coordinates $w_i$'s satisfying $i\in I$. More explicitly, $W_I=(w_{i_1},w_{i_2},\cdots,w_{i_k})$ for any $I=(i_1,\cdots,i_k)$. The function
\begin{equation*}
	\hat S_{a,b} \left(W,W'\right)
	  := \sum_{a\le x_1\le\cdots\le x_k\le b} 
	        \det\left[ (w_i+1)^{x_j} w_i^j\right]_{1\le i,j\le k}
	        \det\left[ (w'_i+1)^{-x_j} (w'_i)^{-j}\right]_{1\le i,j\le k}
\end{equation*}
for any $a\le b$ and vectors $W$ and $W'$ of the same size. Here $k$ is the size of $W$ and $W'$, $w_i$'s and $w'_i$'s are the coordinates of $W$ and $W'$ respectively.

We have the following identity to simplify $\hat S_{a,b} \left(W,W'\right)$.
\begin{lm}\cite{Baik-Liu17}
	\label{lem:double_sum}
	We have
	\begin{equation}
		\label{eq:S_ab}
		\hat S_{a,b}(W,W')
		=\det\left[
		          \frac{1}{-w_i+w'_{i'}}
		          \cdot
		          \frac{w_i (w_i+1)^a} {w'_{i'} (w'_{i'}+1)^{a-1}}
		          +
		          \frac{1}{w_i-w'_{i'}}
		          \cdot
		          \frac{w_i^k (w_i+1)^{b+1}} {(w'_{i'})^k (w'_{i'} +1)^b}
		     \right]_{i,i'=1}^k.
	\end{equation}
\end{lm}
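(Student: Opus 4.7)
The plan is to evaluate $\hat S_{a,b}(W,W')$ directly by combining Andreief-type manipulations with the explicit geometric sum formula $\sum_{x=a}^b r^x = (r^{b+1}-r^a)/(r-1)$. First, I would extract the overall factor $\prod_i w_i/w'_i$ by pulling $w_i$ out of row $i$ of the first determinant and $(w'_{i'})^{-1}$ out of row $i'$ of the second. Next, I would expand both determinants as signed sums over permutations $\sigma,\tau\in S_k$ and interchange with the sum over $x_j$. After these manipulations, for each pair $(\sigma,\tau)$ the inner object becomes the ``ordered monomial sum''
\begin{equation*}
I_{a,b}(r_1,\ldots,r_k)=\sum_{a\le x_1\le\cdots\le x_k\le b}\prod_{j=1}^k r_j^{x_j},\qquad r_j=\frac{w_{\sigma(j)}+1}{w'_{\tau(j)}+1},
\end{equation*}
multiplied by the permutation prefactor $\operatorname{sgn}(\sigma\tau)\prod_j w_{\sigma(j)}^{j-1}(w'_{\tau(j)})^{-(j-1)}$.

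The second step is to evaluate $I_{a,b}$ in closed form. Iterating the one-variable geometric sum from $x_k$ inward, or equivalently introducing gap variables $z_1=x_1-a$, $z_j=x_j-x_{j-1}$ ($j\ge 2$) and slack $z_{k+1}=b-x_k$ and extracting the coefficient of $t^{b-a}$ from $\prod_{\ell=1}^{k+1}(1-tR_\ell)^{-1}$ (with $R_\ell:=r_\ell r_{\ell+1}\cdots r_k$ and $R_{k+1}:=1$), one obtains a sum of up to $2^k$ ``boundary'' terms, each corresponding to an assignment of $\{a,b+1\}$ to each variable. I expect that upon resumming over the permutations $(\sigma,\tau)$, the ``mixed'' boundary contributions cancel by antisymmetry, leaving only the two ``extreme'' configurations: all $x_j=a$, which reassembles into the first summand of the claimed entry (with factor $w_i(w_i+1)^a/[w'_{i'}(w'_{i'}+1)^{a-1}]$), and all $x_j=b+1$, which gives the second summand (with the characteristic power $w_i^k/(w'_{i'})^k$ arising from combining the permutation prefactor $\prod_j w_{\sigma(j)}^{j-1}(w'_{\tau(j)})^{-(j-1)}$ with $(w_i+1)^{b+1}/(w'_{i'}+1)^b$). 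Finally, Andreief's identity in reverse reassembles the resulting double sum over $(i,i')$ into the single $k\times k$ determinant.

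The main obstacle is the cancellation of the $2^k-2$ ``mixed'' boundary contributions upon antisymmetrization: making this rigorous requires a careful combinatorial identity whose source is the sign structure of the double sum over $(\sigma,\tau)$. A cleaner alternative which I would pursue in parallel is induction on $k$. The base case $k=1$ follows from the geometric sum formula and matches the two-term structure exactly after multiplying by $w_1/w'_1$. For the inductive step, I would Laplace-expand the right-hand determinant along its last row, and match each resulting $(k-1)\times(k-1)$ sub-determinant with an analogous reduction of $\hat S_{a,b}$ obtained by peeling off the largest variable $x_k$ (i.e., writing $\hat S_{a,b}=\sum_{y=a}^b\hat S_{a,y}^{(k-1)}\cdot(\text{column contribution at }x_k=y)$) and applying the inductive hypothesis to the inner $(k-1)$-dimensional sum; the two-term structure of each entry in the final determinant then emerges naturally from the two boundary values produced at each inductive step.
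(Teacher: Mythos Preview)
Your approach is genuinely different from the paper's. The paper does not evaluate the ordered monomial sum directly; instead it imports from \cite{Baik-Liu17} a closed formula for the analogous sum with $x_1$ \emph{fixed}, written as a difference of two determinants each of the form $\det[A'+B']$. Summing this identity over $x_1\in\{a,\dots,b\}$ telescopes; one of the two surviving determinants turns out to have rank $<k$ (its entries, after cancelling $1/(w_i-w'_{i'})$, are a sum of $k-1$ rank-one pieces) and hence vanishes, leaving exactly the claimed single determinant. The paper's proof is therefore short because the hard work is delegated to \cite{Baik-Liu17}.

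Your direct approach has a concrete gap at the cancellation step. The expectation that ``only the two extreme boundary configurations survive'' is incorrect: the target $\det[A+B]$ is \emph{not} $\det[A]+\det[B]$. Already for $k=2$ one checks that the cross terms $A_{11}B_{22}-A_{12}B_{21}$ and $B_{11}A_{22}-B_{12}A_{21}$ do not vanish. In fact, the partial-fraction evaluation of $I_{a,b}(r_1,\dots,r_k)$ produces $k{+}1$ terms (not $2$, nor the raw $2^k$ from naive iteration): writing $R_m=r_m\cdots r_k$ and $R_{k+1}=1$, one has $I_{a,b}=R_1^{\,a}\sum_{m=1}^{k+1}C_mR_m^{\,b-a}$, and the $m$-th term carries the ``staircase'' exponent pattern $(a,\dots,a,b,\dots,b)$ with the split at position $m$, plus a nontrivial coefficient $C_m=\prod_{\ell\ne m}(1-R_\ell/R_m)^{-1}$ that itself depends on the $r_j$'s. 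After antisymmetrizing in $(\sigma,\tau)$ these $k{+}1$ pieces must recombine into the full $\det[A+B]$, including all cross terms; the mechanism is not ``mixed terms cancel.'' Your inductive alternative is much closer in spirit to what \cite{Baik-Liu17} actually does (peel off one variable, telescope), but the crucial step---summing the $(k{-}1)\times(k{-}1)$ determinants in the peeled variable and reassembling into a $k\times k$ determinant---is exactly the nontrivial lemma (Lemma~5.9 in \cite{Baik-Liu17}) that the paper cites; your outline does not supply a substitute for it.
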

\begin{proof}[Proof of Lemma~\ref{lem:double_sum}]
	The main technical part of the summation was included in \cite{Baik-Liu17}. Here we simply mention how to arrive~\eqref{eq:S_ab} using the known results in \cite{Baik-Liu17}.
	
	In \cite{Baik-Liu17}, the authors introduced a similar sum $H_a(W;W')$, where $W$ and $W'$ both are of size $N$. See equation (5.6) in \cite{Baik-Liu17}. It reads
	\begin{equation*}
		H_a(W;W') = \sum_{a-1= x_1\le\cdots\le x_N\le a+L-N-1} 
		               \det\left[ (w'_i+1)^{x_j} (w'_i)^j\right]_{1\le i,j\le N}
		               \det\left[ (w_i+1)^{-x_j} w_i^{-j}\right]_{1\le i,j\le N}.
	\end{equation*}
    Here we emphasize that $x_1=a-1$ is fixed in this summation. We also remark that the original definition of $H_a(W;W')$ assumes that the coordinates of $W$ and $W'$ are  roots of the so-called Bethe equation, but we will only cite the identities in \S5.1-5.3 in \cite{Baik-Liu17} where the Bethe roots properties are not used. 
	
	The equation (5.44) in \cite{Baik-Liu17} can be viewed as a difference of two terms. We apply Lemma 5.9 of \cite{Baik-Liu17} for each term and rewrite the equation as
	\begin{equation*}
		\begin{split}
			H_a(W,W')&= \det\left[
			                     \frac{1}{w_i-w'_{i'}}
			                     \cdot
			                     \frac{w'_{i'} (w'_{i'}+1)^{a-1}}{w_i (w_i+1)^{a-2}}
			                    +\frac{1}{-w_i+w'_{i'}}
			                     \cdot
			                     \frac{(w'_{i'})^N (w'_{i'}+1)^{a+L-N}}
			                          {w_i^N (w_i+1)^{a+L-N-1}}
			                \right]_{i,i'=1}^N\\
			      &\quad -\det\left[
			                 \frac{1}{w_i-w'_{i'}}
			                 \cdot
			                 \frac{w'_{i'} (w'_{i'}+1)^{a}}{w_i (w_i+1)^{a-1}}
			                 +\frac{1}{-w_i+w'_{i'}}
			                 \cdot
			                 \frac{(w'_{i'})^N (w'_{i'}+1)^{a+L-N}}
			                      {w_i^N (w_i+1)^{a+L-N-1}}
			                \right]_{i,i'=1}^N.
		\end{split}
	\end{equation*}
We replace $a+L-N-1$ by $b$, and then $a-1$ by $a$, and get
\begin{equation*}
	\begin{split}
		&\sum_{a= x_1\le\cdots\le x_N\le b} 
		   \det\left[ (w'_i+1)^{x_j} (w'_i)^j\right]_{1\le i,j\le N}
		   \det\left[ (w_i+1)^{-x_j} w_i^{-j}\right]_{1\le i,j\le N}\\
		&= \det\left[
		\frac{1}{w_i-w'_{i'}}
		\cdot
		\frac{w'_{i'} (w'_{i'}+1)^{a}}{w_i (w_i+1)^{a-1}}
		+\frac{1}{-w_i+w'_{i'}}
		\cdot
		\frac{(w'_{i'})^N (w'_{i'}+1)^{b+1}}
		{w_i^N (w_i+1)^{b}}
		\right]_{i,i'=1}^N\\
		&\quad -\det\left[
		\frac{1}{w_i-w'_{i'}}
		\cdot
		\frac{w'_{i'} (w'_{i'}+1)^{a+1}}{w_i (w_i+1)^{a}}
		+\frac{1}{-w_i+w'_{i'}}
		\cdot
		\frac{(w'_{i'})^N (w'_{i'}+1)^{b+1}}
		{w_i^N (w_i+1)^{b}}
		\right]_{i,i'=1}^N.  
	\end{split}
\end{equation*}
So far $x_1=a$ is fixed. Now by summing the above identity for all $x_1$ from $a$ to $b$, we get
\begin{equation*}
	\begin{split}
		\hat S_{a,b}(W',W)
		&=\det\left[
		        \frac{1}{w_i-w'_{i'}}
		        \cdot
	         	\frac{w'_{i'} (w'_{i'}+1)^{a}}{w_i (w_i+1)^{a-1}}
	          + \frac{1}{-w_i+w'_{i'}}
		        \cdot
		        \frac{(w'_{i'})^N (w'_{i'}+1)^{b+1}}
		             {w_i^N (w_i+1)^{b}}
		      \right]_{i,i'=1}^N\\
		 &\quad -\det\left[
		                 \frac{1}{w_i-w'_{i'}}
		                 \cdot
		                 \frac{w'_{i'} (w'_{i'}+1)^{b+1}}{w_i (w_i+1)^{b}}
		               + \frac{1}{-w_i+w'_{i'}}
		                 \cdot
		                 \frac{(w'_{i'})^N (w'_{i'}+1)^{b+1}}
	                          {w_i^N (w_i+1)^{b}}
		             \right]_{i,i'=1}^N.
	\end{split}
\end{equation*}
It is easy to see that the second determinant is zero. %The proof of this fact can also find in \cite{Baik-Liu17}, see the last equations in \cite[p.648]{Baik-Liu17}. 
Therefore we obtain a formula for $\hat S_{a,b}(W',W)$ with a single determinant. By switching $W$ and $W'$, and replace the size $N$ by $k$, we obtain~\eqref{eq:S_ab}.
\end{proof}

Now we apply Lemma~\ref{lem:double_sum} to~\eqref{eq:SW2}. We also use the identity
\begin{equation*}
	\begin{split}
	&\sum_{\substack{I_1\cup J_1 = \{1,\cdots,L\}\\
	                 J_1\cup J_2 = \{1,\cdots,L\}\\
                     |I_1|=|I_2|=n-1\\
                     |J_1|=|J_2|=L-n+1}}
                 (-1)^{\#(I_1,J_1)+\#(I_2,J_2)}
                 \det\left[A(i_1,j_1)\right]_{\substack{i_1\in I_1\\ j_1\in J_1}}
                 \det\left[B(i_2,j_2)\right]_{\substack{i_2\in I_2\\ j_2\in J_2}}\\
    &=\oint_0 \det\left[\rz A(i,j) +B(i,j)\right]_{i,j=1}^L \ddbar{\rz}{\rz^n},
    \end{split}
\end{equation*}
which follows from the multilinearity of the determinant on the rows and the Cauchy-Binet formula. It can also be derived from Lemma 5.9 of \cite{Baik-Liu17}. Then we arrive at
\begin{equation}
	\label{eq:SW3}
	\begin{split}
		&S\left(W^{(1)};W^{(2)}\right)
		=\sum_{\ell_1,\ell_2\ge1} (-1)^{\ell_1+\ell_2}
		\frac{\left(w_{\ell_1}^{(1)} + 1\right)^x
			\left(w_{\ell_1}^{(1)}\right)^n               	     
		}
		{\left(w_{\ell_2}^{(2)} + 1\right)^{x+1}
			\left(w_{\ell_2}^{(2)}\right)^{n-1}
		}\\
	    &\oint_0 \frac{\mathrm{dz}}{2\pi\mathrm{i}\mathrm{z}^n}
	     \det\left[
	               \frac{\rz}{-w_{i_1}^{(1)}+w_{i_2}^{(2)}}
	               \cdot
	               \frac{w_{i_1}^{(1)}\left(w_{i_2}^{(2)}+1\right)}
	                    {w_{i_2}^{(2)}}
	              +\frac{\rz}{w_{i_1}^{(1)}-w_{i_2}^{(2)}}
	               \cdot
	               \frac{\left(w_{i_1}^{(1)}\right)^{n-1}
	               	     \left(w_{i_1}^{(1)}+1\right)^{x+1}
               	        }
	                    {\left(w_{i_2}^{(2)}\right)^{n-1}
	                     \left(w_{i_2}^{(2)}+1\right)^{x}
                        }\right.\\
        &\qquad\quad\left.
                  +\frac{1}{-w_{i_1}^{(1)}+w_{i_2}^{(2)}}
                   \cdot
                   \frac{\left(w_{i_1}^{(1)}\right)^{n+1}
                   	     \left(w_{i_1}^{(1)}+1\right)^x
                        }
                        {\left(w_{i_2}^{(2)}\right)^{n+1}
                         \left(w_{i_2}^{(2)}+1\right)^{x-1}
                        }
                  +\frac{1}{w_{i_1}^{(1)}-w_{i_2}^{(2)}}
                   \cdot
                   \frac{\left(w_{i_1}^{(1)}\right)^{N}
                  	     \left(w_{i_1}^{(1)}+1\right)^{x+y+1}
                        }
                        {\left(w_{i_2}^{(2)}\right)^{N}
                  	     \left(w_{i_2}^{(2)}+1\right)^{x+y}
                        }
	         \right]_{\substack{i_1\ne \ell_1\\
	                            i_2\ne \ell_2}}.
	\end{split}
\end{equation}
By inserting this formula to~\eqref{eq:PA_03}, we obtain Lemma~\ref{lem:prob_A}.

\subsection{Proof of Proposition~\ref{prop:p_step2}}
\label{sec:proof_step2}

In this subsection, we prove Proposition~\ref{prop:p_step2}. There are two main steps in the proof. In the first step we will deform the contours and get rid of term $D_\rz$ in~\eqref{eq:finite_time_formula01}. In the second step we will evaluate the summation over $\ell_1$ and $\ell_2$.

\subsubsection{Step 1: Deforming the contours}

We first realize that
\begin{equation*}
	C_\rz(w_1,w_2)+D_\rz(w_1,w_2)
	  = 
        \frac{ \rz } {w_1-w_2}
	    \frac{ w_1^{n-1} e^{s_1 w_1} }
             { w_2^{n-1} e^{s_1 w_2} }
	 +  \frac{ 1 } {-w_1+w_2}
	    \frac{ w_1^{n+1} e^{s_1 w_1} }
	         { w_2^{n+1} e^{s_1 w_2} }
	 +  \frac{ \rz } {-w_1+w_2}
        \frac{ w_1 } { w_2 }
     +  \frac{ 1 } {w_1-w_2}
        \frac{ w_1^{N} e^{(s_1+s_2) w_1} }
             { w_2^{N} e^{(s_1+s_2) w_2} }
\end{equation*}
does not have a pole at $w_1=w_2$. Hence the integrand in~\eqref{eq:finite_time_formula01} only has poles at $0$ and $-1$. Furthermore, we can rewrite the $w_{i_1}^{(1)}$ integrals as
\begin{equation}
	\label{eq:contour_repalce01}
	\int_{|w_{i_1}^{(1)}|=R_1} \ddbar{w_{i_1}^{(1)}}{}
	=\frac{-\rz}{1-\rz} \int_{\Sigma_{\out}}
	  \ddbar{w_{i_1}^{(1)}}{}
	+\frac{1}{1-\rz}  \int_{\Sigma_{\inn}}
	  \ddbar{w_{i_1}^{(1)}}{}
\end{equation}
and the $w_{i_2}^{(2)}$ integrals as
\begin{equation}
		\label{eq:contour_repalce02}
	\int_{|w_{i_2}^{(2)}|=R_2} \ddbar{w_{i_2}^{(2)}}{}=\int_{\Sigma} \ddbar{w_{i_2}^{(2)}}{}
\end{equation}
without changing the value of~\eqref{eq:finite_time_formula01}. After we change the order of summation and integrals, we have
	\begin{equation}
	\label{eq:finite_time_formula03}
	\begin{split}
		& p(s_1,s_2;m,n,M,N) \\
		& = \frac{(-1)^{N(N-1)/2}}
		{(N!)^2}
		\oint_0\ddbar{\rz}{\rz^{n}}
		\sum_{\ell_1,\ell_2=1}^N (-1)^{\ell_1+\ell_2}
		\left( \frac{-\rz}{1-\rz} \int_{\Sigma_{\out}}
		                          \ddbar{w_{\ell_1}^{(1)}}{}
		      +\frac{1}{1-\rz}  \int_{\Sigma_{\inn}}
		                          \ddbar{w_{\ell_1}^{(1)}}{}
		\right)
		\int_{\Sigma} \ddbar{w_{\ell_2}^{(2)}}{}
		   \frac{ \left(w_{\ell_1}^{(1)}\right)^n e^{s_1 w_{\ell_1}^{(1)}} }
		        { \left(w_{\ell_2}^{(2)}\right)^{n-1} e^{s_1 w_{\ell_2}^{(2)}} }\\
		&
		\prod_{i_1\ne \ell_1} \left( \frac{-\rz}{1-\rz} \int_{\Sigma_{\out}}
		                                                \ddbar{w_{i_1}^{(1)}}{}
	                             	+\frac{1}{1-\rz}  \int_{\Sigma_{\inn}}
		                                              \ddbar{w_{i_1}^{(1)}}{}
		                      \right)
		\prod_{i_2\ne \ell_2} \int_{\Sigma} \ddbar{w_{i_2}^{(2)}}{}
		\\
		&\quad \cdot
		\Delta\left(W^{(1)}\right)
		\Delta\left(W^{(2)}\right)
		\tilde f_1\left( W^{(1)} \right)
		\tilde f_2\left( W^{(2)} \right)
		\det\left[
		C_\rz\left(w_{i_1}^{(1)},w_{i_2}^{(2)}\right)
		+
		D_\rz\left(w_{i_1}^{(1)},w_{i_2}^{(2)}\right)
		\right]_{\substack{i_1\ne \ell_1,\\
				i_2\ne \ell_2}
		}.
	\end{split}   
\end{equation}

 Although this rewriting seems simple, it turns out with these changes, we can drop the term $D_\rz$ in the integrand, following from the lemma below.

\begin{lm}
	\label{lem:remove_D}
	Suppose $\Sigma$ and $\Sigma'$ are contours on the complex plane, $\mathrm{d}\mu(w)$ and $\mathrm{d}\mu'(w')$ are two measures on these contours respectively. Suppose $C(w,w')$ and $D(w,w')$ are two complex-valued functions on $\Sigma\times\Sigma'$, and $B(w_1,\cdots,w_N;w_1',\cdots,w'_N)$ is a complex-valued function defined on $\Sigma^N\times(\Sigma')^N$. Assume that
	\begin{equation*}
		\int_{\Sigma^N}\int_{(\Sigma)^N} \left| B(w_1,\cdots,w_N;w_1',\cdots,w'_N) \right| \cdot \prod_{i=1}^N \left(\left|C(w_i,w'_{\sigma(i)})\right|+\left|D(w_i,w'_{\sigma(i)})\right|\right) \prod_{i=1}^N \left|\mathrm{d}\mu(w_i)\right|
		\prod_{i'=1}^N\left|\mathrm{d}\mu'(w'_{i'})\right|<\infty
	\end{equation*}
for each permutation $\sigma\in S_N$. We further assume that
\begin{equation}
	\label{eq:condition_remove_D}
	\int_\Sigma\int_{\Sigma'}  B(w_1,\cdots,w_N;w_1',\cdots,w'_N) D(w_i,w'_{i'})\mathrm{d}\mu(w_i) \mathrm{d}\mu'(w'_{i'})=0
\end{equation}
for any $1\le i,i'\le N$, and any $w_\ell\in\Sigma, \ell\ne i$, any $w'_{\ell'}\in\Sigma', \ell'\ne i'$. Then we have
\begin{equation}
	\label{eq:remove_D}
	\begin{split}
	&\int_{\Sigma^N}\int_{(\Sigma)^N}
	    B(w_1,\cdots,w_N;w_1',\cdots,w'_N) 
	   \cdot
	   \det\left[C(w_i,w'_{i'}) +D(w_i,w'_{i'})\right]_{i,i'=1}^N \prod_{i=1}^N \mathrm{d}\mu(w_i)\prod_{i'=1}^N\mathrm{d}\mu'(w'_i)\\
	&=\int_{\Sigma^N}\int_{(\Sigma)^N}
	B(w_1,\cdots,w_N;w_1',\cdots,w'_N) 
	\cdot
	\det\left[C(w_i,w'_{i'})\right]_{i,i'=1}^N \prod_{i=1}^N \mathrm{d}\mu(w_i)\prod_{i'=1}^N\mathrm{d}\mu'(w'_i).
	\end{split}
\end{equation}
\end{lm}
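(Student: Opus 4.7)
The plan is to expand the determinant by multilinearity and exploit the vanishing hypothesis on every summand that retains at least one factor of $D$. First, apply the Leibniz formula together with the subset expansion of each product of sums:
\begin{equation*}
\det\bigl[C(w_i,w'_{i'})+D(w_i,w'_{i'})\bigr]_{i,i'=1}^N
=\sum_{\sigma\in S_N}\mathrm{sgn}(\sigma)\sum_{T\subseteq\{1,\dots,N\}}\prod_{i\in T}D(w_i,w'_{\sigma(i)})\prod_{i\notin T}C(w_i,w'_{\sigma(i)}).
\end{equation*}
Substitute this into the left-hand side of~\eqref{eq:remove_D}. The absolute integrability assumption (stated for every permutation $\sigma$) majorizes each summand by $|B|\prod_i(|C|+|D|)(w_i,w'_{\sigma(i)})$, which is integrable against $|\mathrm{d}\mu|\,|\mathrm{d}\mu'|$; thus Fubini's theorem justifies interchanging the finite double sum over $(\sigma,T)$ with the $2N$-fold integration.

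Second, show that every term with $T\neq\emptyset$ vanishes. Fix such a pair $(\sigma,T)$ and pick any index $i^*\in T$; set $i'^*:=\sigma(i^*)$. Because each $w_j$ appears in exactly one factor of the Leibniz product (the $j$-th) and each $w'_{j'}$ appears in exactly one factor (the $\sigma^{-1}(j')$-th), the factor $D(w_{i^*},w'_{i'^*})$ is the \emph{only} one in the summand that depends on either $w_{i^*}$ or $w'_{i'^*}$. Applying Fubini once more, carry out the $(w_{i^*},w'_{i'^*})$-integration first, with the remaining factors (functions only of the other $2N-2$ variables) treated as constants. The inner integral then equals
\begin{equation*}
\int_{\Sigma}\!\int_{\Sigma'} B(w_1,\dots,w_N;w'_1,\dots,w'_N)\,D(w_{i^*},w'_{i'^*})\,\mathrm{d}\mu(w_{i^*})\,\mathrm{d}\mu'(w'_{i'^*}),
\end{equation*}
which vanishes by hypothesis~\eqref{eq:condition_remove_D} for every fixed choice of the remaining variables. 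Hence this term contributes $0$.

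Third, the only surviving contribution is $T=\emptyset$, which yields $\sum_{\sigma}\mathrm{sgn}(\sigma)\prod_i C(w_i,w'_{\sigma(i)})=\det[C(w_i,w'_{i'})]_{i,i'=1}^N$ under the integral. This is precisely the right-hand side of~\eqref{eq:remove_D}.

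The only technical point is the bookkeeping required to apply Fubini uniformly across all the $2^N N!$ summands and across the inner/outer splitting of the integration; the absolute integrability assumption is tailored to make all of these applications routine. No further obstacle arises.
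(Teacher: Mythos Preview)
Your proof is correct and follows essentially the same approach as the paper: expand the determinant, observe that every term containing at least one $D$ factor vanishes by the hypothesis~\eqref{eq:condition_remove_D}, and conclude that only $\det[C]$ survives. The paper's proof is a one-sentence sketch of this argument, whereas you have supplied the explicit Leibniz/subset expansion and the Fubini justification that the paper leaves implicit.
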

\begin{proof}[Proof of Lemma~\ref{lem:remove_D}]
	We expand the determinants on both sides of~\eqref{eq:remove_D}. It turns out all the terms that appear on the left side but not the right side have some factor $D(w_i,w'_{i'})$ in the integrand and hence these terms are zero by the assumption~\eqref{eq:condition_remove_D}. This proves the identity.
\end{proof}

In order to apply Lemma~\ref{lem:remove_D} in~\eqref{eq:finite_time_formula03}, we need to check the assumptions. All of these assumptions are obvious except for the assumption~\eqref{eq:condition_remove_D}, which we verify below. We need to show
\begin{equation*}
	\begin{split}
	\left( \frac{-\rz}{1-\rz} \int_{\Sigma_{\out}}
	\ddbar{w_{i_1}^{(1)}}{}
	+\frac{1}{1-\rz}  \int_{\Sigma_{\inn}}
	\ddbar{w_{i_1}^{(1)}}{}
	\right)
	\int_{\Sigma} \ddbar{w_{i_2}^{(2)}}{}
	\Delta\left(W^{(1)}\right)
	\Delta\left(W^{(2)}\right)
	\tilde f_1\left( W^{(1)} \right)
	\tilde f_2\left( W^{(2)} \right)
	D_\rz\left(w_{i_1}^{(1)},w_{i_2}^{(2)}\right)
	\end{split}
\end{equation*}
equals to zero. If we insert the formulas of $\tilde f_1$ and $\tilde f_2$ (see~\eqref{eq:tilde_f}) and $D_\rz$ (see~\eqref{eq:D}) in the above formula, we only need to prove
\begin{equation}
	\label{eq:check_condition}
	\begin{split}
	&\left( \frac{-\rz}{1-\rz} \int_{\Sigma_{\out}}
	\ddbar{w_{1}}{}
	+\frac{1}{1-\rz}  \int_{\Sigma_{\inn}}
	\ddbar{w_1}{}
	\right)
	\int_{\Sigma} \ddbar{w_2}{}\\
	&\quad 
	\tilde G_1(w_1)\tilde G_2(w_2)
	w_1^{-N}(w_1+1)^{-m}
	(w_2+1)^{-M+m}e^{(s_1+s_2)w_2}	
	\left(
	\frac{ \rz } {-w_1+w_2}
	\frac{ w_1 } { w_2 }
	+  \frac{ 1 } {w_1-w_2}
	\frac{ w_1^{N} e^{(s_1+s_2) w_1} }
	{ w_2^{N} e^{(s_1+s_2) w_2} }
	\right)=0
	\end{split}
\end{equation}
for some polynomials $\tilde G_1$ and $\tilde G_2$ of degree $N-1$. Using a simple residue computation, we have
\begin{equation*}
	\begin{split}
		&\int_{\Sigma_\out}\ddbar{w_1}{}\int_\Sigma\ddbar{w_2}{}	\tilde G_1(w_1)\tilde G_2(w_2)
		w_1^{-N}(w_1+1)^{-m}
		(w_2+1)^{-M+m}e^{(s_1+s_2)w_2}	\frac{ \rz } {-w_1+w_2}
		\frac{ w_1 } { w_2 }=0,\\
		&\int_{\Sigma_\inn}\ddbar{w_1}{}\int_\Sigma\ddbar{w_2}{}	\tilde G_1(w_1)\tilde G_2(w_2)
		w_1^{-N}(w_1+1)^{-m}
		(w_2+1)^{-M+m}e^{(s_1+s_2)w_2}	\frac{ \rz } {-w_1+w_2}
		\frac{ w_1 } { w_2 }\\
		&\quad = \rz \int_\Sigma \ddbar{w}{} \tilde G_1(w) \tilde G_2(w) w^{-N} (w+1)^{-M} e^{(s_1+s_2)w},\\
		&\int_{\Sigma_\out}\ddbar{w_1}{}\int_\Sigma\ddbar{w_2}{}	\tilde G_1(w_1)\tilde G_2(w_2)
		w_1^{-N}(w_1+1)^{-m}
		(w_2+1)^{-M+m}e^{(s_1+s_2)w_2}	\frac{ 1 } {w_1-w_2}
		\frac{ w_1^N e^{(s_1+s_2)w_1} } { w_2^Ne^{(s_1+s_2)w_2} }\\
		&\quad =  \int_\Sigma \ddbar{w}{} \tilde G_1(w) \tilde G_2(w) w^{-N} (w+1)^{-M} e^{(s_1+s_2)w},\\
		&\int_{\Sigma_\inn}\ddbar{w_1}{}\int_\Sigma\ddbar{w_2}{}	\tilde G_1(w_1)\tilde G_2(w_2)
		w_1^{-N}(w_1+1)^{-m}
		(w_2+1)^{-M+m}e^{(s_1+s_2)w_2}	\frac{ 1 } {w_1-w_2}
		\frac{ w_1^N e^{(s_1+s_2)w_1} } { w_2^Ne^{(s_1+s_2)w_2} }=0.
	\end{split}
\end{equation*}
~\eqref{eq:check_condition} follows immediately.

Thus we can apply Lemma~\ref{lem:remove_D} in~\eqref{eq:finite_time_formula03}. After we remove the term $D_\rz$, we exchange the integral and summation again and obtain
	\begin{equation}
	\label{eq:finite_time_formula04}
	\begin{split}
		& p(s_1,s_2;m,n,M,N) \\
		& = \frac{(-1)^{N(N-1)/2}}
		{(N!)^2}
		\oint_0\ddbar{\rz}{\rz^{n}}
		\prod_{i_1=1}^N \left( \frac{-\rz}{1-\rz} \int_{\Sigma_{\out}}
		\ddbar{w_{i_1}^{(1)}}{}
		+\frac{1}{1-\rz}  \int_{\Sigma_{\inn}}
		\ddbar{w_{i_1}^{(1)}}{}
		\right)
		\prod_{i_2=1}^N \int_{\Sigma} \ddbar{w_{i_2}^{(2)}}{}
		\\
		&		
		\quad\Delta\left(W^{(1)}\right)
		\Delta\left(W^{(2)}\right)		
		\tilde f_1\left( W^{(1)} \right)
		\tilde f_2\left( W^{(2)} \right)
		\sum_{\ell_1,\ell_2=1}^N (-1)^{\ell_1+\ell_2}
		\frac{ \left(w_{\ell_1}^{(1)}\right)^n e^{s_1 w_{\ell_1}^{(1)}} }
		{ \left(w_{\ell_2}^{(2)}\right)^{n-1} e^{s_1 w_{\ell_2}^{(2)}} }	
		\det\left[
		C_\rz\left(w_{i_1}^{(1)},w_{i_2}^{(2)}\right)
		\right]_{\substack{i_1\ne \ell_1,\\
				i_2\ne \ell_2}
	   	}.
	\end{split}   
\end{equation}

\subsubsection{Step 2: Evaluating the summation}

Recall the formula of $C_\rz$ in~\eqref{eq:C}. We can write
\begin{equation*}
	C_\rz(w_1,w_2)= \frac{w_1^ne^{s_1w_1}}{w_2^ne^{s_1w_2}} 
	                \cdot
	                \left(
	                \frac{\rz}{w_1-w_2}\frac{w_2}{w_1} 
	                +
	                \frac{1}{-w_1+w_2}\frac{w_1}{w_2}
	                \right).
\end{equation*}
We insert this formula in~\eqref{eq:finite_time_formula04}. Recall the formulas of $\tilde f_1, \tilde f_2$ in~\eqref{eq:tilde_f}, and $\hat f_1, \hat f_2$ in~\eqref{eq:hat_f}. We arrive at
	\begin{equation}
	\label{eq:finite_time_formula05}
	\begin{split}
		& p(s_1,s_2;m,n,M,N) \\
		& = \frac{(-1)^{N(N-1)/2}}
		{(N!)^2}
		\oint_0\ddbar{\rz}{\rz^{n}}
		\prod_{i_1=1}^N \left( \frac{-\rz}{1-\rz} \int_{\Sigma_{\out}}
		\ddbar{w_{i_1}^{(1)}}{}
		+\frac{1}{1-\rz}  \int_{\Sigma_{\inn}}
		\ddbar{w_{i_1}^{(1)}}{}
		\right)
		\prod_{i_2=1}^N \int_{\Sigma} \ddbar{w_{i_2}^{(2)}}{}\Delta\left(W^{(1)}\right)
		\Delta\left(W^{(2)}\right)
		\\
		&		
		\quad		
		\hat f_1\left( W^{(1)} \right)
		\hat f_2\left( W^{(2)} \right)
		\sum_{\ell_1,\ell_2=1}^N (-1)^{\ell_1+\ell_2}
		w_{\ell_2}^{(2)}
		\det\left[
		\frac{\rz}{w_{i_1}^{(1)}-w_{i_2}^{(2)}}\frac{w_{i_2}^{(2)}}{w_{i_1}^{(1)}} 
		+
		\frac{1}{-w_{i_1}^{(1)}+w_{i_2}^{(2)}}\frac{w_{i_1}^{(1)}}{w_{i_2}^{(2)}}
		\right]_{\substack{i_1\ne \ell_1,\\
				i_2\ne \ell_2}
		}.
	\end{split}   
\end{equation}

Compare the above formula with~\eqref{eq:finite_time_formula02}. Note the following Cauchy determinant formula
\begin{equation*}
	\frac{\Delta\left(W^{(1)}\right)
		\Delta\left(W^{(2)}\right)}
	     {\Delta\left(W^{(2)};W^{(1)}\right)}
	     =(-1)^{N(N-1)/2} \det\left[\frac{1}{w_{i_2}^{(2)} -w_{i_1}^{(1)}}\right]_{i_1,i_2=1}^N.
\end{equation*}
We see that~\eqref{eq:finite_time_formula02} follows from~\eqref{eq:finite_time_formula05} and Lemma~\ref{lem:Cauchy_generalization} below. This completes the proof of Proposition~\ref{prop:p_step2}.

The remaining part of this subsection is the next lemma and its proof.

\begin{lm}
	\label{lem:Cauchy_generalization}
	Suppose $X=(x_1,\cdots,x_N)$ and $Y=(y_1,\cdots,y_N)$ are two vectors in $\complexC^N$ satisfying $x_i\ne y_j$ for all $1\le i,j\le N$. Suppose $\rz$ is an arbitrary complex number. Then we have the following identity
	\begin{equation}
		\label{eq:Cauchy_generalization}
		\begin{split}
		&\sum_{a,b=1}^N (-1)^{a +b}
		   y_b \det\left[ \frac{\rz}{ x_i-y_j} \frac{y_j}{x_i}
		                        +\frac{1}{-x_i+y_j}\frac{x_i}{y_j}   
		                  \right]_{\substack{i\ne a\\ j\ne b}}\\
		                  &
		=(1-\rz)^{N-2}\left(\hat H(X;Y)+\rz\prod_{i=1}^N\frac{y_i}{x_i}\hat H(Y;X)\right)\det\left[\frac{1}{y_j-x_i}\right]_{i,j=1}^N,
		\end{split}
	\end{equation}
where $\hat H$ is defined in~\eqref{eq:hat_H}.
\end{lm}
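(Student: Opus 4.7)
The plan is to reduce the identity to a computation on a rank-two perturbation of the Cauchy matrix $C$ with entries $C_{ij}=1/(y_j-x_i)$.

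Let $M_{ij} := \frac{\rz}{x_i-y_j}\cdot\frac{y_j}{x_i} + \frac{1}{-x_i+y_j}\cdot\frac{x_i}{y_j}$ denote the entries in the determinant on the left of~\eqref{eq:Cauchy_generalization}. Combining the two fractions as $\frac{\rz y_j^2-x_i^2}{x_iy_j(x_i-y_j)}$ and then adding and subtracting $x_i^2$ in the numerator gives the decomposition
\[
M_{ij} = -\frac{\rz}{x_i} - \frac{1}{y_j} + \frac{1-\rz}{y_j-x_i},
\]
so that $M = (1-\rz)C + VW^T$, where $V := [-\rz\,\mathbf{1}_x\,|\,-\mathbf{1}]$ and $W := [\mathbf{1}\,|\,\mathbf{1}_y]$ are $N\times 2$ matrices with $(\mathbf{1}_x)_i=1/x_i$, $(\mathbf{1}_y)_j=1/y_j$, and $\mathbf{1}=(1,\ldots,1)^T$.

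Next, I would collapse the double sum. For each fixed $a$, the inner sum $\sum_b(-1)^{a+b}y_b\det[M]_{\substack{i\neq a\\ j\neq b}}$ is the Laplace expansion of $\det\tilde M^{(a)}$, where $\tilde M^{(a)}$ is obtained from $M$ by replacing its $a$-th row with $y:=(y_1,\ldots,y_N)$. The rank-one matrix determinant lemma gives $\det\tilde M^{(a)} = \det M + (y-r_a)^T\mathrm{adj}(M)e_a$, where $r_a$ is the $a$-th row of $M$. Summing over $a$ and using $\sum_a r_a^T\mathrm{adj}(M)e_a = \mathrm{tr}(M\,\mathrm{adj}(M)) = N\det M$ reduces the left-hand side of~\eqref{eq:Cauchy_generalization} to $S := y^T\mathrm{adj}(M)\mathbf{1}$. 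For generic $X,Y$ so that $C$ is invertible, Sherman-Morrison-Woodbury applied to $M = (1-\rz)C + VW^T$ then yields $\det M = (1-\rz)^{N-2}\det C\cdot\det K$ and $M^{-1} = (1-\rz)^{-1}\bigl[C^{-1}-C^{-1}VK^{-1}W^TC^{-1}\bigr]$, where $K := (1-\rz)I_2+W^TC^{-1}V$ is $2\times 2$. Combining these,
\[
S = (1-\rz)^{N-3}\det C\,\bigl[\det K\cdot y^TC^{-1}\mathbf{1} - y^TC^{-1}V\cdot\mathrm{adj}(K)\cdot W^TC^{-1}\mathbf{1}\bigr].
\]

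All four bilinear forms on $C^{-1}$ that appear here are computed from the partial-fraction expansion
\[
\frac{\prod_k(z-x_k)}{\prod_k(z-y_k)} = 1 + \sum_j\frac{c_j}{z-y_j}, \qquad c_j := \frac{\prod_k(y_j-x_k)}{\prod_{k\neq j}(y_j-y_k)},
\]
which identifies $(C^{-1}\mathbf{1})_j = c_j$. Matching $z\to\infty$ asymptotics via $\log\prod(z-x_k)/\prod(z-y_k)$ gives $\sum c_j = \sum y_k - \sum x_k$ and $\sum c_jy_j = \hat H(X;Y)$; evaluating at $z=0$ gives $\sum c_j/y_j = 1 - \prod_k x_k/\prod_k y_k$; and the parallel analysis of $\prod(z-x_k)/(z\prod(z-y_k))$ supplies $C^{-1}\mathbf{1}_x$ and thereby the remaining quantities $y^TC^{-1}V$, $W^TC^{-1}\mathbf{1}$, and all four entries of $K$.

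The main obstacle is the final algebraic simplification. Substituting the computed values, the bracket becomes an explicit polynomial in $\rz$ and in $\prod_kx_k/\prod_ky_k$, $\sum y_k-\sum x_k$, $\sum 1/y_k-\sum 1/x_k$, and $\hat H(X;Y)$. The key cancellation uses the factorization
\[
\bigl(1-\rz\tfrac{\prod_ky_k}{\prod_kx_k}\bigr)\bigl(\tfrac{\prod_kx_k}{\prod_ky_k}-\rz\bigr) = \tfrac{\prod_kx_k}{\prod_ky_k} - 2\rz + \rz^2\tfrac{\prod_ky_k}{\prod_kx_k}
\]
together with the identity $\hat H(X;Y) + \hat H(Y;X) = (\sum y_k-\sum x_k)^2$, immediate from \eqref{eq:hat_H}. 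After these cancellations the bracket reduces to $(1-\rz)\bigl[\hat H(X;Y) + \rz\prod_i(y_i/x_i)\hat H(Y;X)\bigr]$, and combining with the prefactor $(1-\rz)^{N-3}\det C$ produces exactly the right-hand side of~\eqref{eq:Cauchy_generalization}. The identity extends from generic $(X,Y)$ to all admissible $(X,Y)$ with $x_i\neq y_j$ by continuity, since both sides are polynomial in the coordinates after clearing the common Cauchy-type denominator.
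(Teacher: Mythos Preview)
Your proof is correct. It shares the paper's underlying idea—treat the matrix as a rank-two perturbation of the Cauchy matrix and evaluate the resulting bilinear forms via rational-function identities—but your execution is different and more streamlined. The paper instead factors $M_{ij}=(1-\rz)\tfrac{x_i}{y_j}\bigl[\tfrac{1}{y_j-x_i}+u\tfrac{1}{x_i}+u\tfrac{y_j}{x_i^2}\bigr]$ with $u=-\rz/(1-\rz)$, producing a different (messier) rank-two perturbation, then expands the $(N-1)\times(N-1)$ minors in two successive rank-one steps and evaluates a table of residue sums $C_{p,q}=\sum_{a,b}x_a^py_b^q\,\tfrac{Y(x_a)X(y_b)}{(x_a-y_b)X'(x_a)Y'(y_b)}$ before a final simplification the paper itself describes as tedious. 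Your decomposition $M=(1-\rz)C+VW^T$ is cleaner; your collapse of the double sum to the single bilinear form $y^T\mathrm{adj}(M)\mathbf{1}$ via the matrix determinant lemma bypasses the paper's iterative expansion; and your one-shot Sherman--Morrison--Woodbury replaces the two rank-one steps. The partial-fraction quantities you compute are exactly the same data as the paper's $C_{p,q}$ table (for instance your $\sum_j c_jy_j=\hat H(X;Y)$ is their $C_{0,1}$, and your $\sum_j c_j/y_j=1-\prod_k x_k/\prod_k y_k$ is their $C_{0,-1}$), so the two arguments ultimately rest on the same identities—yours simply organizes them more transparently.
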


\begin{proof}[Proof of Lemma~\ref{lem:Cauchy_generalization}]
	We first use the identity
	\begin{equation*}
		\frac{\rz}{ x-y} \frac{y}{x}
		+\frac{1}{-x+y}\frac{x}{y} 
		=(1-\rz)\frac{x}{y}
		    \cdot\left(\frac{1}{-x+y} 
		              -\frac{\rz}{1-\rz}\frac{1}{x}
		              -\frac{\rz}{1-\rz}\frac{y}{x^2}
		         \right)
	\end{equation*}
and write the left hand side of~\eqref{eq:Cauchy_generalization} as
\begin{equation}
	(1-\rz)^{N-1}\prod_{i=1}^N\frac{x_i}{y_i} \sum_{a,b=1}^N(-1)^{a+b}\frac{y_b^2}{x_a}
	\det\left[\frac{1}{-x_i+y_j} 
	-\frac{\rz}{1-\rz}\frac{1}{x_i}
	-\frac{\rz}{1-\rz}\frac{y_j}{x_i^2}\right]_{\substack{i\ne a \\ j\ne b}}.
\end{equation}
Thus the equation~\eqref{eq:Cauchy_generalization} is equivalent to, by setting $u=-\rz/(1-\rz)$,
\begin{equation}
	\label{eq:sum_identity_01}
	\begin{split}
	&\sum_{a,b=1}^N (-1)^{a+b} \frac{y_b^2}{x_a} 
	       \det\left[  \frac{1}{-x_i+y_j} +u \frac{1}{x_i} +u \frac{y_j}{x_i^2} \right]_{\substack{i\ne a\\ j\ne b}}\\
	       &
	       = -\left(
	        (u-1)\prod_{i=1}^N \frac{y_i}{x_i} \hat H(X;Y)
	        + u \prod_{i=1}^N\frac{y_i^2}{x_i^2}\hat H(Y; X)
	       \right)\cdot \det\left[\frac{1}{y_j-x_i}\right]_{i,j=1}^N.
	       \end{split}
\end{equation}

The proof of~\eqref{eq:sum_identity_01} is tedious while the strategy is quite straightforward. Below we will show the proof but omit some details which are direct to check. We remark that the strategy was applied to a much simpler identity in \cite[Lemma 5.5]{Baik-Liu17}, but this identity~\eqref{eq:sum_identity_01} is much more complicated.

Before we prove~\eqref{eq:sum_identity_01}, we need to prepare some easier identities. We denote 
\begin{equation*}
	X(w):= \prod_{i=1}^N (w-x_i),\quad Y(w):= \prod_{i=1}^N (w-y_i),
\end{equation*}
and introduce
\begin{equation*}
	C_{p,q} = \sum_{a,b=1}^n x_a^py_b^q\frac{Y(x_a)X(y_b)}{(x_a-y_b)X'(x_a)Y'(y_b)},
\end{equation*}
where $p,q$ are both integers. It is not hard to verify, by using the Cauchy determinant formula, that
\begin{equation}
	\label{eq:C_ab_relation_Cauchy}
	C_{p,q} = \sum_{a,b=1}^N (-1)^{a+b} x_a^p y_b^q \det\left[\frac{1}{-x_i+y_j}\right]_{\substack{i\ne a\\ j\ne b}}
	/\det\left[\frac{1}{-x_i+y_j}\right]_{i,j=1}^N.
\end{equation}

One can evaluate $C_{p,q}$ by converting the sum as a residue computation of an integral on the complex plane. As an illustration, we show how to obtain $C_{-1,2}$, then we will list all the $C_{p,q}$ values we will use later without providing proofs, see Table~\ref{table:C}.

We consider a double integral
\begin{equation*}
	\int_{|y|=R_2} \int_{|x|=R_1} \frac{y^2}{x} \frac{Y(x)X(y)}{(x-y)X(x)Y(y)} \ddbar{x}{} \ddbar{y}{},
\end{equation*}
where $R_1>R_2> \max_i\{|x_i|+|y_i|\}$. Note that we can deform the $x$-contour to infinity and the integral becomes zero. Hence the above double integral is zero. On the other hand, we can change the order of integrals and evaluate the $y$-integral first. It gives a sum over all roots of $Y(y)$:
\begin{equation*}
	0= \int_{|x|=R_1} \sum_{b=1}^N \frac{y_b^2}{x} \frac{Y(x)X(y_b)}{(x-y_b) X(x) Y'(y_b)} \ddbar{x}{}.
\end{equation*}
Then we exchange the summation and integral, and evaluate the $x$-integral by computing the residues within the contour. Note that $x=y_b$ is not a pole. We get
\begin{equation}
	\label{eq:aux_01}
	0=C_{-1,2} -\frac{Y(0)}{X(0)} \sum_{b=1}^N y_b\frac{X(y_b)}{Y'(y_b)}.
\end{equation}
We need to continue to evaluate the summation in~\eqref{eq:aux_01}. We have, by a residue computation,
\begin{equation*}
	\begin{split}
	\sum_{b=1}^N y_b\frac{X(y_b)}{Y'(y_b)}&= \int_{|y|=R_2}y\frac{X(y)}{Y(y)}\ddbar{y}{}\\
	&= \int_{|y|=R_2} y\left(1+\frac{1}{y}\sum_{i=1}^N(y_i-x_i)+\frac{1}{y^2}\hat H(X;Y) +O(y^{-3})\right)\ddbar{y}{}
	 = \hat H(X;Y),
\end{split}
\end{equation*}
where we evaluated the integral by expanding the integrand for large $y$. Here the function $\hat H$ is defined in~\eqref{eq:hat_H}. 

By inserting the above formula to~\eqref{eq:aux_01}, we obtain
\begin{equation*}
	C_{-1,2}=\prod_i\frac{y_i}{x_i}\hat H(X;Y).
\end{equation*}
Using similar calculations, we can find all $C_{p,q}$ for small $p,q$ values. In Table~\ref{table:C} we list some $C_{p,q}$ identities we will use in the proof of~\eqref{eq:sum_identity_01}. We remark that the proof of these identities are analogous to that of $C_{-1,2}$ without adding extra difficulties.

\begin{table}
\begin{tabular}{ |p{1.7cm}|p{2.5cm}|p{1.8cm}|p{7.5cm}|  }
    \hline
    \rule{0pt}{0.6cm}
	Expression& Value &Expression&Value\\
	&&&\\
	\hline
	\rule{0pt}{0.6cm}$C_{0,-1}$   & $\displaystyle 1-\prod_i \frac{x_i}{y_i}$    &
	$C_{-1,2}$ &$\displaystyle \prod_i\frac{y_i}{x_i} \hat H(X;Y)$
	\\
    \hline
	\rule{0pt}{0.6cm}
	$C_{-1,0}$& $\displaystyle-1+ \prod_i \frac{y_i}{x_i}$&$C_{-1,1}-C_{0,0}$ & $\displaystyle \left(1-\prod_i\frac{y_i}{x_i}\right)\sum_i(x_i-y_i)$
	\\
	\hline
		\rule{0pt}{0.6cm}
	$C_{1,0}$ & $\displaystyle -\hat H(Y;X)$
	& 	$C_{0,2}-C_{1,1}$
	&$\displaystyle -\sum_i(x_i-y_i) \hat H(X;Y)$
	\\
	\hline
			\rule{0pt}{0.6cm}
			$C_{0,1}$& $\displaystyle \hat H(X;Y)$&$C_{-2,1}$& $\displaystyle -1 + \prod_i\frac{y_i}{x_i}\left( 1- \sum_i\left(\frac{1}{x_i}-\frac{1}{y_i}\right)\sum_i\left(x_i-y_i\right)\right)$\\
	\hline
\end{tabular}
\caption{\label{table:C}Values of some $C_{p,q}$ expressions.}
\end{table}

We need to evaluate
\begin{equation*}
	\det\left[\frac{1}{-x_i+y_j} +u\frac{1}{x_i}\right]_{i,j=1}^N =\det\left[\frac{1}{-x_i+y_j}\right]_{i,j=1}^N + u\sum_{a,b=1}^N (-1)^{a+b}\frac{1}{x_a} \det\left[\frac{1}{-x_i+y_j}\right]_{\substack{i\ne a\\ j\ne b}}.
\end{equation*}
By applying~\eqref{eq:C_ab_relation_Cauchy} and finding the $C_{-1,0}$ value in Table~\ref{table:C}, we get
\begin{equation}
	\label{eq:aux_02}
	\begin{split}
	\det\left[\frac{1}{-x_i+y_j} +u\frac{1}{x_i}\right]_{i,j=1}^N 
	&=\det\left[\frac{1}{-x_i+y_j}\right]_{i,j=1}^N \left(1+uC_{-1,0}\right)\\
	&=\det\left[\frac{1}{-x_i+y_j}\right]_{i,j=1}^N  \left(1+ u\left(-1+\prod_{i=1}^N\frac{y_i}{x_i}\right)\right).
	\end{split}
\end{equation}
Then we evaluate
\begin{equation*}
	\det\left[  \frac{1}{-x_i+y_j} +u \frac{1}{x_i} +u \frac{y_j}{x_i^2} \right]_{i,j=1}^N = \det\left[\frac{1}{-x_i+y_j} +u\frac{1}{x_i}\right]_{i,j=1}^N
	+u\sum_{a,b=1}^N (-1)^{a+b}\frac{y_b}{x_a^2}\det\left[\frac{1}{-x_i+y_j} +u\frac{1}{x_i}\right]_{\substack{i\ne a\\ j\ne b}}.
\end{equation*}
We insert~\eqref{eq:aux_02} in the above equation and obtain
\begin{equation}
	\label{eq:aux_03}
	\begin{split}
		&\det\left[  \frac{1}{-x_i+y_j} +u \frac{1}{x_i} +u \frac{y_j}{x_i^2} \right]_{i,j=1}^N\\
		&= \det\left[\frac{1}{-x_i+y_j}\right]_{i,j=1}^N  \left(1- u+ u\prod_{i=1}^N\frac{y_i}{x_i}\right)
		+u(1-u)\sum_{a,b=1}^N (-1)^{a+b}\frac{y_b}{x_a^2}\det\left[\frac{1}{-x_i+y_j}\right]_{\substack{i\ne a\\ j\ne b}}\\
		&\quad +u^2\prod_{i=1}^N\frac{y_i}{x_i}\sum_{a,b=1}^N (-1)^{a+b}\frac{1}{x_a}\det\left[\frac{1}{-x_i+y_j}\right]_{\substack{i\ne a\\ j\ne b}}\\
		&=\det\left[\frac{1}{-x_i+y_j}\right]_{i,j=1}^N  \left(1- u+ u\prod_{i=1}^N\frac{y_i}{x_i}
		+u(1-u) C_{-2,1} + u^2\prod_{i=1}^N\frac{y_i}{x_i} C_{-1,0}\right).
	\end{split}
\end{equation}
By inserting the values of $C_{-2,1}$ and $C_{-1,0}$ and simplifying the expression, we obtain
\begin{equation}
	\label{eq:aux_04}
	\begin{split}
		&\det\left[  \frac{1}{-x_i+y_j} +u \frac{1}{x_i} +u \frac{y_j}{x_i^2} \right]_{i,j=1}^N =  \det\left[\frac{1}{-x_i+y_j}\right]_{i,j=1}^N\\
		&\quad \cdot \left[
		1 - 2u\left( 1-\prod_{i=1}^N \frac{y_i}{x_i}\right) -(u-u^2)\prod_{i=1}^N \frac{y_i}{x_i}\sum_{i=1}^N\left(\frac{1}{x_i}-\frac{1}{y_i}\right)\sum_{i=1}^N (x_i-y_i) + u^2 \left(\prod_{i=1}^N \frac{y_i}{x_i}-1\right)^2
		\right].
	\end{split}
\end{equation}
Finally we are ready to prove~\eqref{eq:sum_identity_01}. Inserting~\eqref{eq:aux_04}, we can write
\begin{equation*}
	\begin{split}
		&\sum_{a,b=1}^N (-1)^{a+b} \frac{y_b^2}{x_a} 
		\det\left[  \frac{1}{-x_i+y_j} +u \frac{1}{x_i} +u \frac{y_j}{x_i^2} \right]_{\substack{i\ne a\\ j\ne b}}\\
		&=\sum_{a,b=1}^N (-1)^{a+b} \frac{y_b^2}{x_a} \det\left[\frac{1}{-x_i+y_j}\right]_{\substack{i\ne a\\ j\ne b}}
		\cdot \left[
		1 - 2u\left( 1-\frac{x_a}{y_b}\prod_{i=1}^N \frac{y_i}{x_i}\right)
		 + u^2 \left(\frac{x_a}{y_b}\prod_{i=1}^N \frac{y_i}{x_i}-1\right)^2\right.\\
		&\quad\left. -(u-u^2)\frac{x_a}{y_b}\prod_{i=1}^N \frac{y_i}{x_i}
		\left(-\frac{1}{x_a}+\frac{1}{y_b}+
		\sum_{i=1}^N\left(\frac{1}{x_i}-\frac{1}{y_i}\right)\right)
		\left(-x_a+y_b+ \sum_{i=1}^N (x_i-y_i)\right)
		\right].
	\end{split}
\end{equation*}
We apply~\eqref{eq:C_ab_relation_Cauchy} and rewrite the above equation as
\begin{equation}
	\begin{split}
		&\sum_{a,b=1}^N 
		    (-1)^{a+b} \frac{y_b^2}{x_a} 
		        \det\left[
		               \frac{1}{-x_i+y_j} +u \frac{1}{x_i} +u \frac{y_j}{x_i^2} 
		            \right]_{\substack{i\ne a\\ j\ne b}}
		       /\det\left[
		              \frac{1}{-x_i+y_j} +u \frac{1}{x_i} +u \frac{y_j}{x_i^2} 
		            \right]_{i,j=1}^N\\
	  &=\left(
	         (1-u)^2 +u(1-u)\prod_i\frac{y_i}{x_i}
	    \right)C_{-1,2}
	    +u\prod_i\frac{y_i}{x_i}
	       \left(1-u+u\cdot\prod_i\frac{y_i}{x_i}\right)
	        C_{1,0}\\
	  &\quad-
		  u(1-u)\prod_i\frac{y_i}{x_i}
		      \left(\sum_i\frac{1}{x_i}-\sum_i\frac{1}{y_i}\right)
		      \left(\sum_ix_i-\sum_iy_i\right) C_{0,1}\\
	  &\quad -
	     u(1-u)\prod_i\frac{y_i}{x_i}
	         \left(\sum_ix_i-\sum_iy_i\right)C_{0,0}
	        +
	     u(1-u)\prod_i\frac{y_i}{x_i}
	         \left(\sum_ix_i-\sum_iy_i\right)C_{-1,1}\\
	  &\quad -
	    u(1-u)\prod_i\frac{y_i}{x_i}
	         \left(\sum_i\frac{1}{x_i}-\sum_i\frac{1}{y_i}\right)C_{0,2}
	         +
	    u(1-u)\prod_i\frac{y_i}{x_i}
	         \left(\sum_i\frac{1}{x_i}-\sum_i\frac{1}{y_i}\right)C_{1,1}.
	\end{split}
\end{equation}
By checking the values of Table~\ref{table:C}, and noting that $\left(\sum_i (x_i-y_i)\right)^2=\hat H(X;Y)+\hat H(Y;X)$, we can simplify the above expression. It turns out, after a careful but straightforward calculation, the $u^2$ term vanishes, and the remaining terms match the right hand side of~\eqref{eq:sum_identity_01}. We hence complete the proof.
\end{proof}

\subsection{Proof of Proposition~\ref{prop:p_step3}}
\label{sec:proof_step3}

In this subsection, we prove Proposition~\ref{prop:p_step3}. Note that the equation~\eqref{eq:finite_time_formula02} involves a Cauchy determinant factor 
\begin{equation*}
	\frac{\Delta\left(W^{(1)}\right) \Delta \left(W^{(2)}\right)}
	     {\Delta\left( W^{(2)}; W^{(1)}\right)}
	      = (-1)^{N(N-1)/2}\det\left[\frac{1}{w_{i_2}^{(2)} -w_{i_1}^{(1)}}\right]_{i_1,i_2=1}^N,
\end{equation*}
which is of size N, while the formula~\eqref{eq:def_finite_time_density} is analogous to a Fredholm determinant expansion. So Proposition~\ref{prop:p_step3} can be interpreted as an identity between a Cauchy determinant of large size and a Fredholm-determinant-like expansion. Our strategy contains three steps. First, we rewrite the formula~\eqref{eq:finite_time_formula02} to a summation on discrete spaces with summand having similar Cauchy determinant structures. This rewriting involves a generalized version of an identity in \cite{Liu19}. In the second step, we reformulate the summation to a Fredholm-determinant-like expansion on the same discrete space. We remark that similar calculation were considered in \cite{Baik-Liu16,Baik-Liu17} but our summand is more involved. Finally, we verify that the expansion indeed matches~\eqref{eq:def_finite_time_density} using the identity obtained in the first step.

Below we will first introduce a generalized version of an identity in \cite{Liu19}, the Proposition 4.3 of \cite{Liu19}. Then we prove Proposition~\ref{prop:p_step3} using the above strategy.

\subsubsection{A Cauchy-type summation identity}

We introduce a few concepts before we state the results. We will mainly follow \cite[Section 4]{Liu19} but add a small generalization.

Suppose $W=(w_1,\cdots,w_n)\in\complexC^n$ and $W'=(w'_1,\cdots,w'_{n'})\in\complexC^{n'}$ are two vectors without overlapping coordinates, i.e., they satisfy $w_i\ne w'_{i'}$ for all $i,i'$. We define
\begin{equation}
	\label{eq:2021_12_23_02}
	\Ch(W;W') = \frac{\Delta(W)\Delta(W')}{\Delta(W;W')}
\end{equation}
and call it a \emph{Cauchy-type factor}. Note that when $n=n'$, $\Ch(W;W')$ equals to a Cauchy determinant $\det\left[1/(w_i-w'_{i'})\right]_{i,i'=1}^n$ multiplied by a sign factor $(-1)^{n(n-1)/2}$. We remark that we allow empty product and view it as $1$ in the above definition. For example, when $n'=0$, we have $\Ch(W;W')=\Delta(W)$.

Similar as in~\eqref{eq:SW2}, we use the convention that $W_I=(w_{i_1},\cdots,w_{i_k})$ for 
any index set $I=\{i_1,\cdots,i_k\}$ where $1\le i_1<\cdots<i_k\le n$. In other words, $W_I$ is the vector formed by the coordinates with indices in $I$.

We denote 
\begin{equation*}
	\mathbb{D}(r) := \{z: |z|< r\}, \text{ and } \mathbb{D}_0(r) = \{z: 0<|z|<r\}.
\end{equation*}
And we omit $r$ when $r=1$, i.e., $\mathbb{D}=\mathbb{D}(1)$ and $\mathbb{D}_0=\mathbb{D}_0(1)$. 

Suppose $q(w)$ is a function which is analytic in a certain bounded region $\mathcal{D}$. Denote
\begin{equation}
	\mathcal{R}_z=\left\{
	              w\in \mathcal{D}: q(w)=z\right\}.
\end{equation}
Assume that $\mathcal{R}_0\ne \emptyset$. In other words, there is at least one root of $q(w)$ within $\mathcal{D}$. We also assume that $\rr$ is a positive constant such that $\cup_{z\in\mathbb{D}(\rr)}\mathcal{R}_z=\{w\in\mathcal{D}:|q(w)|\le\rr\}$ lies within a compact subset of $\mathcal{D}$, and $\{w\in\mathcal{D}:|q(w)|=r\}$ for all $0<|r|<\rr$ consists of $|\mathcal{R}_0|$ non-intersecting simply connected contours around the points in $\mathcal{R}_0$. It is easy to see that with these assumptions $q'(w)\ne 0$ for all $w\in \{w\in\mathcal{D}:|q(w)|<\rr\}$. We remark that in the original setting of \cite{Liu19}, they assumed $\mathcal{R}_0=\{0\}$ or $\{-1\}$. Here we drop this assumption.

We will consider a Cauchy-type summation, which involves an expression
\begin{equation}
	\label{eq:calH}
	\mathcal{H}\left(W^{(1)},\cdots,W^{(\ell)};z_0,\cdots,z_{\ell-1}\right)
	:= \left[\prod_{k=1}^{\ell-1} \Ch\left( W_{I^{(k)}}^{(k)};  W_{J^{(k+1)}}^{(k+1)}\right)\right] \cdot \mathcal{A}\left(W^{(1)},\cdots,W^{(\ell)};z_0,\cdots,z_{\ell-1}\right),
\end{equation}
where $W^{(k)}=\left(w^{(k)}_1,\cdots,w_{n_k}^{(k)}\right) \in \complexC^{n_k}$, $1\le k\le \ell$, such that $W^{(k)}$ and $W^{(k+1)}$ do not have overlapping coordinates for $1\le k\le \ell-1$. $I^{(k)}$ and $J^{(k)}$ are arbitrary subsets of $ \{1,\cdots,n_k\}$ for $1\le k\le \ell-1$ and $2\le k\le \ell$ respectively. The function $\mathcal{A}$ is analytic for all $w_{j_k}^{(k)}\in\mathcal{D}\setminus\mathcal{R}_0$, $1\le j_k\le n_k, 1\le k\le \ell$, and for all $(z_0,\cdots,z_{\ell-1})\in\mathbb{D}(\rr)\times \mathbb{D}^{\ell-1}$. Hence $\mathcal{H}$ is also analytic on $(\mathcal{D}\setminus \mathcal{R}_0)^{n_1+\cdots+n_\ell}\times \mathbb{D}(\rr)\times \mathbb{D}^{\ell-1}$, except for having possible poles at $w_{i_k}^{(k)}=w_{i_{k+1}}^{(k+1)}$ for some $i_k\in I^{(k)}$ and $i_{k+1}\in I^{(k+1)}$, which comes from the Cauchy-type factors. We remark that the function $\mathcal{H}$ also depends on the index sets $I^{(k)}, J^{(k+1)}$, $1\le k\le \ell-1$.

Now we introduce the summation. We consider
\begin{equation}\label{eq:calG}
	\mathcal{G}(z_0,\cdots,z_{\ell-1})=
	\sum_{W^{(1)}\in \mathcal{R}_{\hat z_1}^{n_1}}
	\cdots
	\sum_{W^{(\ell)}\in \mathcal{R}_{\hat z_\ell}^{n_\ell}}
                \left[\prod_{k=1}^\ell J(W^{(k)})\right]
                \mathcal{H}\left(W^{(1)},\cdots,W^{(\ell)};z_0,\cdots,z_{\ell-1}\right)
\end{equation}
for $(z_0,\cdots,z_\ell) \in \mathbb{D}_0(\rr)\times \mathbb{D}_0^{\ell-1}$,
where the function
\begin{equation}
	J(w):= \frac{q(w)}{q'(w)}.
\end{equation}
Recall our convention $J(W^{(k)})=\prod_{a=1}^{n_k}J(w_a^{(k)})$. The variables $\hat z_k$'s are defined by
\begin{equation}
	\hat z_k = z_0z_1\cdots z_{k-1},\quad k=1,\cdots,\ell.
\end{equation}

Note the identity
\begin{equation}
	\label{eq:2021_12_31_01}
	\sum_{w\in\mathcal{R}_z}f(w)H(w)= \left(\int_{|q(w)|=C_1}-\int_{|q(w)|=C_2}\right)\frac{f(w)q(w)}{q(w)-z}\ddbar{w}{},
\end{equation}
where $C_1$ and $C_2$ are two positive constants satisfying  $C_2<|z|<C_1$ such that the function $f(w)$ is analytic in $\{w: C_2< |q(w)| < C_1\}$. The right hand side is analytic as a function of $z$ within $C_2<|z|<|C_1|$. This identity implies that $\sum_{w\in\mathcal{R}_z}f(w)H(w)$ is also analytic as a function of $z$ within $C_2<|z|<|C_1|$. Using this fact we obtain that $\mathcal{G}(z_0,\cdots,z_{\ell-1})$
is analytic as a function of $\hat z_1,\cdots,\hat z_\ell$ within $0<|\hat z_\ell|<\cdots<|\hat z_1|<\rr$, and hence is analytic as a function of $z_0,\cdots,z_{\ell-1}$ in $\mathbb{D}_0(\rr)\times \mathbb{D}_0^{\ell-1}$. %Moreover, with the restrictions on $z_i$'s we know $\rr>|\hat z_0|>\cdots>|\hat z_{\ell-1}|>0$, 
We remark that there are no poles from the Cauchy-type factor due to the order of $|\hat z_k|$.% and all the $w_{a}^{(k)}$'s are within $\mathcal{D}\setminus\mathcal{R}_0$.

Our goal is to analytically extend the function $\mathcal{G}$ to $\mathbb{D}(\rr)\times\mathbb{D}^{\ell-1}$ under certain assumption. Below we introduce two more concepts related the assumption, then we state the identity.

We call a sequence of variables $w_{i_k}^{(k)},w_{i_{k+1}}^{(k+1)},\cdots,w_{i_{k'}}^{(k')}$ \emph{a Cauchy chain} with respect to the vectors $W^{(1)},\cdots, W^{(\ell)}$ and index sets $I^{(1)},J^{(2)},I^{(2)},J^{(3)},\cdots, I^{(\ell-1)}, J^{(\ell)}$, if 
\begin{equation*}
	\left(w_{i_k}^{(k)} -w_{i_{k+1}}^{(k+1)}\right)\cdot
	\left(w_{i_{k+1}}^{(k+1)} -w_{i_{k+2}}^{(k+2)}\right)
	\cdot\cdots \cdot
	\left(w_{i_{k'-1}}^{(k'-1)} -w_{i_{k'}}^{(k')}\right)
\end{equation*}
appears as a factor of the denominator in $\prod_{k=1}^{\ell-1} \Ch\left( W_{I^{(k)}}^{(k)};  W_{J^{(k+1)}}^{(k+1)}\right)$. We allow any single variable $w_{i_k}^{(k)}$ to be a Cauchy chain as long as it is a coordinate of $W^{(k)}$.

We say $q(w)$ \emph{dominates} $\mathcal{H}\left(W^{(1)},\cdots,W^{(\ell)};z_0,\cdots,z_{\ell-1}\right)$ if and only if the following function of $w$
\begin{equation}
	\left.q(w)\cdot \mathcal{A}\left(W^{(1)},\cdots,W^{(\ell)};z_0,\cdots,z_{\ell-1}\right)\right|_{w_{i_k}^{(k)}=w_{i_{k+1}}^{(k+1)}=\cdots=w_{i_{k'}}^{(k')}=w}
\end{equation}
is analytic at any $w\in\mathcal{R}_0$ when all other variables are fixed, here $w_{i_k}^{(k)},w_{i_{k+1}}^{(k+1)},\cdots,w_{i_{k'}}^{(k')}$ is an arbitrary Cauchy chain with respect to $W^{(1)},\cdots, W^{(\ell)}$ and $I^{(1)},J^{(2)},I^{(2)},J^{(3)},\cdots, I^{(\ell-1)}, J^{(\ell)}$. We remark that in \cite{Liu19}, this concept was only defined when $\mathcal{R}_0$ contains one single point. Here we dropped this assumption.

\begin{prop}
	\label{prop:Cauchy_summation_identity}
	If $q(w)$ dominates $\mathcal{H}\left(W^{(1)},\cdots,W^{(\ell)};z_0,\cdots,z_{\ell-1}\right)$, then the function $\mathcal{G}(z_0,\cdots,z_{\ell-1})$ can be analytically extended to $\mathbb{D}(\rr)\times\mathbb{D}^{\ell-1}$. Moreover, $\mathcal{G}(z_0=0,z_1\cdots,z_{\ell-1})$ is independent of $q(w)$, and it equals to
	\begin{equation*}
		\prod_{k=2}^{\ell}\prod_{i_k=1}^{n_k} \left[\frac{1}{1-z_{k-1}} \int_{\Sigma_{\inn}^{(k)}}\ddbar{w_{i_k}^{(k)}}{} -\frac{z_{k-1}}{1-z_{k-1}}\int_{\Sigma^{(k)}_{\out}}\ddbar{w_{i_k}^{(k)}}{}\right]\prod_{i_1=1}^{n_1} \int_{\Sigma^{(1)}}\ddbar{w_{i_1}^{(1)}}{}\mathcal{H}\left(W^{(1)},\cdots,W^{(\ell)};0,z_1,\cdots,z_{\ell-1}\right),
	\end{equation*}
where $\Sigma_{\out}^{(\ell)},\cdots,\Sigma_\out^{(2)},\Sigma^{(1)},\Sigma_\inn^{(2)},\cdots,\Sigma_\inn^{(\ell)}$ are $2\ell-1$ nested contours in $\mathcal{D}$ each of which encloses all the points in $\mathcal{R}_0$.
\end{prop}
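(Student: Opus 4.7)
The proof is a direct generalization of that of \cite[Proposition~4.3]{Liu19}, which treats the special case $\mathcal{R}_0=\{0\}$ or $\{-1\}$. The argument proceeds in three steps; the only new ingredient here is that analyticity and domination must be checked at each point of $\mathcal{R}_0$ individually, but since everything is done one residue at a time, no essential modification of the earlier argument is required.

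First, on the open region where $|\hat z_\ell|<\cdots<|\hat z_1|<\rr$, I would convert each discrete sum $\sum_{W^{(k)}\in\mathcal{R}_{\hat z_k}^{n_k}}$ into a product of contour integrals via the residue identity~\eqref{eq:2021_12_31_01}. One chooses $2\ell$ radii $a_\ell<b_\ell<a_{\ell-1}<b_{\ell-1}<\cdots<a_1<b_1<\rr$ with $a_k<|\hat z_k|<b_k$, and takes the outer/inner $W^{(k)}$-contours to be the level sets $\{|q(w)|=b_k\}$ and $\{|q(w)|=a_k\}$ respectively. This writes $\mathcal{G}$ as a signed combination of $2^{n_1+\cdots+n_\ell}$ nested contour integrals of $\mathcal{H}\cdot\prod_{k,i_k}\tfrac{q(w_{i_k}^{(k)})}{q(w_{i_k}^{(k)})-\hat z_k}$, with each $w_{i_k}^{(k)}$ integrated along either $\Sigma^{(k)}_\out$ or $\Sigma^{(k)}_\inn$. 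Because the contours are strictly nested and the Cauchy-type poles of $\mathcal{H}$ lie between adjacent $W^{(k)}$- and $W^{(k+1)}$-contours, those poles are never crossed.

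Second, I would extend this integral representation analytically to the whole polydisk $\mathbb{D}(\rr)\times\mathbb{D}^{\ell-1}$. As $(z_0,\ldots,z_{\ell-1})$ is moved away from the initial regime, the ordering $|\hat z_\ell|<\cdots<|\hat z_1|$ eventually fails and contours must be deformed across one another. Each crossing collects a residue at a Cauchy-chain singularity $w^{(k)}_{i_k}=w^{(k+1)}_{i_{k+1}}=\cdots=w^{(k')}_{i_{k'}}$, producing a lower-dimensional integral in which these variables have been identified. The domination hypothesis is designed exactly so that, once such a chain has collapsed to a common value $w$, the residual integrand remains analytic at every $w\in\mathcal{R}_0$: a single factor $q(w)$ in $\mathcal{A}$ absorbs the pole of $\mathcal{A}$ at $\mathcal{R}_0$. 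Consequently no spurious singularities arise and the integral representation defines an analytic function on the full polydisk.

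Third, evaluate at $z_0=0$. Then $\hat z_k=z_0\cdot z_1\cdots z_{k-1}=0$ for every $k$, so each $\tfrac{q(w)}{q(w)-\hat z_k}$ collapses to $1$; all explicit dependence on the auxiliary function $q$ disappears from the integrand, yielding the claimed $q$-independence. For $k=1$, $\hat z_1\equiv z_0=0$ and the inner and outer $W^{(1)}$-contours carry identical integrands, so their signed combination merges into a single integral over a contour $\Sigma^{(1)}$ enclosing $\mathcal{R}_0$. For $k\ge 2$, the residual $z_{k-1}$-dependence in the limit arises from the residues accumulated in the second step, namely when an inner $W^{(k-1)}$-contour was deformed past an outer $W^{(k)}$-contour and the $\tfrac{q}{q-\hat z_k}$-factor was evaluated at $\hat z_k=z_{k-1}\hat z_{k-1}$; a direct bookkeeping of these residues in the $z_0\to 0$ limit, exactly as in the corresponding step of \cite{Liu19}, produces the stated weighted combination $\tfrac{1}{1-z_{k-1}}\int_{\Sigma^{(k)}_\inn}-\tfrac{z_{k-1}}{1-z_{k-1}}\int_{\Sigma^{(k)}_\out}$. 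The main obstacle will be the second step: each crossing generates residue integrands that must themselves inherit a reduced form of the domination condition so that subsequent analyticity checks go through, and verifying this inheritance along arbitrarily long Cauchy chains, by induction on $\ell$, is the combinatorial core of the proof.
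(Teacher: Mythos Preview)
Your proposal is correct and follows the same route as the paper: the paper's own proof simply observes that \cite[Proposition~4.3]{Liu19} (the case $\mathcal{R}_0=\{0\}$) nowhere uses that $\mathcal{R}_0$ is a singleton, so the argument carries over verbatim. What you have written is a faithful sketch of that argument from \cite[Section~6]{Liu19}, with the correct observation that the domination condition need only be checked pointwise at each element of $\mathcal{R}_0$.
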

\begin{proof}[Proof of Proposition~\ref{prop:Cauchy_summation_identity}]
	When $\mathcal{R}_0=\{0\}$, this is exactly the same as \cite[Proposition 4.3]{Liu19}. On the other hand, their proof does not use the fact $\mathcal{R}_0=\{0\}$, see \cite[Section 6]{Liu19}. Hence Proposition~\ref{prop:Cauchy_summation_identity} follows from the same argument.
\end{proof}

One can similarly consider a two-region version of the above result. Assume that $\mathcal{D}_\LL$ and $\mathcal{D}_\RR$ are two disjoint bounded regions on the complex plane. Let $q(w)$ be a function analytic in $\mathcal{D}_\LL\cup\mathcal{D}_\RR$ and define
\begin{equation*}
	\mathcal{R}_{z,\LL}=\{u\in\mathcal{D}_\LL: q(u)=z\},\quad \text{and }
	\mathcal{R}_{z,\RR}=\{v\in\mathcal{D}_\RR: q(v)=z\}.
\end{equation*}
Assume that both $\mathcal{R}_{0,\LL}$ and $\mathcal{R}_{0,\RR}$ are nonempty. The analog of~\eqref{eq:calH} is
\begin{equation*}
	\begin{split}
		&	\mathcal{H}\left(U^{(1)},\cdots,U^{(\ell)};V^{(1)},\cdots,V^{(\ell)};z_0,\cdots,z_{\ell-1}\right)\\
		&
		:= \left[\prod_{k=1}^{\ell-1} \Ch\left( U_{I_\LL^{(k)}}^{(k)};  U_{J_\LL^{(k+1)}}^{(k+1)}\right)
		 \Ch\left( V_{I_\RR^{(k)}}^{(k)};  V_{J_\RR^{(k+1)}}^{(k+1)}\right)\right] \cdot \mathcal{A}\left(U^{(1)},\cdots,U^{(\ell)};V^{(1)},\cdots,V^{(\ell)};z_0,\cdots,z_{\ell-1}\right),
	\end{split}
\end{equation*}
where $\mathcal{A}$ is analytic in $\mathcal{D}_\LL\setminus \mathcal{R}_{0,\LL}$ for each coordinate of $U^{(k)}$, and in $\mathcal{D}_\RR\setminus \mathcal{R}_{0,\RR}$ for each coordinate of $V^{(k)}$, $1\le k\le \ell$, and analytic for all $(z_0,\cdots,z_\ell)\in\mathbb{D}(\rr)\times \mathbb{D}^{\ell-1}$. The analog of~\eqref{eq:calG} is
\begin{equation*}
	\mathcal{G}(z_0,\cdots,z_{\ell-1})=
	\sum_{\substack{U^{(1)}\in \mathcal{R}_{\hat z_1,\LL}^{n_{1,\LL}}\\
	 V^{(1)}\in \mathcal{R}_{\hat z_1,\RR}^{n_{1,\RR}}}}
	\cdots
	\sum_{\substack{U^{(\ell)}\in \mathcal{R}_{\hat z_\ell,\LL}^{n_{\ell,\LL}}\\
			V^{(\ell)}\in \mathcal{R}_{\hat z_\ell,\RR}^{n_{\ell,\RR}}}}
	\left[\prod_{k=1}^\ell J(U^{(k)})J(V^{(k)})\right]
	\mathcal{H}\left(U^{(1)},\cdots,U^{(\ell)};V^{(1)},\cdots,V^{(\ell)};z_0,\cdots,z_{\ell-1}\right)
\end{equation*}
for $(z_0,\cdots,z_\ell) \in \mathbb{D}_0(\rr)\times \mathbb{D}_0^{\ell-1}$. We can similarly define Cauchy chains in $\mathcal{D}_\LL$ and in $\mathcal{D}_\RR$. We say $q(w)$ dominates $\mathcal{H}\left(U^{(1)},\cdots,U^{(\ell)};V^{(1)},\cdots,V^{(\ell)};z_0,\cdots,z_{\ell-1}\right)$ if
\begin{equation*}
	\left.q(u)\cdot \mathcal{A}\left(U^{(1)},\cdots,U^{(\ell)};V^{(1)},\cdots,V^{(\ell)};z_0,\cdots,z_{\ell-1}\right)\right|_{u_{i_k}^{(k)}=u_{i_{k+1}}^{(k+1)}=\cdots=u_{i_{k'}}^{(k')}=u}
\end{equation*}
is analytic at any $u\in \mathcal{R}_{0,\LL}$ for any Cauchy chain in $\mathcal{D}_\LL$, and
\begin{equation*}
	\left.q(v)\cdot \mathcal{A}\left(U^{(1)},\cdots,U^{(\ell)};V^{(1)},\cdots,V^{(\ell)};z_0,\cdots,z_{\ell-1}\right)\right|_{v_{i_k}^{(k)}=v_{i_{k+1}}^{(k+1)}=\cdots=v_{i_{k'}}^{(k')}=v}
\end{equation*}
is analytic at any $v\in \mathcal{R}_{0,\RR}$ for any Cauchy chain in $\mathcal{D}_\RR$. The analog of Proposition~\ref{prop:Cauchy_summation_identity} is as follows.
\begin{prop}
	\label{prop:Cauchy_summation_identity2}
	If $q(w)$ dominates $\mathcal{H}\left(U^{(1)},\cdots,U^{(\ell)};V^{(1)},\cdots,V^{(\ell)};z_0,\cdots,z_{\ell-1}\right)$, then the function $\mathcal{G}(z_0,\cdots,z_{\ell-1})$ can be analytically extended to $\mathbb{D}(\rr)\times\mathbb{D}^{\ell-1}$. Moreover, $\mathcal{G}(z_0=0,z_1\cdots,z_{\ell-1})$ is independent of $q(w)$, and it equals to
	\begin{equation*}
		\begin{split}
		&\prod_{k=2}^{\ell}\prod_{i_k=1}^{n_{k,\LL}} \left[\frac{1}{1-z_{k-1}} \int_{\Sigma_{\inn,\LL}^{(k)}}\ddbar{u_{i_k}^{(k)}}{} -\frac{z_{k-1}}{1-z_{k-1}}\int_{\Sigma^{(k)}_{\out,\LL}}\ddbar{u_{i_k}^{(k)}}{}\right]\prod_{i_1=1}^{n_{1,\LL}} \int_{\Sigma^{(1)}_\LL}\ddbar{u_{i_1}^{(1)}}{}\\
		&\prod_{k=2}^{\ell}\prod_{i_k=1}^{n_{k,\RR}} \left[\frac{1}{1-z_{k-1}} \int_{\Sigma_{\inn,\RR}^{(k)}}\ddbar{v_{i_k}^{(k)}}{} -\frac{z_{k-1}}{1-z_{k-1}}\int_{\Sigma^{(k)}_{\out,\RR}}\ddbar{v_{i_k}^{(k)}}{}\right]\prod_{i_1=1}^{n_{1,\RR}} \int_{\Sigma^{(1)}_\RR}\ddbar{v_{i_1}^{(1)}}{}\\
		&\mathcal{H}\left(U^{(1)},\cdots,U^{(\ell)};V^{(1)},\cdots,V^{(\ell)};0,z_1\cdots,z_{\ell-1}\right),
		\end{split}
	\end{equation*}
where $\Sigma_{\out,\LL}^{(\ell)},\cdots,\Sigma_{\out,\LL}^{(2)},\Sigma^{(1)}_\LL,\Sigma_{\inn,\LL}^{(2)},\cdots,\Sigma_{\inn,\LL}^{(\ell)}$ are $2\ell-1$ nested contours in $\mathcal{D}_\LL$ each of which encloses all the points in $\mathcal{R}_{0,\LL}$, and $\Sigma_{\out,\RR}^{(\ell)},\cdots,\Sigma_{\out,\RR}^{(2)},\Sigma^{(1)}_\RR,\Sigma_{\inn,\RR}^{(2)},\cdots,\Sigma_{\inn,\RR}^{(\ell)}$ are $2\ell-1$ nested contours in $\mathcal{D}_\RR$ each of which encloses all the points in $\mathcal{R}_{0,\RR}$.
\end{prop}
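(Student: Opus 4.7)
The plan is to adapt the proof of Proposition~\ref{prop:Cauchy_summation_identity} (i.e., \cite[Proposition 4.3]{Liu19} as generalized in its Section~6) almost verbatim to the two-region setting. The crucial structural observation is that $\mathcal{D}_\LL\cap \mathcal{D}_\RR=\emptyset$ and the Cauchy-type factors appearing in $\mathcal{H}$ only pair $U$-coordinates with $U$-coordinates and $V$-coordinates with $V$-coordinates; consequently every Cauchy chain (in the sense defined just above Proposition~\ref{prop:Cauchy_summation_identity2}) lies entirely in one of the two regions, and the contour manipulations in $\mathcal{D}_\LL$ and $\mathcal{D}_\RR$ decouple completely. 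This lets me run the one-region argument twice in parallel.

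First, applying the identity~\eqref{eq:2021_12_31_01} separately to each of the $2\ell$ sums (over $\mathcal{R}_{\hat z_k,\LL}$ and $\mathcal{R}_{\hat z_k,\RR}$ for $k=1,\dots,\ell$) would rewrite $\mathcal{G}$ as an iterated contour integral over nested contours of the form $\{|q(w)|=C\}$ inside $\mathcal{D}_\LL$ and $\mathcal{D}_\RR$, with the $C$-levels ordered so that no Cauchy poles are crossed. This representation makes analyticity on $\mathbb{D}_0(\rr)\times\mathbb{D}_0^{\ell-1}$ manifest and supplies the platform for the subsequent deformations.

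Second, I would extend $\mathcal{G}$ across each hyperplane $z_k=0$ with $k\ge 1$ by pushing the level-$(k{+}1)$ contours through the level-$k$ contours in each region and collecting the Cauchy-pole residues; this is the same calculation as in \cite[Section~6]{Liu19}, but carried out in parallel in the two disjoint regions (which is permissible precisely because no chain crosses between them). Extending across $z_0=0$ forces all $\hat z_k\to 0$ and collapses every contour toward $\mathcal{R}_{0,\LL}\cup\mathcal{R}_{0,\RR}$, so the cumulative chain residues accumulate at points of $\mathcal{R}_0$. Here the dominance hypothesis is exactly what is needed: when a Cauchy chain $w_{i_k}^{(k)},\dots,w_{i_{k'}}^{(k')}$ lying in $\mathcal{D}_\bullet$ collapses to a common value $w_\ast\in\mathcal{R}_{0,\bullet}$, the potential simple pole of $\mathcal{A}$ at $w_\ast$ is cancelled by the single factor $q(w_\ast)=0$ produced during the collapse. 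After this cancellation the $z_0\to 0$ limit exists, and the remaining residue bookkeeping yields the claimed combination of inner and outer contours with the coefficients $1/(1-z_{k-1})$ and $-z_{k-1}/(1-z_{k-1})$; independence from $q$ is then automatic because $q$ no longer appears in the integrand once the contours have been shrunk to encircle $\mathcal{R}_{0,\bullet}$.

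The main obstacle will be the bookkeeping in the last step: one must verify that, for every admissible Cauchy chain inside one of the two regions, the dominance condition as stated cancels every potentially divergent contribution produced as the contours are pushed through $\mathcal{R}_{0,\LL}\cup \mathcal{R}_{0,\RR}$. Since this cancellation happens independently inside $\mathcal{D}_\LL$ and inside $\mathcal{D}_\RR$, the check reduces to the one already handled in \cite[Section~6]{Liu19} for the single-region setting, and the final assembly of the two sets of nested contour integrals into the formula stated in Proposition~\ref{prop:Cauchy_summation_identity2} is then routine.
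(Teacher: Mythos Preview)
Your proposal is correct and takes essentially the same approach as the paper. The paper's proof simply notes that the argument is identical to that of \cite[Proposition~4.4]{Liu19} once one substitutes the generalized one-region result (Proposition~\ref{prop:Cauchy_summation_identity}) for \cite[Proposition~4.2]{Liu19}; your writeup spells out exactly what that substitution amounts to, namely that the disjointness of $\mathcal{D}_\LL$ and $\mathcal{D}_\RR$ together with the structure of the Cauchy-type factors forces every Cauchy chain to stay inside a single region, so the single-region argument runs independently in each.
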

\begin{proof}[Proof of Proposition~\ref{prop:Cauchy_summation_identity2}]
	The case when $\mathcal{R}_{0,\LL}=\{-1\}$ and $\mathcal{R}_{0,\RR}=\{0\}$ was the same as \cite[Proposition 4.4]{Liu19}. The proof for the more general case is also the same as the proof of \cite[Proposition 4.4]{Liu19}, except that we apply Proposition~\ref{prop:Cauchy_summation_identity} in this paper instead of \cite[Proposition 4.2]{Liu19}.
\end{proof}

\subsubsection{Rewriting~\eqref{eq:finite_time_formula02}}
\label{sec:rewriting_3_5}

Now we want to apply Proposition~\ref{prop:Cauchy_summation_identity} to equation~\eqref{eq:finite_time_formula02} and rewrite the formula.

We first choose $q(w)=w^N(w+1)^{L-N}$, where $L$ is any fixed integer satisfying $L\ge M+N$. Recall the formula~\eqref{eq:finite_time_formula02}. Let $\mathcal{H}\left(W^{(2)},W^{(1)};z_1,z_0=\rz\right)$ be a slight modification of the integrand in~\eqref{eq:finite_time_formula02}. More precisely, let
\begin{equation}
	\label{eq:calH2}
	\mathcal{H} \left(W^{(2)},W^{(1)};z_1,z_0\right)=\Ch\left(W^{(2)};W^{(1)}\right)\mathcal{A} \left(W^{(2)},W^{(1)};z_1,z_0\right),
\end{equation}
where
\begin{equation}
	\label{eq:calA2}
	\begin{split}
	&\mathcal{A} \left(W^{(2)},W^{(1)};z_1,z_0\right)\\
	&:=\Delta\left(W^{(1)}\right)\Delta\left(W^{(2)}\right)\hat f_1\left(W^{(1)}\right) \hat f_2\left(W^{(2)}\right)\left[\hat H\left(W^{(1)};W^{(2)}\right) +z_0\prod_{i=1}^N\frac{w_{i}^{(2)}}{w_i^{(1)}}\hat H\left(W^{(1)};W^{(2)}\right)\right].
	\end{split}
\end{equation}
Note that when $z_0=\rz$, $\mathcal{H}\left(W^{(2)},W^{(1)};z_1,z_0\right)$ is exactly the integrand of~\eqref{eq:finite_time_formula02}. Assume $\mathcal{D}$ is a bounded region enclosing both $0$ and $-1$. It is obvious that the function $\mathcal{A}$ is well defined and analytic for all $w_{i}^{(1)}, w_i^{(2)}\in \mathcal{D}\setminus\{\-1,0\}$, $1\le i\le N$, and for all $(z_1,z_0)\in \mathbb{D}(\rr)\times\mathbb{D}$, here we choose
\begin{equation}
	\label{eq:rr_value}
	\rr= N^N(L-N)^{L-N}/L^L.
\end{equation}

We remark that we have a different ordering of indices compared to the original formulas~\eqref{eq:calH} and~\eqref{eq:calG}. This is because we want to make the indices of $\hat f_1$ and $\hat f_2$ more natural by using $1$ to label the parameters appearing in the first part of the last passage time and using $2$ to label the parameters appearing in the second part of the last passage time. On the other hand, we also want to make our indices in Propositions~\ref{prop:Cauchy_summation_identity} and~\ref{prop:Cauchy_summation_identity2} consistent with \cite{Liu19} so the readers can compare the results easily. These different orderings might be confusing but they only appear in this technical proof. We will keep reminding readers if needed.

The sum we are considering is
\begin{equation}
	\label{eq:calG2}
	\mathcal{G}(z_1,z_0) = \sum_{W^{(2)}\in \mathcal{R}_{\hat z_2}^N} \sum_{W^{(1)} \in \mathcal{R}_{\hat z_1}^{N}} J\left(W^{(1)}\right)J\left(W^{(2)}\right) \mathcal{H}\left(W^{(2)},W^{(1)};z_1,z_0\right),
\end{equation}
where $\hat z_2=z_1$ and $\hat z_1=z_1z_0$. We assume that $z_1\in\mathbb{D}_0(\rr)$ and $z_0\in\mathbb{D}_0$ hence $0<|\hat z_2|< |\hat z_1|<\rr$.

We need to verify that Proposition~\ref{prop:Cauchy_summation_identity} is applicable for this function~\eqref{eq:calG2}. All other assumptions are trivial, except for the one that $q(w)$ dominates $\mathcal{H} \left(W^{(2)},W^{(1)};z_1,z_0\right)$. We verify it below.

There are only three types of Cauchy chains. The chains of single element $w_{i_1}^{(1)}$ or $w_{i_2}^{(2)}$, and the chain of two elements $w_{i_2}^{(2)}, w_{i_1}^{(1)}$. For the first type of chains, we need to verify $q(w_{i_1}^{(1)})\mathcal{A} \left(W^{(2)},W^{(1)};z_1,z_0\right)$ is analytic at $0$ and $-1$. This follows from the fact that $\hat f_1(w)q(w)w^{-1}=(w+1)^{L-N-m}w^{n-1}e^{s_1w}$ is an entire function. Similarly we can verify it for the second type of Cauchy chains. Finally, for the chain of two elements $w_{i_2}^{(2)},w_{i_1}^{(1)}$, we need to show $\left.q(w)\mathcal{A} \left(W^{(2)},W^{(1)};z_1,z_0\right)\right|_{w_{i_2}^{(2)}=w_{i_1}^{(1)}=w}$ is analytic at $-1$ and $0$. It follows from the fact that $\hat f_1(w)\hat f_2(w)q(w)=(w+1)^{L-N-M}e^{(s_1+s_2)w}$ is entire.

So we can apply Proposition~\ref{prop:Cauchy_summation_identity}, and obtain
\begin{equation*}
	\begin{split}
	&\mathcal{G}(0,z_0=\rz)=\prod_{i_1=1}^N \left(
	\frac{-\rz}{1-\rz} \int_{\Sigma_{\out}}
	\ddbar{w_{i_1}^{(1)}}{}
	+\frac{1}{1-\rz}  \int_{\Sigma_{\inn}}
	\ddbar{w_{i_1}^{(1)}}{}
	\right)
	\prod_{i_2=1}^N \int_{\Sigma} \ddbar{w_{i_2}^{(2)}}{}\\
	&\quad 
	\hat f_1\left(W^{(1)}\right)
	\hat f_2\left(W^{(2)}\right)
	\frac{ \left( \Delta\left(W^{(1)}\right) \right)^2 
		\left( \Delta\left(W^{(2)}\right) \right)^2
	}
	{ \Delta\left(W^{(2)};W^{(1)}\right)
	}
	\cdot 
	\left(
	\hat H \left(W^{(1)};W^{(2)}\right)
	+ \rz \frac{\prod_{i_2=1}^N w_{i_2}^{(2)}}
	{\prod_{i_1=1}^N w_{i_1}^{(1)}}
	\hat H \left(W^{(2)}; W^{(1)}\right)
	\right).
	\end{split}
\end{equation*}
Hence we have an alternate expression for~\eqref{eq:finite_time_formula02}
\begin{equation}
	\label{eq:aux_09}
	p(s_1,s_2;m,n,M,N)=\frac{1}{(N!)^2}\oint_0 \mathcal{G}(0,z_0=\rz) \frac{ (1-\rz)^{N-2} \mathrm{d}\rz }
	{ 2\pi\mathrm{i} \rz^n }.
\end{equation}

\subsubsection{Reformulation to a Fredholm-determinant-like expansion}

In this subsubsection, we want to evaluate the summation~\eqref{eq:calG2} in a different way. Recall $q(w)=w^N(w+1)^{L-N}$ and $\mathcal{R}_z$ are the roots of $q(w)=z$. This equation is called the Bethe equation, and its roots are called the Bethe roots. It is known \cite{Baik-Liu16} that when $|z|< \rr= N^N (L-N)^{L-N}/L^L$, the set $\mathcal{R}_z$ can be split into two different subsets $\mathcal{R}_{z,\LL}$ and $\mathcal{R}_{z,\RR}$ satisfying $|\mathcal{R}_{z,\LL}|=L-N$ and $|\mathcal{R}_{z,\RR}|=N$. Intuitively, each root in $\mathcal{R}_{z,\LL}$ ($\mathcal{R}_{z,\RR}$, respectively) can be viewed as an continuous function of $z$ starting from $-1$ ($0$, respectively) when $z=0$. We denote
\begin{equation}
	\mathcal{D}_{\LL} = \cup_{|z|<\rr} \mathcal{R}_{z,\LL},\quad \text{and}\quad
	\mathcal{D}_{\RR} = \cup_{|z|<\rr} \mathcal{R}_{z,\RR},
\end{equation}
and
\begin{equation}
	q_{z,\LL} (w)= \prod_{u\in\mathcal{R}_{z,\LL}} (w-u),\quad \text{and}\quad 
	q_{z,\RR} (w)= \prod_{v\in\mathcal{R}_{z,\LL}} (w-v)
\end{equation}
which will be used in later computations. Note that $\mathcal{D}_{\LL}$ and $\mathcal{D}_{\RR}$ are two disjoint bounded regions, and $q_{z,\LL}(w)q_{z,\RR}(w)=q(w)-z$.

We will rewrite the summation~\eqref{eq:calG2} by treating $w_{i_k}^{(k)}\in\mathcal{R}_{\hat z_k,\LL}$ and $w_{i_k}^{(k)}\in\mathcal{R}_{\hat z_k,\RR}$ separately. We first observe that, by checking the formulas~\eqref{eq:calH2} and~\eqref{eq:calA2}, the summand is invariant when we permute the coordinates of $W^{(k)}$, $k=1,2$. We also observe that the summand is zero if any two coordinates of $W^{(k)}$ are equal due to the Cauchy-type factor. Therefore we only need to consider the summation for $W^{(k)}$ with different coordinates.

Assume that $n_k$ coordinates in $W^{(k)}$ are chosen from $\mathcal{R}_{\hat z_k,\LL}$. Then the other $N-n_k$ coordinates are chosen from $\mathcal{R}_{\hat z_k,\RR}$. Note that $\mathcal{R}_{\hat z_k,\RR}$ has exactly $N$ elements, hence there are $n_k$ elements which do not appear in $W^{(k)}$. We denote $V^{(k)}=(v_1^{(k)},\cdots,v_{n_k}^{(k)})$ the vector formed by these elements with any given order. We also denote $U^{(k)}=(u_1^{(k)},\cdots,u_{n_k}^{(k)})$ the vector formed by the coordinates of $W^{(k)}$ in $\mathcal{R}_{\hat z_k,\LL}$. Note the invariance property we observed above. We write
\begin{equation}
	\label{eq:sum_rewriting}
	\sum_{W^{(2)} \in \mathcal{R}_{\hat z_2}^N}
	\sum_{W^{(1)} \in \mathcal{R}_{\hat z_1}^N} =(N!)^2\sum_{n_1,n_2=0}^N \frac{1}{(n_1!)^2(n_2!)^2}\sum_{\substack{U^{(2)} \in \mathcal{R}_{\hat z_2,\LL}^{n_2}\\ V^{(2)} \in \mathcal{R}_{\hat z_2,\RR}^{n_2}}}\sum_{\substack{U^{(1)} \in \mathcal{R}_{\hat z_1,\LL}^{n_1}\\ V^{(1)} \in \mathcal{R}_{\hat z_1,\RR}^{n_1}}},
\end{equation}
where the factors $N!$, $n_k!$ come from the number of ways to permute the coordinates of $W^{(k)}$, $U^{(k)}$ (and $V^{(k)}$) respectively. Now we need to rewrite the summand in terms of $U^{(k)}$ and $V^{(k)}$, $k=1,2$. Such a rewriting was mostly done in \cite{Baik-Liu16,Baik-Liu17} except for one extra factor. We will write down the formulas without proofs except for the one involving the extra factor.

Recall the notation conventions~\eqref{eq:def_Delta1},~\eqref{eq:def_Delta2} and~\eqref{eq:def_f_vector}. We write, by simply inserting the coordinates,
\begin{equation*}
	\hat f_k\left(W^{(k)}\right) = \frac{\hat f_k\left(U^{(k)}\right)}{\hat f_k\left(V^{(k)}\right)} \cdot \hat f_k\left(\mathcal{R}_{\hat z_k,\RR}\right),
	\quad 
	J\left(W^{(k)}\right) = \frac{J\left(U^{(k)}\right)}{J\left(V^{(k)}\right)} \cdot  J\left(\mathcal{R}_{\hat z_k,\RR}\right), \quad k=1,2.
\end{equation*}
We also have (see equation (4.43) of \cite{Baik-Liu17})
\begin{equation*}
	\Delta\left(W^{(k)}\right)^2 = (-1)^{N(N-1)/2} 
	                    \frac{\Delta\left(U^{(k)}\right)^2\Delta\left(V^{(k)}\right)^2}
	                         {\Delta\left(U^{(k)};V^{(k)}\right)^2}
	                    \frac{q_{\hat z_k,\RR}^2\left(U^{(k)}\right)}
	                         {\left(q'_{\hat z_k,\RR}\left(V^{(k)}\right)\right)^2}
	                    \cdot q'_{\hat z_k,\RR}\left(\mathcal{R}_{\hat z_k,\RR}\right)
\end{equation*}
and (see equation (4.44) of \cite{Baik-Liu17})
\begin{equation*}
	\Delta\left(W^{(2)};W^{(1)}\right)=\Delta\left(\mathcal{R}_{\hat z_2,\RR};\mathcal{R}_{\hat z_1,\RR}\right)
	\frac{\Delta\left(U^{(2)};U^{(1)}\right)\Delta\left(V^{(2)};V^{(1)}\right)}{\Delta\left(U^{(2)};V^{(1)}\right)\Delta\left(V^{(2)};U^{(1)}\right)}
	\cdot
	\frac{q_{\hat z_1,\RR}\left(U^{(2)}\right)
	q_{\hat z_2,\RR}\left(U^{(1)}\right)}{
q_{\hat z_1,\RR}\left(V^{(2)}\right)
q_{\hat z_2,\RR}\left(V^{(1)}\right)}.
\end{equation*}
We need to further rewrite the above expressions so that we can apply Proposition~\ref{prop:Cauchy_summation_identity2} later.
Denote
\begin{equation*}
	\mathfrak{h}(w;z):= \begin{dcases}
		q_{z,\RR}(w)/w^N, & w\in \mathcal{D}_\LL,\\
		q_{z,\LL}(w)/(w+1)^{L-N}, & w\in\mathcal{D}_\RR.
	\end{dcases}
\end{equation*}
It is easy to check that $\mathfrak{h}(w;z)$ is analytic and nonzero for $w\in\mathcal{D}_\LL\cup\mathcal{D}_\RR$ and for $z\in\mathbb{D}(\rr)$. Especially we have $\mathfrak{h}(w;0)=1$ for all $w\in\mathcal{D}_\LL\cup\mathcal{D}_\RR$. See equation (5.5) in \cite{Liu19} and the discussions below.

One can write (see equation (4.51) of \cite{Baik-Liu17})
\begin{equation*}
	q'_{z,\RR}(v) = \frac{v^N}{J(v)\mathfrak{h}(v;z)},\quad v\in\mathcal{R}_{z,\RR}.
\end{equation*}
and (see (4.49) of \cite{Baik-Liu17})
\begin{equation*}
	q_{z,\RR}(v') = \frac{z'-z}{q_{z,\LL}(v')} =\frac{z'-z}{(v'+1)^{L-N}\mathfrak{h}(v';z)},\quad v'\in\mathcal{R}_{z',\RR}.
\end{equation*}
Note that $\Delta\left(\mathcal{R}_{\hat z_2,\RR};\mathcal{R}_{\hat z_1,\RR}\right) = q_{\hat z_1,\RR}\left(\mathcal{R}_{\hat z_2,\RR}\right)$. After inserting all these formulas and simplifying the expression, we end up with
\begin{equation}
	\label{eq:aux_05}
	\begin{split}
&	J\left(W^{(1)}\right)J\left(W^{(2)}\right)
	\hat f_1\left(W^{(1)}\right)
	\hat f_2\left(W^{(2)}\right)
	\frac{ \left( \Delta\left(W^{(1)}\right) \right)^2 
		\left( \Delta\left(W^{(2)}\right) \right)^2
	}
	{ \Delta\left(W^{(2)};W^{(1)}\right)
	}=\mathcal{K}(\hat z_2,\hat z_1)\cdot \frac{\hat z_1^n \hat z_2^{N-n}}{(\hat z_2-\hat z_1)^N}\\
&\quad\cdot \left[  
       \prod_{k=1}^2
            \frac{(\Delta(U^{(k)}))^2(\Delta(V^{(k)}))^2}
                 {(\Delta(U^{(k)};V^{(k)}))^2}
            \cdot
            \frac{f_k(U^{(k)};s_k)}{f_k(V^{(k)};s_k)}
            \cdot
              \left( \mathfrak{h}(U^{(k)};\hat z_k)
              \right)^2
            \cdot
              \left( \mathfrak{h}(V^{(k)};\hat z_k)
              \right)^2
            \cdot 
              J(U^{(k)}) J(V^{(k)})
\right]\\
&\quad \cdot
       \left[
           \frac{
           	     \Delta(U^{(2)};V^{(1)}) \Delta (V^{(2)}; U^{(1)})
                }                 
                {
                 \Delta(U^{(2)};U^{(1)}) \Delta (V^{(2)}; V^{(1)})
                }
            \cdot
            \frac{
            	  (1-\hat z_{2}/\hat z_1)^{n_1}
            	  (1-\hat z_{1}/\hat z_2)^{n_2}
                 }
                 {
                  \mathfrak{h}(U^{(2)};\hat z_1)
                  \mathfrak{h}(V^{(2)};\hat z_1)	
                  \mathfrak{h}(U^{(1)};\hat z_2)
                  \mathfrak{h}(V^{(1)};\hat z_2)
                 }
       \right],
\end{split}
\end{equation}
where the functions $f_k(w;s_k)=\hat f_k(w) w^N$, $k=1,2$, are defined in~\eqref{eq:def_f}, and
\begin{equation}
	\begin{split}
	\mathcal{K}(\hat z_2,\hat z_1)&= \frac{1}{\hat z_1^{n}}\prod_{v\in\mathcal{R}_{\hat z_1,\RR}}\frac{(v+1)^{-m}v^{n}e^{s_1v}}{\mathfrak{h}(v;\hat z_1)}\cdot
	\frac{1}{\hat z_2^{N-n}}\prod_{v\in\mathcal{R}_{\hat z_2,\RR}}\frac{(v+1)^{-M+m+L-N}v^{N-n}e^{s_2v}}{\mathfrak{h}(v;\hat z_2)/\mathfrak{h}(v;\hat z_1)}\\
	&=(-1)^{N(L-1)}\prod_{v\in\mathcal{R}_{\hat z_1,\RR}}\frac{(v+1)^{-m}e^{s_1v}}{\mathfrak{h}(v;\hat z_1)}
	\prod_{u\in\mathcal{R}_{\hat z_1,\LL}}\frac{1}{u^n}
	\prod_{v\in\mathcal{R}_{\hat z_2,\RR}}\frac{(v+1)^{-M+m+L-N}e^{s_2v}}{\mathfrak{h}(v;\hat z_2)/\mathfrak{h}(v;\hat z_1)}
	\prod_{u\in\mathcal{R}_{\hat z_2,\LL}}\frac{1}{u^{N-n}}.
	\end{split}
\end{equation}
We observe that $\mathcal{K}(\hat z_2,\hat z_1)$ is analytic for both $\hat z_2\in\mathbb{D}_\rr$ and $\hat z_1\in\mathbb{D}_\rr$ since $\mathfrak{h}$ is analytic and nonzero, and $z^{-1}\prod_{v\in\mathcal{R}_{z,\RR}}v = (-1)^{L-1} \prod_{u\in\mathcal{R}_{z,\LL}} u^{-1}$ is analytic for $z\in\mathbb{D}_\rr$. Moreover, we have $\mathcal{K}(0,0)=1$.

As we mentioned before, there is an extra factor in the summand of~\eqref{eq:sum_rewriting} which comes from~\eqref{eq:calA2},
\begin{equation*}
	\hat H\left(W^{(1)};W^{(2)}\right) +z_0\prod_{i=1}^N\frac{w_{i}^{(2)}}{w_i^{(1)}}\hat H\left(W^{(2)};W^{(1)}\right).
\end{equation*}
Here $\hat H$ is defined in~\eqref{eq:hat_H}. Recall that $\{w_i^{(k)}:1\le i\le N\}=\mathcal{R}_{\hat z_k,\RR}\cup \{u_i^{(k)}:1\le i\le n_k\} \setminus \{v_i^{(k)}:1\le i\le n_k\}$. We write, for each $k,k'\in\{1,2\}$,
\begin{equation*}
	\sum_{i=1}^N \left(w_i^{(k)} -w_i^{(k')}\right)= \sum_{i_k=1}^{n_k} \left(u_{i_k}^{(k)} -v_{i_k}^{(k)}\right) -\sum_{i'_{k'}=1}^{n_{k'}} \left(u_{i'_{k'}}^{(k')} -v_{i'_{k'}}^{(k')}\right) +\mathcal{S}_1(\hat z_k) -\mathcal{S}_1(\hat z_{k'})
\end{equation*}
and
\begin{equation*}
	\sum_{i=1}^N \left((w_i^{(k)})^2 -(w_i^{(k')})^2\right)= \sum_{i_k=1}^{n_k} \left((u_{i_k}^{(k)})^2 -(v_{i_k}^{(k)})^2\right) -\sum_{i'_{k'}=1}^{n_{k'}} \left((u_{i'_{k'}}^{(k')})^2 -(v_{i'_{k'}}^{(k')})^2\right) +\mathcal{S}_2(\hat z_k) -\mathcal{S}_2(\hat z_{k'}),
\end{equation*}
where
\begin{equation}
	\mathcal{S}_k(\hat z):=\sum_{v\in\mathcal{R}_{\hat z,\RR}}v^k,\quad k=1,2
\end{equation}
is analytic in $\hat z\in\mathbb{D}_\rr$. Moreover, it is easy to see that $\mathcal{S}_k(0)=0$ for both $k=1,2$. We also write
\begin{equation}
	z_0\prod_{i=1}^N\frac{w_i^{(2)}}{w_i^{(1)}}=\frac{\hat z_1}{\hat z_2}\prod_{i=1}^N\frac{w_i^{(2)}}{w_i^{(1)}}= \prod_{i_1=1}^{n_1}\frac{v_{i_1}^{(1)}}{u_{i_1}^{(1)}}\prod_{i_2=1}^{n_2} \frac{u_{i_2}^{(2)}}{ v_{i_2}^{(2)}}\cdot \frac{\pi (\hat z_2)}{\pi (\hat z_1)},
\end{equation}
where
\begin{equation*}
	\pi(\hat z):= \frac{1}{\hat z}\prod_{v\in\mathcal{R}_{\hat z,\RR}} v = \frac{(-1)^{L-1}}{\prod_{u\in\mathcal{R}_{\hat z,\LL}} u}
\end{equation*}
is analytic in $\mathbb{D}_\rr$. Moreover, it is easy to see that $\pi(0) =(-1)^{N-1}$.

Combing the above calculations we have
\begin{equation}
	\label{eq:aux_06}
	\hat H\left(W^{(1)};W^{(2)}\right) +z_0\prod_{i=1}^N\frac{w_{i}^{(2)}}{w_i^{(1)}}\hat H\left(W^{(1)};W^{(2)}\right)
	=\tilde H(U^{(1)},U^{(2)};V^{(1)},V^{(2)};\hat z_1,\hat z_2)
\end{equation}
for some function $\tilde H$ which is analytic for all $u_{i_1}^{(1)},u_{i_2}^{(2)}\in\mathcal{D}_\LL$, $v_{i_1}^{(1)},v_{i_2}^{(2)}\in\mathcal{D}_\RR\setminus\{0\}$, $1\le i_1\le n_1, 1\le i_2\le n_2$, and for $\hat z_1,\hat z_2\in\mathbb{D}_\rr$. Moreover, we have
\begin{equation}
	\tilde H(U^{(1)},U^{(2)};V^{(1)},V^{(2)};0,0) = H(U^{(1)},U^{(2)};V^{(1)},V^{(2)}),
\end{equation}
where $H$ is defined in~\eqref{eq:def_H}.

\bigskip
Now we combine~\eqref{eq:aux_05} and~\eqref{eq:aux_06}, and note~\eqref{eq:sum_rewriting}. Note $\hat z_1/\hat z_2 =z_0$. We have
\begin{equation}
	\label{eq:aux_07}
	\begin{split}
		&\frac{1}{(N!)^2}\mathcal{G}(z_1,z_0)\frac{(1-z_0)^N}{z_0^n}\\
		&=\mathcal{K}(\hat z_2,\hat z_1)\sum_{n_1,n_2=0}^N \frac{(1-z_0^{-1})^{n_1}
			(1-z_0)^{n_2}}{(n_1!)^2(n_2!)^2}\sum_{\substack{U^{(2)} \in \mathcal{R}_{\hat z_2,\LL}^{n_2}\\ V^{(2)} \in \mathcal{R}_{\hat z_2,\RR}^{n_2}}}\sum_{\substack{U^{(1)} \in \mathcal{R}_{\hat z_1,\LL}^{n_1}\\ V^{(1)} \in \mathcal{R}_{\hat z_1,\RR}^{n_1}}}
		\Ch(U^{(2)};U^{(1)})\Ch(V^{(2)};V^{(1)})\\
		&\quad\cdot \left[  
		\prod_{k=1}^2
		\frac{(\Delta(U^{(k)}))(\Delta(V^{(k)}))}
		{(\Delta(U^{(k)};V^{(k)}))^2}
		\cdot
		\frac{f_k(U^{(k)};s_k)}{f_k(V^{(k)};s_k)}
		\cdot
		\left( \mathfrak{h}(U^{(k)};\hat z_k)
		\right)^2
		\cdot
		\left( \mathfrak{h}(V^{(k)};\hat z_k)
		\right)^2
		\cdot 
		J(U^{(k)}) J(V^{(k)})
		\right]\\
		&\quad \cdot 
		\left[
		{
			\Delta(U^{(2)};V^{(1)}) \Delta (V^{(2)}; U^{(1)})
		}        
		\cdot
		\frac{
			\tilde H(U^{(1)},U^{(2)};V^{(1)},V^{(2)};\hat z_1,\hat z_2)
		}
		{
			\mathfrak{h}(U^{(2)};\hat z_1)
			\mathfrak{h}(V^{(2)};\hat z_1)	
			\mathfrak{h}(U^{(1)};\hat z_2)
			\mathfrak{h}(V^{(1)};\hat z_2)
		}
		\right].
	\end{split}
\end{equation}

\subsubsection{Completing the proof}

Now we are ready to complete the proof. We will take $z_1\to 0$ on both sides of~\eqref{eq:aux_07}. Recall that we have already proven that $\mathcal{G}(z_1,z_0)$ is analytic for $(z_1,z_0)\in\mathbb{D}_\rr\times\mathbb{D}$ and $\mathcal{G}(0,z_0)$ is well defined. For the right hand side, recall $\hat z_2=z_1$ and $\hat z_1=z_1z_0$. When $z_1\to 0$, both $\hat z_1$ and $\hat z_2$ go to $0$. We also recall $\mathcal{K}(0,0)=1$.

For the summand over $U^{(2)},V^{(2)},U^{(1)},V^{(1)}$, it is a Cauchy type summation as we discussed in Proposition~\ref{prop:Cauchy_summation_identity2}. Our previous discussions on the functions $\mathfrak{h}$ and $\tilde H$ implies that this summand satisfies the analyticity assumption. The proof that $q(w)$ dominates the corresponding factor in this summand is also similar to the previous case discussed in Section~\ref{sec:rewriting_3_5}. The only minor difference is that we have a factor $\prod_{i_1}v_{i_1^{(1)}}\prod_{i_2}(v_{i_2}^{(2)})^{-1}$ in $\tilde H$ but the proof does not change even with this factor. Hence we know that this summation is also analytic for $(z_1,z_0)\in\mathbb{D}_\rr\times\mathbb{D}$. Moreover, by inserting $z_1=0$ in the equation, we obtain
\begin{equation}
	\label{eq:aux_08}
	\begin{split}
		&\frac{1}{(N!)^2}\mathcal{G}(0,z_0)\frac{(1-z_0)^N}{z_0^n}=\sum_{n_1,n_2=0}^N \frac{(1-z_0^{-1})^{n_1}
			(1-z_0)^{n_2}}{(n_1!)^2(n_2!)^2}\\
		&\quad \prod_{i_1=1}^{n_1}     
		\left(  \frac{1}{1-{z_0}} \int_{\Sigma_{\LL,\inn}} 	    
	\ddbar{u^{(1)}_{i_1}}{}   
-\frac{{z_0}}{1-{z_0}} \int_{\Sigma_{\LL,\out}} \ddbar{u^{(1)}_{i_1}}{}
\right)
\left(  \frac{1}{1-{z_0}}  \int_{\Sigma_{\RR,\inn}}  
\ddbar{v^{(1)}_{i_1}}{}
-\frac{{z_0}}{1-{z_0}}  \int_{\Sigma_{\RR,\out}}
\ddbar{v^{(1)}_{i_1}}{}
\right)\\
&\quad \cdot 
\prod_{i_2=1}^{n_2}  
\int_{\Sigma_\LL} \ddbar{u^{(2)}_{i_2}}{} 
\int_{\Sigma_\LL} \ddbar{v^{(2)}_{i_2}}{} 
\Ch(U^{(2)};U^{(1)})\Ch(V^{(2)};V^{(1)}) \prod_{k=1}^2
\frac{(\Delta(U^{(k)}))(\Delta(V^{(k)}))}
{(\Delta(U^{(k)};V^{(k)}))^2}
\cdot
\frac{f_k(U^{(k)};s_k)}{f_k(V^{(k)};s_k)}  \\
			&\quad \cdot 
		{
			\Delta(U^{(2)};V^{(1)}) \Delta (V^{(2)}; U^{(1)})
		}        
		\cdot
		{
			 H(U^{(1)},U^{(2)};V^{(1)},V^{(2)}).
		}		
	\end{split}
\end{equation}
Inserting it in~\eqref{eq:aux_09} and replacing $n_0,n_1$ by $k_1,k_2$, we obtain
\begin{equation}
	\label{eq:aux_30}
	\begin{split}
		&p(s_1,s_2;m,n,M,N)
		=\oint_0\ddbar{\mathrm{z}}{\mathrm{(1-z)^2}}\sum_{k_1,k_2\ge 0}
		\frac{1}{(k_1!k_2!)^2}\\
		&\quad
		\prod_{i_1=1}^{k_1}     
		\left(  \frac{1}{1-\mathrm{z}} \int_{\Sigma_{\LL,\inn}} 	    
		\ddbar{u^{(1)}_{i_1}}{}   
		-\frac{\mathrm{z}}{1-\mathrm{z}} \int_{\Sigma_{\LL,\out}} \ddbar{u^{(1)}_{i_1}}{}
		\right)
		\left(  \frac{1}{1-\mathrm{z}}  \int_{\Sigma_{\RR,\inn}}  
		\ddbar{v^{(1)}_{i_1}}{}
		-\frac{\mathrm{z}}{1-\mathrm{z}}   \int_{\Sigma_{\RR,\out}}
		\ddbar{v^{(1)}_{i_1}}{}
		\right)
		\\
		&\quad \cdot 
		\prod_{i_2=1}^{k_2}  
		\int_{\Sigma_\LL} \ddbar{u^{(2)}_{i_2}}{} 
		\int_{\Sigma_\RR} \ddbar{v^{(2)}_{i_2}}{} 
		\cdot    
		\left(1-\mathrm{z}\right)^{k_2}
		\left(1-\frac{1}{\mathrm{z}}\right)^{k_1}
		\cdot 
		\frac{f_1(U^{(1)};s_1)f_2(U^{(2)};s_2)}{f_1(V^{(1)};s_1)f_2(V^{(2)};s_2)}
		\cdot
		H(U^{(1)},U^{(2)};V^{(1)},V^{(2)})\\
		&\quad \cdot 
		\prod_{\ell=1}^2
		\frac{\left( \Delta(U^{(\ell)}) \right)^2
			\left( \Delta(V^{(\ell)}) \right)^2
		}
		{\left( \Delta(U^{(\ell)}; V^{(\ell)}) \right)^2}
		\cdot 
		\frac{\Delta(U^{(1)};V^{(2)})\Delta(V^{(1)};U^{(2)})}
		{\Delta(U^{(1)};U^{(2)})\Delta(V^{(1)};V^{(2)})}.
	\end{split}
\end{equation}
Note that when $k_1=0$, the summand is analytic for $\mathrm{z}=0$ hence the integral of $\mathrm{z}$ vanishes. When $k_2=0$, there is no $u^{(2)}_{i_2}$ or $v_{i_2}^{(2)}$ variable, hence the $u_{i_1}^{(1)}$ and $v_{i_1}^{(1)}$ contours can be deformed to $\Sigma_\LL$ and $\Sigma_\RR$ respectively. As a result, the $\mathrm{z}$ integral can be separately written as
\begin{equation*}
	\oint_0\ddbar{\mathrm{z}}{\mathrm{(1-z)^2}}\left(1-\frac{1}{\mathrm{z}}\right)^{k_1}=\begin{dcases}
		-1,& k_1=1,\\
		0,&k_1=0, \text{or }k_1\ge 2.
	\end{dcases}
\end{equation*}
However, it is direct to check that $H(U^{(1)},U^{(2)};V^{(1)},V^{(2)})=0$ when $k_1=1$ and $k_2=0$. Therefore the summand when $k_2=0$ also vanishes. Thus we can replace the sum $\sum_{k_1,k_2\ge 0}$ by $\sum_{k_1,k_2\ge 1}$, and
arrive at the formula~\eqref{eq:def_finite_time_density}.

\section{Asymptotic analysis and proof of Theorem~\ref{thm:limiting_geodesic}}
\label{sec:proof_thm2}
 
 In this section, we will perform asymptotic analysis for the formulas obtained in Theorem~\ref{thm:finite_time_geodesic1} and prove Theorem~\ref{thm:limiting_geodesic}. 
The main technical result of this section is as follows.
\begin{prop}
	\label{prop:asymptotics_density}Suppose $\alpha>0, \gamma\in(0,1)$ are fixed constants. Assume that
	\begin{equation}
		\label{eq:scaling}
		\begin{split}
			M&= [\alpha N],\\
			m&=[\gamma\alpha N + x_1 \alpha^{2/3}(1+\sqrt{\alpha})^{2/3} N^{2/3}],\\
			n&=[\gamma N + x_2 \alpha^{-1/3}(1+\sqrt{\alpha})^{2/3} N^{2/3}],\\
			t_1&=d((1,1),(m,n)) + \mathrm{t}_1\cdot \alpha^{-1/6}(1+\sqrt{\alpha})^{4/3}N^{1/3},\\
			t_2&=d((m+1,n),(M,N)) +\mathrm{t}_2 \cdot 
			\alpha^{-1/6}(1+\sqrt{\alpha})^{4/3}N^{1/3},\\
			t_2'&=d((m,n+1),(M,N)) +\mathrm{t}_2 \cdot 
			\alpha^{-1/6}(1+\sqrt{\alpha})^{4/3}N^{1/3},
		\end{split}
	\end{equation}
for some real numbers $x_1,x_2$. Then
%\begin{equation}
%	\alpha^{-2/3}(1+\sqrt{\alpha})^{10/3}N^{4/3}p(\ell_1,\ell_2;m,n,M,N) = \limp(\mathtt{l}_1,\mathtt{l}_2,\mathrm{x}=x_2-x_1;\gamma)+\bigO(N^{-1/3})
%\end{equation}
\begin{equation}
	\label{eq:estimate_vertical}
	\begin{split}
		&\prob
		\left(
			(m,n),(m+1,n)\in \gd_{(1,1)}(M,N), L_{(1,1)}(m,n)\ge t_1,
			L_{(m+1,n)}(M,N)\ge t_2
		\right)\\
		&=\alpha^{1/3}(1+\sqrt{\alpha})^{-2/3}N^{-2/3}\int_{\mathrm{t}_1}^\infty \int_{\mathrm{t}_2}^\infty\limp(\mathrm{s}_1,\mathrm{s}_2,\mathrm{x}=x_2-x_1;\gamma)\mathrm{d}{\mathrm{s}_2}\mathrm{d}{\mathrm{s}_2}+\bigO(N^{-1}(\log N)^5),
		\end{split}
\end{equation}
and similarly
\begin{equation}
	\label{eq:estimate_horizontal}
	\begin{split}
		&\prob
		\left(
			(m,n), (m,n+1)\in \gd_{(1,1)}(M,N), L_{(1,1)}(m,n)\ge t_1,  L_{(m,n+1)}(M,N)\ge t'_2
		\right)\\
		&=\alpha^{-2/3}(1+\sqrt{\alpha})^{-2/3}N^{-2/3}\int_{\mathrm{t}_1}^\infty \int_{\mathrm{t}_2}^\infty\limp(\mathrm{s}_1,\mathrm{s}_2,\mathrm{x}=x_2-x_1;\gamma)\mathrm{d}{\mathrm{s}_2}\mathrm{d}{\mathrm{s}_1}+\bigO(N^{-1}(\log N)^5)
	\end{split}
\end{equation}
as $N$ becomes large, and the $\bigO(N^{-1}(\log N)^5)$ errors are uniformly for $x_1,x_2$ in any given compact set and for $\mathrm{t}_1,\mathrm{t}_2$ in any given set with a finite lower bound.	
\end{prop}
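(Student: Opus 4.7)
The plan is to combine Theorem~\ref{thm:finite_time_geodesic1} with a steepest-descent analysis of the contour-integral formula~\eqref{eq:def_finite_time_density} for $p(s_1,s_2;m,n,M,N)$. By Theorem~\ref{thm:finite_time_geodesic1} the probability in~\eqref{eq:estimate_vertical} equals $\int_{t_1}^\infty\int_{t_2}^\infty p\,\mathrm{d}s_2\,\mathrm{d}s_1$, and after the substitution $s_\ell=d_\ell+\mathrm{s}_\ell c_N$ with $c_N:=\alpha^{-1/6}(1+\sqrt{\alpha})^{4/3}N^{1/3}$ and $d_1:=d((1,1),(m,n)),\ d_2:=d((m+1,n),(M,N))$, this becomes $c_N^2\int_{\mathrm{t}_1}^\infty\int_{\mathrm{t}_2}^\infty p\,\mathrm{d}\mathrm{s}_2\,\mathrm{d}\mathrm{s}_1$. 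The task then reduces to pointwise convergence of $c_N^2 p$ to the correctly normalized $\limp$ together with a uniform integrable dominator. The horizontal-step estimate~\eqref{eq:estimate_horizontal} will follow from the same analysis applied to the row/column-swapped density $p(s_1,s_2;n,m,N,M)$ appearing in Theorem~\ref{thm:finite_time_geodesic1}; the different prefactor $\alpha^{-2/3}$ reflects this swap in the critical-point computation.

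For the saddle-point analysis, the key observation is that both $\log f_\ell(w;d_\ell)$ share the common double critical point $w_c=-1/(1+\sqrt{\alpha})$ at the balanced values $x_1=x_2=0,\ s_\ell=d_\ell$, with $(\log f_\ell)'''(w_c)\sim -2\gamma_\ell N(1+\sqrt{\alpha})^4/\sqrt{\alpha}$, where $\gamma_1:=\gamma$ and $\gamma_2:=1-\gamma$. I deform the six contours $\Sigma_{\LL,\cdot},\Sigma_{\RR,\cdot}$ to pass through $w_c$ along the steepest-descent directions and rescale
\begin{equation*}
u=w_c+\alpha^{1/6}(1+\sqrt{\alpha})^{-4/3}N^{-1/3}\,\xi,\qquad v=w_c+\alpha^{1/6}(1+\sqrt{\alpha})^{-4/3}N^{-1/3}\,\eta,
\end{equation*}
so that the left- and right-half contours approach the Airy contours $\Gamma_{\LL,\cdot},\Gamma_{\RR,\cdot}$ of Figure~\ref{fig:contours_limit}. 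Taylor expansion of $\log f_\ell$ then produces the cubic term $-\gamma_\ell\zeta^3/3$ of~\eqref{eq:def_rmf}; the $x_\ell$-offset of $(m,n)$ generates the $\mp\mathrm{x}\zeta^2/2$ quadratic term (whose net coefficient depends only on $\mathrm{x}=x_2-x_1$, by direct expansion using $1/(w_c+1)^2=(1+\sqrt{\alpha})^2/\alpha$ and $1/w_c^2=(1+\sqrt{\alpha})^2$); and the combined $\mathrm{s}_\ell$-shift together with the induced drift of the critical point produces the linear coefficient $\mathrm{s}_\ell-\mathrm{x}^2/(4\gamma_\ell)$. The rational prefactors $(\Delta(U^{(\ell)}))^2(\Delta(V^{(\ell)}))^2/(\Delta(U^{(\ell)};V^{(\ell)}))^2$ and $\Delta(U^{(1)};V^{(2)})\Delta(V^{(1)};U^{(2)})/(\Delta(U^{(1)};U^{(2)})\Delta(V^{(1)};V^{(2)}))$ are invariant under the affine rescaling and match the limit formula directly, while $H$ of~\eqref{eq:def_H} converges to $\mathrm{H}$ of~\eqref{eq:def_rmH}: one verifies that $\prod v/\prod u\to 1$ at rate $N^{-1/3}$, so that expanding $1\pm\prod v/\prod u$ together with $\bigl(\sum(u-v)\bigr)^2\mp\sum(u^2-v^2)$ in the local variables, the leading survivor is $\tfrac{1}{12}\mathrm{S}_1^4+\tfrac14\mathrm{S}_2^2-\tfrac13\mathrm{S}_1\mathrm{S}_3$. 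The Jacobian from the $2(k_1+k_2)$ integrations cancels against the Vandermonde rescaling factors, and combining with $c_N^2$ yields the overall $\alpha^{1/3}(1+\sqrt{\alpha})^{-2/3}N^{-2/3}$.

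The hard part will be justifying the exchange of the $N\to\infty$ limit with both the double sum $\sum_{k_1,k_2\ge1}$ and the $\mathrm{s}_\ell$-integrations while tracking the $(\log N)^5$ error. My plan is to truncate each contour at $|\xi|,|\eta|\le\log N$: the tail is super-polynomially small by the standard cubic lower bound on $-\mathrm{Re}(\log f_\ell-\log f_\ell(w_c))$ away from $w_c$ along the steepest-descent contour, forced by the double-critical-point structure. Inside the truncated region, the Taylor remainder in the exponent is $\bigO(N^{-1/3}(\log N)^3)$ per integration variable; combining over the $2(k_1+k_2)$ variables with the cubic super-exponential decay of $|\mathrm{f}_\ell|$ along the Airy contours (which, by an argument analogous to Lemma~\ref{lm:hat_rt_uniform_bounds}, gives $\|\mathrm{T}_{k_1,k_2}\|\lesssim C^{k_1+k_2}/(k_1!k_2!)^{3/2}$ uniformly for $\mathrm{s}_\ell,\mathrm{x}$ in compact sets) yields an absolutely convergent dominator summable uniformly in $N$, so that dominated convergence carries the limit inside. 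The five powers of $\log N$ in the final error come from the cutoff radius, the Taylor degree in the local variables, the range of the $\mathrm{s}_\ell$-integrations, the Gaussian-cubic decay of $\limp$ in $\mathrm{s}_1+\mathrm{s}_2$, and the $\rz$-contour localization near $0$.
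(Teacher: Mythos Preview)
Your steepest-descent outline is correct and parallels the paper's, with one structural difference: the paper integrates out $s_1,s_2$ \emph{before} any asymptotics. Since $\mathrm{Re}(u^{(\ell)}_{i_\ell}-v^{(\ell)}_{i_\ell})<0$ on the chosen contours,
\[
\int_{t_\ell}^\infty \frac{f_\ell(U^{(\ell)};s_\ell)}{f_\ell(V^{(\ell)};s_\ell)}\,\mathrm{d}s_\ell
=\frac{f_\ell(U^{(\ell)};t_\ell)}{f_\ell(V^{(\ell)};t_\ell)}\cdot\frac{1}{\sum_{i_\ell}(v^{(\ell)}_{i_\ell}-u^{(\ell)}_{i_\ell})},
\]
so the probability in~\eqref{eq:estimate_vertical} equals $\oint\frac{\mathrm{d}\mathrm{z}}{2\pi\mathrm{i}(1-\mathrm{z})^2}\sum_{k_1,k_2}\frac{1}{(k_1!k_2!)^2}\hat T_{k_1,k_2}$ with $\hat T_{k_1,k_2}$ a concrete $2(k_1+k_2)$-fold contour integral depending only on $\mathrm{t}_1,\mathrm{t}_2$ (equation~\eqref{eq:hat_T}), and the limiting side is written the same way with $\hat{\mathrm{T}}_{k_1,k_2}$. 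The whole proof then collapses to the single termwise estimate of Lemma~\ref{lm:estimate_error}, summable in $(k_1,k_2)$ against $1/(k_1!k_2!)^2$. Your order---asymptotics on $p$ first, then integrate in $\mathrm{s}_\ell$---would also work, but it is exactly the ``uniform integrable dominator in $\mathrm{s}_\ell$'' step you flag as the hard part; integrating first removes that step entirely and is the cleaner route.

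One correction: the $(\log N)^5$ does not come from five separate mechanisms. It arises solely from the $H\to\mathrm{H}$ expansion. Writing $\tilde H:=\alpha^{-2/3}(1+\sqrt\alpha)^{10/3}N^{4/3}H$, the leading pieces are of order $N$ and $N^{2/3}$ (see~\eqref{eq:aux_20}) and cancel against the first Taylor coefficients of $\prod v^{(1)}_{i_1}u^{(2)}_{i_2}\big/\prod u^{(1)}_{i_1}v^{(2)}_{i_2}$; one must therefore expand that product through order $N^{-1}$. The $\epsilon^{-3}N\,\mathrm{S}_1$ prefactor multiplied by the $O(N^{-4/3}(\log N)^4)$ remainder of the product, together with $|\mathrm{S}_1|\lesssim(k_1+k_2)\log N$, gives the $O(N^{-1/3}(\log N)^5)$. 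The $f_\ell\to\mathrm{f}_\ell$ Taylor error contributes only $O(N^{-1/3}(\log N)^4)$.
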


%Note that we used symbols $\ell_i,\mathtt{l}_i$ here instead of $s_i,\mathrm{s}_i$ as in Theorem~\ref{thm:finite_time_geodesic1} and Theorem~\ref{thm:limiting_geodesic} to make them more distinguishable.

The proof of Proposition will be provided later in this section. 
Below we prove Theorem~\ref{thm:limiting_geodesic} assuming Proposition~\ref{prop:asymptotics_density}.

Recall that $\pi$ is an up/left lattice path from $(m,n)$ to $(m',n')$. See Figure~\ref{fig:limit_theorem_proof} for an illustration. We first realize that there are different types of lattice points $(a,b)\in\pi$ depending on whether $(a+1,b)$ and $(a,b+1)$ are on $\pi$ or not. We call $(a,b)\in\pi$ is a horizontal point if $(a,b+1)\notin\pi$, and a vertical point if $(a+1,b)\notin\pi$. Note there are outer corners which are both horizontal and vertical points, and inner corners which are neither horizontal nor vertical points. We also note that an exit point $\mathbf{p}$ must be a horizontal point $\mathbf{p}=(a,b)$ with $\mathbf{p}_+=(a,b+1)$, or a vertical point $\mathbf{p}=(a,b)$ with $\mathbf{p}_+=(a+1,b)$.  We write
\begin{equation}
	\label{eq:riemann_sum}
	\begin{split}
		&\prob
		\left(
		\begin{array}{l}
			\gd_{(1,1)}(M,N)
			\text{ intersects $\pi$, and exits $\pi$ at some point $\mathbf{p}=( a, b)$,} \\
			\mbox{and }L_{(1,1)}(\mathbf{p})\ge t_1=d((1,1),\mathbf{p}) + \mathrm{t}_1\cdot \alpha^{-1/6}(1+\sqrt{\alpha})^{4/3} N^{1/3}, \\
			\mbox{and } L_{\mathbf{p}_+}(M,N)\ge t_2=d(\mathbf{p}_+,(M,N)) + \mathrm{t}_2\cdot \alpha^{-1/6}(1+\sqrt{\alpha})^{4/3} N^{1/3}\\
		\end{array}
		\right)\\
		&=\sum_{(a,b)\in \pi \text{ is a vertical point }}
		\prob
		\left(
		\begin{array}{l}
			(a,b)\in \gd_{(1,1)}(M,N)
			\text{ and } (a+1,b)\in \gd_{(1,1)}{(M,N)},\\
			\mbox{and }L_{(1,1)}(a,b)\ge d((1,1),(a,b)) + \mathrm{t}_1\cdot \alpha^{-1/6}(1+\sqrt{\alpha})^{4/3} N^{1/3}, \\
			\mbox{and } L_{(a+1,b)}(M,N)\ge d((a+1,b),(M,N)) + \mathrm{t}_2\cdot \alpha^{-1/6}(1+\sqrt{\alpha})^{4/3} N^{1/3}\\
		\end{array}
		\right)\\	
		&
		+\sum_{(a,b)\in \pi \text{ is a horizontal point }}
		\prob
		\left(
		\begin{array}{l}
			(a,b)\in \gd_{(1,1)}(M,N)
			\text{ and } (a,b+1)\in \gd_{(1,1)}{(M,N)},\\
			\mbox{and }L_{(1,1)}(a,b)\ge d((1,1),(a,b)) + \mathrm{t}_1\cdot \alpha^{-1/6}(1+\sqrt{\alpha})^{4/3} N^{1/3}, \\
			\mbox{and } L_{(a,b+1)}(M,N)\ge d((a,b+1),(M,N)) + \mathrm{t}_2\cdot \alpha^{-1/6}(1+\sqrt{\alpha})^{4/3} N^{1/3}\\
		\end{array}
		\right).	
	\end{split}
\end{equation}

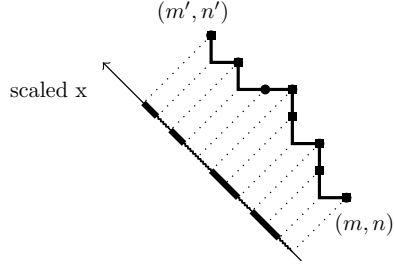
\begin{figure}[h]
	\centering\scalebox{1.2}{	\begin{tikzpicture}
			\draw[line width=1pt] (1.2,2.7)--(1.2,2.4)--(1.5,2.4)--(1.5,2.1)--(2.1,2.1)--(2.1,1.5)--(2.4,1.5)--(2.4,0.9)--(2.7,0.9);
			\draw[<-] (0,2.4)--(2.2,0.2); 
			\draw[line width=2pt] (0.45,1.95)--(0.6,1.8);
			\draw[decoration={aspect=0.03, segment length=0.4mm, amplitude=0.12mm,coil},decorate] (0.6,1.8)--(0.75,1.65);
			\draw[line width=2pt] (0.75,1.65)--(0.9,1.5);
			\draw[decoration={aspect=0.03, segment length=0.4mm, amplitude=0.12mm,coil},decorate] (0.9,1.5)--(1.2,1.2);
			\draw[line width=2pt] (1.2,1.2)--(1.5,0.9);
			\draw[decoration={aspect=0.03, segment length=0.4mm, amplitude=0.12mm,coil},decorate] (1.5,0.9)--(1.65,0.75);
			\draw[line width=2pt] (1.65,0.75)--(1.95,0.45);
			\draw[decoration={aspect=0.03, segment length=0.4mm, amplitude=0.12mm,coil},decorate] (1.95,0.45)--(2.1,0.3);
			\draw[dotted] (1.2,2.7)--(0.45,1.95);
			\draw[dotted] (1.2,2.4)--(0.6,1.8);
			\draw[dotted] (1.5,2.4)--(0.75,1.65);
			\draw[dotted] (1.5,2.1)--(0.9,1.5);
			\draw[dotted] (1.8,2.1)--(1.05,1.35);
			\draw[dotted] (2.1,2.1)--(1.2,1.2);
			\draw[dotted] (2.1,1.8)--(1.35,1.05);
			\draw[dotted] (2.1,1.5)--(1.5,0.9);
			\draw[dotted] (2.4,1.5)--(1.65,0.75);
			\draw[dotted] (2.4,1.2)--(1.8,0.6);
			\draw[dotted] (2.4,0.9)--(1.95,0.45);
			\draw[dotted] (2.7,0.9)--(2.1,0.3);
			\node at (2.7,0.9)[circle,fill,inner sep=1.1pt]{};
			\node at (2.7,0.9)[rectangle,fill,inner sep=1.34pt]{};
			\node at (2.4,1.2)[rectangle,fill,inner sep=1.34pt]{};
			\node at (2.4,1.5)[rectangle,fill,inner sep=1.34pt]{};
			\node at (2.4,1.5)[circle,fill,inner sep=1.1pt]{};
			\node at (2.1,1.8)[rectangle,fill,inner sep=1.34pt]{};
			\node at (2.1,2.1)[rectangle,fill,inner sep=1.34pt]{};
			\node at (2.1,2.1)[circle,fill,inner sep=1.1pt]{};
			\node at (1.8,2.1)[circle,fill,inner sep=1.1pt]{};
			\node at (1.5,2.4)[circle,fill,inner sep=1.1pt]{};
			\node at (1.5,2.4)[rectangle,fill,inner sep=1.34pt]{};
			\node at (1.2,2.7)[rectangle,fill,inner sep=1.34pt]{};
			\node at (1.2,2.7)[circle,fill,inner sep=1.1pt]{};
			\node[scale=0.7] at (2.9,0.6)  {$(m,n)$};
			\node[scale=0.7] at (1,2.95)  {$(m',n')$};
			\node[scale=0.7] at (-0.6,2.1) {scaled $\mathrm{x}$};
	\end{tikzpicture}}
	\caption{An illustration of the sum~\eqref{eq:riemann_sum}. The square-shaped points are vertical points, and the round-shaped points are horizontal points. The sum can be viewed as a Riemann sum along the axis $\mathrm{x}$, where the horizontal points contribute to the spring parts and the vertical points contribute to the thick part.}\label{fig:limit_theorem_proof}
\end{figure}

Now we apply Proposition~\ref{prop:asymptotics_density} and view the right hand side of~\eqref{eq:riemann_sum} as a Riemann sum of the quantity $\int_{\mathrm{t}_1}^\infty \int_{\mathrm{t}_2}^\infty\limp(\mathrm{s}_1,\mathrm{s}_2,\mathrm{x};\gamma)\mathrm{d}{\mathrm{s}_2}\mathrm{d}{\mathrm{s}_1}$ over an interval $\mathrm{x}\in [x_2-x_1,x'_2-x'_1]$, plus an error terms $\bigO(N^{-1}(\log N)^5)\times \bigO(N^{2/3})=\bigO(N^{-1/3}(\log N)^5)$. See Figure~\ref{fig:limit_theorem_proof} for an illustration. It is easy to see from the definition that $\int_{\mathrm{t}_1}^\infty \int_{\mathrm{t}_2}^\infty\limp(\mathrm{s}_1,\mathrm{s}_2,\mathrm{x};\gamma)\mathrm{d}{\mathrm{s}_2}\mathrm{d}{\mathrm{s}_1}$ is continuous in $\mathrm{x}$. Thus the Riemman sum converges to the desired integral in~\eqref{eq:limit_geodesic_distribution}, and we complete the proof of Theorem~\ref{thm:limiting_geodesic}.

\bigskip

The remaining part of this section is the proof of Proposition~\ref{prop:asymptotics_density}. We first realize that~\eqref{eq:estimate_horizontal} and~\eqref{eq:estimate_vertical} are equivalent. In fact, if we switch rows and columns and replace $\alpha$ by $\alpha^{-1}$ in the equation~\eqref{eq:estimate_horizontal}, we obtain~\eqref{eq:estimate_vertical} with $-\mathrm{x}$ instead of $\mathrm{x}$ appearing on the right hand side. Note that $\limp(\mathrm{s}_1,\mathrm{s}_2,\mathrm{x};\gamma)=\limp(\mathrm{s}_1,\mathrm{s}_2,-\mathrm{x};\gamma)$, see Remark~\ref{rmk:symmetry_T}. We hence obtain the equivalence of~\eqref{eq:estimate_horizontal} and~\eqref{eq:estimate_vertical}. It remains to prove one equation~\eqref{eq:estimate_vertical}.

Using Theorem~\ref{thm:finite_time_geodesic1}, we write the left hand side of~\eqref{eq:estimate_vertical} as
\begin{equation}
	\label{eq:left_vertical}
	\prob
	\left(
	\begin{array}{l}
		(m,n), (m+1,n)\in \gd_{(1,1)}{(M,N)},\\
		\mbox{and }L_{(1,1)}(m,n)\ge t_1, \\
		\mbox{and } L_{(m+1,n)}(M,N)\ge t_2\\
	\end{array}
	\right)=\oint_0\ddbar{\mathrm{z}}{\mathrm{(1-z)^2}}\sum_{k_1,k_2\ge 1}
	\frac{1}{(k_1!k_2!)^2}
	\hat T_{k_1,k_2}(\mathrm{z};t_1,t_2;m,n,M,N),
\end{equation}
where
\begin{equation}
	\label{eq:hat_T}
	\begin{split}
	&\hat T_{k_1,k_2}(\mathrm{z};t_1,t_2;m,n,M,N)\\
	&=\int_{t_1}^\infty\int_{t_2}^\infty T_{k_1,k_2}(\mathrm{z};s_1,s_2;m,n,M,N)\mathrm{d}s_2\mathrm{d}s_1\\
	&=
	\prod_{i_1=1}^{k_1}     
	\left(  \frac{1}{1-\mathrm{z}} \int_{\Sigma_{\LL,\inn}} 	    
	\ddbar{u^{(1)}_{i_1}}{}   
	-\frac{\mathrm{z}}{1-\mathrm{z}} \int_{\Sigma_{\LL,\out}} \ddbar{u^{(1)}_{i_1}}{}
	\right)
	\left(  \frac{1}{1-\mathrm{z}}  \int_{\Sigma_{\RR,\inn}}  
	\ddbar{v^{(1)}_{i_1}}{}
	-\frac{\mathrm{z}}{1-\mathrm{z}}   \int_{\Sigma_{\RR,\out}}
	\ddbar{v^{(1)}_{i_1}}{}
	\right)
	\\
	&\quad \cdot 
	\prod_{i_2=1}^{k_2}  
	\int_{\Sigma_\LL} \ddbar{u^{(2)}_{i_2}}{} 
	\int_{\Sigma_\RR} \ddbar{v^{(2)}_{i_2}}{} 
	\cdot    
	\left(1-\mathrm{z}\right)^{k_2}
	\left(1-\frac{1}{\mathrm{z}}\right)^{k_1}
	\cdot 
	\frac{f_1(U^{(1)};t_1)f_2(U^{(2)};t_2)}{f_1(V^{(1)};t_1)f_2(V^{(2)};t_2)}
	\cdot \frac{1}{\prod_{\ell=1}^2\sum_{i_\ell=1}^{k_\ell}(u_{i_\ell}^{(\ell)} -v_{i_\ell}^{(\ell)})}
	\\
	&\quad \cdot H(U^{(1)},U^{(2)};V^{(1)},V^{(2)})\cdot 
	\prod_{\ell=1}^2
	\frac{\left( \Delta(U^{(\ell)}) \right)^2
		\left( \Delta(V^{(\ell)}) \right)^2
	}
	{\left( \Delta(U^{(\ell)}; V^{(\ell)}) \right)^2}
	\cdot 
	\frac{\Delta(U^{(1)};V^{(2)})\Delta(V^{(1)};U^{(2)})}
	{\Delta(U^{(1)};U^{(2)})\Delta(V^{(1)};V^{(2)})},
	\end{split}
\end{equation}
with the functions $f_1(w;t_1)$ and $f_2(w;t_2)$ defined in~\eqref{eq:def_f}, and the function $H$ defined by~\eqref{eq:def_H}. We remark that in the above equation we evaluated the integral over $s_1$ and $s_2$ using the fact $\mathrm{Re} u_{i_\ell}^{(\ell)}<\mathrm{Re} v_{i_\ell}^{(\ell)}$ due to the order of the contours.%We remark that $f_1$ and $f_2$ above have $t_i$ instead of $s_i$ as parameters compared with~\eqref{eq:def_f}.

Similarly, we can write %the leading term of the right hand side of~\eqref{eq:estimate_vertical} can be written as
\begin{equation}
	\label{eq:right_vertical}
	\begin{split}
		\int_{\mathrm{t}_1}^\infty \int_{\mathrm{t}_2}^\infty\limp(\mathrm{s}_1,\mathrm{s}_2,\mathrm{x};\gamma)\mathrm{d}{\mathrm{s}_2}\mathrm{d}{\mathrm{s}_1}=\oint_0\ddbar{\mathrm{z}}{\mathrm{(1-z)^2}}\sum_{k_1,k_2\ge 1}
		\frac{1}{(k_1!k_2!)^2}
		\mathrm{\hat T}_{k_1,k_2}(\mathrm{z};\mathrm{t}_1,\mathrm{t}_2,\mathrm{x};\gamma)
	\end{split}
\end{equation}
with
\begin{equation}
	\label{eq:hat_rT}
	\begin{split}
		&\mathrm{\hat T}_{k_1,k_2}(\mathrm{z};\mathrm{t}_1,\mathrm{t}_2,\mathrm{x};\gamma)=\int_{\mathrm{t}_1}^\infty \int_{\mathrm{t}_2}^\infty\mathrm{ T}_{k_1,k_2}(\mathrm{z};\mathrm{s}_1,\mathrm{s}_2,\mathrm{x};\gamma)\mathrm{d}{\mathrm{s}_2}\mathrm{d}{\mathrm{s}_1}\\
		&=
		(-1)^{k_1+k_2}\prod_{i_1=1}^{k_1}     
		\left(  \frac{1}{1-\mathrm{z}} \int_{\Gamma_{\LL,\inn}} 	    
		\ddbar{\xi^{(1)}_{i_1}}{}   
		-\frac{\mathrm{z}}{1-\mathrm{z}} \int_{\Gamma_{\LL,\out}} \ddbar{\xi^{(1)}_{i_1}}{}
		\right)
		\left(  \frac{1}{1-\mathrm{z}}  \int_{\Gamma_{\RR,\inn}}  
		\ddbar{\eta^{(1)}_{i_1}}{}
		-\frac{\mathrm{z}}{1-\mathrm{z}}   \int_{\Gamma_{\RR,\out}}
		\ddbar{\eta^{(1)}_{i_1}}{}
		\right)
		\\
		&\quad \cdot 
		\prod_{i_2=1}^{k_2}  
		\int_{\Gamma_\LL} \ddbar{\xi^{(2)}_{i_2}}{} 
		\int_{\Gamma_\RR} \ddbar{\eta^{(2)}_{i_2}}{} 
		\cdot    
		\left(1-\mathrm{z}\right)^{k_2}
		\left(1-\frac{1}{\mathrm{z}}\right)^{k_1}
		\cdot 
		\frac
		{\mathrm{f}_1(\boldsymbol{\xi}^{(1)};\mathrm{t}_1)
			\mathrm{f}_2(\boldsymbol{\xi}^{(2)};\mathrm{t}_2)}
		{\mathrm{f}_1(\boldsymbol{\eta}^{(1)};\mathrm{t}_1)
			\mathrm{f}_2(\boldsymbol{\eta}^{(2)};\mathrm{t}_2)}
		\cdot
		\frac{1}{\prod_{\ell=1}^2\sum_{i_\ell=1}^{k_\ell}(\xi_{i_\ell}^{(\ell)} -\eta_{i_\ell}^{(\ell)})}
		\\
		&\quad \cdot
		\mathrm{H}(\boldsymbol{\xi}^{(1)},\boldsymbol{\xi}^{(2)};
		\boldsymbol{\eta}^{(1)},\boldsymbol{\eta}^{(2)}) \cdot 
		\prod_{\ell=1}^2
		\frac{\left( \Delta(\boldsymbol{\xi}^{(\ell)}) \right)^2
			\left( \Delta(\boldsymbol{\eta}^{(\ell)}) \right)^2
		}
		{\left( \Delta(\boldsymbol{\xi}^{(\ell)}; \boldsymbol{\eta}^{(\ell)}) \right)^2}
		\cdot 
		\frac{\Delta(\boldsymbol{\xi}^{(1)};\boldsymbol{\eta}^{(2)})
			\Delta(\boldsymbol{\eta}^{(1)};\boldsymbol{\xi}^{(2)})
		}
		{\Delta(\boldsymbol{\xi}^{(1)};\boldsymbol{\xi}^{(2)})
			\Delta(\boldsymbol{\eta}^{(1)};\boldsymbol{\eta}^{(2)})
		},
	\end{split}
\end{equation}
where  the functions $\mathrm{f}_1(\zeta;\mathrm{t})$ and $\mathrm{f}_2(\zeta;\mathrm{t})$ are defined in~\eqref{eq:def_rmf}, and the function $\mathrm{H}$ is defined in~\eqref{eq:def_rmH}. We remark that in the above calculations we exchanged the integrals and the summations. We need to justify that they are exchangeable. It is tedious but not hard to check that
\begin{equation}
	\label{eq:aux_10}
	\int_{t_1}^\infty\int_{t_2}^\infty \sum_{k_1,k_2\ge 1}\frac{1}{(k_1!k_2!)^2} \left|T_{k_1,k_2}(\mathrm{z};s_1,s_2;m,n,M,N)\right||\mathrm{d}s_2||\mathrm{d}s_1|<C(\rz)<\infty
\end{equation}
and 
\begin{equation}
	\label{eq:aux_11}
	\int_{\mathrm{t}_1}^\infty\int_{\mathrm{t}_2}^\infty \sum_{k_1,k_2\ge 1}\frac{1}{(k_1!k_2!)^2} \left|\mathrm{T}_{k_1,k_2}(\mathrm{z};\mathrm{s}_1,\mathrm{s}_2,\mathrm{x};\gamma)\right||\mathrm{d}\mathrm{s}_2||\mathrm{d}\mathrm{s}_1|<\mathrm{C}(\rz)<\infty
\end{equation}
for some constants $C(\rz)$ and $\mathrm{C}(\rz)$ which only depend on $\mathrm{z}$. Moreover, $C(\rz)$ and $\mathrm{C}(\rz)$ are both continuous in $\rz$ (except at $\rz=0$ or $-1$) hence they are uniformly bounded for $|\rz|=$constant that lies in $(0,1)$.  Here we omit the proof of these inequalities since it is similar to that of Lemma~\ref{lm:hat_rt_uniform_bounds}. Using these inequalities we verify that the exchanges of integrals and summations are valid and equations~\eqref{eq:left_vertical} and~\eqref{eq:right_vertical} hold.

To proceed, we need to compare~\eqref{eq:left_vertical} and~\eqref{eq:right_vertical} term by term and estimate their difference. There is a need to see the dependence of the error on the parameters. We will fix the contour of $\rz$ to be a circle with fixed radius $|\rz|\in(0,1)$. We also introduce the following notation.

\begin{notation}
	\label{notation:CC}
	we use the calligraphic font $\CC$ (or $\CC_i$ with some index $i$) to denote a positive constant term (independent of $N$) satisfying the following three conditions:
	\begin{enumerate}[(1)]
		\item $\CC$ is independent of $k_1$ and $k_2$.
		\item $\CC$ is continuous in $\mathrm{z}$.
		\item $\CC$ is continuous in $\mathrm{t}_1$ and $\mathrm{t}_2$, and decays exponentially as $\mathrm{t}_1\to\infty$ or $\mathrm{t}_2\to\infty$.
	\end{enumerate}
\end{notation}

Throughout this whole section, we will use $\CC$ as described in Notation~\ref{notation:CC}, and the regular $C$ as a constant independent of the parameters.

We will show the following two lemmas in subsequent subsections.

\begin{lm}
	\label{lm:hat_rt_uniform_bounds}
	We have the estimate
	\begin{equation*}
		\left|\mathrm{\hat T}_{k_1,k_2}(\mathrm{z};\mathrm{t}_1,\mathrm{t}_2,\mathrm{x};\gamma)\right| \le k_1^{k_1/2}k_2^{k_2/2}(k_1+k_2)^{(k_1+k_2)/2}\CC_1^{k_1+k_2}
	\end{equation*}
	for all $k_1,k_2\ge 1$, where  $\CC_1$ is a positive constant as described in Notation~\ref{notation:CC}.
\end{lm}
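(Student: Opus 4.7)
The plan is to combine a Cauchy-determinant reformulation of the Vandermonde quotient with Hadamard's inequality and the cubic decay of $\mathrm{f}_1,\mathrm{f}_2$ along the contours. The key algebraic observation, which I would establish first, is that the whole Vandermonde-type ratio in~\eqref{eq:hat_rT} collapses into exactly three Cauchy determinants. Setting $A=(\boldsymbol{\xi}^{(1)},\boldsymbol{\eta}^{(2)})$ and $B=(\boldsymbol{\eta}^{(1)},\boldsymbol{\xi}^{(2)})$, a direct application of Cauchy's identity $\det[1/(a_i-b_j)]=\Delta(a)\Delta(b)/\Delta(a;b)$ at sizes $k_1$, $k_2$, and $k_1+k_2$ gives, up to sign,
\begin{equation*}
\prod_{\ell=1}^2\frac{(\Delta(\boldsymbol{\xi}^{(\ell)}))^2(\Delta(\boldsymbol{\eta}^{(\ell)}))^2}{(\Delta(\boldsymbol{\xi}^{(\ell)};\boldsymbol{\eta}^{(\ell)}))^2}\cdot\frac{\Delta(\boldsymbol{\xi}^{(1)};\boldsymbol{\eta}^{(2)})\Delta(\boldsymbol{\eta}^{(1)};\boldsymbol{\xi}^{(2)})}{\Delta(\boldsymbol{\xi}^{(1)};\boldsymbol{\xi}^{(2)})\Delta(\boldsymbol{\eta}^{(1)};\boldsymbol{\eta}^{(2)})}=\prod_{\ell=1}^2\det\!\left[\frac{1}{\xi_i^{(\ell)}-\eta_j^{(\ell)}}\right]\cdot\det\!\left[\frac{1}{A_i-B_j}\right].
\end{equation*}

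Next, since the six contours $\Gamma_{\LL,\inn},\Gamma_\LL,\Gamma_{\LL,\out},\Gamma_{\RR,\inn},\Gamma_\RR,\Gamma_{\RR,\out}$ are mutually disjoint and the $\Gamma_\LL$-family is separated from the $\Gamma_\RR$-family by a positive distance, every entry of each of these Cauchy matrices is bounded by some fixed constant $1/c$. Hadamard's inequality then produces the three factors $k_1^{k_1/2}$, $k_2^{k_2/2}$, and $(k_1+k_2)^{(k_1+k_2)/2}$ which form exactly the combinatorial growth appearing in the lemma, together with a factor of $\CC^{k_1+k_2}$ absorbing the $1/c$ bounds.

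The rest of the integrand is then absorbed into the super-exponential decay coming from $\mathrm{f}_1,\mathrm{f}_2$. Along $\Gamma_\LL$ (rays $re^{\pm 2\pi\ii/3}$) and $\Gamma_\RR$ (rays $re^{\pm\pi\ii/3}$) the cubic part of $\log\mathrm{f}_\ell$ has real part $\ge c|\zeta|^3$ (respectively $\le-c|\zeta|^3$), so $|\mathrm{f}_\ell(\xi;\mathrm{t}_\ell)|\le\CC e^{-c|\xi|^3}$ and $|\mathrm{f}_\ell(\eta;\mathrm{t}_\ell)|^{-1}\le\CC e^{-c|\eta|^3}$. The $\mathrm{t}_\ell$-dependence sits only in $\exp(\mathrm{t}_\ell(\xi-\eta))$, and $\mathrm{Re}(\xi-\eta)\le-c<0$ throughout, giving a global factor $e^{-c\mathrm{t}_1-c\mathrm{t}_2}$ which is permitted inside $\CC_1$ by Notation~\ref{notation:CC}. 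The polynomial $\mathrm{H}$ grows at most like $(k_1+k_2)^4(1+\max|\zeta|)^4$ and the prefactor $1/\prod_\ell\sum_{i_\ell}(\xi^{(\ell)}_{i_\ell}-\eta^{(\ell)}_{i_\ell})$ is bounded by $C/(k_1 k_2)$ because each sum has real part $\le-ck_\ell$; both are dominated by the cubic decay.

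Finally, integration over each of the $2(k_1+k_2)$ variables of a factor of the form $(1+|\zeta|)^{N_0}e^{-c|\zeta|^3}$ yields a constant, contributing $\CC^{k_1+k_2}$, while the $\mathrm{z}$-dependent prefactors $(1-\mathrm{z})^{k_2},(1-1/\mathrm{z})^{k_1},(1-\mathrm{z})^{-1},\mathrm{z}/(1-\mathrm{z})$ also combine into $\CC^{k_1+k_2}$ with continuous dependence on $\mathrm{z}$. Multiplying everything gives the claimed bound. The hardest part will be pinning down the algebraic identity in the first step: recognizing that the cross factor combines with the within-level Vandermonde quotients to produce a single $(k_1+k_2)\times(k_1+k_2)$ Cauchy determinant (rather than, say, two separate determinants of size $k_1+k_2$) is exactly what forces the mixed growth rate $k_1^{k_1/2}k_2^{k_2/2}(k_1+k_2)^{(k_1+k_2)/2}$; once that identity is in hand the remaining estimates are routine separation and decay bookkeeping.
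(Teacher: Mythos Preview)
Your proposal is correct and follows essentially the same approach as the paper: factor the Vandermonde ratio into three Cauchy determinants of sizes $k_1$, $k_2$, and $k_1+k_2$ (the paper's $\mathrm{B}_1,\mathrm{B}_2,\mathrm{B}_3$ are exactly your three determinants), apply Hadamard's inequality to extract $k_1^{k_1/2}k_2^{k_2/2}(k_1+k_2)^{(k_1+k_2)/2}$, and absorb the remaining polynomial factors from $\mathrm{H}$ and the reciprocal sum into the cubic decay of $\mathrm{f}_\ell^{\pm 1}$ along the contours. The only cosmetic difference is that the paper bounds $|\mathrm{H}|$ by a separable product $\prod \rg_1^4(|\zeta|)$ rather than by $(k_1+k_2)^4(1+\max|\zeta|)^4$; the product form integrates variable-by-variable more cleanly, whereas your $\max|\zeta|$ bound would need the trivial inequality $\max_i|\zeta_i|^p\le\prod_i(1+|\zeta_i|^p)$ to become separable, but this is a minor bookkeeping point.
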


\begin{lm}
	\label{lm:estimate_error}
		With the same assumptions as in Proposition~\ref{prop:asymptotics_density}, there is a constant $\CC_2$ as described in Notation~\ref{notation:CC} such that
	\begin{equation}
		\begin{split}
		&\left|N^{2/3}\hat T_{k_1,k_2}(\mathrm{z};t_1,t_2;m,n,M,N)-\alpha^{1/3}(1+\sqrt{\alpha})^{-2/3}\mathrm{\hat T}_{k_1,k_2}(\mathrm{z};\mathrm{t}_1,\mathrm{t}_2,\mathrm{x};\gamma)
		\right|\\
		&\le k_1^{k_1/2}k_2^{k_2/2}(k_1+k_2)^{(k_1+k_2)/2}\CC_2^{k_1+k_2} N^{-1/3}(\log N)^5
		\end{split}
	\end{equation}
for all $k_1,k_2\ge 1$ as $N$ becomes sufficiently large.
\end{lm}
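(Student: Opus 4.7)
The plan is to treat Lemma~\ref{lm:estimate_error} as a uniform saddle-point (steepest-descent) estimate for the integral defining $\hat T_{k_1,k_2}$. The first thing I would do is locate the critical point. A direct computation shows that under the scaling~\eqref{eq:scaling} the functions $f_1(\,\cdot\,;t_1)$ and $f_2(\,\cdot\,;t_2)$ share, to leading order, the double critical point $w_c = -1/(1+\sqrt{\alpha}) \in (-1,0)$, with the second derivative of $\log f_\ell$ vanishing there and a non-vanishing third derivative. This is precisely the standard Airy critical point for exponential LPP under KPZ scaling, and it sits inside every $\Sigma_{\LL,\ast}$ contour (which encloses $-1$) as well as inside every $\Sigma_{\RR,\ast}$ contour (which encloses $0$), so all six finite-time contours can be deformed to pass through $w_c$ without changing the integral.

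Next I would introduce the KPZ change of variables $u = w_c + c_N\xi$, $v = w_c + c_N\eta$ with $c_N = \alpha^{1/6}(1+\sqrt{\alpha})^{-4/3} N^{-1/3}$, chosen so that the cubic Taylor term of $\log f_1$ matches exactly the $-\tfrac{\gamma}{3}\zeta^3$ in~\eqref{eq:def_rmf}; one verifies analogously that the same scaling turns the cubic term of $\log f_2$ into $-\tfrac{1-\gamma}{3}\zeta^3$, the quadratic terms in $\zeta$ into the $\pm \tfrac12\mathrm{x}\zeta^2$ coefficients, and the linear terms into $(\mathrm{s}-\mathrm{x}^2/(4\gamma))\zeta$ and its $(1-\gamma)$ analogue. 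I would deform each $\Sigma$ contour so that, in a window of radius $C(\log N) c_N$ around $w_c$, it coincides with the image of the corresponding Airy contour $\Gamma$, and outside that window it follows a steep descent path along which $\mathrm{Re}(\log f_\ell)$ drops by at least $\delta(\log N)^3$.

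The core computation has three ingredients. (a) The Jacobian: each pair of $(u,v)$ gives $c_N^2$, the Vandermondes $\Delta(U^{(\ell)}), \Delta(V^{(\ell)})$ scale by $c_N^{k_\ell(k_\ell-1)/2}$ each, $\Delta(U^{(\ell)};V^{(\ell)})$ by $c_N^{k_\ell^2}$, the cross factors $\Delta(U^{(1)};V^{(2)})$ etc. by matching powers, and the factor $1/\sum(u_i-v_i)$ contributes $c_N^{-1}$. The function $H$ in~\eqref{eq:def_H}, after cancellation of $u_i - v_j = c_N(\xi_i - \eta_j)$ and $u_i^2 - v_j^2 = 2w_c c_N(\xi_i-\eta_j) + c_N^2(\xi_i^2-\eta_j^2)$, expands as $c_N^4 \mathrm{H}(\boldsymbol{\xi};\boldsymbol{\eta}) + O(c_N^5)$, where the leading coefficient is exactly the expression~\eqref{eq:def_rmH}; the apparent lower-order terms all cancel by symmetry. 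Multiplying all powers of $c_N$ yields the prefactor $\alpha^{1/3}(1+\sqrt{\alpha})^{-2/3}N^{-2/3}$ of~\eqref{eq:estimate_vertical}. (b) Pointwise Taylor comparison in the local window, where each factor differs from its limit by $O(c_N |\xi|^4)$, so integrating against the cubic decay along $\Gamma$ gives a relative error of size $c_N (\log N)^4 \sim N^{-1/3}(\log N)^4$, and one extra log comes from the size of the integration window, giving the advertised $N^{-1/3}(\log N)^5$. (c) Tail contribution: on the non-local pieces, $|f_\ell(u)/f_\ell(v)|$ is exponentially small in $(\log N)^3$, which beats any polynomial factor.

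The main obstacle, and what all the bookkeeping must respect, is uniformity in $k_1$ and $k_2$. A naive pointwise bound on the integrand multiplied by the volume $(\log N c_N)^{2(k_1+k_2)}$ of the integration region would destroy the estimate. The essential device is Hadamard's inequality applied to the Vandermonde and Cauchy-type factors, combined with the super-exponential decay $|\mathrm{f}_\ell(\xi;\mathrm{t}_\ell)| \le \exp(-c|\mathrm{Re}\,\xi|^3)$ along $\Gamma_\ast$; this produces the $k_1^{k_1/2}k_2^{k_2/2}(k_1+k_2)^{(k_1+k_2)/2}$ factor (the extra $(k_1+k_2)^{(k_1+k_2)/2}$ absorbs the cross-products $\Delta(U^{(1)};V^{(2)})\Delta(V^{(1)};U^{(2)})/\Delta(U^{(1)};U^{(2)})\Delta(V^{(1)};V^{(2)})$ and the polynomial growth of $\mathrm{H}$, which is of degree $4$ in $k_1+k_2$ variables). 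Exactly the same bounds prove Lemma~\ref{lm:hat_rt_uniform_bounds}, so that estimate can be reused verbatim to control both the truncation of $\mathrm{\hat T}_{k_1,k_2}$ to the local window and the finite-time remainder.
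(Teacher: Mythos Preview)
Your strategy matches the paper's almost exactly: split each $\Sigma_*$ into a local piece $\Sigma_*^{(N)}$ (the image of $\Gamma_*^{(N)}=\Gamma_*\cap\mathbb{D}(\log N)$ under $w=w_c+c_N\zeta$) and a steep-descent tail $\Sigma_*^{(\exe)}$, bound the tails by $e^{-c(\log N)^3}$, and on the local piece compare termwise after the change of variables, with Hadamard's inequality supplying the $k_1^{k_1/2}k_2^{k_2/2}(k_1+k_2)^{(k_1+k_2)/2}$ factor. Two points deserve more care than you give them.

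First, your claim that $H=c_N^4\,\mathrm{H}+O(c_N^5)$ with lower-order terms cancelling ``by symmetry'' understates the computation. Writing $\tilde H:=\alpha^{-2/3}(1+\sqrt{\alpha})^{10/3}N^{4/3}H$, the raw expansion of~\eqref{eq:def_H} produces terms of order $N$ and $N^{2/3}$ (not $O(1)$): explicitly
\[
\tilde H=\tfrac12\epsilon^{-2}(\mathrm{S}_1^2-\mathrm{S}_2)N^{2/3}+\epsilon^{-3}N\mathrm{S}_1
+\bigl[\tfrac12\epsilon^{-2}(\mathrm{S}_1^2+\mathrm{S}_2)N^{2/3}-\epsilon^{-3}N\mathrm{S}_1\bigr]\cdot\prod\frac{v_{i_1}^{(1)}}{u_{i_1}^{(1)}}\prod\frac{u_{i_2}^{(2)}}{v_{i_2}^{(2)}},
\]
and the cancellation down to $\mathrm{H}=\tfrac{1}{12}\mathrm{S}_1^4+\tfrac14\mathrm{S}_2^2-\tfrac13\mathrm{S}_1\mathrm{S}_3$ comes only after expanding the product $\prod v/u$ to \emph{third} order in $c_N$ (via $\log(v/u)$) and matching against both brackets. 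The remainder is not $O(c_N^5)$ but $c^{k_1+k_2}O(N^{-1/3}(\log N)^5)$ with an exponential dependence on $k_1+k_2$; this $c^{k_1+k_2}$ must be absorbed into your $\CC_2^{k_1+k_2}$, and you should check explicitly that every Taylor remainder (for $\prod v/u$, for $e^{\epsilon N^{-1/3}\mathrm{S}_1}$, etc.) carries such a factor and not something worse.

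Second, a minor correction: $w_c=-1/(1+\sqrt{\alpha})$ is \emph{not} inside the original $\Sigma_{\LL,*}$ or $\Sigma_{\RR,*}$ contours of Figure~\ref{fig:contours_finite_time}; those are small circles about $-1$ and $0$. You must deform them to pass through $w_c$ while preserving the nesting $\Sigma_{\LL,\inn}\subset\Sigma_\LL\subset\Sigma_{\LL,\out}$ (and likewise on the right), since the Cauchy denominators $\Delta(U^{(1)};U^{(2)})$, $\Delta(V^{(1)};V^{(2)})$ create poles that the contours must not cross during the deformation.
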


Now we use these two lemmas to prove~\eqref{eq:estimate_vertical}. We first use  and realize that the right hand side of~\eqref{eq:right_vertical} is uniformly bounded  by
\begin{equation*}
	\begin{split}
	&\oint_0\left|\ddbar{\mathrm{z}}{\mathrm{(1-z)^2}}\right|\sum_{k_1,k_2\ge 1}
	\frac{1}{(k_1!k_2!)^2}
	\left|\mathrm{\hat T}_{k_1,k_2}(\mathrm{z};\mathrm{t}_1,\mathrm{t}_2,\mathrm{x};\gamma)\right|\\
	&\le \oint_0\left|\ddbar{\mathrm{z}}{\mathrm{(1-z)^2}}\right|\sum_{k_1,k_2\ge 1}
	\frac{1}{(k_1!k_2!)^2} k_1^{k_1/2}k_2^{k_2/2}(k_1+k_2)^{(k_1+k_2)/2}\CC_1^{k_1+k_2}<\infty,
	\end{split}
\end{equation*}
where the last inequality is due to the Stirling's approximation formula $k!\approx k^ke^{-k}\sqrt{2\pi k}$ for large $k$.

Similarly we know that
\begin{equation*}
	\begin{split}
			&\oint_0\left|\ddbar{\mathrm{z}}{\mathrm{(1-z)^2}}\right|\sum_{k_1,k_2\ge 1}
		\frac{1}{(k_1!k_2!)^2}
		\left| N^{2/3}\hat T_{k_1,k_2}(\mathrm{z};t_1,t_2;m,n,M,N)-\alpha^{1/3}(1+\sqrt{\alpha})^{-2/3}\mathrm{\hat T}_{k_1,k_2}(\mathrm{z};\mathrm{t}_1,\mathrm{t}_2,\mathrm{x};\gamma) \right|\\
		&\le \oint_0\left|\ddbar{\mathrm{z}}{\mathrm{(1-z)^2}}\right|\sum_{k_1,k_2\ge 1}
		\frac{1}{(k_1!k_2!)^2} k_1^{k_1/2}k_2^{k_2/2}(k_1+k_2)^{(k_1+k_2)/2}\CC_2^{k_1+k_2} N^{-1/3}(\log N)^5<\infty
	\end{split}
\end{equation*}
for sufficiently large $N$.

 Combining the above two estimates we also know the right hand side of~\eqref{eq:left_vertical} multiplied by $N^{2/3}$ is also uniformly bounded by the sum of the above two bounds
 \begin{equation*}
 	\begin{split}
 		&N^{2/3}\oint_0\left|\ddbar{\mathrm{z}}{\mathrm{(1-z)^2}}\right|\sum_{k_1,k_2\ge 1}
 		\frac{1}{(k_1!k_2!)^2}
 		\left|\hat T_{k_1,k_2}(\mathrm{z};t_1,t_2;m,n,M,N)\right|\\
 		& \le \oint_0\left|\ddbar{\mathrm{z}}{\mathrm{(1-z)^2}}\right|\sum_{k_1,k_2\ge 1}
 		\frac{1}{(k_1!k_2!)^2} k_1^{k_1/2}k_2^{k_2/2}(k_1+k_2)^{(k_1+k_2)/2}\\
 		&\qquad\cdot\left(\alpha^{1/3}(1+\sqrt{\alpha})^{-2/3}\CC_1^{k_1+k_2}+\CC_2^{k_1+k_2}N^{-1/3}(\log N)^5\right)\\
 		&<\infty.
 	\end{split}
 \end{equation*}

The above estimates imply that we can rewrite, using~\eqref{eq:left_vertical} and~\eqref{eq:right_vertical},
\begin{equation*}
	\begin{split}
		&N^{2/3}\prob
		\left(
		\begin{array}{l}
			(m,n), (m+1,n)\in \gd_{(1,1)}{(M,N)},\\
			\mbox{and }L_{(1,1)}(m,n)\ge t_1, \\
			\mbox{and } L_{(m+1,n)}(M,N)\ge t_2\\
		\end{array}
		\right)
		-\alpha^{1/3}(1+\sqrt{\alpha})^{-2/3}\int_{\mathrm{t}_1}^\infty \int_{\mathrm{t}_2}^\infty\limp(\mathrm{s}_1,\mathrm{s}_2,\mathrm{x};\gamma)\mathrm{d}{\mathrm{s}_2}\mathrm{d}{\mathrm{s}_1}\\
		&=\oint_0\ddbar{\mathrm{z}}{\mathrm{(1-z)^2}}\sum_{k_1,k_2\ge 1}
		\frac{1}{(k_1!k_2!)^2}\left(N^{2/3}\hat T_{k_1,k_2}(\mathrm{z};t_1,t_2;m,n,M,N)-\alpha^{1/3}(1+\sqrt{\alpha})^{-2/3}\mathrm{\hat T}_{k_1,k_2}(\mathrm{z};\mathrm{t}_1,\mathrm{t}_2,\mathrm{x};\gamma)\right),
	\end{split}
\end{equation*}
which is uniformly bounded by, using Lemma~\ref{lm:estimate_error},
\begin{equation*}
	\oint_0\left|\ddbar{\mathrm{z}}{\mathrm{(1-z)^2}}\right|\sum_{k_1,k_2\ge 1}
	\frac{1}{(k_1!k_2!)^2} k_1^{k_1/2}k_2^{k_2/2}(k_1+k_2)^{(k_1+k_2)/2}\CC_2^{k_1+k_2} N^{-1/3}(\log N)^5=\bigO(N^{-1/3}(\log N)^5)
\end{equation*}
for sufficiently large $N$. Thus~\eqref{eq:estimate_vertical} holds. 

\bigskip

It remains to prove the two lemmas~\ref{lm:hat_rt_uniform_bounds} and~\ref{lm:estimate_error}. Note that if we did not have the factors $\frac{1}{\prod_{\ell=1}^2\sum_{i_\ell=1}^{k_\ell}(u_{i_\ell}^{(\ell)} -v_{i_\ell}^{(\ell)})}$ and
$H(U^{(1)},U^{(2)};V^{(1)},V^{(2)})$ in the integrand of $T_{k_1,k_2}(\mathrm{z};t_1,t_2;m,n,M,N)$, and the factors $\frac{1}{\prod_{\ell=1}^2\sum_{i_\ell=1}^{k_\ell}(\xi_{i_\ell}^{(\ell)} -\eta_{i_\ell}^{(\ell)})}$ and
 $\mathrm{H}(\boldsymbol{\xi}^{(1)},\boldsymbol{\xi}^{(2)};
\boldsymbol{\eta}^{(1)},\boldsymbol{\eta}^{(2)})$ in the integrand of $\mathrm{\hat T}_{k_1,k_2}(\mathrm{z};\mathrm{t}_1,\mathrm{t}_2,\mathrm{x};\gamma)$, the right hand sides of both~\eqref{eq:left_vertical} and~\eqref{eq:right_vertical} could be viewed as expansions of Fredholm determinants. They have similar structures as the expansion of the two-time distribution formulas in TASEP, see \cite[Proposition 2.10]{Liu19}. Moreover, the two lemmas above are indeed analogous to Lemmas 7.1 and 7.2 in \cite{Liu19}. So it is not surprising that we can modify the standard asymptotic analysis for Fredholm determinants to prove these two lemmas. However, we do need some tedious calculations to incorporate the extra factors, and much finer estimates in Lemmas~\ref{lm:hat_rt_uniform_bounds} and~\ref{lm:estimate_error} compared with the analogs in \cite{Liu19}. Our proof will also be illustrative to prove similar statements in our follow-up papers.

\bigskip

We will prove the Lemma~~\ref{lm:hat_rt_uniform_bounds} and~\ref{lm:estimate_error} in the following two subsections. 

\subsection{Proof of Lemma~\ref{lm:hat_rt_uniform_bounds}}
\label{sec:proof_lm_uniform_bounds}

In this subsection we prove Lemma~\ref{lm:hat_rt_uniform_bounds}. Some estimates we use here will also appear in the proof of the lemmas~\ref{lm:estimate_error} in the next subsection.

We first estimate the factor
\begin{equation*}
	\mathrm{B}(\boldsymbol{\xi}^{(1)},\boldsymbol{\eta}^{(1)};\boldsymbol{\xi}^{(2)},\boldsymbol{\eta}^{(2)}):=\prod_{\ell=1}^2
	\frac{\left( \Delta(\boldsymbol{\xi}^{(\ell)}) \right)^2
		\left( \Delta(\boldsymbol{\eta}^{(\ell)}) \right)^2
	}
	{\left( \Delta(\boldsymbol{\xi}^{(\ell)}; \boldsymbol{\eta}^{(\ell)}) \right)^2}
	\cdot 
	\frac{\Delta(\boldsymbol{\xi}^{(1)};\boldsymbol{\eta}^{(2)})
		\Delta(\boldsymbol{\eta}^{(1)};\boldsymbol{\xi}^{(2)})
	}
	{\Delta(\boldsymbol{\xi}^{(1)};\boldsymbol{\xi}^{(2)})
		\Delta(\boldsymbol{\eta}^{(1)};\boldsymbol{\eta}^{(2)})
	}.
\end{equation*}
Observe that this factor is the product of the following three Cauchy determinants up to a sign
\begin{equation*}
	\begin{split}
		\mathrm{B}_1&= \det\left[\frac{1}{\xi^{(1)}_{i_1} -\eta^{(1)}_{j_1}}\right]_{i_1,j_1=1}^{k_1} = (-1)^{k_1(k_1-1)/2}\frac{ \Delta(\boldsymbol{\xi}^{(1)}) 
			\Delta(\boldsymbol{\eta}^{(1)}) 
		}
		{ \Delta(\boldsymbol{\xi}^{(1)}; \boldsymbol{\eta}^{(1)})},\\
		\mathrm{B}_2&= \det\left[\frac{1}{\xi^{(2)}_{i_2} -\eta^{(2)}_{j_2}}\right]_{i_2,j_2=1}^{k_2} = (-1)^{k_2(k_2-1)/2}\frac{ \Delta(\boldsymbol{\xi}^{(2)}) 
			\Delta(\boldsymbol{\eta}^{(2)}) 
		}
		{ \Delta(\boldsymbol{\xi}^{(2)}; \boldsymbol{\eta}^{(2)})},\\
		\mathrm{B}_3&
		= \det \left[
		\begin{array}{ccc|ccc}
			&\vdots&&&\vdots&\\
			\cdots&\displaystyle\frac{1}{\xi_{i_1}^{(1)} -\eta_{j_1}^{(1)}}&\cdots&\cdots&\displaystyle\frac{1}{\xi_{i_1}^{(1)} -\xi_{j_2}^{(2)}}&\cdots\\
			&\vdots&&&\vdots&\\
			\hline
			&\vdots&&&\vdots&\\
			\cdots&\displaystyle\frac{1}{\eta_{i_2}^{(2)} -\eta_{j_1}^{(1)}}&\cdots&\cdots&\displaystyle\frac{1}{\eta_{i_2}^{(2)} -\xi_{j_2}^{(2)}}&\cdots\\
			&\vdots&&&\vdots&
		\end{array}
		\right]_{\substack{1\le i_1,j_1\le k_1\\ 1\le i_2,j_2\le k_2}}\\
		&=(-1)^{k_1(k_1-1)/2+k_2(k_2+1)/2}
		\frac{ \Delta(\boldsymbol{\xi}^{(1)}) 
			\Delta(\boldsymbol{\eta}^{(1)}) 
		}
		{ \Delta(\boldsymbol{\xi}^{(1)}; \boldsymbol{\eta}^{(1)})}
		\cdot \frac{ \Delta(\boldsymbol{\xi}^{(2)}) 
			\Delta(\boldsymbol{\eta}^{(2)}) 
		}
		{ \Delta(\boldsymbol{\xi}^{(2)}; \boldsymbol{\eta}^{(2)})}
		\cdot 
		\frac{\Delta(\boldsymbol{\xi}^{(1)};\boldsymbol{\eta}^{(2)})
			\Delta(\boldsymbol{\eta}^{(1)};\boldsymbol{\xi}^{(2)})
		}
		{\Delta(\boldsymbol{\xi}^{(1)};\boldsymbol{\xi}^{(2)})
			\Delta(\boldsymbol{\eta}^{(1)};\boldsymbol{\eta}^{(2)})
		}.
	\end{split}
\end{equation*}
By applying the Hadamard's inequality, we have
\begin{equation*}
	\left|\mathrm{B}_1 \right| \le \prod_{i_1=1}^{k_1}\sqrt{\sum_{j_1=1}^{k_1}\left|\xi^{(1)}_{i_1} -\eta^{(1)}_{j_1}\right|^{-2}}
	\le k_1^{k_1/2}\prod_{i_1=1}^{k_1}\frac{1}{\dg(\xi_{i_1}^{(1)})},
\end{equation*}
where $\dg(\xi)$ denotes the shortest distance from the point $\xi$ to the contours $\Gamma_{\LL,\out},\Gamma_\LL,\Gamma_{\LL,\inn},\Gamma_{\RR,\out},\Gamma_\RR,\Gamma_{\RR,\inn}$ except for the one contour which $\xi$ belongs to. For example, if  $\xi_{i_1}^{(1)}\in\Gamma_{\LL,\out}$, then $\dg(\xi_{i_1}^{(1)})$ is the distance from $\xi_{i_1}^{(1)}$ to $\Gamma_\LL\cup\Gamma_{\RR,\out}$, where we ignored the contours $\Gamma_{\LL,\out},\Gamma_{\LL,\inn},\Gamma_\RR,$ and $\Gamma_{\RR,\inn}$ since $\Gamma_{\LL,\out}$ is the contour $\xi_{i_1}^{(1)}$ belongs to, and the other three contours are farther to the point $\xi_{i_1}^{(1)}$ compared with $\Gamma_\LL$ and $\Gamma_{\RR,\out}$.

Similarly, we have
\begin{equation*}
	\mathrm{B}_2 \le k_2^{k_2/2} \prod_{i_2=1}^{k_2}\frac{1}{\dg(\eta_{i_2}^{(2)})},
\end{equation*}
and
\begin{equation*}
	\mathrm{B}_3 \le (k_1+k_2)^{(k_1+k_2)/2}\prod_{j_1=1}^{k_1}\frac{1}{\dg(\eta_{j_1}^{(1)})}\prod_{j_2=1}^{k_2}\frac{1}{\dg(\xi_{j_2}^{(2)})}.
\end{equation*}
We combine the above estimates and obtain
\begin{equation}
	\label{eq:aux_12}
	\begin{split}
		&\left|\mathrm{B}(\boldsymbol{\xi}^{(1)},\boldsymbol{\eta}^{(1)};\boldsymbol{\xi}^{(2)},\boldsymbol{\eta}^{(2)})\right|\\
		& \le k_1^{k_1/2}k_2^{k_2/2}(k_1+k_2)^{(k_1+k_2)/2}\prod_{i_1=1}^{k_1}\frac{1}{\dg(\xi_{i_1}^{(1)})}\prod_{i_2=1}^{k_2}\frac{1}{\dg(\eta_{i_2}^{(2)})}\prod_{j_1=1}^{k_1}\frac{1}{\dg(\eta_{j_1}^{(1)})}\prod_{j_2=1}^{k_2}\frac{1}{\dg(\xi_{j_2}^{(2)})}.
	\end{split}
\end{equation}

Now we consider the factor $\mathrm{H}(\boldsymbol{\xi}^{(1)},\boldsymbol{\eta}^{(1)};
\boldsymbol{\xi}^{(2)},\boldsymbol{\eta}^{(2)})=\frac{1}{12}\mathrm{S}_1^4+\frac14\mathrm{S}_2^2-\frac13\mathrm{S}_1\mathrm{S}_3$ which is defined in~\eqref{eq:def_rmH}. We use the trivial bounds 
\begin{equation*}
	\begin{split}
		|\mathrm{S}_\ell| &= \left|\sum_{i_1=1}^{k_1}
		\left( \left(\xi_{i_1}^{(1)}\right)^{\ell}
		-\left(\eta_{i_1}^{(1)}\right)^{\ell}
		\right)
		-\sum_{i_2=1}^{k_2}
		\left( \left(\xi_{i_2}^{(2)}\right)^{\ell}
		-\left(\eta_{i_2}^{(2)}\right)^{\ell}
		\right)\right| \\
		&\le \prod_{i_1=1}^{k_1} \left(1+ |\xi_{i_1}^{(1)}|^\ell\right)\left(1+ |\eta_{i_1}^{(1)}|^\ell\right)\prod_{i_2=1}^{k_2} \left(1+ |\xi_{i_2}^{(2)}|^\ell\right)\left(1+ |\eta_{i_2}^{(2)}|^\ell\right)\\
		&\le \prod_{i_1=1}^{k_1}\rg_1\left(|\xi_{i_1}^{(1)}|\right)\rg_1\left(|\eta_{i_1}^{(1)}|\right)\prod_{i_2=1}^{k_2}\rg_1\left(|\xi_{i_2}^{(2)}|\right)\rg_1\left(|\eta_{i_2}^{(2)}|\right),\quad \ell=1,2,3,
	\end{split}
\end{equation*}
where $\rg_1(y):=1+y+y^2+y^3$. Note that $\rg_1^2(y)\le \rg_1^4(y)$ for all $y\ge 0$. Thus
\begin{equation}
	\label{eq:aux_13}
	\begin{split}
		|\mathrm{H}(\boldsymbol{\xi}^{(1)},\boldsymbol{\eta}^{(1)};
		\boldsymbol{\xi}^{(2)},\boldsymbol{\eta}^{(2)})|
		&\le \frac1{12}|\mathrm{S}_1^4|+\frac{1}{4}|\mathrm{S}_2^2|+\frac{1}{3}|\mathrm{S}_1\mathrm{S}_3|\\
		&\le \prod_{i_1=1}^{k_1}\rg_1^4\left(|\xi_{i_1}^{(1)}|\right)\rg_1^4\left(|\eta_{i_1}^{(1)}|\right)\prod_{i_2=1}^{k_2}\rg_1^4\left(|\xi_{i_2}^{(2)}|\right)\rg_1^4\left(|\eta_{i_2}^{(2)}|\right).
	\end{split}
\end{equation}
Finally, we note that the locations of contours imply that $\mathrm{Re}(\xi_{i_\ell}^{(\ell)})<0$ for $\xi_{i_\ell}^{(\ell)}\in\Gamma_\LL\cup\Gamma_{\LL,\out}\cup\Gamma_{\LL,\inn}$, and $\mathrm{Re}(\eta_{i_\ell}^{(\ell)})>0$ for $\eta_{i_\ell}^{(\ell)}\in\Gamma_\RR\cup\Gamma_{\RR,\out}\cup\Gamma_{\RR,\inn}$. Thus we have a trivial bound
\begin{equation}
	\label{eq:aux_14}
	\begin{split}
		\left|\frac{1}{\prod_{\ell=1}^2\sum_{i_\ell=1}^{k_\ell}(\xi_{i_\ell}^{(\ell)} -\eta_{i_\ell}^{(\ell)})}\right|&\le \frac{1}{\mathrm{Re}(\eta_{ 1}^{(1)}-\xi_{ 1}^{(1)})}\cdot \frac{1}{\mathrm{Re}(\eta_{1}^{(2)}-\xi_{1}^{(2)})}
		\le \frac{1}{\mathrm{Re}(\eta_1^{(1)})}\cdot \frac{1}{\mathrm{Re}(\eta_1^{(2)})}\\
		&\le  \left(1+\frac{1}{\mathrm{Re}(\eta_1^{(1)})}\right)\left(1+\frac{1}{\mathrm{Re}(-\xi_1^{(1)})}\right)\left(1+\frac{1}{\mathrm{Re}(\eta_1^{(2)})}\right)\left(1+\frac{1}{\mathrm{Re}(-\xi_1^{(2)})}\right)\\
		&\le \prod_{i_1=1}^{k_1}\rg_2\left(\xi_{i_1}^{(1)}\right)\rg_2\left(\eta_{i_1}^{(1)}\right)\prod_{i_2=1}^{k_2}\rg_2\left(\xi_{i_2}^{(2)}\right)\rg_2\left(\eta_{i_2}^{(2)}\right),
	\end{split}
\end{equation}
where $\rg_2(w):=1+|\mathrm{Re}(w)|^{-1}$ for all $w\in\complexC\setminus\mathrm{i}\realR$.

Now we insert all the estimates~\eqref{eq:aux_12},~\eqref{eq:aux_13} and~\eqref{eq:aux_14} in the equation~\eqref{eq:hat_rT} and obtain
\begin{equation}
	\label{eq:aux_15}
	\begin{split}
		&\left|\mathrm{\hat T}_{k_1,k_2}(\mathrm{z};\mathrm{t}_1,\mathrm{t}_2,\mathrm{x};\gamma)\right|
		\le k_1^{k_1/2}k_2^{k_2/2}(k_1+k_2)^{(k_1+k_2)/2}\\
		&\quad\cdot 
		\prod_{i_1=1}^{k_1}     
		\left(  \frac{1}{|1-\mathrm{z}|} \int_{\Gamma_{\LL,\inn}} 	    
		\frac{|\mathrm{d} \xi_{i_1}^{(1)}|}{2\pi}   
		+\frac{\mathrm{|z|}}{|1-\mathrm{z}|} \int_{\Gamma_{\LL,\out}} \frac{|\mathrm{d} \xi_{i_1}^{(1)}|}{2\pi}   
		\right)
		\left(  \frac{1}{|1-\mathrm{z}|} \int_{\Gamma_{\RR,\inn}} 	    
		\frac{|\mathrm{d} \eta_{i_1}^{(1)}|}{2\pi}   
		+\frac{\mathrm{|z|}}{|1-\mathrm{z}|} \int_{\Gamma_{\RR,\out}} \frac{|\mathrm{d} \eta_{i_1}^{(1)}|}{2\pi}   
		\right)
		\\
		&\quad \cdot 
		\prod_{i_2=1}^{k_2}  
		\int_{\Gamma_\LL} \frac{|\mathrm{d} \xi_{i_2}^{(2)}|}{2\pi} 
		\int_{\Gamma_\RR} \frac{|\mathrm{d} \eta_{i_2}^{(2)}|}{2\pi}  
		\cdot    
		\left|1-\mathrm{z}\right|^{k_2}
		\left|1-\frac{1}{\mathrm{z}}\right|^{k_1}
		\cdot 
		\prod_{i_1=1}^{k_1}\rg\left(\xi_{i_1}^{(1)}\right)\rg\left(\eta_{i_1}^{(1)}\right)\prod_{i_2=1}^{k_2}\rg\left(\xi_{i_2}^{(2)}\right)\rg\left(\eta_{i_2}^{(2)}\right)\\
		&=k_1^{k_1/2}k_2^{k_2/2}(k_1+k_2)^{(k_1+k_2)/2}\left|1-\mathrm{z}\right|^{k_2}
		\left|1-\frac{1}{\mathrm{z}}\right|^{k_1} \CC_{1,1}^{k_1}\CC_{1,2}^{k_2}\\
		&\le k_1^{k_1/2}k_2^{k_2/2}(k_1+k_2)^{(k_1+k_2)/2} \left(\left|1-\frac{1}{\mathrm{z}}\right|\CC_{1,1}+\left|1-\mathrm{z}\right|\CC_{1,2}\right)^{k_1+k_2},
	\end{split}
\end{equation}
where 
\begin{equation*}
	\rg(\zeta)=\begin{dcases}
		|\mathrm{f}_1(\zeta;\mathrm{t}_1)|\rg^4_1(|\zeta|)\rg_2(\zeta)/\dg(\zeta), & \zeta\in\Gamma_{\LL,\out}\cup\Gamma_{\LL,\inn},\\
		|\mathrm{f}_1(\zeta;\mathrm{t}_1)^{-1}|\rg^4_1(|\zeta|)\rg_2(\zeta)/\dg(\zeta), & \zeta\in\Gamma_{\RR,\out}\cup\Gamma_{\RR,\inn},\\
		|\mathrm{f}_2(\zeta;\mathrm{t}_2)|\rg^4_1(|\zeta|)\rg_2(\zeta)/\dg(\zeta), & \zeta\in\Gamma_{\LL},\\
		|\mathrm{f}_2(\zeta;\mathrm{t}_2)^{-1}|\rg^4_1(|\zeta|)\rg_2(\zeta)/\dg(\zeta), & \zeta\in\Gamma_{\RR},\\
	\end{dcases}
\end{equation*}
and
\begin{equation*}
	\begin{split}
		\CC_{1,1}&=\left(  \frac{1}{|1-\mathrm{z}|} \int_{\Gamma_{\LL,\inn}} 	    
		\frac{\rg(\xi)|\mathrm{d} \xi|}{2\pi}   
		+\frac{\mathrm{|z|}}{|1-\mathrm{z}|} \int_{\Gamma_{\LL,\out}} \frac{\rg(\xi)|\mathrm{d} \xi|}{2\pi}   
		\right)\left(  \frac{1}{|1-\mathrm{z}|} \int_{\Gamma_{\RR,\inn}} 	    
		\frac{\rg(\eta)|\mathrm{d} \eta|}{2\pi}   
		+\frac{\mathrm{|z|}}{|1-\mathrm{z}|} \int_{\Gamma_{\RR,\out}} \frac{\rg(\eta)|\mathrm{d} \eta|}{2\pi}   
		\right),\\
		\CC_{1,2}&=\left(  \int_{\Gamma_{\LL}} 	    
		\frac{\rg(\xi)|\mathrm{d} \xi|}{2\pi}   	  
		\right)\left(\int_{\Gamma_{\RR}} 	    
		\frac{\rg(\eta)|\mathrm{d} \eta|}{2\pi}  
		\right).
	\end{split}
\end{equation*}
We used the fact that $\rg(\zeta)$ decays exponentially when $\zeta$ goes to infinity along the integration contours since all other factors are of polynomial order, $\dg(\zeta)$ is bounded below, and the dominating factor $|\mathrm{f}_\ell|$ (or $|\mathrm{f}_\ell^{-1}|$) decays super exponentially. By checking the parameters appearing in $\mathrm{f}_\ell$ (and hence in $\rg$), we find that both $\CC_{1,1}$ and $\CC_{1,2}$ satisfy the conditions described in Notation~\ref{notation:CC}. Thus~\eqref{eq:aux_15} implies Lemma~\ref{lm:hat_rt_uniform_bounds} with $\CC_1=\left|1-\frac{1}{\mathrm{z}}\right|\CC_{1,1}+\left|1-\mathrm{z}\right|\CC_{1,2}$.

\subsection{Proof of Lemma~\ref{lm:estimate_error}}
\label{sec:proof_lm_error}

The proof of Lemma~\ref{lm:estimate_error} is more tedious. We separate the argument into three parts. In the first part we illustrate the proof strategy and show that Lemma~\ref{lm:estimate_error} can be reduced to two other lemmas. In the remaining two parts we prove these lemmas respectively.

\subsubsection{Proof strategy}
\label{sec:asymptotics_strategy}

Although the quantities $\hat T_{k_1,k_2}$ and $\mathrm{\hat T}_{k_1,k_2}$ only depend on how the integration contours are nested, we choose these contours explicitly to simplify our argument. The idea is that we split each contour into two parts with one part making most of the contribution in integration and the other part contributing an exponentially small error only.

We first choose the six contours appearing in the terms $\mathrm{\hat T}_{k_1,k_2}$. As we introduced before, we assume $\Gamma_{\LL,\out},\Gamma_\LL$ and $\Gamma_{\LL,\inn}$, from right to left, are three  simple contours in the left half plane from $e^{-2\pi\ii/3}\infty$ to $e^{2\pi\ii/3}\infty$. Similarly, $\Gamma_{\RR,\out},\Gamma_\RR$ and $\Gamma_{\RR,\inn}$, from left to right, are three  simple contours in the right half plane from $e^{-\pi\ii/3}\infty$ to $e^{\pi\ii/3}\infty$. For simplification, we assume that all these contours are symmetric about the real axis.

Each of the $\Gamma_*$ contour above, $*\in\{\{\LL,\out\},\{\LL\},\{\LL,\inn\},\{\RR,\out\},\{\RR\},\{\RR,\inn\}\}$, can be split into two parts. One part is within the disk $\mathbb{D}(\log N)$, the disk of radius $\log N$ with center $0$, and the other part is outside of this disk. We denote these two parts $\Gamma^{(N)}_*$ and $\Gamma^{(\exe)}_*$. In other words, we have six contours within $\mathbb{D}(\log N)$:  $\Gamma_{\LL,\out}^{(N)},\Gamma_\LL^{(N)}$, $\Gamma_{\LL,\inn}^{(N)}$,  $\Gamma_{\RR,\out}^{(N)},\Gamma_\RR^{(N)}$, and $\Gamma_{\RR,\inn}^{(N)}$, and six contours outside of $\mathbb{D}(\log N)$:  $\Gamma_{\LL,\out}^{(\exe)},\Gamma_\LL^{(\exe)}$, $\Gamma_{\LL,\inn}^{(\exe)}$,  $\Gamma_{\RR,\out}^{(\exe)},\Gamma_\RR^{(\exe)}$, and $\Gamma_{\RR,\inn}^{(\exe)}$.

We now choose the six contours appearing in the terms $\hat T_{k_1,k_2}$. We let them all intersect a neighborhood  of the point
\begin{equation}
	\label{eq:def_w_c}
	w_c:=-\frac{1}{1+\sqrt{\alpha}},
	\end{equation}
where $\alpha$ is the constant in Proposition~\ref{prop:asymptotics_density}.
We pick, for each $*\in\{\{\LL,\out\},\{\LL\},\{\LL,\inn\},\{\RR,\out\},\{\RR\},\{\RR,\inn\}\}$, $\Sigma_*$ to be the union of two parts $\Sigma_*^{(N)}$ and $\Sigma_*^{(\exe)}$. The part $\Sigma_*^{(N)}$ lies in a neighborhood of $w_c$ and satisfies
\begin{equation}
	\label{eq:relation_contours}
	\Sigma_{*}^{(N)}= w_c + \alpha^{1/6}(1+\sqrt{\alpha})^{-4/3}N^{-1/3}\Gamma_*^{(N)},\quad *\in\{\{\LL,\out\},\{\LL\},\{\LL,\inn\},\{\RR,\out\},\{\RR\},\{\RR,\inn\}\}.
\end{equation}
See the solid contours within the dashed circle in Figure~\ref{fig:limit_theorem_proof_contours} for an illustration.

Recall $f_1(w;t_1)=(w+1)^{-m}w^{n}e^{t_1w}$ and $f_2(w;t_2)=(w+1)^{-M+m}w^{N-n}e^{t_2w}$ with the parameters satisfying~\eqref{eq:scaling}. A detailed calculation (see~\eqref{eq:aux_17} and~\eqref{eq:aux_18} for example) indicate that $f_i(w;t_i)$ behaves like a cubic-exponential function. More explicitly, $f_i(w;t_i)$ decays super-exponentially fast when $w$ moves away from $w_c$ along the contours $\Sigma_{*}^{(N)}$ on the left, and grows super-exponentially fast along the contours $\Sigma_{*}^{(N)}$ on the right. Moreover, if we denote $w_*^{ep}$ and $\overline{w_*^{ep}}$ the endpoints of $\Sigma_*^{(N)}$, using~\eqref{eq:aux_17} and~\eqref{eq:aux_18}, we have $|f_i(w_*^{ep},t_i)/f_i(w_c;t_i)|\le e^{-c(\log N)^3}$ when $w_*^{ep}$ is on the left contours, and $|f_i(w_*^{ep};t_i)/f_i(w_c,t_i)|\ge e^{c(\log N)^3}$ when $w_*^{ep}$ is on the right contours. Here $c$ is some positive constant uniformly for $\mathrm{x}$ in a compact interval and $\mathrm{t}_1,\mathrm{t}_2$ with a lower bound.

In the next step, we will define the contours $\Sigma_*^{(\exe)}$. Note that \begin{equation*}
	f_1(w;t_1)=e^{\gamma N h(w) + \bigO(N^{2/3})},\quad f_2(w;t_2)=e^{(1-\gamma )N h(w) +\bigO(N^{2/3})},
\end{equation*}
where 
\begin{equation}
	h(w)=-\alpha \log(w+1) +\log w +(\sqrt{\alpha}+1)^2 w.
\end{equation}
It is standard to analyze $\mathrm{Re}h(w)$ for $w\in\complexC$ and extend the contours $\Sigma_*^{(N)}$ to $\Sigma_*^{(\exe)}$ such that
\begin{equation}
	\label{eq:2022_1_3_01}
	\max_{u\in\Sigma_*^{(\exe)}}|f_i (u;t_i)| \le \min_{u\in\Sigma_*^{(N)}}|f_i (u;t_i)|, \qquad i=1,2, *\in\{\{\LL,\out\},\{\LL\},\{\LL,\inn\}\}
\end{equation}
and
\begin{equation}
	\label{eq:2022_1_3_02}
	\min_{v\in\Sigma_*^{(\exe)}}|f_i (v;t_i)| \ge \max_{v\in\Sigma_*^{(N)}}|f_i (v;t_i)|, \qquad i=1,2, *\in\{\{\RR,\out\},\{\RR\},\{\RR,\inn\}\}
\end{equation}
for sufficiently large $N$. See Figure~\ref{fig:limit_theorem_proof_contours} for an illustration and the figure caption for more explanation.
\begin{figure}[h]
	\centering
\includegraphics{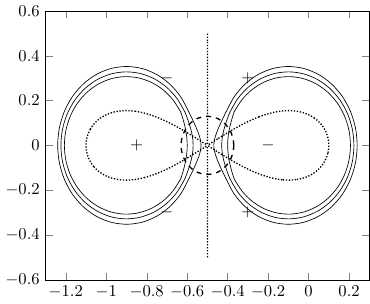}
\caption{Illustration of the contours when $\alpha=1$. The dotted lines represent the level curve $\mathrm{Re}h(w)=\mathrm{Re}h(w_c)$. It consists of two closed contours and one infinite contour all of which pass the critical point $w_c$. The complex plane thus is split into four parts, two of them marked with $-$ signs have lower levels of $\mathrm{Re}h(w)$, and the other two marked with $+$ signs have higher levels of $\mathrm{Re}h(w)$. The three solid contours on the left, from inside to outside, are $\Sigma_{\LL,\inn}$, $\Sigma_{\LL}$, $\Sigma_{\LL,\out}$ respectively. The three solid contours on the right, from inside to outside, are $\Sigma_{\RR,\inn}$, $\Sigma_{\RR}$, and $\Sigma_{\RR,\out}$ respectively. Each contour $\Sigma_*$ is split into two parts. The part within the dashed circle is $\Sigma_*^{(N)}$, and the remaining part is $\Sigma_*^{(\exe)}$.}\label{fig:limit_theorem_proof_contours}
\end{figure}

Combining with the bounds of $f_i$ at the endpoints of $\Sigma_*^{(N)}$ discussed above, we have the following two estimates
\begin{equation}
	\label{eq:constant_c}
\max_{u\in\Sigma_*^{(\exe)}}|f_i (u;t_i)/f_i(w_c;t_i)| \le \min_{u\in\Sigma_*^{(N)}}|f_i (u;t_i)/f_i(w_c;t_i)|\le e^{-c(\ln N)^3}, *\in\{\{\LL,\out\},\{\LL\},\{\LL,\inn\}\},
\end{equation}
\begin{equation}
	\label{eq:constant_c2}
	\min_{v\in\Sigma_*^{(\exe)}}|f_i (v;t_i)/f_i(w_c;t_i)| \ge \min_{v\in\Sigma_*^{(N)}}|f_i (v;t_i)/f_i(w_c;t_i)| \ge e^{c(\ln N)^3}, *\in\{\{\RR,\out\},\{\RR\},\{\RR,\inn\}\}.
\end{equation}

We remark that the contours we choose above are independent of the parameters $k_1$ and $k_2$, hence the constant $c$ above is also independent of $k_1$ and $k_2$.

\bigskip

With the contours we mentioned above, we can rewrite
\begin{equation*}
	\hat T_{k_1,k_2}(\mathrm{z};t_1,t_2;m,n,M,N) =\hat T_{k_1,k_2}^{(N)}(\mathrm{z};t_1,t_2;m,n,M,N) +\hat T_{k_1,k_2}^{(\exe)}(\mathrm{z};t_1,t_2;m,n,M,N),
\end{equation*}
where
\begin{equation}
	\label{eq:hat_TN}
	\begin{split}
		&\hat T_{k_1,k_2}^{(N)}(\mathrm{z};t_1,t_2;m,n,M,N)\\
		&=
		\prod_{i_1=1}^{k_1}     
		\left(  \frac{1}{1-\mathrm{z}} \int_{\Sigma_{\LL,\inn}^{(N)}} 	    
		\ddbar{u^{(1)}_{i_1}}{}   
		-\frac{\mathrm{z}}{1-\mathrm{z}} \int_{\Sigma_{\LL,\out}^{(N)}} \ddbar{u^{(1)}_{i_1}}{}
		\right)
		\left(  \frac{1}{1-\mathrm{z}}  \int_{\Sigma_{\RR,\inn}^{(N)}}  
		\ddbar{v^{(1)}_{i_1}}{}
		-\frac{\mathrm{z}}{1-\mathrm{z}}   \int_{\Sigma_{\RR,\out}^{(N)}}
		\ddbar{v^{(1)}_{i_1}}{}
		\right)
		\\
		&\quad \cdot 
		\prod_{i_2=1}^{k_2}  
		\int_{\Sigma_\LL^{(N)}} \ddbar{u^{(2)}_{i_2}}{} 
		\int_{\Sigma_\RR^{(N)}} \ddbar{v^{(2)}_{i_2}}{} 
		\cdot    
		\left(1-\mathrm{z}\right)^{k_2}
		\left(1-\frac{1}{\mathrm{z}}\right)^{k_1}
		\cdot 
		\frac{f_1(U^{(1)};t_1)f_2(U^{(2)};t_2)}{f_1(V^{(1)};t_1)f_2(V^{(2)};t_2)}
		\cdot \frac{1}{\prod_{\ell=1}^2\sum_{i_\ell=1}^{k_\ell}(u_{i_\ell}^{(\ell)} -v_{i_\ell}^{(\ell)})}
		\\
		&\quad \cdot H(U^{(1)},U^{(2)};V^{(1)},V^{(2)})\cdot 
		\prod_{\ell=1}^2
		\frac{\left( \Delta(U^{(\ell)}) \right)^2
			\left( \Delta(V^{(\ell)}) \right)^2
		}
		{\left( \Delta(U^{(\ell)}; V^{(\ell)}) \right)^2}
		\cdot 
		\frac{\Delta(U^{(1)};V^{(2)})\Delta(V^{(1)};U^{(2)})}
		{\Delta(U^{(1)};U^{(2)})\Delta(V^{(1)};V^{(2)})}.
	\end{split}
\end{equation}
Note that $\hat T_{k_1,k_2}^{(N)}$ has the same formula as $\hat T_{k_1,k_2}$ in~\eqref{eq:hat_T} except that we replace all the $\Sigma_*$ contours to $\Sigma_*^{(N)}$. Recall that we have $\Sigma_*=\Sigma_*^{(N)}\cup\Sigma_*^{(\exe)}$. Hence
\begin{equation}
	\label{eq:hat_Texe}
	\begin{split}
		&\hat T_{k_1,k_2}^{(\exe)}(\mathrm{z};t_1,t_2;m,n,M,N)\\
		&=
		\sum_{\Delta}
		\prod_{i_1=1}^{k_1}     
		\left(  \frac{1}{1-\mathrm{z}} \int_{\Sigma_{\LL,\inn}^{(\Delta)}} 	    
		\ddbar{u^{(1)}_{i_1}}{}   
		-\frac{\mathrm{z}}{1-\mathrm{z}} \int_{\Sigma_{\LL,\out}^{(\Delta)}} \ddbar{u^{(1)}_{i_1}}{}
		\right)
		\left(  \frac{1}{1-\mathrm{z}}  \int_{\Sigma_{\RR,\inn}^{(\Delta)}}  
		\ddbar{v^{(1)}_{i_1}}{}
		-\frac{\mathrm{z}}{1-\mathrm{z}}   \int_{\Sigma_{\RR,\out}^{(\Delta)}}
		\ddbar{v^{(1)}_{i_1}}{}
		\right)
		\\
		&\quad \cdot 
		\prod_{i_2=1}^{k_2}  
		\int_{\Sigma_\LL^{(\Delta)}} \ddbar{u^{(2)}_{i_2}}{} 
		\int_{\Sigma_\RR^{(\Delta)}} \ddbar{v^{(2)}_{i_2}}{} 
		\cdots
	\end{split}
\end{equation}
where we did not write out the integrand which is the same as in~\eqref{eq:hat_TN}, and the summation is over all possible $\Delta$'s each of which belongs to $\{N,\exe\}$ and at least one $\Delta$ is $\exe$. We also point out that we omit the indices of $\Delta$ in $\Sigma_*^{(\Delta)}$: It indeed depends on the choice of $*$ and $i_1$ or $i_2$. Since we have $4k_1+2k_2$ integration contours, we have $2^{4k_1+2k_2}-1$ possible choices of $\Delta$ in the above summation.

Similarly we can write
\begin{equation*}
	\mathrm{\hat T}_{k_1,k_2}(\mathrm{z};\mathrm{t}_1,\mathrm{t}_2,\mathrm{x};\gamma)
	=\mathrm{\hat T}_{k_1,k_2}^{(N)}(\mathrm{z};\mathrm{t}_1,\mathrm{t}_2,\mathrm{x};\gamma)
	+
	\mathrm{\hat T}_{k_1,k_2}^{(\exe)}(\mathrm{z};\mathrm{t}_1,\mathrm{t}_2,\mathrm{x};\gamma),
\end{equation*}
where $\mathrm{\hat T}_{k_1,k_2}^{(N)}(\mathrm{z};\mathrm{t}_1,\mathrm{t}_2,\mathrm{x};\gamma)$ has the same formula as~\eqref{eq:hat_rT} with all the integration contours $\Gamma_*$ replaced by $\Gamma_*^{(N)}$, and $\mathrm{\hat T}_{k_1,k_2}^{(\exe)}(\mathrm{z};\mathrm{t}_1,\mathrm{t}_2,\mathrm{x};\gamma)$ is a summation of $2^{4k_1+2k_2}-1$ terms each of which has the same formula as ~\eqref{eq:hat_rT} except that the integration contours are all replaced by $\Gamma_*^{(N)}$ or $\Gamma^{(\exe)}_*$ and at least one of the contours is replaced by $\Gamma^{(\exe)}_*$.

We will show the following two lemmas.

\begin{lm}
	\label{lm:hat_T_main}
With the same assumptions as in Proposition~\ref{prop:asymptotics_density}, there exists a constant $\CC_{2,1}$ as described in Notation~\ref{notation:CC}, such that
\begin{equation*}
	\begin{split}
	&\left|\alpha^{-1/3}(1+\sqrt{\alpha})^{2/3}N^{2/3}\hat T_{k_1,k_2}^{(N)}(\mathrm{z};t_1,t_2;m,n,M,N)
	-\mathrm{\hat T}_{k_1,k_2}^{(N)}(\mathrm{z};\mathrm{t}_1,\mathrm{t}_2,\mathrm{x};\gamma)\right| \\
	&\le k_1^{k_1/2}k_2^{k_2/2}(k_1+k_2)^{(k_1+k_2)/2}\CC_{2,1}^{k_1+k_2}N^{-1/3}(\log N)^{5}
	\end{split}
\end{equation*}
for all $k_1,k_2\ge 1$ as $N$ becomes sufficiently large.
\end{lm}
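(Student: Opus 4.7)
The plan is to implement the change of variables
\[
u_{i_\ell}^{(\ell)}=w_c+\epsilon\,\xi_{i_\ell}^{(\ell)},\qquad
v_{i_\ell}^{(\ell)}=w_c+\epsilon\,\eta_{i_\ell}^{(\ell)},\qquad
\epsilon:=\alpha^{1/6}(1+\sqrt{\alpha})^{-4/3}N^{-1/3},
\]
which maps the contours $\Sigma_*^{(N)}$ exactly onto the contours $\Gamma_*^{(N)}$ by the relation~\eqref{eq:relation_contours}. A direct bookkeeping of scaling factors then yields the prefactor $\alpha^{-1/3}(1+\sqrt{\alpha})^{2/3}N^{2/3}$ in the statement: the Jacobian contributes $\epsilon^{2(k_1+k_2)}$; the Cauchy factors $\prod_\ell (\Delta(U^{(\ell)}))^2(\Delta(V^{(\ell)}))^2/(\Delta(U^{(\ell)};V^{(\ell)}))^2$ contribute $\epsilon^{-2(k_1+k_2)}$; the cross ratio $\Delta(U^{(1)};V^{(2)})\Delta(V^{(1)};U^{(2)})/\Delta(U^{(1)};U^{(2)})\Delta(V^{(1)};V^{(2)})$ is degree $0$; the denominator $\prod_\ell(\sum(u-v))^{-1}$ contributes $\epsilon^{-2}$; and, as computed in a moment, the factor $H$ will furnish an extra $\epsilon^4/w_c^2$. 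The net prefactor is $\epsilon^2/w_c^2=\alpha^{1/3}(1+\sqrt{\alpha})^{-2/3}N^{-2/3}$, which precisely inverts the asserted normalization.

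The substantive step is the Taylor expansion of the cubic exponential $f_1(U^{(1)})f_2(U^{(2)})/f_1(V^{(1)})f_2(V^{(2)})$ around $w_c$. Set $h(w)=-\alpha\log(w+1)+\log w+(1+\sqrt{\alpha})^2 w$, so $h'(w_c)=h''(w_c)=0$ and $h'''(w_c)=-2(1+\sqrt{\alpha})^4/\sqrt{\alpha}$; the leading cubic contribution $N h(w_c+\epsilon\xi)-Nh(w_c)\to-\tfrac13\gamma\xi^3$ (respectively $-\tfrac13(1-\gamma)\xi^3$) matches the exponents in $\mathrm{f}_1$, $\mathrm{f}_2$ of~\eqref{eq:def_rmf}. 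The $N^{2/3}$ subleading contributions from the shifts $x_1,x_2$ cancel against those in $d((1,1),(m,n))$, $d((m+1,n),(M,N))$, leaving exactly the linear and quadratic coefficients $-\tfrac12\mathrm{x}\zeta^2+(\mathrm{s}_i-\mathrm{x}^2/(4\gamma_i))\zeta$ that appear in $\mathrm{f}_1,\mathrm{f}_2$. The most delicate piece is the factor $H$: expanding $R=\prod(v^{(1)}/u^{(1)})\prod(u^{(2)}/v^{(2)})$ via $\log R=-\epsilon\mathrm{S}_1/w_c+\epsilon^2\mathrm{S}_2/(2w_c^2)-\epsilon^3\mathrm{S}_3/(3w_c^3)+O(\epsilon^4)$, one finds after careful expansion that the $\epsilon^2$ and $\epsilon^3$ terms in $\tfrac12 A^2(1+R)+\tfrac12 B(1-R)$ cancel identically, and the $\epsilon^4$ term equals $w_c^{-2}[\tfrac{1}{12}\mathrm{S}_1^4+\tfrac14\mathrm{S}_2^2-\tfrac13\mathrm{S}_1\mathrm{S}_3]=w_c^{-2}\mathrm{H}$. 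This exact cancellation, verifying the leading-order match, is the central computation of the proof.

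The final task is the error estimate. On the truncated contours $|\xi|,|\eta|\le\log N$, the fourth-order remainder in $Nh(w_c+\epsilon\xi)$ is bounded by $N\epsilon^4|\xi|^4=O(N^{-1/3}(\log N)^4)$, and analogous polynomial remainders appear in the Taylor expansions of the shift corrections, of $R$, and of the rational factors $\prod w^{(2)}/w^{(1)}$ in the Cauchy-type factors; exponentiating and using $|e^X-1|\le|X|e^{|X|}$ converts each such remainder into a multiplicative $1+O(N^{-1/3}(\log N)^4)$ correction, while a single additional $\log N$ factor arises either from one further order in a polynomial (such as the $N\epsilon^5|\xi|^5$ next term) or from an integration against the cubic-Gaussian density over $|\xi|\le\log N$. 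The resulting pointwise bound is then multiplied by the Hadamard-type estimate for the Cauchy-product factor $\mathrm{B}$ established in Section~\ref{sec:proof_lm_uniform_bounds}, which supplies the combinatorial factor $k_1^{k_1/2}k_2^{k_2/2}(k_1+k_2)^{(k_1+k_2)/2}$, and the exponential decay of $\mathrm{f}_1,\mathrm{f}_2$ along the $\Gamma_*^{(N)}$ contours converts every integral into a constant of the type $\CC$ as in Notation~\ref{notation:CC}; together these yield the $(\CC_{2,1})^{k_1+k_2}$ growth. The main obstacle I anticipate is carrying out the full Taylor expansion while tracking that every one of the numerous $O(\epsilon^{-1})$ individual factors (coming from $1/\sum(u-v)$, from the Cauchy denominators, and from $(u^{(\ell)}-v^{(\ell)})^{-2}$ blowups as $\xi\to\eta$) is absorbed by the vanishing of $H$ and the cancellation of the leading quadratic terms, so that no spurious unbounded factor survives after the truncation bounds are applied.
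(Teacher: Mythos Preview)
Your proposal is correct and follows essentially the same approach as the paper: the same change of variables~\eqref{eq:change_variables}, the same Taylor expansion of $f_\ell$ around $w_c$ yielding~\eqref{eq:aux_17}--\eqref{eq:aux_19}, the same expansion of $H$ via $R=\prod(v^{(1)}/u^{(1)})\prod(u^{(2)}/v^{(2)})$ producing the key cancellation~\eqref{eq:aux_21}, and the same Hadamard-type bounds from Section~\ref{sec:proof_lm_uniform_bounds} to control the resulting error integrals $E_1,E_2$ in~\eqref{eq:aux_22}. Your closing concern about stray $O(\epsilon^{-1})$ factors is unnecessary: as you already computed in your first paragraph, the homogeneous degree count in the Cauchy factors and in $1/\sum(u-v)$ balances exactly, so nothing there interacts with the $H\to\mathrm{H}$ cancellation.
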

\begin{lm}
	\label{lm:hat_T_err}
	With the same assumptions as in Proposition~\ref{prop:asymptotics_density}, there exist two constants $\CC_{2,3}$ and $\CC_{2,4}$ as described in Notation~\ref{notation:CC}, such that
	\begin{equation*}
		N^{2/3} \left|\hat T_{k_1,k_2}^{(\exe)}(\mathrm{z};t_1,t_2;m,n,M,N)\right| \le k_1^{k_1/2}k_2^{k_2/2}(k_1+k_2)^{(k_1+k_2)/2}\CC_{2,3}^{k_1+k_2}\cdot e^{-c\cdot (\ln N)^3/2},
	\end{equation*}
and
\begin{equation*}
	\left|\mathrm{\hat T}_{k_1,k_2}^{(\exe)}(\mathrm{z};\mathrm{t}_1,\mathrm{t}_2,\mathrm{x};\gamma)\right| \le k_1^{k_1/2}k_2^{k_2/2}(k_1+k_2)^{(k_1+k_2)/2}\CC_{2,4}^{k_1+k_2} \cdot  e^{-c\cdot (\ln N)^3/2}
\end{equation*}
for all $k_1,k_2\ge 1$ as $N$ becomes sufficiently large. Here the constant $c$ is the same as in~\eqref{eq:constant_c} and~\eqref{eq:constant_c2}.
\end{lm}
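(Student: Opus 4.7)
The plan is to prove both bounds by adapting the integrand estimates used in Lemma~\ref{lm:hat_rt_uniform_bounds}, with the new input that on each $(\exe)$ piece of a contour the dominating exponential factor $f_i$ (or its reciprocal) is super-exponentially small compared with its value at the critical point. First I would normalize the ratio $f_1(U^{(1)};t_1)f_2(U^{(2)};t_2)/[f_1(V^{(1)};t_1)f_2(V^{(2)};t_2)]$ by multiplying numerator and denominator by appropriate powers of $f_1(w_c;t_1)$ and $f_2(w_c;t_2)$, so that each factor becomes either $f_i(u;t_i)/f_i(w_c;t_i)$ for $u$ on a left contour, or $f_i(w_c;t_i)/f_i(v;t_i)$ for $v$ on a right contour. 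These cancel each other and cost nothing, but they let~\eqref{eq:constant_c}--\eqref{eq:constant_c2} act directly on the integrand.

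Next I would expand $\hat T_{k_1,k_2}^{(\exe)}$ as in~\eqref{eq:hat_Texe}: a sum of $2^{4k_1+2k_2}-1$ terms, each containing at least one integral along some $\Sigma_*^{(\exe)}$. For each such term I would pick a single $(\exe)$ contour and use~\eqref{eq:constant_c} or~\eqref{eq:constant_c2} to factor out an explicit constant $e^{-c(\log N)^3}$. The remaining integrand is estimated exactly as in Section~\ref{sec:proof_lm_uniform_bounds}: the Cauchy-type ratio $\mathrm{B}$ is controlled via three Hadamard bounds yielding the factor $k_1^{k_1/2}k_2^{k_2/2}(k_1+k_2)^{(k_1+k_2)/2}$ together with $\prod 1/\dg$, the factor $H$ is bounded via $\prod \rg_1^4$, and $1/\prod_\ell\sum_{i_\ell}(u_{i_\ell}^{(\ell)}-v_{i_\ell}^{(\ell)})$ by $\prod \rg_2$. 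The fact that $\Sigma_*^{(\exe)}$ has infinite length is compensated by the super-exponential decay of $f_i$ (on the left) or of $1/f_i$ (on the right) along the chosen steepest-descent contours, so the remaining integrals are finite and the entire bound on the rest has the form $\CC^{k_1+k_2}$ as in Notation~\ref{notation:CC}. Combining, one term gives
\[
k_1^{k_1/2}k_2^{k_2/2}(k_1+k_2)^{(k_1+k_2)/2}\,\CC^{k_1+k_2}\cdot e^{-c(\log N)^3},
\]
and summing over the $2^{4k_1+2k_2}-1$ terms (absorbed into an enlarged $\CC$), together with the prefactor $N^{2/3}$ (absorbed into one half of $e^{-c(\log N)^3}$), yields the stated bound with $e^{-c(\log N)^3/2}$. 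The $\mathrm{t}_1,\mathrm{t}_2$-decay of $\CC$ required by Notation~\ref{notation:CC} follows from the factors $e^{t_1 u},e^{t_2 u}$ in $f_i$, which are at most $1$ on left contours and at least $e^{c'(\mathrm{t}_1+\mathrm{t}_2)}$ on right contours after $N^{1/3}$-rescaling.

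The bound on $\mathrm{\hat T}_{k_1,k_2}^{(\exe)}$ is entirely parallel. Each of the $2^{4k_1+2k_2}-1$ terms contains at least one integral along a $\Gamma_*^{(\exe)}$, i.e.\ a piece lying outside $\mathbb{D}(\log N)$. On these pieces, by the cubic form of $\mathrm{f}_1,\mathrm{f}_2$ in~\eqref{eq:def_rmf} and the steepest-descent orientation of the contours $\Gamma_*$ (emanating at angles $\pm 2\pi/3$ from $\infty$ on the left, $\pm\pi/3$ on the right), one has $|\mathrm{f}_i(\zeta;\mathrm{t}_i)|\le \exp(-c'|\zeta|^3)$ (respectively $|\mathrm{f}_i(\zeta;\mathrm{t}_i)|^{-1}\le\exp(-c'|\zeta|^3)$) for some $c'>0$ depending only on $\gamma$. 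Splitting this into two equal halves, one half produces $e^{-(c'/2)(\log N)^3}\le e^{-c(\log N)^3/2}$ (after possibly relabelling $c$), while the other half absorbs the polynomial bounds on the Cauchy factors, $\mathrm{H}$ and $1/\prod_\ell\sum_{i_\ell}(\xi-\eta)$ and ensures absolute convergence of the remaining integrals. The identification of the constant $c$ with the one from~\eqref{eq:constant_c}--\eqref{eq:constant_c2} follows from the scaling relation~\eqref{eq:relation_contours}: a Taylor expansion of $h(w)=-\alpha\log(w+1)+\log w+(\sqrt{\alpha}+1)^2w$ at its double critical point $w_c$ shows that both decays are governed by the same cubic term $\tfrac{h'''(w_c)}{6}(w-w_c)^3$ under the rescaling, so a common constant $c$ can be taken.

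The hardest step will be the careful verification in the previous paragraph that the finite-$N$ constant $c$ from~\eqref{eq:constant_c}--\eqref{eq:constant_c2} agrees with (or bounds from below) the constant coming from the cubic tails of $\mathrm{f}_i$ on $\Gamma_*^{(\exe)}$, uniformly for $\mathrm{x},\mathrm{t}_1,\mathrm{t}_2$ in the ranges claimed in the lemma. This essentially amounts to showing that the descent curves of $h$ through $w_c$ can be chosen in a way that is compatible with the limit contours $\Gamma_*$ under the scaling~\eqref{eq:relation_contours}, and that the endpoints of $\Sigma_*^{(N)}$, which sit at distance $\alpha^{1/6}(1+\sqrt{\alpha})^{-4/3}N^{-1/3}\log N$ from $w_c$, produce the same cubic decay constant as the endpoints of $\Gamma_*^{(N)}$ at $|\zeta|=\log N$. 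All other ingredients are direct transcriptions of the estimates in Section~\ref{sec:proof_lm_uniform_bounds}.
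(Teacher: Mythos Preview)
Your overall strategy is the same as the paper's: expand $\hat T_{k_1,k_2}^{(\exe)}$ into pieces each carrying at least one $(\exe)$ integral, extract the super-exponential factor $e^{-c(\ln N)^3}$ from that integral via~\eqref{eq:constant_c}--\eqref{eq:constant_c2}, and bound the rest as in Section~\ref{sec:proof_lm_uniform_bounds}. The paper organizes the expansion slightly differently: instead of summing all $2^{4k_1+2k_2}-1$ terms and absorbing the combinatorial factor $16^{k_1}4^{k_2}$ into $\CC^{k_1+k_2}$, it uses the elementary inequality $\prod_i(a_i+b_i)-\prod_i a_i\le\sum_\ell b_\ell\prod_{i\ne\ell}(a_i+b_i)$ to reduce to $4k_1+2k_2$ terms $\delta_{j;\ell}$, each with exactly one variable on an $(\exe)$ piece and the rest on the full $\Sigma_*$. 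This is cosmetic; your version works just as well.

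Where you do have a gap is in the sentence ``the remaining integrand is estimated exactly as in Section~\ref{sec:proof_lm_uniform_bounds}'' for the finite-$N$ quantity. The per-variable bounds $\prod\rg_2$ and $\prod\rg_1^4$ in~\eqref{eq:aux_13}--\eqref{eq:aux_14} were tailored to the $\Gamma_*$ contours, where each factor is $O(1)$. On the $\Sigma_*^{(N)}$ contours the natural analogue of $\rg_2(w)=1+|\mathrm{Re}(w-w_c)|^{-1}$ is of size $N^{1/3}$, so $\prod\rg_2$ over all $2(k_1+k_2)$ variables produces $(N^{1/3})^{2(k_1+k_2)}$, which is \emph{not} of the form $\CC^{k_1+k_2}$ with $N$-independent $\CC$, and one factor of $e^{-c(\ln N)^3}$ cannot absorb it. The paper sidesteps this: it bounds $|H|\le C(k_1+k_2)^2$ directly (contours are bounded, variables bounded away from $0$), and bounds $1/|\prod_\ell\sum_{i_\ell}(u_{i_\ell}^{(\ell)}-v_{i_\ell}^{(\ell)})|$ by only \emph{two} factors $|\mathrm{Re}(u_1^{(1)}-w_c)|^{-1}\cdot|\mathrm{Re}(u_1^{(2)}-w_c)|^{-1}$ (never passing to the loose $\prod\rg_2$). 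It then carefully tracks the resulting powers of $N$: two factors of $N^{1/3}$ from $1/|\mathrm{Re}(\cdot-w_c)|$, one more $N^{1/3}$ from $1/\dtt$ on the $(\exe)$ piece, giving $Ne^{-c(\ln N)^3}$ in~\eqref{eq:aux_26}--\eqref{eq:aux_27}, which together with the prefactor $N^{2/3}$ is still $\ll e^{-c(\ln N)^3/2}$. Your argument is easily repaired by making this same substitution, but as written it does not go through.

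Two smaller remarks. First, the $\Sigma_*^{(\exe)}$ are pieces of closed bounded contours around $-1$ or $0$ (see Figure~\ref{fig:limit_theorem_proof_contours}), so they have finite length; no decay argument is needed there, only on $\Gamma_*^{(\exe)}$. Second, your ``hardest step'' is not needed: the constant $c$ in~\eqref{eq:constant_c}--\eqref{eq:constant_c2} is \emph{defined} via the Taylor expansion~\eqref{eq:aux_17a} at the endpoint $|\zeta|=\log N$, which is exactly the cubic decay constant of $\mathrm{f}_i$ on $\Gamma_*^{(\exe)}$; there is nothing to match. The paper simply says ``the proofs for the two estimates are similar'' and only writes out the $\hat T$ case.
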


It is obvious that Lemmas~\ref{lm:estimate_error} follows immediately by the above lemmas. In the next two subsubsections we will prove Lemmas~\ref{lm:hat_T_main} and~\ref{lm:hat_T_err} respectively.

\subsubsection{Proof of Lemma~\ref{lm:hat_T_main}}

We recall the formula~\eqref{eq:hat_TN} for $\hat T_{k_1,k_2}^{(N)}$. We change the integration variables
\begin{equation}
	\label{eq:change_variables}
	\begin{split}
	u_{i_1}^{(1)} &= w_c + \alpha^{1/6} (1+\sqrt{\alpha})^{-4/3}N^{-1/3} \xi_{i_1}^{(1)},\\
	v_{i_1}^{(1)} &= w_c + \alpha^{1/6} (1+\sqrt{\alpha})^{-4/3}N^{-1/3} \eta_{i_1}^{(1)},\\
	u_{i_2}^{(2)} &= w_c + \alpha^{1/6} (1+\sqrt{\alpha})^{-4/3}N^{-1/3} \xi_{i_2}^{(2)},\\
	v_{i_2}^{(2)} &= w_c + \alpha^{1/6} (1+\sqrt{\alpha})^{-4/3}N^{-1/3} \eta_{i_2}^{(2)},
	\end{split}
\end{equation}
where $w_c=-(1+\sqrt{\alpha})^{-1}$ is defined in~\eqref{eq:def_w_c},  $\xi_{i_1}^{(1)} \in \Gamma_{\LL,\inn}^{(N)} \cup \Gamma_{\LL,\out}^{(N)}$, $\xi_{i_2}^{(2)} \in \Gamma_\LL^{(N)}$, $\eta_{i_1}^{(1)} \in \Gamma_{\RR,\inn}^{(N)} \cup \Gamma_{\RR,\out}^{(N)}$, and $\eta_{i_2}^{(2)} \in \Gamma_\RR^{(N)}$. Note the relation between  $\Sigma_*^{(N)}$ contours and $\Gamma_*^{(N)}$ contours in~\eqref{eq:relation_contours}. Thus we have
\begin{equation}
	\label{eq:aux_16}
	\begin{split}
		&\alpha^{-1/3}(1+\sqrt{\alpha})^{2/3}N^{2/3}\hat T_{k_1,k_2}^{(N)}(\mathrm{z};t_1,t_2;m,n,M,N)\\
		&=
		\prod_{i_1=1}^{k_1}     
		\left(  \frac{1}{1-\mathrm{z}} \int_{\Gamma^{(N)}_{\LL,\inn}} 	    
		\ddbar{\xi^{(1)}_{i_1}}{}   
		-\frac{\mathrm{z}}{1-\mathrm{z}} \int_{\Gamma^{(N)}_{\LL,\out}} \ddbar{\xi^{(1)}_{i_1}}{}
		\right)
		\left(  \frac{1}{1-\mathrm{z}}  \int_{\Gamma^{(N)}_{\RR,\inn}}  
		\ddbar{\eta^{(1)}_{i_1}}{}
		-\frac{\mathrm{z}}{1-\mathrm{z}}   \int_{\Gamma^{(N)}_{\RR,\out}}
		\ddbar{\eta^{(1)}_{i_1}}{}
		\right)
		\\
		&\quad \cdot 
		\prod_{i_2=1}^{k_2}  
		\int_{\Gamma^{(N)}_\LL} \ddbar{\xi^{(2)}_{i_2}}{} 
		\int_{\Gamma^{(N)}_\RR} \ddbar{\eta^{(2)}_{i_2}}{} 
		\cdot    
		\left(1-\mathrm{z}\right)^{k_2}
		\left(1-\frac{1}{\mathrm{z}}\right)^{k_1}
		\cdot 
		\frac
		{\tilde f_1(\boldsymbol{\xi}^{(1)};t_1)
			\tilde f_2(\boldsymbol{\xi}^{(2)};t_2)}
		{\tilde f_1(\boldsymbol{\eta}^{(1)};t_1)
			\tilde f_2(\boldsymbol{\eta}^{(2)};t_2)}
		\cdot
		\frac{1}{\prod_{\ell=1}^2\sum_{i_\ell=1}^{k_\ell}(\xi_{i_\ell}^{(\ell)} -\eta_{i_\ell}^{(\ell)})}
		\\
		&\quad \cdot
		\tilde {H}(\boldsymbol{\xi}^{(1)},\boldsymbol{\xi}^{(2)};
		\boldsymbol{\eta}^{(1)},\boldsymbol{\eta}^{(2)}) \cdot 
		\prod_{\ell=1}^2
		\frac{\left( \Delta(\boldsymbol{\xi}^{(\ell)}) \right)^2
			\left( \Delta(\boldsymbol{\eta}^{(\ell)}) \right)^2
		}
		{\left( \Delta(\boldsymbol{\xi}^{(\ell)}; \boldsymbol{\eta}^{(\ell)}) \right)^2}
		\cdot 
		\frac{\Delta(\boldsymbol{\xi}^{(1)};\boldsymbol{\eta}^{(2)})
			\Delta(\boldsymbol{\eta}^{(1)};\boldsymbol{\xi}^{(2)})
		}
		{\Delta(\boldsymbol{\xi}^{(1)};\boldsymbol{\xi}^{(2)})
			\Delta(\boldsymbol{\eta}^{(1)};\boldsymbol{\eta}^{(2)})
		},
	\end{split}
\end{equation}
where
\begin{equation}
	\label{eq:aux_28}
	\begin{split}
	&\tilde f_\ell (\xi_{i_\ell}^{(\ell)};t_i) = f_\ell (u_{i_\ell}^{(\ell)};t_i)/f_\ell(w_c;t_i), \qquad
	\tilde f_\ell (\eta_{i_\ell}^{(\ell)};t_i) = f_\ell (v_{i_\ell}^{(\ell)};t_i)/f_\ell(w_c;t_i),\\
	&\tilde {H}(\boldsymbol{\xi}^{(1)},\boldsymbol{\xi}^{(2)};
	\boldsymbol{\eta}^{(1)},\boldsymbol{\eta}^{(2)})=\alpha^{-2/3}(1+\sqrt{\alpha})^{10/3}N^{4/3}H(U^{(1)},U^{(2)};V^{(1)},V^{(2)})
	\end{split}
\end{equation}
with the $u_{i_\ell}^{(\ell)}, v_{i_\ell}^{(\ell)}$ being viewed as functions of $\xi_{i_\ell}^{(\ell)}$ and $\eta_{i_\ell}^{(\ell)}$ as in~\eqref{eq:change_variables}. Note that~\eqref{eq:aux_16} equals to $\mathrm{\hat T}_{k_1,k_2}^{(N)}(\mathrm{z};\mathrm{t}_1,\mathrm{t}_2,\mathrm{x};\gamma)$ if we replace $\tilde f_\ell$ by $\mathrm{f_\ell}$ and $\tilde H$ by $\mathrm{H}$, see~\eqref{eq:hat_rT} for the formula of $\mathrm{\hat T}_{(k_1,k_2)}$ and note that replacing the contours $\Gamma_*$ by $\Gamma_*^{(N)}$ in~\eqref{eq:hat_rT} gives the formula of $\mathrm{\hat T}_{(k_1,k_2)}^{(N)}$.

Recall that $f_1(w;t_1)=(w+1)^{-m} w^n e^{t_1w}$. Note the scaling in~\eqref{eq:scaling}. For all $|\zeta|\le \log N$, we have the following Taylor expansion
\begin{equation}
	\label{eq:aux_17a}
	\begin{split}&\log \left(f_1(w_c + \alpha^{1/6}(1+\sqrt{\alpha})^{-4/3}\zeta N^{-1/3};t_1)/f_1(w_c;t_1) \right)\\
		&= -m\log \left(1+\alpha^{-1/3}(1+\sqrt{\alpha})^{-1/3} \zeta N^{-1/3} \right) + n\log  \left(1-\alpha^{1/6}(1+\sqrt{\alpha})^{-1/3} \zeta N^{-1/3} \right) + t_1 \alpha^{1/6}(1+\sqrt{\alpha})^{-4/3} \zeta N^{-1/3}\\
		&=-\frac{1}{3}\gamma\zeta^3 -\frac{1}{2}({x}_2-{x}_1)\zeta^2 +\left(\mathrm{t}_1 -\frac{1}{4\gamma}({x}_2-{x}_1)^2\right)\zeta +\bigO(N^{-1/3}(\log N)^4),
%		&=\mathrm{f}_1(\zeta;\mathrm{t}_1) + \bigO(N^{-1/3}(\log N)^4)
		\end{split}
\end{equation}
and hence, using the fact $e^{\bigO(N^{-1/3}(\log N)^4)}=1+\bigO(N^{-1/3}(\log N)^4)$,
\begin{equation}
	\label{eq:aux_17}
	\tilde f_1(w_c + \alpha^{1/6}(1+\sqrt{\alpha})^{-4/3}\zeta N^{-1/3};t_1)=\mathrm{f}_1(\zeta;\mathrm{t}_1)\cdot \left(1 +\bigO(N^{-1/3}(\log N)^4)\right).
\end{equation}
Note here the error term $\bigO(N^{-1/3}(\log N)^4)$ is uniformly for all $|\zeta|\le \log N$.
Similarly, for all $|\zeta|\le \log N$,
\begin{equation}
	\label{eq:aux_18}
	\tilde f_2(w_c + \alpha^{1/6}(1+\sqrt{\alpha})^{-4/3}\zeta N^{-1/3};t_2)=\mathrm{f}_2(\zeta;\mathrm{t}_2)\cdot \left(1 +\bigO(N^{-1/3}(\log N)^4)\right).
\end{equation}
Inserting the above estimates, we have
\begin{equation}
	\label{eq:aux_19}
	\frac
	{\tilde f_1(\boldsymbol{\xi}^{(1)};t_1)
		\tilde f_2(\boldsymbol{\xi}^{(2)};t_2)}
	{\tilde f_1(\boldsymbol{\eta}^{(1)};t_1)
		\tilde f_2(\boldsymbol{\eta}^{(2)};t_2)}=
	\frac
	{\mathrm f_1(\boldsymbol{\xi}^{(1)};\mathrm{t}_1)
		\mathrm f_2(\boldsymbol{\xi}^{(2)};\mathrm{t}_2)}
	{\mathrm f_1(\boldsymbol{\eta}^{(1)};\mathrm{t}_1)
		\mathrm f_2(\boldsymbol{\eta}^{(2)};\mathrm{t}_2)}
	\left(1+c_1^{k_1+k_2}\bigO(N^{-1/3}(\log N)^4)\right),
\end{equation}
where $c_1=4$ and we used the inequality
\begin{equation}
	\label{eq:aux_31}
	\left|\prod_{i=1}^n(1+x_i)-1\right|\le (1+x)^n-1 \le 2^nx
\end{equation}
for all $x_1,\cdots,x_n\in\complexC$ and $x>0$ satisfying $|x_i|\le x<1$.

Now we consider the term $\tilde H$. Recall the formulas of $H$ in~\eqref{eq:def_H} and $\mathrm{S}_\ell$ in~\eqref{eq:def_S}. We have
\begin{equation}
	\label{eq:aux_20}\begin{split}
		&\tilde {H}(\boldsymbol{\xi}^{(1)},\boldsymbol{\xi}^{(2)};
		\boldsymbol{\eta}^{(1)},\boldsymbol{\eta}^{(2)})=\alpha^{-2/3}(1+\sqrt{\alpha})^{10/3}N^{4/3}H(U^{(1)},U^{(2)};V^{(1)},V^{(2)})\\
	% & = \frac{1}{2}\epsilon^{-2}\mathrm{S}_1^2 N^{2/3} \left(1+ \prod_{i_1=1}^{k_1}\frac{v_{i_1}^{(1)}}{u_{i_1}^{(1)}}\prod_{i_2=1}^{k_2}\frac{u_{i_2}^{(2)}}{v_{i_2}^{(2)}}\right) -\frac{1}{2}\epsilon^{-2}\mathrm{S}_2 N^{2/3} \left(1- \prod_{i_1=1}^{k_1}\frac{v_{i_1}^{(1)}}{u_{i_1}^{(1)}}\prod_{i_2=1}^{k_2}\frac{u_{i_2}^{(2)}}{v_{i_2}^{(2)}}\right)\\
	% &\quad+\epsilon^{-3}N \mathrm{S}_1\left(1- \prod_{i_1=1}^{k_1}\frac{v_{i_1}^{(1)}}{u_{i_1}^{(1)}}\prod_{i_2=1}^{k_2}\frac{u_{i_2}^{(2)}}{v_{i_2}^{(2)}}\right)\\
	 &=\frac{1}{2}\epsilon^{-2}\left(\mathrm{S}_1^2-\mathrm{S}_2\right) N^{2/3}+\epsilon^{-3}N \mathrm{S}_1 +\left(\frac{1}{2}\epsilon^{-2}(\mathrm{S}_1^2+\mathrm{S}_2) N^{2/3}-\epsilon^{-3}N \mathrm{S}_1\right)\cdot \prod_{i_1=1}^{k_1}\frac{v_{i_1}^{(1)}}{u_{i_1}^{(1)}}\prod_{i_2=1}^{k_2}\frac{u_{i_2}^{(2)}}{v_{i_2}^{(2)}},
	\end{split}
\end{equation}
where  $\epsilon :=\alpha^{1/6}(1+\sqrt{\alpha})^{-1/3}$. Note the following estimate
\begin{equation*}
	\begin{split}
	\frac{w_c+(1+\sqrt{\alpha})^{-1}\epsilon \zeta N^{-1/3}}{w_c} &= \exp\left(-\epsilon N^{-1/3}\zeta -\frac{1}{2}\epsilon^2N^{-2/3}\zeta^2 -\frac{1}{3}\epsilon^3N^{-1}\zeta^3 +\bigO(N^{-4/3}(\log N)^4)\right)\\
	&=\exp\left(-\epsilon N^{-1/3}\zeta -\frac{1}{2}\epsilon^2N^{-2/3}\zeta^2 -\frac{1}{3}\epsilon^3N^{-1}\zeta^3 \right)\left(1+\bigO(N^{-4/3}(\log N)^4)\right)
	\end{split}
\end{equation*}
for all $|\zeta|\le \log N$, where $\bigO(N^{-4/3}(\log N)^4)$ is uniformly on $\zeta$. Using the inequality~\eqref{eq:aux_31}, we obtain
\begin{equation}
	\label{eq:aux_32}
	\prod_{i_1=1}^{k_1}\frac{v_{i_1}^{(1)}}{u_{i_1}^{(1)}}\prod_{i_2=1}^{k_2}\frac{u_{i_2}^{(2)}}{v_{i_2}^{(2)}}=\exp\left(\epsilon N^{-1/3}\mathrm{S}_1+\frac12\epsilon^2 N^{-2/3} \mathrm{S}_2 +\frac{1}{3}\epsilon^3N^{-1}\mathrm{S}_3\right)\left(1+c_1^{k_1+k_2} \bigO(N^{-4/3}(\log N)^4)\right).
\end{equation}
Note the trivial bound  $|\mathrm{S}_\ell| \le  (k_1+k_2)(\log N)^\ell$. We have
\begin{equation*}
	\begin{split}
	\left|\exp\left(\epsilon N^{-1/3}\mathrm{S}_1\right)-\sum_{n\le 3}\frac{1}{n!}(\epsilon N^{-1/3}\mathrm{S}_1)^n\right|
	&\le \sum_{n\ge 4}\frac{1}{n!}(\epsilon(k_1+k_2) N^{-1/3}\log N)^n\\
	&\le (N^{-1/3}\log N)^4\sum_{n\ge 4}\frac{1}{n!}(\epsilon(k_1+k_2))^n\\
	&\le c_2^{k_1+k_2}(N^{-1/3}\log N)^4,
	\end{split}
\end{equation*}
where $c_2=e^\epsilon$. Thus
\begin{equation*}
	\exp\left(\epsilon N^{-1/3}\mathrm{S}_1\right)=1+\epsilon N^{-1/3}\mathrm{S}_1 +\frac{1}{2} \epsilon^2 N^{-2/3} \mathrm{S}_1^2 +\frac16 \epsilon^3 N^{-1}\mathrm{S}_1^3 +c_2^{k_1+k_2}\bigO(N^{-4/3}(\log N)^4).
\end{equation*}
Similarly we have
\begin{equation*}
	\begin{split}
		\exp\left(\frac12\epsilon^2 N^{-2/3} \mathrm{S}_2 \right)&=1+\frac12\epsilon^2 N^{-2/3}\mathrm{S}_2 +c_3^{k_1+k_2}\bigO(N^{-4/3}(\log N)^4),\\
		\exp\left(\frac{1}{3}\epsilon^3N^{-1}\mathrm{S}_3\right)&=1+\frac13\epsilon^3 N^{-1}\mathrm{S}_3+c_4^{k_1+k_2}\bigO\left(N^{-2}(\log N)^6\right)
	\end{split}
\end{equation*}
for some positive constants $c_3$ and $c_4$. 
Inserting the above equations to~\eqref{eq:aux_32}, and then combining~\eqref{eq:aux_32} and~\eqref{eq:aux_20}, we obtain, after a careful calculation,
\begin{equation}
	\label{eq:aux_21}
	\begin{split}
	\tilde {H}(\boldsymbol{\xi}^{(1)},\boldsymbol{\xi}^{(2)};
	\boldsymbol{\eta}^{(1)},\boldsymbol{\eta}^{(2)})&= \frac{1}{12}\mathrm{S}_1^4 +\frac{1}{4}\mathrm{S}_2^2-\frac{1}{3}\mathrm{S}_1\mathrm{S}_3 +c_5^{k_1+k_2}\bigO(N^{-1/3}(\log N)^5)\\
	&\  =\mathrm{H}(\boldsymbol{\xi}^{(1)},\boldsymbol{\xi}^{(2)};
	\boldsymbol{\eta}^{(1)},\boldsymbol{\eta}^{(2)})  +c_5^{k_1+k_2}\bigO(N^{-1/3}(\log N)^5)
	\end{split}
\end{equation}
for some positive constant $c_5$.

Now we insert~\eqref{eq:aux_19} and~\eqref{eq:aux_21} into~\eqref{eq:aux_16}, and obtain
\begin{equation}
	\label{eq:aux_22}
	\begin{split}
		&\alpha^{-1/3}(1+\sqrt{\alpha})^{2/3}N^{2/3}\hat T_{k_1,k_2}^{(N)}(\mathrm{z};t_1,t_2;m,n,M,N) - \mathrm{\hat T}_{k_1,k_2}^{(N)}(\mathrm{z};\mathrm{t}_1,\mathrm{t}_2,\mathrm{x};\gamma)\\
		&=c_1^{k_1+k_2}\bigO(N^{-1/3}(\log N)^4)E_1+c_5^{k_1+k_2}\bigO(N^{-1/3}(\log N)^5) E_2 + (c_1c_5)^{k_1+k_2}\bigO(N^{-2/3}(\log N)^9) E_2,
	\end{split}
\end{equation}
where
\begin{equation}
	\begin{split}
		E_j&=
		\prod_{i_1=1}^{k_1}     
		\left(  \frac{1}{1-\mathrm{z}} \int_{\Gamma^{(N)}_{\LL,\inn}} 	    
		\ddbar{\xi^{(1)}_{i_1}}{}   
		-\frac{\mathrm{z}}{1-\mathrm{z}} \int_{\Gamma^{(N)}_{\LL,\out}} \ddbar{\xi^{(1)}_{i_1}}{}
		\right)
		\left(  \frac{1}{1-\mathrm{z}}  \int_{\Gamma^{(N)}_{\RR,\inn}}  
		\ddbar{\eta^{(1)}_{i_1}}{}
		-\frac{\mathrm{z}}{1-\mathrm{z}}   \int_{\Gamma^{(N)}_{\RR,\out}}
		\ddbar{\eta^{(1)}_{i_1}}{}
		\right)
		\\
		&\quad \cdot 
		\prod_{i_2=1}^{k_2}  
		\int_{\Gamma^{(N)}_\LL} \ddbar{\xi^{(2)}_{i_2}}{} 
		\int_{\Gamma^{(N)}_\RR} \ddbar{\eta^{(2)}_{i_2}}{} 
		\cdot    
		\left(1-\mathrm{z}\right)^{k_2}
		\left(1-\frac{1}{\mathrm{z}}\right)^{k_1}
		\cdot \frac
		{\mathrm f_1(\boldsymbol{\xi}^{(1)};\mathrm{t}_1)
			\mathrm f_2(\boldsymbol{\xi}^{(2)};\mathrm{t}_2)}
		{\mathrm f_1(\boldsymbol{\eta}^{(1)};\mathrm{t}_1)
			\mathrm f_2(\boldsymbol{\eta}^{(2)};\mathrm{t}_2)}\cdot 
		\frac{1}{\prod_{\ell=1}^2\sum_{i_\ell=1}^{k_\ell}(\xi_{i_\ell}^{(\ell)} -\eta_{i_\ell}^{(\ell)})}
		\\
		&\quad \cdot
		K_j(\boldsymbol{\xi}^{(1)},\boldsymbol{\xi}^{(2)};
		\boldsymbol{\eta}^{(1)},\boldsymbol{\eta}^{(2)}) \cdot 
		\prod_{\ell=1}^2
		\frac{\left( \Delta(\boldsymbol{\xi}^{(\ell)}) \right)^2
			\left( \Delta(\boldsymbol{\eta}^{(\ell)}) \right)^2
		}
		{\left( \Delta(\boldsymbol{\xi}^{(\ell)}; \boldsymbol{\eta}^{(\ell)}) \right)^2}
		\cdot 
		\frac{\Delta(\boldsymbol{\xi}^{(1)};\boldsymbol{\eta}^{(2)})
			\Delta(\boldsymbol{\eta}^{(1)};\boldsymbol{\xi}^{(2)})
		}
		{\Delta(\boldsymbol{\xi}^{(1)};\boldsymbol{\xi}^{(2)})
			\Delta(\boldsymbol{\eta}^{(1)};\boldsymbol{\eta}^{(2)})
		},
	\end{split}
\end{equation}
with
\begin{equation}
	K_j(\boldsymbol{\xi}^{(1)},\boldsymbol{\xi}^{(2)};
	\boldsymbol{\eta}^{(1)},\boldsymbol{\eta}^{(2)})=
	\begin{dcases}
		\mathrm{H}(\boldsymbol{\xi}^{(1)},\boldsymbol{\xi}^{(2)};
		\boldsymbol{\eta}^{(1)},\boldsymbol{\eta}^{(2)}), & j=1,\\
		1,& j=2.
	\end{dcases}
\end{equation}
Note that these $E_j$ terms have similar structure with $\mathrm{\hat T}_{k_1,k_2}(\mathrm{z};\mathrm{s}_1,\mathrm{s}_2,\mathrm{x};\gamma)$, except that the integration contours $\Gamma_*^{(N)}$ are subsets of $\Gamma_*$ appearing in the definition of $\mathrm{\hat T}_{k_1,k_2}(\mathrm{z};\mathrm{s}_1,\mathrm{s}_2,\mathrm{x};\gamma)$. Recall~\eqref{eq:aux_15} in the proof of Lemma~\ref{lm:hat_rt_uniform_bounds}. It is obvious that we have the same upper bound if we use  contours $\Gamma_*^{(N)}$  instead of $\Gamma_*$. Thus we obtain
\begin{equation*}
	|E_1| \le k_1^{k_1/2}k_2^{k_2/2}(k_1+k_2)^{(k_1+k_2)/2}\CC_{1}^{k_1+k_2}.
\end{equation*}
Similarly we have, by removing the factor $\mathrm{g_1}^4$, which comes from the estimate of $\mathrm{H}$, in the inequality~\eqref{eq:aux_15}, 
\begin{equation*}
	|E_2| \le k_1^{k_1/2}k_2^{k_2/2}(k_1+k_2)^{(k_1+k_2)/2}(\CC'_{1})^{k_1+k_2},
\end{equation*}
where $\CC'_1\le \CC_1$ is a positive constant satisfying the conditions described in Notation~\ref{notation:CC}. Combining the estimates of $|E_j|$ with~\eqref{eq:aux_22}, we obtain Lemma~\ref{lm:hat_T_main}.

\subsubsection{Proof of Lemma~\ref{lm:hat_T_err}}

The proofs for the two estimates are similar, hence we only prove the estimate for $\hat T_{k_1,k_2}^{(\exe)}(\mathrm{z};t_1,t_2;m,n,M,N)$.

Recall~\eqref{eq:hat_Texe}. We have
\begin{equation}
	\label{eq:aux_23}
	\begin{split}
		&\left|\hat T_{k_1,k_2}^{(\exe)}(\mathrm{z};t_1,t_2;m,n,M,N)\right|\\
		&\le 
		\sum_{\Delta}
		\prod_{i_1=1}^{k_1}     
		\left(  \left|\frac{1}{1-\mathrm{z}}\right| \int_{\Sigma_{\LL,\inn}^{(\Delta)}} 	    
		\frac{|\mathrm{d}u^{(1)}_{i_1}|}{2\pi}   
		+\left|\frac{\mathrm{z}}{1-\mathrm{z}}\right| \int_{\Sigma_{\LL,\out}^{(\Delta)}} \frac{|\mathrm{d}u^{(1)}_{i_1}|}{2\pi}   
		\right)
		\left(  \left|\frac{1}{1-\mathrm{z}}\right| \int_{\Sigma_{\RR,\inn}^{(\Delta)}} 	    
		\frac{|\mathrm{d}v^{(1)}_{i_1}|}{2\pi}   
		+\left|\frac{\mathrm{z}}{1-\mathrm{z}}\right| \int_{\Sigma_{\RR,\out}^{(\Delta)}} \frac{|\mathrm{d}v^{(1)}_{i_1}|}{2\pi}   
		\right)
		\\
		&\quad \cdot 
		\prod_{i_2=1}^{k_2}  
		\int_{\Sigma_\LL^{(\Delta)}} \frac{|\mathrm{d}u^{(2)}_{i_2}|}{2\pi}   
		\int_{\Sigma_\RR^{(\Delta)}} \frac{|\mathrm{d}v^{(2)}_{i_2}|}{2\pi}  
		\cdot    
		\left|1-\mathrm{z}\right|^{k_2}
		\left|1-\frac{1}{\mathrm{z}}\right|^{k_1}
		\cdot 
		\left|\frac{f_1(U^{(1)};t_1)f_2(U^{(2)};t_2)}{f_1(V^{(1)};t_1)f_2(V^{(2)};t_2)}\right|
		\cdot \frac{1}{\prod_{\ell=1}^2\left|\sum_{i_\ell=1}^{k_\ell}(u_{i_\ell}^{(\ell)} -v_{i_\ell}^{(\ell)})\right|}
		\\
		&\quad \cdot \left| H(U^{(1)},U^{(2)};V^{(1)},V^{(2)})\right|\cdot 
		\prod_{\ell=1}^2
		\left|\frac{\left( \Delta(U^{(\ell)}) \right)^2
			\left( \Delta(V^{(\ell)}) \right)^2
		}
		{\left( \Delta(U^{(\ell)}; V^{(\ell)}) \right)^2}\right|
		\cdot 
		\left|\frac{\Delta(U^{(1)};V^{(2)})\Delta(V^{(1)};U^{(2)})}
		{\Delta(U^{(1)};U^{(2)})\Delta(V^{(1)};V^{(2)})}\right|.
	\end{split}
\end{equation}

Recall the the sum is over all possible $2^{4k_1+2k_2}-1$ combinations of the contours, except for the only one combination that all the contours are of the form $\Sigma_*^{(N)}$ (i.e., near the critical point $w_c$). Also recall that $\Sigma_*=\Sigma_*^{(N)}\cup\Sigma_*^{(\exe)}$. The right hand side of~\eqref{eq:aux_23} can be rewritten as
\begin{equation}
	\label{eq:aux_24}
	\begin{split}
	&\prod_{i_1=1}^{k_1}\left(  \int_{\Sigma_{\LL,\inn}}|\mathrm{d}u^{(1)}_{i_1}|
	+\int_{\Sigma_{\LL,\out}}|\mathrm{d}u^{(1)}_{i_1}|
	\right)
	\left( \int_{\Sigma_{\RR,\inn}}  |\mathrm{d}v^{(1)}_{i_1}|
	+ \int_{\Sigma_{\RR,\out}}|\mathrm{d}v^{(1)}_{i_1}|
	\right)\prod_{i_2=1}^{k_2}  
	\int_{\Sigma_\LL} |\mathrm{d}u^{(2)}_{i_2}| 
	\int_{\Sigma_\RR} |\mathrm{d}v^{(2)}_{i_2}|\\
	&-\prod_{i_1=1}^{k_1}\left(  \int_{\Sigma_{\LL,\inn}^{(N)}}|\mathrm{d}u^{(1)}_{i_1}|
	+\int_{\Sigma_{\LL,\out}^{(N)}}|\mathrm{d}u^{(1)}_{i_1}|
	\right)
	\left( \int_{\Sigma_{\RR,\inn}^{(N)}}   |\mathrm{d}v^{(1)}_{i_1}|
	+ \int_{\Sigma_{\RR,\out}^{(N)}}|\mathrm{d}v^{(1)}_{i_1}|   
	\right)\prod_{i_2=1}^{k_2}  
	\int_{\Sigma_\LL^{(N)}} |\mathrm{d}u^{(2)}_{i_2}| 
	\int_{\Sigma_\RR^{(N)}} |\mathrm{d}v^{(2)}_{i_2}|,
	\end{split}
\end{equation}
where we suppressed the factors and the integrand for simplifications since they do not affect our argument here. Note the following simple inequality
\begin{equation*}
	\prod_{i} (a_i+b_i) -\prod_{i}a_i \le \sum_\ell b_\ell\prod_{i\ne \ell}(a_i+b_i)
\end{equation*}
for all nonnegative numbers $a_i,b_i$. We apply this inequality for $a_i=\int_{\Sigma_*^{(N)}}$ and $b_i=\int_{\Sigma_*^{(\exe)}}$ in~\eqref{eq:aux_24}. We find that~\eqref{eq:aux_24} can be bounded by
\begin{equation}
	\sum_{j_1=1}^{k_1}\left(\delta_{j_1;1}+\delta_{j_1;2}+\delta_{j_1;3}+\delta_{j_1;4}\right)+
	\sum_{j_2=1}^{k_2}\left(\delta_{j_2;5} +\delta_{j_2;6}\right).
\end{equation}
The quantities $\delta_{j,i}$ in the above equation are given by
\begin{equation*}
	\begin{split}
	\delta_{j_1;1}&=\int_{\Sigma_{\LL,\inn}^{(\exe)}}|\mathrm{d}u^{(1)}_{j_1}|\prod_{i_1\ne j_1}\left(  \int_{\Sigma_{\LL,\inn}}|\mathrm{d}u^{(1)}_{i_1}|
	+\int_{\Sigma_{\LL,\out}}|\mathrm{d}u^{(1)}_{i_1}|
	\right)
	\prod_{i_1=1}^{k_1}\left( \int_{\Sigma_{\RR,\inn}}  |\mathrm{d}v^{(1)}_{i_1}|
	+ \int_{\Sigma_{\RR,\out}}|\mathrm{d}v^{(1)}_{i_1}|
	\right)\cdots\\
	\delta_{j_1;2}&=\int_{\Sigma_{\LL,\out}^{(\exe)}}|\mathrm{d}u^{(1)}_{j_1}|\prod_{i_1\ne j_1}\left(  \int_{\Sigma_{\LL,\inn}}|\mathrm{d}u^{(1)}_{i_1}|
	+\int_{\Sigma_{\LL,\out}}|\mathrm{d}u^{(1)}_{i_1}|
	\right)
	\prod_{i_1=1}^{k_1}\left( \int_{\Sigma_{\RR,\inn}}  |\mathrm{d}v^{(1)}_{i_1}|
	+ \int_{\Sigma_{\RR,\out}}|\mathrm{d}v^{(1)}_{i_1}|
	\right)\cdots\\
	\delta_{j_1;3}&=\int_{\Sigma_{\LL,\inn}^{(\exe)}}|\mathrm{d}v^{(1)}_{j_1}|\prod_{i_1=1}^{k_1}\left(  \int_{\Sigma_{\LL,\inn}}|\mathrm{d}u^{(1)}_{i_1}|
	+\int_{\Sigma_{\LL,\out}}|\mathrm{d}u^{(1)}_{i_1}|
	\right)
	\prod_{i_1\ne j_1}\left( \int_{\Sigma_{\RR,\inn}}  |\mathrm{d}v^{(1)}_{i_1}|
	+ \int_{\Sigma_{\RR,\out}}|\mathrm{d}v^{(1)}_{i_1}|
	\right)\cdots\\
	\delta_{j_1;4}&=\int_{\Sigma_{\LL,\inn}^{(\exe)}}|\mathrm{d}v^{(1)}_{j_1}|\prod_{i_1=1}^{k_1}\left(  \int_{\Sigma_{\LL,\inn}}|\mathrm{d}u^{(1)}_{i_1}|
	+\int_{\Sigma_{\LL,\out}}|\mathrm{d}u^{(1)}_{i_1}|
	\right)
	\prod_{i_1\ne j_1}\left( \int_{\Sigma_{\RR,\inn}}  |\mathrm{d}v^{(1)}_{i_1}|
	+ \int_{\Sigma_{\RR,\out}}|\mathrm{d}v^{(1)}_{i_1}|
	\right)\cdots
	\end{split}
\end{equation*}
where $\cdots$ stands for $\prod_{i_2=1}^{k_2}  
\int_{\Sigma_\LL^{(N)}} |\mathrm{d}u^{(2)}_{i_2}| 
\int_{\Sigma_\RR^{(N)}} |\mathrm{d}v^{(2)}_{i_2}|$, and
\begin{equation*}
	\begin{split}
		\delta_{j_2;5}&=\cdots \int_{\Sigma_{\LL}^{(\exe)}}|\mathrm{d}u^{(2)}_{j_2}|\prod_{i_2\ne j_2}|\mathrm{d}u^{(2)}_{i_2}|
		\prod_{i_2=1}^{k_2}  
		\int_{\Sigma_\RR^{(N)}} |\mathrm{d}v^{(2)}_{i_2}|,\\
		\delta_{j_2;6}&=\cdots \int_{\Sigma_{\RR}^{(\exe)}}|\mathrm{d}v^{(2)}_{j_2}|\prod_{i_2=1 }^{k_2}|\mathrm{d}u^{(2)}_{i_2}|
		\prod_{i_2\ne j_2}  
		\int_{\Sigma_\RR^{(N)}} |\mathrm{d}v^{(2)}_{i_2}|,
	\end{split}
\end{equation*}
where $\cdots$ stands for $\prod_{i_1=1}^{k_1}\left(  \int_{\Sigma_{\LL,\inn}}|\mathrm{d}u^{(1)}_{i_1}|
+\int_{\Sigma_{\LL,\out}}|\mathrm{d}u^{(1)}_{i_1}|
\right)
\left( \int_{\Sigma_{\RR,\inn}}  |\mathrm{d}v^{(1)}_{i_1}|
+ \int_{\Sigma_{\RR,\out}}|\mathrm{d}v^{(1)}_{i_1}|
\right)$. Here we suppressed the factors and integrands in $\delta_{j;\ell}$ for simplifications: They are the same as in~\eqref{eq:aux_23}.

We have the following estimates:
\begin{equation}
	\label{eq:aux_26}
	\delta_{j_1;\ell}\le k_1^{k_1/2}k_2^{k_2/2}(k_1+k_2)^{(k_1+k_2+4)/2} \CC_{2,3}^{k_1+k_2} Ne^{-c(\ln N)^3},\quad 1\le \ell\le 4, \ 1\le j_1\le k_1,
\end{equation}
and
\begin{equation}
	\label{eq:aux_27}
	\delta_{j_2;\ell}\le k_1^{k_1/2}k_2^{k_2/2}(k_1+k_2)^{(k_1+k_2+4)/2} \CC_{2,3}^{k_1+k_2} Ne^{-c(\ln N)^3},\quad 5\le \ell\le 6, \ 1\le j_2\le k_2,
\end{equation}
for all $k_1,k_2\ge 1$ and sufficiently large $N$, where $\CC_{2,3}$ is a constant satisfying the conditions described in Notation~\ref{notation:CC}, and $c>0$ is a constant appearing in~\eqref{eq:constant_c} and~\eqref{eq:constant_c2}. With these estimates, and noting that $(k_1+k_2)^3\le e^{2(k_1+k_2)}$ for all $k_1,k_2\ge 0$ and that $Ne^{-c(\ln N)^3}\ll e^{-c(\ln N)^3/2}$ for sufficiently large $N$, we obtain Lemma~\ref{lm:hat_T_err} immediately.

It remains to show~\eqref{eq:aux_26} and~\eqref{eq:aux_27}. We only prove one representative inequality due to their similarity. Below we show~\eqref{eq:aux_26}  for $j_1=\ell =1$.

We write down the full expression of $\delta_{1;1}$,

	\begin{equation}
		\label{eq:aux_25}
		\begin{split}
			&\delta_{1;1}=\left|\frac{1}{1-\mathrm{z}}\right| \int_{\Sigma_{\LL,\inn}^{(\exe)}} 	    
			\frac{|\mathrm{d}u^{(1)}_{1}|}{2\pi}  \cdot			
			\prod_{i_1=2}^{k_1}     
			\left(  \left|\frac{1}{1-\mathrm{z}}\right| \int_{\Sigma_{\LL,\inn}^{(\Delta)}} 	    
			\frac{|\mathrm{d}u^{(1)}_{i_1}|}{2\pi}   
			+\left|\frac{\mathrm{z}}{1-\mathrm{z}}\right| \int_{\Sigma_{\LL,\out}^{(\Delta)}} \frac{|\mathrm{d}u^{(1)}_{i_1}|}{2\pi}   
			\right)\\
			&\quad \cdot	\prod_{i_1=1}^{k_1}
			\left(  \left|\frac{1}{1-\mathrm{z}}\right| \int_{\Sigma_{\RR,\inn}^{(\Delta)}} 	    
			\frac{|\mathrm{d}v^{(1)}_{i_1}|}{2\pi}   
			+\left|\frac{\mathrm{z}}{1-\mathrm{z}}\right| \int_{\Sigma_{\RR,\out}^{(\Delta)}} \frac{|\mathrm{d}v^{(1)}_{i_1}|}{2\pi}   
			\right)\cdot 
			\prod_{i_2=1}^{k_2}  
			\int_{\Sigma_\LL} \frac{|\mathrm{d}u^{(2)}_{i_2}|}{2\pi}   
			\int_{\Sigma_\RR} \frac{|\mathrm{d}v^{(2)}_{i_2}|}{2\pi}  \\
			&\quad
			\cdot    
			\left|1-\mathrm{z}\right|^{k_2}
			\left|1-\frac{1}{\mathrm{z}}\right|^{k_1}
			\cdot 
			\left|\frac{f_1(U^{(1)};t_1)f_2(U^{(2)};t_2)}{f_1(V^{(1)};t_1)f_2(V^{(2)};t_2)}\right|
			\cdot \frac{1}{\prod_{\ell=1}^2\left|\sum_{i_\ell=1}^{k_\ell}(u_{i_\ell}^{(\ell)} -v_{i_\ell}^{(\ell)})\right|}
			\\
			&\quad \cdot \left| H(U^{(1)},U^{(2)};V^{(1)},V^{(2)})\right|\cdot 
			\prod_{\ell=1}^2
			\left|\frac{\left( \Delta(U^{(\ell)}) \right)^2
				\left( \Delta(V^{(\ell)}) \right)^2
			}
			{\left( \Delta(U^{(\ell)}; V^{(\ell)}) \right)^2}\right|
			\cdot 
			\left|\frac{\Delta(U^{(1)};V^{(2)})\Delta(V^{(1)};U^{(2)})}
			{\Delta(U^{(1)};U^{(2)})\Delta(V^{(1)};V^{(2)})}\right|.	
		\end{split}
	\end{equation}
 Note that, due to the assumptions of the contours,
\begin{equation*}
	\frac{1}{\prod_{\ell=1}^2\left|\sum_{i_\ell=1}^{k_\ell}(u_{i_\ell}^{(\ell)} -v_{i_\ell}^{(\ell)})\right|}\le \frac{1}{|\mathrm{Re}(u_1^{(1)}-w_c)|}\cdot\frac{1}{|\mathrm{Re}(u_1^{(2)}-w_c)|}.
\end{equation*}
We also use a looser bound for $H$, using the facts that all the contours are bounded and away from $0$,
\begin{equation*}
	\left| H(U^{(1)},U^{(2)};V^{(1)},V^{(2)})\right| \le C\cdot (k_1+k_2)^2
\end{equation*}
for all $k_1,k_2\ge 1$, where $C$ is positive constant independent of  $k_1,k_2$ and all the parameters. Now we use a similar argument as in Section~\ref{sec:proof_lm_uniform_bounds} and obtain
\begin{equation}
	\label{eq:aux_29}
	\begin{split}
	\delta_{1;1}&\le C\cdot k_1^{k_1/2}k_2^{k_2/2}(k_1+k_2)^{(k_1+k_2+4)/2}\left(\left|1-\frac{1}{\mathrm{z}}\right|\theta_{1}+\left|1-\mathrm{z}\right|\theta_{2}\right)^{k_1+k_2-2} \theta_{3} \cdot |1-\mathrm{z}|\cdot\left|1-\frac{1}{\mathrm{z}}\right| \\
	&\quad\cdot \left|\frac{1}{1-\mathrm{z}}\right| \int_{\Sigma_{\LL,\inn}^{(\exe)}} 	    
	\frac{|\mathrm{d}u^{(1)}_{1}|}{2\pi} \frac{|\tilde f_1(u_1^{(1)};t_1)|}{\dtt(u_1^{(1)})\cdot |\mathrm{Re}(u_1^{(1)}-w_c)|}\cdot \int_{\Sigma_{\LL}} 	    
	\frac{|\tilde f_2(u^{(2)}_1;t_2)||\mathrm{d} u_1^{(2)}|}{2\pi\cdot \dtt(u_1^{(2)})\cdot |\mathrm{Re}(u_1^{(2)}-w_c)|},
	\end{split}
\end{equation}
where $\tilde f_\ell(w;t_\ell)=f_\ell(w;t_\ell)/f(w_c;t_\ell)$ as introduced in~\eqref{eq:aux_28}, and $\theta_i$'s are given by
\begin{equation*}
	\begin{split}
		\theta_{1}&=\left(  \frac{1}{|1-\mathrm{z}|} \int_{\Sigma_{\LL,\inn}} 	    
		\frac{|\tilde f_1(u;t_1)||\mathrm{d} u|}{2\pi\cdot \dtt(u)}   
		+\frac{\mathrm{|z|}}{|1-\mathrm{z}|} \int_{\Sigma_{\LL,\out}} \frac{|\tilde f_1(u;t_1)||\mathrm{d} u|}{2\pi\cdot \dtt(u)} 
		\right)\\
		&\quad\cdot\left(  \frac{1}{|1-\mathrm{z}|} \int_{\Sigma_{\RR,\inn}} 	    
		\frac{|\mathrm{d}v|}{2\pi\cdot |\tilde f_1(v;t_1)|\cdot \dtt(v)} 
		+\frac{\mathrm{|z|}}{|1-\mathrm{z}|} \int_{\Gamma_{\RR,\out}} \frac{|\mathrm{d} v|}{2\pi\cdot |\tilde f_1(v;t_1)|\cdot \dtt(v)}  
		\right),\\
		\theta_{2}&=\left(  \int_{\Sigma_{\LL}} 	    
		\frac{|\tilde f_2(u;t_2)||\mathrm{d} u|}{2\pi\cdot \dtt(u)}  	  
		\right)\left(\int_{\Sigma_{\RR}} 	    
		\frac{|\mathrm{d}v|}{2\pi\cdot |\tilde f_2(v;t_2)|\cdot \dtt(v)} 
		\right),\\
		\theta_{3}&=\left(  \frac{1}{|1-\mathrm{z}|} \int_{\Sigma_{\RR,\inn}} 	    
		\frac{|\mathrm{d}v|}{2\pi\cdot |\tilde f_1(v;t_1)|\cdot \dtt(v)} 
		+\frac{\mathrm{|z|}}{|1-\mathrm{z}|} \int_{\Gamma_{\RR,\out}} \frac{|\mathrm{d} v|}{2\pi\cdot |\tilde f_1(v;t_1)|\cdot \dtt(v)}  
		\right)\\
		&\quad\cdot \left(\int_{\Sigma_{\RR}} 	    
		\frac{|\mathrm{d}v|}{2\pi\cdot |\tilde f_2(v;t_2)|\cdot \dtt(v)} 
		\right),
	\end{split}
\end{equation*}
and $\dtt(w)$, for $w\in\Sigma_{\LL,\inn}\cup\Sigma_{\LL}\cup\Sigma_{\LL,\out}\cup\Sigma_{\RR,\inn}\cup\Sigma_{\RR}\cup\Sigma_{\RR,\out}$, is the distance between $w$ and the contours $\Sigma_{\LL,\inn}\cup\Sigma_{\LL}\cup\Sigma_{\LL,\out}\cup\Sigma_{\RR,\inn}\cup\Sigma_{\RR}\cup\Sigma_{\RR,\out}$ except for the one $w$ belongs to. This $\dtt(w)$ has a similar definition as $\dtt(\zeta)$ in Section~\ref{sec:proof_lm_uniform_bounds} but with different contours. 

We claim that all of the integrals appearing in $\theta_i$ values are bounded by  some constant $\CC_{2;3}'$ satisfying the conditions described in Notation~\ref{notation:CC}. For example, consider the first integral in $\theta_1$,
\begin{equation*}
	\int_{\Sigma_{\LL,\inn}} 	    
	\frac{|\tilde f_1(u;t_1)||\mathrm{d} u|}{2\pi\cdot \dtt(u)}  =\int_{\Sigma_{\LL,\inn}^{(N)}} 	    
	\frac{|\tilde f_1(u;t_1)||\mathrm{d} u|}{2\pi\cdot \dtt(u)}  +\int_{\Sigma_{\LL,\inn}^{(\exe)}} 	    
	\frac{|\tilde f_1(u;t_1)||\mathrm{d} u|}{2\pi\cdot \dtt(u)},
\end{equation*}
where the first term is approximately, using~\eqref{eq:aux_17},
\begin{equation*}
	C' \cdot \int_{\Gamma_{\LL,\inn}^{(N)}}\frac{|\mathrm{f}_1(\xi;\mathrm{t}_1)||\mathrm{d}\xi| }{\dtt(\xi)}\le C' \cdot \int_{\Gamma_{\LL,\inn}}\frac{|\mathrm{f}_1(\xi;\mathrm{t}_1)||\mathrm{d}\xi| }{\dtt(\xi)}
\end{equation*}
for some constant $C'$, 
and the second term is bounded above by, using~\eqref{eq:constant_c},
\begin{equation*}
	C''\cdot N^{1/3} \cdot e^{-c(\ln N)^3}
\end{equation*}
for some constant $C''$, where the extra $N^{1/3}$ comes from a possible large factor $1/\dtt(u)$. These two estimates confirm the claim for the first factor. Similarly we have the claims for other factors. Thus we have
\begin{equation*}
	\theta_1 , \theta_2, \theta_3 \le \CC'_{2,3}.
\end{equation*}

Using the similar estimates, we can also obtain
\begin{equation*}
	\int_{\Sigma_{\LL,\inn}^{(\exe)}} 	    
	\frac{|\mathrm{d}u^{(1)}_{1}|}{2\pi} \frac{|\tilde f_1(u_1^{(1)};t_1)|}{\dtt(u_1^{(1)})\cdot |\mathrm{Re}(u_1^{(1)}-w_c)|}\le C'''N^{2/3}e^{-c(\ln N)^3}
\end{equation*}
and
\begin{equation*}
	\int_{\Sigma_{\LL}} 	    
	\frac{|\tilde f_2(u^{(2)}_1;t_2)||\mathrm{d} u_1^{(2)}|}{2\pi\cdot \dtt(u_1^{(2)})\cdot |\mathrm{Re}(u_1^{(2)}-w_c)|} \le C'''N^{1/3}\CC''_{2,3},
\end{equation*}
where the extra $N^{1/3}$ comes from a possible large factor $1/|\mathrm{Re}(w-w_c)|$. Combing all these estimates in~\eqref{eq:aux_29}, we obtain~\eqref{eq:aux_26} for $j_1=\ell=1$. Other estimates in~\eqref{eq:aux_26} and~\eqref{eq:aux_27} are similar.

\def\cydot{\leavevmode\raise.4ex\hbox{.}}

\end{document}